\newcommand{\MATRIXGP}{{\Gamma}}
\newcommand{\AUTOGROUP}{{\mathcal G}}
\newcommand{\GRID}{{\mathbb{G}}}
\newcommand{\goodparams}{\mathbb{C}^4_{\rm good}}
\newcommand{\BIGFATOU}{{\mathcal{F}}}
\renewcommand{\tilde}{\widetilde}
\newcommand{\C}{\mathbb{C}}
\newcommand{\N}{\mathbb{N}}
\newcommand{\Z}{\mathbb{Z}}
\newcommand{\fol}{\mathcal{D}}
\newcommand{\bbCP}{\mathbb{CP}}
\newcommand{\discrete}{\mathcal{D}}
\newcommand{\nondiscrete}{\mathcal{N}}
\newcommand{\parameterneighborhood}{\mathcal{P}}
\newcommand{\infinityneighborhood}{\mathcal{U}}
\newcommand{\ourboundary}{\mathscr{B}}
\newtheorem{theorem}{\bf Theorem}[section]
\newtheorem{proposition}[theorem]{\bf Proposition}
\newtheorem{definitionproposition}[theorem]{\bf Definition/Proposition}
\newtheorem*{theorem*}{\bf Theorem}
\newtheorem*{theoremA}{\bf Theorem A}
\newtheorem*{theoremB}{\bf Theorem B}
\newtheorem*{theoremC}{\bf Theorem C}
\newtheorem*{theoremD}{\bf Theorem D}
\newtheorem*{theoremE}{\bf Theorem E}
\newtheorem*{corollaryE}{\bf Corollary to Theorem E}
\newtheorem*{theoremF}{\bf Theorem F}
\newtheorem*{corollaryF}{\bf Corollary to Theorem F}
\newtheorem*{theoremG}{\bf Theorem G}
\newtheorem*{corollaryG}{\bf Corollary to Theorem G}
\newtheorem*{theoremH}{\bf Theorem H}
\newtheorem*{theoremK}{\bf Theorem K}
\newtheorem{lemma}[theorem]{\bf Lemma}
\newtheorem{corollary}[theorem]{\bf Corollary}
\newtheorem{remark}[theorem]{\bf Remark}
\newtheorem*{2GeneralRemark}{\bf Remark}
\begin{document}

\title[Dynamics on Character Varieties, Fatou/Julia, and Painlev\'e 6]{Dynamics of groups of automorphisms of character varieties and Fatou/Julia decomposition for Painlev\'e~6}

\author{Julio Rebelo and Roland Roeder}
\date{\today}

\thanks{}


\maketitle

\begin{abstract}
We study the dynamics of the group of holomorphic automorphisms of the affine
cubic surfaces
\begin{align*}
S_{A,B,C,D} = \{(x,y,z) \in \C^3 \, : \, x^2 + y^2 + z^2 +xyz = Ax + By+Cz+D\},
\end{align*}
where $A,B,C,$ and $D$ are complex parameters.  We focus on a finite index
subgroup $\AUTOGROUP_{A,B,C,D} < {\rm Aut}(S_{A,B,C,D})$ whose action not only
describes the dynamics of Painlev\'e 6 differential equations but also arises
naturally in the context of character varieties. We define the Julia and Fatou
sets of this group action and prove that there is a dense orbit in the Julia
set.  In order to show that the Julia set is ``large'' we consider a second
dichotomy, between locally discrete and locally non-discrete dynamics.  For an
open set in parameter space, $\parameterneighborhood \subset \C^4$, we show
that there simultaneously exist an open set in $S_{A,B,C,D}$ on which
$\AUTOGROUP_{A,B,C,D}$ acts locally discretely and a second open set in
$S_{A,B,C,D}$ on which $\AUTOGROUP_{A,B,C,D}$ acts locally non-discretely.  Their
common boundary contains an invariant set $\ourboundary_{A,B,C,D}$ of
topological dimension $3$.  After removing a countable union of real-algebraic
hypersurfaces from $\parameterneighborhood$ we show that $\AUTOGROUP_{A,B,C,D}$
simultaneously exhibits a non-empty Fatou set and also a Julia set having
non-trivial interior.  The open set $\parameterneighborhood$ contains a natural
family of parameters previously studied by Dubrovin-Mazzocco.

The interplay between the Fatou/Julia dichotomy and the locally
discrete/non-discrete dichotomy plays a major theme in this paper and seems
bound to play an important role in further dynamical studies of holomorphic
automorphism groups.
\end{abstract}

\section{Introduction}

\subsection{Setting}
Let $A$, $B$, $C$, and~$D$ be fixed complex parameters and consider the affine cubic surface
\begin{align*}
S_{A,B,C,D} = \{(x,y,z) \in \C^3 \, : \, x^2 + y^2 + z^2 +xyz = Ax + By+Cz+D\}.
\end{align*}
It is known that the corresponding family of projective surfaces contains all smooth cubic surfaces (see \cite{toledo}
or Section~\ref{cubics} for further detail). Note that every
line parallel to the $x$-axis
intersects the surface $S_{A,B,C,D}$ at two points (counted with multiplicity) and one can therefore
define an involution $s_x: S_{A,B,C,D} \rightarrow S_{A,B,C,D}$ that switches them:
\begin{align}\label{EQN:SX}
s_x\left(x, y, z\right) = \left(-x-yz+A,  y, z \right).
\end{align}
Two further involutions $s_y: S_{A,B,C,D} \rightarrow S_{A,B,C,D}$ and $s_z: S_{A,B,C,D} \rightarrow S_{A,B,C,D}$
are defined analogously by means of lines parallel to the $y$-axis and $z$-axis, respectively.

Consider the group
\begin{align}
\AUTOGROUP^\pm \equiv \AUTOGROUP^\pm_{A,B,C,D} = \langle s_x, s_y, s_z \rangle\leq {\rm Aut}(S_{A,B,C,D}),
\end{align}
where ${\rm Aut}(S_{A,B,C,D})$ denotes the group of all (algebraic) holomorphic diffeomorphisms of $S_{A,B,C,D}$.
For a generic choice of parameters one has $\AUTOGROUP^\pm_{A,B,C,D} = {\rm Aut}(S_{A,B,C,D})$ and, in general,
$\AUTOGROUP^\pm_{A,B,C,D}$ is a subgroup of ${\rm Aut}(S_{A,B,C,D})$ of index at most $24$; See \cite[Theorem 3.1]{cantat-2} and also \cite{huti}.

Consider also the index two subgroup
\begin{align*}
\AUTOGROUP \equiv \AUTOGROUP_{A,B,C,D}= \langle g_x,  g_y, g_z \rangle  < \AUTOGROUP^\pm.
\end{align*}
where $g_x = s_z \circ s_y$, $g_y = s_x \circ s_z$, and $g_z = s_y \circ s_x$.

The dynamics of the action of groups $\AUTOGROUP^\pm_{A,B,C,D}$ and
$\AUTOGROUP_{A,B,C,D}$ on $S_{A,B,C,D}$ and their individual elements have several
deep connections, including to the dynamics of mapping class groups on character varieties, to the
monodromy of the Painlev\'e~6 differential equation, and to the aperiodic
Schr\"odinger equation; see, for example, \cite{cantat-1} for a nice description of these
connections.
For this reason there is an extensive body of previous works on this dynamical system. In Section \ref{SEC:RELATED_WORKS} we will provide
a sample, focusing on those that we consider most closely related to the present paper.
We will also provide further details about the connections with dynamics on character varieties and with the 
Painlev\'e~6 differential equation in Section \ref{SEC:MOTIVATIONS}.

\vspace{0.1in}
\noindent
{\bf Main goal of our work:}
Our paper is devoted to questions about the ``pointwise complex dynamics
of the whole group'', i.e.\  to the orbits of individual points, their closures,
and more generally to the nature of subsets of the complex surface $S_{A,B,C,D}$
that are invariant under $\AUTOGROUP^\pm$ and $\AUTOGROUP$.   

\vspace{0.1in}
\noindent
We focus on the ``chaotic'' part of the dynamics {\rm (}i.e.\ the 
dynamics on the Julia set $\mathcal{J}_{A,B,C,D}$, as defined in Section \ref{SUBSEC:FATOU_JULIA}{\rm )} which is therefore complementary
to the extensive work previously done by Bowditch, Tan and his collaborators,
and others about the domains on which the dynamics is properly discontinuous.   Indeed, our work can be seen 
as an effort to answer the questions posed by Bowditch  immediately below Corollary 5.6  on p. 728 of \cite{bowditch}.
Let us point out that the complex dynamics on the $SL(2,\mathbb{C})$ character
varieties  (i.e. of $\AUTOGROUP_{A,B,C,D}$ on $S_{A,B,C,D}$) is described as
being ``extremely mysterious and non-trivial'' by Goldman \cite[p.
461]{goldman-1} and by Tan, Wong, and Zhang   \cite[p. 762]{TWZ}.

\vspace{0.1in}
It is important to remark that we are considering the action
of $\AUTOGROUP_{A,B,C,D}$ on the (non-compact) affine surface $S_{A,B,C,D} \subset
\mathbb{C}^3$.  Indeed the elements of $\AUTOGROUP_{A,B,C,D}$ extend only as
birational mappings of the compactification $\overline{S}_{A,B,C,D} \subset
\mathbb{CP}^3$, with both indeterminate and super-attracting/collapsing behavior
at infinity (see Section \ref{dynamics_near_infinty}).   This lack of a good compact domain 
for $\AUTOGROUP_{A,B,C,D}$ makes several aspects of this dynamical system rather
challenging.   It also seems to rule out  ergodic-theoretic
methods like those recently used by Cantat and Dujardin \cite{cantat-dujardin}
to study automorphism groups of compact surfaces.  This will be discussed
further in Section \ref{SUBSEC:CANTAT_DUJARDIN}.

\subsection{Some ``preferred'' parameters}
Throughout the paper we will refer to the following specific parameters and parametric families.

\vspace{0.05in}
\noindent
{\bf Markoff Parameters:} $(A,B,C,D) = (0,0,0,0)$, as discussed in \cite{bowditch} and \cite{series}.

\vspace{0.05in}
\noindent
{\bf Picard Parameters:} $(A,B,C,D) = (0,0,0,4)$. For these parameters,
Picard proved that the
Painlev\'e equation has explicit first integrals and countably many algebraic
solutions.  This is related to the curious fact that the action of
$\AUTOGROUP_{0,0,0,4}$ is semi-conjugate to an action on $(\mathbb{C} \setminus
\{0\})^2$ by monomial mappings.  In particular, for these parameters everything
can be computed rather explicitly.

\vspace{0.05in}
\noindent
{\bf Punctured Torus Parameters:} $(A,B,C,D) = (0,0,0,D)$ for any $D \in \mathbb{C}$. These
parameters correspond to dynamics on the character variety of the once
punctured torus; see, e.g.\  \cite[Sec. 1.1]{cantat-1}. For real $D$ and the
corresponding real surfaces, this is the family studied by Goldman~\cite{goldman-1}.

\vspace{0.05in}
\noindent
{\bf Dubrovin-Mazzocco Parameters:}  This is a
real $1$-parameter family studied by Dubrovin and  Mazzocco \cite{dubrovinmazzocco} which seems
to play a significant role in several problems related to Mathematical-Physics and, in
particular, on the study of Frobenius manifolds.
In our notations, the Dubrovin-Mazzocco parameters correspond to
\begin{align}\label{EQN:DM_PARAMETERS}
A(a) = B(a) = C(a) = 2a+4, \quad \mbox{and} \quad D(a) = -(a^2 + 8a +8)
\end{align}
for $a \in (-2,2)$.

Notice that both the Markoff and Picard parameters are included within the Punctured Torus Parameters.
Meanwhile, the Picard parameters are in the closure of the Dubrovin-Mazzocco parameters, corresponding to
$a=-2$, but the Markoff parameters are not.

\subsection{Two relevant dynamical dichotomies}\label{SUBSEC:2DICHOTOMIES}
With the goal of producing interesting invariant sets and finding points with complicated orbit closures
we introduce two dynamically invariant dichotomies.

A consequence of our results is that, in general, the action of
$\AUTOGROUP$ (or of $\AUTOGROUP^\pm$) is genuinely
non-linear, for example, it does not preserve any rigid geometric structure in the sense of Gromov~\cite{gromov};
see Remark~\ref{genuinelynonlinear}. Whereas this
was mostly expected, this action
still appears to share some
basic properties/issues with actions of countable subgroups of finite
dimensional Lie groups. This typically happens on some (proper) open
subsets of the surface $S_{A,B,C,D}$ and, for this reason, the
notions of {\em locally discrete} vs.\ {\em locally non-discrete dynamics} of $\AUTOGROUP$ will come in handy.
Meanwhile, to deal with the non-linear nature of the global dynamics we will adapt the {\em Fatou/Julia theory}
to the group $\AUTOGROUP$.  The core of this paper lies in the interplay between these two dichotomies.

\subsection{Locally non-discrete/discrete dichotomy}\label{SUBSEC:LOCALLY_DISCR}
Let $M$ be a (possibly open) connected complex manifold and consider a group $G$ of holomorphic diffeomorphisms of $M$.
The group $G$ is said to be {\it locally non-discrete} on an open set $U \subset M$
if there is a sequence of maps $\{ f_n \}_{n=0}^{\infty} \in G$ satisfying the following conditions (see for example \cite{REBELO_REIS}):
\begin{enumerate}
 \item For every $n$, $f_n$ is different from the identity.
  \item The sequence of maps $f_n$ converges uniformly to the identity on compact subsets of $U$.
\end{enumerate}
If there is no such sequence $f_n$ on $U$ we say that $G$ is {\it locally discrete} on $U$.

Remark that for an action by a finite dimensional Lie group, local
non-discreteness on some open set implies that the corresponding sequence of
elements converges to the identity on all of $M$, i.e. that the action is
globally non-discrete.  However, in our context the non-linearity of the
mappings allow for local non-discreteness to occur on a proper open subset $U
\subset M$ in such a way that it does not extend beyond $U$.

For any choice of parameters $(A,B,C,D)$ let
\begin{flalign*}
\nondiscrete_{A,B,C,D} = \{p \in S_{A,B,C,D} \, : \, \AUTOGROUP_{A,B,C,D} \, \mbox{is locally non-discrete on an open neighborhood $U$ of $p$}\}, &&
\end{flalign*}
and let 
$\discrete_{A,B,C,D} =   S_{A,B,C,D} \setminus \nondiscrete_{A,B,C,D}$.
We will refer to $\nondiscrete_{A,B,C,D}$ as the ``locally non-discrete locus'' and to $\discrete_{A,B,C,D}$ as the ``locally discrete locus''.
By definition, $\nondiscrete_{A,B,C,D}$ is open, $\discrete_{A,B,C,D}$ is closed, and each is invariant under $\AUTOGROUP_{A,B,C,D}$.

Note that $\nondiscrete_{A,B,C,D}$ can be empty for certain parameter values;
indeed it is for the Picard Parameters (Theorem D(ii), below).  We do not
know if $\discrete_{A,B,C,D}$ can be empty for any parameter values.

\subsection{Fatou/Julia dichotomy}\label{SUBSEC:FATOU_JULIA}

For any point $p \in S_{A,B,C,D}$ we 
denote the orbit of $p$ under $\AUTOGROUP$ by
\begin{align*}
\AUTOGROUP(p) = \{ \, \gamma(p) \; : \; \, \gamma \in \AUTOGROUP \, \}.
\end{align*}
The {\em Fatou set} of the group action $\AUTOGROUP$ is defined as
\begin{align*}
{\mathcal F}_{A,B,C,D} = \{p \in S_{A,B,C,D} \, : \, \mbox{$\AUTOGROUP$ forms a normal family in some open neighborhood of $p$}\}.
\end{align*}
Naturally the condition of being a normal family means that every sequence of maps as indicated must have a
convergent subsequence (for the topology of uniform convergence on compact subsets -- compact-open topology).
However, since $S_{A,B,C,D}$ is {\it open}, sequences of maps avoiding compact sets are expected to
arise as well. It is then convenient to make the notion of converging subsequence accurate
by means of the following definition: a sequence of maps (diffeomorphisms onto their images) $f_i$ is said to
{\it converge to infinity}\, on an open set $U \subset S_{A,B,C,D}$ if for every compact set
$\overline{V} \subset U$ and every
compact set $K \subset S_{A,B,C,D}$, there are only finitely many maps $f_i$ such that
$f_i (\overline{V}) \cap K \neq \emptyset$. Sequences converging to infinity are to be included
in the definition of normal family used above. In particular, if the sequence formed by all elements
of $\AUTOGROUP$ converge to infinity on some open set $U \subset S_{A,B,C,D}$, then $U$ is contained
in the Fatou set of $\AUTOGROUP$.

\begin{remark}
	{\rm According to Proposition~\ref{PROP:FATOU_COMP_ARE_HYPERBOLIC}, any component of the Fatou set ${\mathcal F}_{A,B,C,D}$
		is Kobayashi hyperbolic and, by exploiting this condition, it can be shown that our definition amounts to requiring
		the family to be normal if it is viewed as (continuous) maps from $S_{A,B,C,D}$ with values in its one-point compactification.}
\end{remark}

The {\em Julia set} of the group action $\AUTOGROUP$ is defined as
\begin{align*}
{\mathcal J}_{A,B,C,D} = S_{A,B,C,D} \setminus {\mathcal F}_{A,B,C,D}.
\end{align*}
It follows from the definitions that ${\mathcal
F}_{A,B,C,D}$ is open while ${\mathcal J}_{A,B,C,D}$ is closed.  Furthermore
both sets are invariant under $\AUTOGROUP$.  
(For some parameters $S_{A,B,C,D}$ may be singular, but this is not an issue: we will see in
Remark~\ref{REM:SINGULAR_POINTS_JULIA} that such singular points are always in 
${\mathcal J}_{A,B,C,D}$.)

It is worth emphasizing  that the Julia set is non-empty for every choice of parameters; see,
for example, Lemma~\ref{COR:GX_JULIA}.  
With more effort one can show that for every choice of parameters $(A,B,C,D)$
the group $\AUTOGROUP_{A,B,C,D}$ contains elements $f$ having positive entropy and exhibiting dynamics quite
similar to that of H\'enon maps; see \cite{IU_ERGODIC,cantat-1,cantat-2}.   The Julia set associated with iteration of each
such individual mapping is a subset of ${\mathcal J}_{A,B,C,D}$, including the collection of all saddle-type periodic points
of each such mapping.
However, the Fatou set 
can be empty for some parameter values; indeed this happens for the Picard Parameters (Theorem D(i), below).

\subsection{Main results}

Classical results from the holomorphic dynamics of rational maps of the Riemann sphere assert that there is a dense orbit in the Julia set (topological transitivity) and that repelling periodic points are dense in the Julia set.
We search for analogous statements for the action of $\AUTOGROUP_{A,B,C,D}$ on $S_{A,B,C,D}$.

\begin{theoremA}
For any parameters $(A,B,C,D)$ there is a point $p \in {\mathcal J}_{A,B,C,D}$ such that
\begin{align*}
\overline{\AUTOGROUP(p)} = {\mathcal J}_{A,B,C,D},
\end{align*}
i.e., there is a dense orbit of $\AUTOGROUP$ in ${\mathcal J}_{A,B,C,D}$.
\end{theoremA}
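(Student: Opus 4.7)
The plan is to derive Theorem~A as a consequence of Birkhoff's Transitivity Theorem applied to the action of $\Gamma$ on $\mathcal{J}_{A,B,C,D}$. This reduces the task to verifying three standard ingredients: (i) $\mathcal{J}_{A,B,C,D}$ is a non-empty, complete, separable metric space; (ii) $\mathcal{J}_{A,B,C,D}$ has no isolated points; and (iii) the action of $\Gamma$ on $\mathcal{J}_{A,B,C,D}$ is topologically transitive, i.e., for every pair of non-empty relatively open sets $U,V\subset\mathcal{J}_{A,B,C,D}$ there exists $\gamma\in\Gamma$ with $\gamma(U)\cap V\neq\emptyset$. Once these are established, Birkhoff's theorem produces a dense $G_\delta$ set of points in $\mathcal{J}_{A,B,C,D}$ whose $\Gamma$-orbit is dense in $\mathcal{J}_{A,B,C,D}$, which is exactly Theorem~A.

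Ingredient (i) is essentially automatic: non-emptiness is provided by Lemma~\ref{COR:GX_JULIA}, and $\mathcal{J}_{A,B,C,D}$ is closed in the second-countable smooth complex surface $S_{A,B,C,D}$. For (ii), I would leverage the presence of H\'enon-like elements in $\Gamma$ (such as $g_x$ or a well-chosen product of the generators), whose single-map Julia set is a perfect subset of $\mathcal{J}_{A,B,C,D}$; combining the local accumulation provided by such a perfect set with the $\Gamma$-invariance of $\mathcal{J}_{A,B,C,D}$ and the fact that every point of $\mathcal{J}_{A,B,C,D}$ sees non-trivial dynamics in its neighborhood should rule out isolated points.

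The substantive step is (iii). Given $U,V$ as above, fix $p\in U$ and pick open neighborhoods $\tilde U\subset S_{A,B,C,D}$ of $p$ and $\tilde V\subset S_{A,B,C,D}$ of some $q\in V$ small enough that their intersections with $\mathcal{J}_{A,B,C,D}$ lie in $U$ and $V$ respectively. Since $p\in\mathcal{J}_{A,B,C,D}$, the family $\{\gamma|_{\tilde U}:\gamma\in\Gamma\}$ fails to be normal in the sense of Section~\ref{SUBSEC:FATOU_JULIA}, so no subsequence either converges uniformly on compact subsets of $\tilde U$ nor escapes to infinity on $\tilde U$. I would argue by contradiction: if $\gamma(\tilde U)\cap\tilde V=\emptyset$ for every $\gamma\in\Gamma$, then every image $\gamma(\tilde U)$ lies in $S_{A,B,C,D}\setminus\overline{\tilde V}$, and a Montel-type argument applied to this constrained target should restore normality of $\Gamma$ on $\tilde U$, yielding the sought contradiction.

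The main obstacle is precisely this Montel-type step, because the one-variable Montel theorem on $\mathbb{P}^1$ does not transfer directly to an open affine complex surface, and one must simultaneously dismiss both the uniform-convergence and the escape-to-infinity modes of normality. A plausible route is to reduce to the one-variable setting by restricting each $\gamma|_{\tilde U}$ to holomorphic disks through $p$ and composing with the three coordinate projections $S_{A,B,C,D}\to\mathbb{C}$, then choosing $\tilde V$ so that the resulting meromorphic families on each disk omit enough values to force normality via the classical Montel theorem. An alternative is to exploit Kobayashi hyperbolicity of the complement $S_{A,B,C,D}\setminus\overline{\tilde V}$ for a carefully chosen $\tilde V$, for instance placed near a transverse saddle periodic point of a uniformly hyperbolic element of $\Gamma$ of the type studied by Cantat and by Iwasaki--Uehara. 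Handling the non-compactness of $S_{A,B,C,D}$ and the birational wildness at infinity emphasized in the introduction will be the delicate technical core of this reduction.
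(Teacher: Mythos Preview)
Your Baire/Birkhoff framework is correct and matches the paper's setup. Ingredient~(ii) is unnecessary: once topological transitivity is established in the sense of~(iii), the Baire argument in the paper yields a dense $G_\delta$ of points with dense orbit directly, without any appeal to perfectness.

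The genuine gap is in your approach to~(iii). Your contradiction scheme assumes $\gamma(\tilde U)\cap\tilde V=\emptyset$ for all $\gamma$ and hopes to recover normality; but avoiding a single arbitrary open set $\tilde V$ in a complex surface is far too weak for any Montel-type conclusion. You also cannot ``choose $\tilde V$'' to make this work: $\tilde V$ is a neighborhood of a prescribed point $q\in\mathcal{J}_{A,B,C,D}$, not yours to place. The Kobayashi-hyperbolicity alternative fails for the same reason: $S_{A,B,C,D}\setminus\overline{\tilde V}$ is essentially never hyperbolic for a small ball $\tilde V$.

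The paper's solution inverts the logic. Rather than trying to connect $\tilde U$ to $\tilde V$ directly, it introduces a fixed intermediate set, the ``grid'' $\mathcal{G}=S_{x_0}\cup S_{x_1}\cup S_{y_0}\cup S_{y_1}\cup S_{z_0}\cup S_{z_1}$ built from six coordinate fibers with $x_i,y_i,z_i\in(-2,2)$. The point is that missing $\mathcal{G}$ forces \emph{each} of the three coordinate projections to omit two values, so the classical one-variable Montel theorem applies coordinate-wise and yields normality. Hence any open set meeting $\mathcal{J}_{A,B,C,D}$ has a $\Gamma$-image hitting $\mathcal{G}$ (Proposition~\ref{PROP:HITTING_G}). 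One then navigates along $\mathcal{G}$ using the parabolic maps $g_x,g_y,g_z$, whose restrictions to the fibers over $(-2,2)$ are elliptic (Proposition~\ref{PROP_ELLIPTIC_HYPERBOLIC}). There is a further subtlety you did not anticipate: after moving $\tilde U$ and $\tilde V$ to meet the same fiber, the na\"ive intersection $\gamma(\tilde U)\cap\tilde V$ need not contain any Julia point (Remark~\ref{REM:PROBLEM_WITH_PROP}). The paper resolves this with a quantitative hitting lemma (Lemma~\ref{PROP:HITTING_OTHER_FIBERS-QuantitativeVersion}), exploiting a sequence of Julia points in a transverse fiber $S_{y_0}\subset\mathcal{J}_{A,B,C,D}$ approaching $S_{x_0}$ to guarantee a common Julia point in both images.
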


In the setting of group actions, the natural analog of having a dense set of
repelling periodic points consists of looking for a dense set ${\mathcal
J}^*_{A,B,C,D} \subset {\mathcal J}_{A,B,C,D}$ such that each point $p \in {\mathcal
J}^*_{A,B,C,D}$ has 
an element of its stabilizer whose derivative at $p$ is hyperbolic.  In Theorem~D, below, we will see that
this does not hold for the Picard Parameters $(0,0,0,4)$.  We leave it as an
open question to characterize for which parameters $(A,B,C,D)$, if any, the set
${\mathcal J}^*_{A,B,C,D}$ is dense in ${\mathcal J}_{A,B,C,D}$.

A more modest statement is obtained by 
replacing ``hyperbolic derivative'' by derivative
conjugate to a ``shear map''. This is the content of
Theorem~B below.

\begin{theoremB}
For any choice of parameters $(A,B,C,D)$
there is a dense set ${\mathcal J}^\#_{A,B,C,D} \subset {\mathcal J}_{A,B,C,D}$ such that for every
$p \in {\mathcal J}^\#_{A,B,C,D}$ there exists
$\gamma \in \AUTOGROUP$ such that $\gamma(p) = p$ and
\begin{align*}
D\gamma(p) \qquad \mbox{is conjugate to} \qquad \left[\begin{array}{cc} 1 & 1 \\ 0 & 1\end{array}\right].
\end{align*}
\end{theoremB}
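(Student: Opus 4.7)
The plan is to produce abundant shear fixed points of iterates $g_x^n$ (and $g_y^n, g_z^n$) on ``resonant'' fibers of the $g_x$-invariant fibration of $S_{A,B,C,D}$, propagate them by $\Gamma$-conjugation, and deduce density in $\mathcal{J}_{A,B,C,D}$ using Theorem~A.

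First I would exploit the fact that $g_x = s_z\circ s_y$ preserves the fibration of $S_{A,B,C,D}$ by the affine conics $C_{x_0}=\{x=x_0\}\cap S_{A,B,C,D}$, and acts on each smooth fiber as a M\"obius transformation of a $\mathbb{P}^1$-conic. The order of this M\"obius in $\mathrm{PSL}(2,\C)$ depends algebraically on $x_0$ (and the parameters), and for each integer $n\ge 2$ one gets infinitely many complex values $x_0$ at which the M\"obius has order exactly $n$ (in the Markoff case, the linear part of $g_x$ on $(y,z)$ has trace $x^2-2$, so the resonance equation reduces to $x_0 =\pm 2\cos(\pi k/(2n))$; for general parameters one replaces $x^2-2$ by a polynomial $T(x)$ in $x$ and $A,B,C,D$). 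At any such $x_0$, $\gamma:=g_x^n$ restricts to the identity on $C_{x_0}$, so every $p\in C_{x_0}$ is a fixed point of $\gamma$. The analogous construction produces resonant fixed curves for $g_y^n$ and $g_z^n$.

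Next I would analyze the derivative at such a fixed point $p\in C_{x_0}$: one eigenvalue of $D\gamma(p)$ equals $1$ because $\gamma$ preserves the $x$-coordinate, and the other equals $1$ because $\gamma|_{C_{x_0}}=\mathrm{id}$. Writing $\gamma$ in local coordinates $(x,y)$ on $S_{A,B,C,D}$ as $\gamma(x,y)=(x,\,y+\epsilon(x,y))$ with $\epsilon\equiv 0$ on $\{x=x_0\}$, the off-diagonal entry of $D\gamma(p)$ equals $\partial_x\epsilon(x_0,y)$, which measures the first-order departure of $g_x^n|_{C_x}$ from the identity on neighbouring fibers; it vanishes only on a proper subvariety of $C_{x_0}$. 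Off that subvariety $D\gamma(p)$ is conjugate to $\bigl[\begin{smallmatrix}1&1\\0&1\end{smallmatrix}\bigr]$. Since $\gamma^k(p)=p$ while $\|D\gamma^k(p)\|\sim k$, the family $\{\gamma^k\}\subset\Gamma$ cannot be normal on any neighbourhood of $p$, so $p\in\mathcal{J}_{A,B,C,D}$. Conjugating by arbitrary $h\in\Gamma$ yields further curves $h(C_{x_0})\subset\mathcal{J}_{A,B,C,D}$ whose points are shear fixed points of $h\gamma h^{-1}\in\Gamma$; let $\mathcal{E}\subset\mathcal{J}^\#_{A,B,C,D}$ denote the $\Gamma$-invariant countable union of all such curves.

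It then remains to show $\overline{\mathcal{E}}\supset\mathcal{J}_{A,B,C,D}$. Invoking Theorem~A, fix $p_0\in\mathcal{J}_{A,B,C,D}$ with $\overline{\Gamma(p_0)}=\mathcal{J}_{A,B,C,D}$; if some orbit point $h(p_0)$ lies in $\overline{\mathcal{E}}$, then $\Gamma$-invariance of $\overline{\mathcal{E}}$ gives $\mathcal{J}_{A,B,C,D}=\overline{\Gamma(p_0)}\subset\overline{\mathcal{E}}$, yielding the claimed density. The hard part will be precisely this last step: the orbit $\Gamma(p_0)$ is a countable dense set in a Julia set that is typically real four-dimensional, while $\mathcal{E}$ is a countable union of real two-dimensional algebraic curves, so the desired intersection is not forced by Theorem~A alone. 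I expect its resolution to combine the abundance of resonant fibers (their parameters accumulate densely in a non-trivial subset of $\C$ as $n$ varies), the transverse spreading of $\Gamma$-conjugates of those curves (which should render $\overline{\mathcal{E}}$ of non-empty interior in $\mathcal{J}_{A,B,C,D}$, after which density is automatic from Theorem~A), and possibly the Cantat--Loray theorem that $\Gamma$ preserves no holomorphic foliation, which prevents $\overline{\mathcal{E}}$ from being confined to any invariant thin subset.
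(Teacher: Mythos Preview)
Your construction of shear fixed points is right and matches the paper's: on a fiber $S_{x_0}$ where $g_x$ has finite order $n$, the map $\gamma=g_x^n$ fixes the whole fiber, and an explicit derivative computation shows that all but finitely many of those fixed points are genuine shears. Your argument that such points lie in $\mathcal{J}_{A,B,C,D}$ (via $\|D\gamma^k(p)\|\sim k$) is also correct.

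The gap is exactly where you flag it: density. Theorem~A gives one dense orbit, and as you say, a countable dense set has no reason to meet your countable union of curves $\mathcal{E}$, nor to land in its closure. Your proposed workarounds --- accumulation of resonant parameters, ``transverse spreading'' to force $\overline{\mathcal{E}}$ to have interior, or the Cantat--Loray rigidity result --- are all heuristic at this stage and none of them is easy to make rigorous.

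The paper bypasses Theorem~A entirely here and uses a direct Montel argument instead. One needs only \emph{two} resonant fibers in each coordinate direction: take $x_0=y_0=z_0=0$ (where $g_x^2,g_y^2,g_z^2$ restrict to the identity) and $x_1=y_1=z_1=\sqrt{2}$ (where the fourth powers do), and form the grid $\mathcal{G}'=S_{x_0}\cup S_{x_1}\cup S_{y_0}\cup S_{y_1}\cup S_{z_0}\cup S_{z_1}$. Now if $U$ is any open set meeting $\mathcal{J}_{A,B,C,D}$ and $\gamma(U)\cap\mathcal{G}'=\emptyset$ for every $\gamma\in\Gamma$, then each coordinate of $\gamma|_U$ omits the two values $\{0,\sqrt{2}\}$, so Montel's theorem forces $\Gamma$ to be normal on $U$ --- contradicting $U\cap\mathcal{J}_{A,B,C,D}\neq\emptyset$. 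Hence some $\gamma(U)$ meets $\mathcal{G}'$; since shear fixed points are dense in $\mathcal{G}'$ (your computation), $\gamma(U)$ contains a shear fixed point $q$, and $\gamma^{-1}(q)\in U$ is a shear fixed point for the conjugated element. This gives density immediately, with no appeal to Theorem~A and no need for infinitely many resonant values.
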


Often parametric families of rational maps of the Riemann sphere have some mappings with connected Julia set and other mappings with disconnected Julia set. In our context we have 
a slightly surprising general topological property of Julia sets, namely:

\begin{theoremC}
For any parameters $A,B,C,D$ the Julia set ${\mathcal J}_{A,B,C,D}$ is connected.
\end{theoremC}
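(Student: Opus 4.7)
The plan is to argue by contradiction, assuming that $\mathcal{J}_{A,B,C,D}$ is disconnected. Write $\mathcal{J}_{A,B,C,D}$ as the disjoint union of its connected components $\{C_\alpha\}_{\alpha \in I}$ with $|I|\geq 2$. Each $\gamma \in \Gamma$, being a homeomorphism of $S_{A,B,C,D}$ that preserves $\mathcal{J}_{A,B,C,D}$, must permute the $C_\alpha$, yielding a homomorphism $\rho : \Gamma \to \mathrm{Sym}(I)$. Theorem~A supplies a point $p \in \mathcal{J}_{A,B,C,D}$ with $\overline{\Gamma(p)} = \mathcal{J}_{A,B,C,D}$; writing $\alpha_0$ for the index with $p \in C_{\alpha_0}$, density of the orbit forces the $\Gamma$-orbit of $\alpha_0$ to equal all of $I$, so $\Gamma$ acts transitively on $I$ via $\rho$.

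Next, I would try to manufacture a connected $\Gamma$-invariant subset of $\mathcal{J}_{A,B,C,D}$ whose closure exhausts $\mathcal{J}_{A,B,C,D}$, which would immediately contradict $|I|\geq 2$. A natural candidate comes from the hyperbolic elements in $\Gamma$: by Cantat's analysis \cite{cantat-1}, $\Gamma$ contains elements whose dynamics is Hénon-like, and for such a hyperbolic $\gamma_0 \in \Gamma$ with a saddle fixed point $p^* \in \mathcal{J}_{A,B,C,D}$, the global unstable manifold
\begin{align*}
W^u(p^*) \;=\; \bigcup_{n \geq 0} \gamma_0^n\bigl(W^u_{\mathrm{loc}}(p^*)\bigr)
\end{align*}
is a connected immersed holomorphic curve. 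It lies inside $\mathcal{J}_{A,B,C,D}$ because the family $\{\gamma_0^{-n}\}$ fails to be normal at any point of $W^u(p^*)$, the backward iterates crunching onto $p^*$. Saturating under $\Gamma$ and passing to the closure yields a $\Gamma$-invariant closed subset of $\mathcal{J}_{A,B,C,D}$; the desired conclusion is that this closure equals $\mathcal{J}_{A,B,C,D}$, so that $\mathcal{J}_{A,B,C,D}$ is connected as the closure of a connected set.

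The main obstacle is precisely this density statement. Theorem~A only supplies density of a countable totally disconnected orbit, and upgrading to density of the $\Gamma$-saturation of a connected complex curve requires a finer tool -- for instance a $\lambda$-lemma style argument propagating transversality of $W^u(p^*)$ to its translates, combined with the dense orbit from Theorem~A. An alternative, more group-theoretic route would use Theorem~B: the density of shear-type fixed points places nontrivial elements of $\Gamma$ into each stabilizer $\rho^{-1}(\mathrm{Stab}(\alpha))$, after which one would attempt to exploit the presentation $\Gamma = \langle g_x, g_y, g_z\rangle$ to conclude that no nontrivial transitive permutation of $I$ is consistent with the abundance of such stabilizing elements. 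Either way, ruling out $|I|\geq 2$ is the heart of the proof.
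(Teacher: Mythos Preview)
Your proposal has a more fundamental gap than the density issue you flag. Even granting that the $\Gamma$-saturation $\bigcup_{\gamma \in \Gamma} \gamma(W^u(p^*))$ is dense in $\mathcal{J}_{A,B,C,D}$, this saturation is a \emph{countable union} of connected curves $\gamma(W^u(p^*))$, and such a union has no reason to be connected: $\gamma(W^u(p^*))$ and $W^u(p^*)$ may well be disjoint for most $\gamma$. You write ``$\mathcal{J}_{A,B,C,D}$ is connected as the closure of a connected set,'' but you never argue that the saturation itself is connected, and the closure of a disconnected set can remain disconnected. Likewise, the transitivity of $\Gamma$ on $I$ that you extract from Theorem~A is perfectly compatible with $|I|\geq 2$; it gives no contradiction on its own, and your group-theoretic alternative via Theorem~B does not close the gap either.

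The paper's proof takes an entirely different and more elementary route, avoiding both obstacles. It uses the grid $\mathcal{G} = S_{x_0}\cup S_{x_1}\cup S_{y_0}\cup S_{y_1}\cup S_{z_0}\cup S_{z_1}$ built from six fiber-curves over points in $(-2,2)$: this grid is path-connected and lies in $\mathcal{J}_{A,B,C,D}$ by Lemma~\ref{COR:GX_JULIA}. Given any $p\in\mathcal{J}_{A,B,C,D}$ and a neighborhood $U$ of $p$, Proposition~\ref{PROP:HITTING_G} supplies $\gamma\in\Gamma$ with $\gamma(U)$ meeting some irreducible component $C\cong\mathbb{C}\setminus\{0\}$ of $\mathcal{G}$. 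The key step is Picard's theorem: $\gamma^{-1}(C)\subset S_{A,B,C,D}\subset\mathbb{C}^3$ is again biholomorphic to $\mathbb{C}\setminus\{0\}$, and if it avoided $\mathcal{G}$ then each coordinate of its uniformization $\mathbb{C}\to\mathbb{C}^3$ would omit two values, which is impossible for a nonconstant entire function. Hence $\gamma^{-1}(C)\subset\mathcal{J}_{A,B,C,D}$ is a single connected curve passing through $U$ and meeting $\mathcal{G}$, which path-connects any neighborhood of any Julia point to the fixed connected backbone $\mathcal{G}$.
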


Let us now transition from results that hold for all parameters to results that only hold for certain parameters.  

\begin{theoremD} For the Picard Parameters $(A,B,C,D) = (0,0,0,4)$ we have:
\begin{itemize}
\item[(i)] $\mathcal{J}_{0,0,0,4} = S_{0,0,0,4}$ and consequently $\mathcal{F}_{0,0,0,4} = \emptyset$,
\item[(ii)] The action of $\AUTOGROUP_{0,0,0,4}$ is locally discrete on any open subset of $S_{0,0,0,4}$, and
\item[(iii)] The closure of the set of points $\mathcal{J}^*_{0,0,0,4}$ that have hyperbolic stabilizers 
is contained in $S_{0,0,0,4} \cap [-2,2]^3$ and hence is a proper subset of $\mathcal{J}_{0,0,0,4} = S_{0,0,0,4}$.
\end{itemize}
\end{theoremD}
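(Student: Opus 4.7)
The plan is to exploit the classical semi-conjugacy available at Picard parameters. The map $\pi(s,t)=(-(s+s^{-1}),\,-(t+t^{-1}),\,-(st+s^{-1}t^{-1}))$ defines a $2$-to-$1$ ramified cover $(\mathbb{C}^*)^2\to S_{0,0,0,4}$ with covering involution $\sigma(s,t)=(s^{-1},t^{-1})$, and a direct calculation shows that each of $s_x,s_y,s_z$ lifts to a pure monomial self-map of $(\mathbb{C}^*)^2$ --- for example $s_x(s,t)=(st^2,t^{-1})$. In the logarithmic chart $(u,v)=(\log s,\log t)$ these lifts act as $\mathbb{Z}$-linear maps, so $\Gamma_{0,0,0,4}$ lifts to a finite-index subgroup $\tilde\Gamma<GL(2,\mathbb{Z})$. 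One checks that $\tilde\Gamma$ contains parabolic elements fixing the three lines $\{u=0\}$, $\{v=0\}$, $\{u+v=0\}$ (respectively $g_x^2$, $g_y$, $g_z$) as well as hyperbolic elements (e.g.\ $g_xg_y^{-1}$, of trace $-6$). All three parts of Theorem~D reduce to statements about this linear action.

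Part (ii) is immediate from discreteness. A sequence $\gamma_n\neq\id$ in $\Gamma_{0,0,0,4}$ converging uniformly on compacts of some open $U\subset S_{0,0,0,4}$ to the identity lifts to a sequence of monomial maps whose linear parts $M_n\in GL(2,\mathbb{Z})$ satisfy $(M_n\mp I)(u,v)\to 0$ uniformly on an open subset of $\mathbb{C}^2$. Since a $\mathbb{C}$-linear map vanishing on an open set is identically zero, $M_n\to\pm I$ in $GL(2,\mathbb{R})$, and the discreteness of $GL(2,\mathbb{Z})$ forces $M_n=\pm I$ eventually, contradicting $\gamma_n\neq\id$.

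For part (iii), let $\gamma\in\Gamma$ fix $p\in S_{0,0,0,4}$ with $D\gamma(p)$ hyperbolic. After replacing $\gamma$ by $\gamma^2$ if necessary, a lift $\tilde\gamma$ with matrix $M\in GL(2,\mathbb{Z})$ satisfying $|\mathrm{tr}\,M|>2$ fixes a preimage $\tilde p=(s_0,t_0)$. Writing $(u_0,v_0)$ for a branch of $(\log s_0,\log t_0)$, the fixed-point condition $M(u_0,v_0)\equiv(u_0,v_0)\pmod{2\pi i\mathbb{Z}^2}$ together with $\det(M-I)\neq 0$ (which follows from $|\mathrm{tr}\,M|>2$) forces $(u_0,v_0)\in (M-I)^{-1}(2\pi i\mathbb{Z}^2)\subset 2\pi i\,\mathbb{Q}^2$, that is, purely imaginary. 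Hence $\tilde p$ lies on the compact torus $T^2:=\{|s|=|t|=1\}$ and $p=\pi(\tilde p)$ has all three coordinates in $[-2,2]$. Since $[-2,2]^3$ is closed, $\overline{\mathcal J^\ast_{0,0,0,4}}\subset S_{0,0,0,4}\cap[-2,2]^3$, which is a proper subset of the unbounded surface $S_{0,0,0,4}$.

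Part (i), the most delicate, is handled by showing that for every $\tilde p\in(\mathbb{C}^*)^2$ and every open neighborhood $\tilde U\ni\tilde p$ one can produce a sequence $M_n\in\tilde\Gamma$ with $\|M_n\|\to\infty$ while $M_n\tilde p$ remains in a compact subset of $(\mathbb{C}^*)^2$. Such a sequence exists in every case: for $\tilde p\in T^2$ (where $\log|\tilde p|=0$) any unbounded sequence in $\tilde\Gamma$ works; for $\log|\tilde p|\neq 0$ lying on a ray of rational slope, an infinite-order parabolic in $\tilde\Gamma$ fixing that ray gives the sequence; for an irrational slope, density of $\tilde\Gamma$-orbits on $\mathbb{R}^2\setminus\{0\}$ (a consequence of standard density of $SL(2,\mathbb{Z})$-orbits of irrational vectors combined with the finite-index property) produces the desired $M_n$. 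With such $M_n$ in hand, the unboundedness of $\|M_n\|$ rules out uniform convergence of any subsequence on $\tilde U$ (uniform holomorphic limits would have bounded derivatives), while boundedness of $\{M_n\tilde p\}$ rules out uniform escape to infinity (the image of $\{\tilde p\}$ returns to any compact neighborhood of an accumulation point of $\{M_n\tilde p\}$ infinitely often). Therefore $\tilde\Gamma$ is not a normal family on $\tilde U$ in the sense of Section~\ref{SUBSEC:FATOU_JULIA}, and pushing down via $\pi$ yields non-normality of $\Gamma_{0,0,0,4}$ on $\pi(\tilde U)$. The principal technical obstacle is the irrational-slope case, where one must transfer density of orbits from the $SL(2,\mathbb{Z})$-action to that of the finite-index subgroup $\tilde\Gamma$.
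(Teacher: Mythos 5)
Your semi-conjugacy is the same one the paper uses (your $\pi$ is the paper's $\Phi$ after substituting $t\mapsto 1/t$), and your proofs of (ii) and (iii) essentially coincide with the paper's. For (ii) both arguments pass to the linear picture and invoke discreteness of the integer matrix group. For (iii) you solve the fixed-point congruence $M(u_0,v_0)\equiv(u_0,v_0)\bmod 2\pi i\,\mathbb{Z}^2$ directly, which is a slightly more economical route to the conclusion that hyperbolic fixed points of $\eta_M$ lie on the unit torus, but it is the same mechanism as the paper's Lemma~\ref{LEM:EIGENVALUES_FIXED_POINTS} and Proposition~\ref{PROP:SADDLE_POINTS_NOT_DENSE_PICARD} (both hinge on $D\eta_M$ being conjugate to $M$ in logarithmic coordinates, plus $\det(M-I)\neq 0$ for hyperbolic $M$).

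Part (i) is where you take a genuinely different route. The paper (Proposition~\ref{PROP:PICARD_PARAM_JULIA} together with Lemma~\ref{LEM:EIGENDIRECTIONS}) shows $\mathbb{T}^2$ lies in the Julia set because the hyperbolic $\eta_M$ are Anosov there, then observes that the stable and unstable manifolds $\mathcal{W}^{s/u}_M(\mathbb{T}^2)$ (which are $3$-real-dimensional and lie in the Julia sets of individual $\eta_M^{\pm 1}$) sweep out a dense set as $M$ ranges over hyperbolic elements, by density of the corresponding eigendirections in $\mathbb{P}^1(\mathbb{R})$; closedness of the Julia set finishes it. You instead argue directly at \emph{every} point $\tilde p$, producing a sequence $M_n$ with $\|M_n\|\to\infty$ and $\{M_n\tilde p\}$ precompact in $(\mathbb{C}^*)^2$, and correctly observing that such a sequence kills both uniform convergence (the derivative estimate works precisely because precompactness of $\{M_n\tilde p\}$ keeps the monomial factors bounded and bounded away from zero, so $\|D\eta_{M_n}(\tilde p)\|\asymp\|M_n\|$) and escape to infinity. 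This replaces the paper's appeal to hyperbolic dynamics and stable manifolds with a more elementary linear-recurrence argument, at the cost of the ingredient you yourself flag: for $\log|\tilde p|$ of irrational slope you need recurrence of the $\tilde\Gamma$-orbit. What is actually required is weaker than density of the $SL(2,\mathbb{Z})$-orbit --- it suffices that for an irrational vector $w$ there exist $M_n\in SL(2,\mathbb{Z})$ with $\|M_n\|\to\infty$ and $\{M_nw\}$ bounded, which follows from continued-fraction best approximations (one row of $M_n$ sends $w$ to something small, and the complementary row can be adjusted by an integer multiple of the first to land in a bounded window); the passage to the finite-index subgroup is then the standard coset-representative trick. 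With that supplied your argument is complete and correct, and arguably more self-contained than the paper's, which implicitly invokes the stable manifold theorem for Anosov diffeomorphisms.
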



In contrast to the Picard parameters, for which the corresponding Fatou set is
empty, Teichm\"uller theory can be applied to show that the Fatou set is non-empty
for certain values of the parameters, including the Markov parameters $(A,B,C,D) = (0,0,0,0)$.
A more powerful approach stems from 
issues related to the Bowditch Conjecture and the Bowditch BQ
Conditions, as introduced by Bowditch \cite{bowditch} and Tan, Wong, and Zhang
\cite{TWZ} and later studied by Maloni, Palesi, and Tan \cite{MPT},  Hu, Tan, and Zhang
\cite{Hu}, and several others.  See Section~\ref{SUBSEC:TEICH} for more details.
Their methods can be adapted to prove the following theorem:

\begin{theoremE}  The Fatou set $\mathcal{F}_{A,B,C,D}$ is non-empty for any $(A,B,C,D)$ in an open neighborhood in $\mathbb{C}^4$ of
\begin{itemize}
\item[(1)] any punctured torus parameters $(0,0,0,D)$ for $D \neq 4$, and 
\item[(2)] any Dubrovin-Mazzocco Parameter $(A(a),B(a),C(a),D(a))$, where $a \in (-2,2)$.
\end{itemize}
\end{theoremE}
\noindent

\noindent
We will denote the subset of the Fatou set obtained in the proof of Theorem E
by $V_{\rm BQ} \equiv V_{\rm BQ}(A,B,C,D)$ and call it the {\em Bowditch set} because any point $p \in V_{\rm BQ}$
satisfies the BQ Conditions.  (See Section \ref{SUBSEC:TEICH} for the precise statement of these conditions.)   As a matter of fact, the action of $\AUTOGROUP$ on
$V_{\rm BQ}$ is properly discontinuous, as was shown in \cite{TWZ} and
\cite{MPT}.  (We will also show this in the proof of Theorem F, below.)  
The Bowditch set seems to be of considerable interest and, in
particular, the following rephrasing of Statement (1) in Theorem E seems to be
new:

\begin{corollaryE}
Consider the punctured torus parameters $(0,0,0,D)$ where $D \in \mathbb{C}$. The Bowditch set $V_{\rm BQ}(0,0,0,D)$
is non-empty if and only if $D \neq 4$.
\end{corollaryE}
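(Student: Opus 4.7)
The plan is to deduce both implications directly from theorems already stated, making the corollary essentially a bookkeeping exercise that combines Theorem~E(1), Theorem~D(i), and the basic fact that the Bowditch set sits inside the Fatou set.

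For the ``if'' direction ($D\neq 4 \Rightarrow V_{\rm BQ}(0,0,0,D)\neq\emptyset$), I would simply invoke Theorem~E(1). More precisely, the proof of that theorem does not merely assert that $\mathcal{F}_{0,0,0,D}$ is non-empty; it does so by exhibiting $V_{\rm BQ}(0,0,0,D)$ explicitly as a non-empty subset of the Fatou set via the Bowditch / BQ machinery of \cite{bowditch}, \cite{TWZ}, and \cite{MPT}. Thus the non-emptiness of $V_{\rm BQ}(0,0,0,D)$ for $D\neq 4$ is already recorded inside the proof of Theorem~E(1), and nothing new is required here beyond pointing this out.

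For the ``only if'' direction ($D=4 \Rightarrow V_{\rm BQ}(0,0,0,4) = \emptyset$), the key observation is that $V_{\rm BQ}(A,B,C,D) \subset \mathcal{F}_{A,B,C,D}$ for every choice of parameters. This inclusion is built into the meaning of the Bowditch set: points satisfying the BQ Conditions are precisely the points at which $\Gamma$ acts properly discontinuously (as proved in \cite{TWZ} and \cite{MPT} and as recalled in the paragraph following Theorem~E), and proper discontinuity on a neighborhood implies normality of $\Gamma$ on that neighborhood, hence membership in $\mathcal{F}_{A,B,C,D}$. With this inclusion in hand, Theorem~D(i) gives $\mathcal{F}_{0,0,0,4} = \emptyset$, and therefore $V_{\rm BQ}(0,0,0,4) = \emptyset$, which is exactly what we need.

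The main (and only) delicate point is ensuring that the inclusion $V_{\rm BQ} \subset \mathcal{F}$ is verified, since the corollary's contrapositive rests on it entirely. There is no other obstacle: the corollary is best thought of as a clean summary of how Theorems~E(1) and~D(i) interact in the punctured torus family, with $D=4$ emerging as a sharp threshold---exactly the Picard value where the extra structure identified by Picard (explicit first integrals, the semi-conjugacy to monomial dynamics on $(\C\setminus\{0\})^2$, and the resulting countable family of algebraic solutions) forces the Fatou set, and therefore its Bowditch subset, to collapse.
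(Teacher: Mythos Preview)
Your proposal is correct and matches the paper's approach: the corollary is stated there without a separate proof, as a ``rephrasing of Statement (1) in Theorem~E,'' and your argument simply spells out the two ingredients---the proof of Theorem~E(1) actually constructs points of $V_{\rm BQ}(0,0,0,D)$ for $D\neq 4$, while Theorem~D(i) forces $V_{\rm BQ}(0,0,0,4)\subset\mathcal{F}_{0,0,0,4}=\emptyset$.
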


However, let us note that we recently discovered that Statement (2) of Theorem E is an immediate consequence
of a stronger statement \cite[Theorem 5.3]{MPT}.   
In general, a complete classification of all $(A,B,C,D) \in \mathbb{C}^4$ for which
$V_{\rm BQ}(A,B,C,D)$ is non-empty seems to be a delicate question.  

\vspace{0.1in}

Because the Fatou set can be non-empty it is interesting to determine how
``large'' the Julia set is, especially if one wishes to apply Theorems A and B.
For this reason we study the interplay of the Fatou/Julia dichotomy with the
locally non-discrete/discrete dichotomy.  We will first discuss the locally
non-discrete/discrete dichotomy and then relate it to the Fatou/Julia
dichotomy. 

In Section~\ref{SEC:LOCALLY_NONDISCRETE},
we will show the existence of
a very large set of parameters $(A,B,C,D)$ for which
$\AUTOGROUP_{A,B,C,D}$ is locally non-discrete on some open subset of the surface
$S_{A,B,C,D}$.
Later in Section \ref{SEC:PROOF_OF_THEOREM_G} we obtain as a consequence 
the following theorem about the coexistence of both locally discrete and
locally non-discrete dynamics:


\begin{theoremF} There is an open neighborhood $\parameterneighborhood \subset
\mathbb{C}^4$ of the Markoff Parameters $(0,0,0,0)$ and of each of the
Dubrovin-Mazzocco Parameters $(A(a),B(a),C(a),D(a))$, where $a \in (-2,2)$, with
the following property.  

For any $(A,B,C,D) \in \parameterneighborhood$ there are disjoint non-empty opens sets $U, V_{\rm BQ} \subset S_{A,B,C,D}$ 
such that:
\begin{enumerate}
\item The action of $\AUTOGROUP_{A,B,C,D}$ is locally non-discrete on $U$; i.e.\ $U \subset \nondiscrete_{A,B,C,D}$.
\item The action of $\AUTOGROUP_{A,B,C,D}$ is locally discrete on any open neighborhood of any point from $V_{\rm BQ}$, i.e.\ $V_{\rm BQ} \subset \discrete_{A,B,C,D}$.
Indeed, the action of $\AUTOGROUP_{A,B,C,D}$ on $V_{\rm BQ}$ is properly discontinuous.
\end{enumerate}
There are non-commuting pairs of element of $\AUTOGROUP_{A,B,C,D}$ both of which are arbitrarily close to the identity on $U$.
\end{theoremF}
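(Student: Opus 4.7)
The plan is to treat the two required open sets separately: $V_{\rm BQ}$ comes from Theorem~E, and $U$ comes from the local non-discreteness construction of Section~\ref{SEC:LOCALLY_NONDISCRETE}; both constructions are then promoted to an open parameter neighborhood by continuity and implicit function arguments.

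For part (2), I would invoke Theorem~E directly. It produces a non-empty Bowditch set $V_{\rm BQ}(A,B,C,D)$ on an open neighborhood in $\mathbb{C}^4$ of each of the Markoff and Dubrovin--Mazzocco base parameters, and the $\Gamma$-action on $V_{\rm BQ}$ is properly discontinuous by the BQ-conditions argument of Tan--Wong--Zhang and Maloni--Palesi--Tan that is carried out in the proof of Theorem~E. In particular, $V_{\rm BQ}\subset\discrete_{A,B,C,D}$.

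For part (1), I would locate at each base parameter $(A_0,B_0,C_0,D_0)$ an element $\gamma_0\in\Gamma$ with a periodic point $p_0$ whose derivative $D\gamma_0(p_0)$ is elliptic with eigenvalue $\lambda=e^{2\pi i\alpha}$, $\alpha$ irrational and Diophantine. Because the elements of $\Gamma$ preserve the Poincar\'e residue $2$-form on $S_{A,B,C,D}$, the two eigenvalues of $D\gamma_0(p_0)$ are reciprocal, so $\lambda$ forces $D\gamma_0(p_0)$ into a compact torus. Siegel's linearization theorem then furnishes a neighborhood $U\ni p_0$ on which $\gamma_0$ is holomorphically conjugate to $z\mapsto\lambda z$, and iterates $\gamma_0^{n_k}$ with $n_k\alpha\to 0\pmod{1}$ converge to the identity uniformly on compact subsets of $U$; this establishes $U\subset\nondiscrete_{A,B,C,D}$. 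The final clause of the theorem---non-commuting pairs arbitrarily close to the identity on $U$---follows by also applying this construction to a conjugate $h\gamma_0 h^{-1}$ for some $h\in\Gamma$ whose action moves $p_0$ to another point of $U$: the resulting sequences are approximate rotations about distinct centers in $U$ and hence fail to commute for large iterates. Parameter stability is then a consequence of the implicit function theorem for the periodic point $p_0$, holomorphic dependence of its multiplier on $(A,B,C,D)$, and the persistence of Siegel linearizability under small perturbations along the Diophantine locus.

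The main obstacle lies in exhibiting $\gamma_0$ and $p_0$ at the two base families of parameters, which is the content of Section~\ref{SEC:LOCALLY_NONDISCRETE}. At the Markoff parameters the natural common fixed point $\origin\in S_{0,0,0,0}$ is singular on the surface and the generator derivatives at $\origin$ are order-two reflections, so no elliptic behavior is visible immediately; one must instead find periodic points of longer words in $\Gamma$ away from the singular locus, compute the traces of their derivatives, and verify that at least one such trace takes the form $2\cos(\pi\alpha)$ for an irrational (Diophantine) $\alpha$. A parallel analysis is needed along the Dubrovin--Mazzocco curve $a\in(-2,2)$, where continuity of multipliers in $a$ is used to avoid the countable discrete set of resonant parameter values. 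Once such a $\gamma_0$ is produced at one base parameter in each family, intersecting the resulting open parameter neighborhood with the one from Theorem~E yields $\parameterneighborhood$, and the disjointness $U\cap V_{\rm BQ}=\emptyset$ is automatic from $U\subset\nondiscrete_{A,B,C,D}$ and $V_{\rm BQ}\subset\discrete_{A,B,C,D}$.
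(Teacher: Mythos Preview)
Your plan for part (2) is fine and matches the paper; the paper likewise cites Theorem~E, though it re-derives proper discontinuity directly from the ``escape'' estimate in Proposition~\ref{PROP:FATOU_C3} (any non-trivial $\gamma$ sends $q\in B_\epsilon(p)$ outside $B_\epsilon(p)$), together with a normal-family argument.

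The gap is in part (1). Your mechanism for producing $U$ is Siegel linearization of a single element $\gamma_0$ around an elliptic fixed point with Diophantine multiplier, and this is not parameter-stable in the way Theorem~F requires. Under a perturbation of $(A,B,C,D)$ the fixed point persists by the implicit function theorem, but its multiplier $\lambda$ moves holomorphically in the parameters; nothing forces $|\lambda|=1$ (volume preservation only gives $\lambda_1\lambda_2=1$, not $|\lambda_i|=1$), and even along the locus $|\lambda|=1$ the Diophantine condition has empty interior. So ``persistence of Siegel linearizability along the Diophantine locus'' cannot yield an \emph{open} $\parameterneighborhood\subset\mathbb{C}^4$. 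Your deferral to Section~\ref{SEC:LOCALLY_NONDISCRETE} does not help here, because that section does not locate Diophantine elliptic periodic points at all.

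The paper avoids this entirely by a different mechanism: it finds, at each base parameter, two non-commuting elements $F_1,F_2\in\Gamma$ whose derivatives at a common fixed point are merely \emph{close} to the identity (for Markoff, $g_x^2,g_y^2$ are tangent to the identity at the origin of $\mathbb{C}^3$; for Dubrovin--Mazzocco, $D_{p_1}g_x$ has unit-modulus eigenvalues so some fixed power $g_x^k$ and its $g_y$-conjugate work). Lemma~\ref{tangentidentitycloseidentity} then puts $F_1,F_2$ into the $C^0$-open hypothesis of Proposition~\ref{PROP:CONV_TO_ID}, whose iterated commutators $S(n)$ converge uniformly to the identity on a fixed ball. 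Because the hypothesis of Proposition~\ref{PROP:CONV_TO_ID} is $C^0$-open, it persists on a genuine open set of parameters, and Lemma~\ref{LEM:ALGEBRA} guarantees each $S(n)$ contains non-commuting (hence non-trivial) elements. This simultaneously gives local non-discreteness, the ``non-commuting pairs arbitrarily close to the identity'' clause, and parameter openness---none of which your Siegel route delivers robustly.
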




\begin{corollaryF}
For every choice of parameters $(A,B,C,D) \in \parameterneighborhood$ there there is a set
\begin{align*}
\ourboundary_{A,B,C,D} \subset \partial \nondiscrete_{A,B,C,D} = \partial \discrete_{A,B,C,D}
\end{align*}
that has topological dimension equal to three and is invariant under $\AUTOGROUP_{A,B,C,D}$.
\end{corollaryF}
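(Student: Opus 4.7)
The natural candidate is $\ourboundary_{A,B,C,D} := \partial \nondiscrete_{A,B,C,D}$, which automatically coincides with $\partial \discrete_{A,B,C,D}$ since $\discrete_{A,B,C,D}$ is the complement of the open set $\nondiscrete_{A,B,C,D}$ inside $S_{A,B,C,D}$. The $\Gamma_{A,B,C,D}$-invariance is immediate: both $\nondiscrete_{A,B,C,D}$ and $\discrete_{A,B,C,D}$ are $\Gamma_{A,B,C,D}$-invariant by their very definitions, and $\Gamma_{A,B,C,D}$ acts by biholomorphisms, hence homeomorphisms, so the topological boundary of an invariant set is invariant as well. Non-emptiness is read off directly from Theorem~F: the non-empty open sets $U \subset \nondiscrete_{A,B,C,D}$ and $V_{\rm BQ} \subset \discrete_{A,B,C,D}$ exhibit $\nondiscrete_{A,B,C,D}$ as a non-empty proper open subset of $S_{A,B,C,D}$, so its topological boundary must be non-empty.

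The core of the argument is the computation of the topological dimension, which I would handle by upper and lower bounds. For the upper bound, the set $\ourboundary_{A,B,C,D}$ is the topological boundary of an open set, hence closed and nowhere dense in $S_{A,B,C,D}$; since $S_{A,B,C,D}$ is a real $4$-dimensional topological manifold away from its finitely many isolated singular points, the classical result from dimension theory (see Hurewicz--Wallman) that a closed nowhere dense subset of an $n$-manifold has topological dimension at most $n-1$ gives $\dim \ourboundary_{A,B,C,D} \leq 3$. For the lower bound, $U \subset \nondiscrete_{A,B,C,D}$ and $V_{\rm BQ} \subset \mathrm{int}(\discrete_{A,B,C,D})$ lie in disjoint open sets whose union is $S_{A,B,C,D} \setminus \ourboundary_{A,B,C,D}$, so any path from $U$ to $V_{\rm BQ}$ must meet $\ourboundary_{A,B,C,D}$. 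The classical separation theorem in dimension theory, which states that a closed set separating two open sets in an $n$-manifold has topological dimension at least $n-1$, then yields $\dim \ourboundary_{A,B,C,D} \geq 3$. Combining the two bounds produces the equality $\dim \ourboundary_{A,B,C,D} = 3$.

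The only genuine obstacle is to justify that the separation argument really is one: I need $U$ and $V_{\rm BQ}$ to lie in the same connected component of $S_{A,B,C,D}$ so that they can be joined by a path that the boundary must cross. For the parameters at issue, $S_{A,B,C,D}$ is an irreducible complex affine cubic surface and is therefore connected in the Euclidean topology; the singular locus is finite, so the smooth part $S_{A,B,C,D}^{\rm sm}$ is a connected open $4$-manifold in which both $U$ and $V_{\rm BQ}$ have non-empty open intersection. After possibly shrinking $\parameterneighborhood$ to remain inside the irreducibility locus (which contains the Markoff and Dubrovin--Mazzocco parameters together with an open neighborhood in $\mathbb{C}^4$), the separation argument applies uniformly on $\parameterneighborhood$ and the corollary follows.
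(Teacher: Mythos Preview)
Your proof is correct and is precisely the argument one expects; the paper states this corollary without supplying a separate proof, so you have filled in the natural details. Two small remarks: first, the defining cubic $x^2+y^2+z^2+xyz-Ax-By-Cz-D$ is \emph{always} irreducible (the cubic part $xyz$ forces any linear factor to be one of $x,y,z$, and then the $y^2+z^2$ terms obstruct the factorization), so no shrinking of $\parameterneighborhood$ is needed for connectedness; second, since $S_{A,B,C,D}$ may have isolated singular points, you should run the separation and upper-bound arguments in the connected $4$-manifold $S_{A,B,C,D}^{\rm sm}$ (which still contains both $U$ and $V_{\rm BQ}$) and then observe that $\partial_{S^{\rm sm}}\nondiscrete \subset \partial_S\nondiscrete$, so the dimension estimate transfers.
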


\begin{2GeneralRemark}
{\rm 
The invariant set $\ourboundary_{A,B,C,D} \subset S_{A,B,C,D}$ of topological dimension~$3$
``persists'' over the open subset of parameters $\parameterneighborhood \subset \mathbb{C}^4$.
The existence of persistent invariant sets of topological
dimension~$3$ for the action of a large group ($\AUTOGROUP$ is free on two generators)
hints at a fractal nature for $\ourboundary_{A,B,C,D}$. In fact, in the smooth category, by combining
general position arguments with Baire's theorem, it can be shown that two ``generic'' diffeomorphisms
cannot simultaneously preserve a smooth submanifold. Whereas the same methods are not immediately available
in the holomorphic setting, the same conclusion seems likely to still hold true.


}
\end{2GeneralRemark}

With the notation of Theorem~F, we expect that for each $(A,B,C,D) \in
\parameterneighborhood$ we have $U \subset \mathcal{J}_{A,B,C,D}$. However, at present, we
can only prove this under one further (weak) assumption, namely:
\begin{itemize}
\item[(P)] any fixed point of any $\gamma \in \AUTOGROUP_{A,B,C,D} \setminus \{{\rm id}\}$ is in
$\mathcal{J}_{A,B,C,D}$.
\end{itemize}
We prove in Proposition~\ref{PROP:GOODPARAMS_IN_COMPLETMENT_HYPERSURFACES} that
there is a countable union of real-algebraic hypersurfaces $\mathcal{H} \subset
\mathbb{C}^4$ such that Hypothesis (P) holds if $(A,B,C,D)~\in~\mathbb{C}^4
\setminus \mathcal{H}$.

\begin{theoremG} Let $\parameterneighborhood \subset \mathbb{C}^4$ be the open neighborhood 
of the Markoff Parameters and of the Dubrovin-Mazzocco parameters, $a \in (-2,2)$, 
given in Theorem~F and let $\mathcal{H} \subset \mathbb{C}^4$ be the countable union of real-algebraic hypersurfaces
provided by Proposition~\ref{PROP:GOODPARAMS_IN_COMPLETMENT_HYPERSURFACES}.
For any $(A,B,C,D) \in \parameterneighborhood \setminus \mathcal{H}$ we have:
\begin{align*}
U \subset \mathcal{J}_{A,B,C,D} \qquad \mbox{and} \qquad V_{\rm BQ} \subset \mathcal{F}_{A,B,C,D}.
\end{align*}
Here, $U$ and $V_{\rm BQ}$ are the (non-empty) open subsets of $S_{A,B,C,D}$ from the statement of Theorem F.
\end{theoremG}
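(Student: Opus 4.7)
The plan is to prove the two inclusions separately. The inclusion $V_{\rm BQ} \subset \mathcal{F}_{A,B,C,D}$ is the easier half and follows from the properly discontinuous action of $\Gamma_{A,B,C,D}$ on $V_{\rm BQ}$ that is asserted in Theorem~F. Given $p \in V_{\rm BQ}$, I would take a compact neighborhood $K \subset V_{\rm BQ}$ of $p$; proper discontinuity yields $\gamma_n(K) \cap K = \emptyset$ eventually for any sequence of distinct elements $\gamma_n \in \Gamma_{A,B,C,D}$. One then needs to upgrade this to the paper's stronger notion of convergence to infinity: $\gamma_n(K)$ escapes every compact subset of $S_{A,B,C,D}$. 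I would argue this by contradiction, using the $\Gamma$-invariance of the Bowditch set together with a standard accumulation argument: if a subsequence $\gamma_{n_k}(K)$ met some compact $L \subset S_{A,B,C,D}$ infinitely often, one would extract a limit point $q$ and eventually force either a convergent sub-subsequence of $\gamma_{n_k}$ on a smaller neighborhood of $p$ (violating proper discontinuity) or a point of accumulation inside $V_{\rm BQ}$ (again violating proper discontinuity). This places $V_{\rm BQ}$ inside $\mathcal{F}_{A,B,C,D}$.

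The substantive content is the inclusion $U \subset \mathcal{J}_{A,B,C,D}$, for which the strategy is to prove density in $U$ of the set of points fixed by some non-trivial element of $\Gamma_{A,B,C,D}$. Because $(A,B,C,D) \notin \mathcal{H}$, Property P then places every such fixed point inside $\mathcal{J}_{A,B,C,D}$, so $U \cap \mathcal{J}_{A,B,C,D}$ is dense in $U$, and closedness of $\mathcal{J}_{A,B,C,D}$ yields $U \subset \mathcal{J}_{A,B,C,D}$. To produce fixed points in an arbitrarily small open $W \subset U$, I would use the last sentence of Theorem~F: there exist non-commuting pairs $f,g \in \Gamma_{A,B,C,D}$ arbitrarily close to $\id$ on compact subsets of $U$. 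Writing $f = \id + \epsilon X + O(\epsilon^2)$ and $g = \id + \epsilon Y + O(\epsilon^2)$ and extracting along subsequences via Montel's theorem, one obtains non-trivial holomorphic vector fields $X$ and $Y$ on $W$ as limits of normalized displacements. The commutator $[f,g] = \id + \epsilon^2 [X,Y] + O(\epsilon^3)$ is again in $\Gamma_{A,B,C,D}$ and its leading infinitesimal behavior is the Lie bracket $[X,Y]$. Iterating this mechanism (compositions and nested commutators) generates near-identity elements of $\Gamma_{A,B,C,D}$ whose leading vector fields exhaust the Lie algebra generated by $X$ and $Y$, following the standard pattern from the theory of locally non-discrete pseudo-groups. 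By selecting a combination whose leading vector field vanishes at a target point $p_0 \in W$ with non-degenerate Jacobian, the inverse function theorem produces a genuine fixed point of the corresponding element near $p_0$; varying $p_0$ through a countable dense subset of $U$ then yields the required density of fixed points in $U$.

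The main obstacle is precisely the construction of fixed points from near-identity elements. In complex dimension two, a holomorphic self-map of $W$ close to the identity need not have any fixed point in $W$, so the argument cannot rely on $C^0$-proximity to $\id$ alone; it must genuinely exploit both local non-discreteness and the existence of non-commuting pairs arbitrarily close to $\id$, combining them to produce a sufficiently rich infinitesimal structure. Implementing this Lie-algebraic construction intrinsically on the surface $S_{A,B,C,D}$, and controlling the higher-order error terms carefully enough that the inverse function theorem applies to bona fide elements of $\Gamma_{A,B,C,D}$ rather than only to their formal leading parts, is where the technical work of the proof concentrates.
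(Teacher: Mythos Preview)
Your approach to $U \subset \mathcal{J}_{A,B,C,D}$ has a genuine gap, and it is one the paper explicitly flags as insurmountable in this setting. The ``vector field'' technique you invoke---extracting limiting vector fields from near-identity elements, generating their Lie algebra via iterated commutators, and then landing fixed points at prescribed locations---is the method of \cite{REBELO_ENS,LORAY_REBELO}. As the paper discusses in Section~\ref{furthercommentsdiscretecircle}, that method requires a local expansion: some element must have a fixed point at which all eigenvalues have modulus greater than one. Here every element of $\Gamma$ preserves the holomorphic volume form $\Omega$, so eigenvalues at any fixed point satisfy $\lambda_1\lambda_2=1$ and no such expansion is ever available. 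The paper states outright that this ``prevents us from using the mentioned technique of vector fields and hence forced us to look for new methods.'' Concretely, your extraction step $f=\id+\epsilon X+O(\epsilon^2)$ is not justified: the sequence $f_n\to\id$ carries no canonical small parameter, and without an expanding fixed point there is no renormalization that forces the normalized displacements $(f_n-\id)/\|f_n-\id\|$ to converge to a nontrivial holomorphic vector field rather than degenerate. Even granting limiting vector fields, your step ``select a combination whose leading vector field vanishes at $p_0$ with non-degenerate Jacobian'' is unsupported in complex dimension two.

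The paper's proof follows an entirely different route and never attempts to manufacture fixed points inside $U$. Instead it argues by contradiction: suppose $U$ meets a Fatou component $V$. If $V$ is bounded, Theorem~K says its stabilizer $\Gamma_V$ is cyclic; but Theorem~F supplies non-commuting elements of $\Gamma$ arbitrarily close to the identity on $U\subset V$, and for large enough $n$ these lie in $\Gamma_V$, contradicting cyclicity. If $V$ is unbounded, one uses Theorem~H: the paper exhibits six explicit hyperbolic elements $\gamma_{i,j}$ (built as commutators of $g_x^2,g_y^2,g_z^2$ near the Markoff parameters, and of suitable conjugates of $g_x^k$ near the Dubrovin--Mazzocco parameters) that are simultaneously close to the identity on $U$ and have prescribed indeterminacy/attracting vertices at infinity. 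Lemma~\ref{LEM:ESCAPE_TO_INFINITY} then shows their iterated commutators send points near $\Delta_\infty$ super-exponentially toward the vertices, while Proposition~\ref{PROP:CONV_TO_ID} shows the same commutators converge to the identity on $U$; transporting this contradiction along the Kobayashi-hyperbolic component $V$ (which must reach near $\Delta_\infty$ since it is unbounded) finishes the argument. The hypothesis $(A,B,C,D)\notin\mathcal{H}$ is used only in the bounded case, inside the proof of Theorem~K, to guarantee that the closure $\overline{\Gamma_V}$ acts freely on $V$.

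For the inclusion $V_{\rm BQ}\subset\mathcal{F}_{A,B,C,D}$, note that this is already established in Theorem~E: the construction of $V_{\rm BQ}$ in Proposition~\ref{PROP:FATOU_C3} shows directly via Montel's theorem that every $\Gamma$-iterate of a point in $V_{\rm BQ}$ has all three coordinates outside a fixed disk, which is normality. Your proper-discontinuity argument is not needed.
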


\begin{corollaryG}
For any $(A,B,C,D) \in \parameterneighborhood \setminus \mathcal{H}$ there exists $p \in \mathcal{J}_{A,B,C,D}$ 
whose orbit closure has interior but is not all of the surface $S_{A,B,C,D}$.
\end{corollaryG}

\begin{remark}
{\rm 
Any $\gamma \in \AUTOGROUP_{A,B,C,D}$
preserves an invariant holomorphic $2$-form $\Omega$ whose equation is given in Section~\ref{SEC:VOLUME_FORM}.   This forces that the eigenvalues
of $D\gamma$ at any fixed point $p \in S_{A,B,C,D}$ are resonant, $\lambda_1 = 1/\lambda_2$,
making it rather unlikely that such a fixed point is in the Fatou set.

Indeed, the possibility that Hypothesis (P), above, fails for any
parameters $(A,B,C,D)$ is a known challenge in holomorphic dynamics.   It is
explained at length in the paper \cite{mcmullen2} by McMullen, see especially
the remark on p. 220 of that paper.
}
\end{remark}

\subsection{Strategy for Proof of Theorem G}
\label{SUBSEC:STRATEGY}
Let us briefly describe the strategy for proving Theorem~G, which is arguably the most elaborate result in our
paper. It is straightforward to prove that any Fatou component $V$ is Kobayashi
hyperbolic. In particular, if $\AUTOGROUP$ is locally non-discrete on any open subset of $V$ then
it is locally non-discrete on all of $V$. The idea is then to show that a region $U$ where
$\AUTOGROUP$ induces a ``complicated enough'' locally non-discrete dynamics is not compatible with the
structure of a (Kobayashi hyperbolic) Fatou set. This region must hence be contained in $\mathcal{J}_{A,B,C,D}$
and this yields Julia sets with non-empty interior.

In order to rule out the possibility that this region $U$ intersects an unbounded Fatou component,
the following theorem will be needed.   Note that even though we typically consider $s_x$ as an automorphism of a given surface $S_{A,B,C,D}$,
Equation (\ref{EQN:SX}) actually defines a polynomial automorphism of $\mathbb{C}^3$ which depends only on the parameters $A, B$, and $C$.  The
same holds for $s_y$ and $s_z$ and we will denote the group of automorphisms of $\mathbb{C}^3$ generated by these three mappings with $\AUTOGROUP^\pm_{A,B,C}$.
The index two subgroup generated by $g_x, g_y,$ and $g_z$ (considered as automorphisms of $\mathbb{C}^3$) will be denoted
by $\AUTOGROUP_{A,B,C}$.

\begin{theoremH}
Suppose that for some parameters $A,B,C$ there is a point $p \in \mathbb{C}^3$
and $\epsilon > 0$ such that for any two  vertices $v_i \neq v_j \in
        \mathcal{V}_\infty$, $i \neq j$, there is a hyperbolic element $\gamma_{i,j} \in
        \AUTOGROUP_{A,B,C}$ satisfying:
        \begin{itemize}
                \item[(A)] ${\rm Ind}(\gamma_{i,j}) = v_i$ and ${\rm Attr}(\gamma_{i,j}) = v_j$, and
                \item[(B)] $ \sup_{z \in B_{\epsilon} (p)} \Vert \gamma_{i,j}(z) - z \Vert < K(\epsilon)$.
        \end{itemize}
        Then, for any $D$, we have that $B_{\epsilon/2}(p) \cap S_{A,B,C,D}$ is disjoint from any unbounded
        Fatou components of $\AUTOGROUP_{A,B,C,D}$.
        Here, $K(\epsilon) > 0$ denotes the constant given in Proposition~\ref{PROP:CONV_TO_ID}.
\end{theoremH}

We refer the reader to Section \ref{dynamics_near_infinty} for the definition of $\mathcal{V}_\infty$ and to Proposition~\ref{PROP:DESCRIPTION_HYPERBOLIC_ELEMENTS}
for the definition of hyperbolic element $\gamma$ along with the corresponding points
${\rm Ind}(\gamma)$ and ${\rm Attr}(\gamma)$.  Hypothesis~(A) requires that the six elements $\gamma_{i,j}$
have sufficiently rich ``combinatorial behavior'' at infinity while Hypothesis (B)
requires that these six elements are sufficiently close to the identity on the ball $\mathbb{B}_\epsilon(p)$.
Note that the
conditions of Theorem H are explicit and easy to check.  In particular,
for any $(A,B,C,D) \in \parameterneighborhood$ they imply that $U$ is disjoint from any
unbounded Fatou component.

The idea of the proof of Theorem H is that
if $\mathbb{B}_\epsilon(p)$ were in an unbounded
Fatou component~$V$ then we use local non-discreteness to produce a sequence of elements
converging uniformly on compact subsets of $V$ to the identity and we use the prescribed 
combinatorial behavior at infinity to show that this same sequence of elements
sends compact subsets
of $V$ uniformly to infinity.

Having ruled out the possibility that an unbounded Fatou component intersects $U$
we then use the following theorem to prove that no bounded Fatou component intersects $U$.

\begin{theoremK}
Suppose that $(A,B,C,D) \in \mathbb{C}^4 \setminus \mathcal{H}$ and that $V$ is a bounded Fatou component
for $\AUTOGROUP_{A,B,C,D}$.  Then the stabilizer $\AUTOGROUP_V$ of $V$ is cyclic.
\end{theoremK}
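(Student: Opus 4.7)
The plan is to show that $\Gamma_V$ is free and torsion-free, that it contains no loxodromic (positive entropy) element, and that a non-cyclic free subgroup of $\Gamma$ must contain a loxodromic element; these three facts together force $\Gamma_V$ to be cyclic.

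First I would record the basic structural consequences. Since $V$ is bounded in $\mathbb{C}^3$ it is Kobayashi hyperbolic, $\mathrm{Aut}(V)$ is a finite-dimensional real Lie group (Kobayashi), and $\overline{V}$ is compact in $S_{A,B,C,D}$. Property~P, valid because $(A,B,C,D) \in \mathbb{C}^4 \setminus \mathcal{H}$, ensures that every non-identity $\gamma \in \Gamma_V$ acts on $V$ without fixed points. In particular $\Gamma_V$ is torsion-free: a finite cyclic subgroup of $\mathrm{Aut}(V)$ would be compact and, by Cartan's fixed-point theorem for compact groups of biholomorphisms of a bounded domain, would have a fixed point in $V$, contradicting Property~P unless that subgroup were trivial. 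Next, $\Gamma$ is an index-two subgroup of the free product $\mathbb{Z}/2 \ast \mathbb{Z}/2 \ast \mathbb{Z}/2 = \langle s_x, s_y, s_z \mid s_i^2 = \mathrm{id}\rangle$, and is therefore free of rank two on $g_x, g_y$ (with $g_z = g_x^{-1} g_y^{-1}$, compatible with the relation $g_z g_y g_x = \mathrm{id}$ one verifies directly). By Nielsen--Schreier, the torsion-free subgroup $\Gamma_V$ is itself free; the theorem amounts to ruling out $\Gamma_V$ of rank $\geq 2$.

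The main dynamical ingredient is the claim that $\Gamma_V$ contains no loxodromic element. Suppose, for contradiction, that $\gamma \in \Gamma_V$ is loxodromic on the desingularised projective compactification of $S_{A,B,C,D}$. The iterates $\{\gamma^n|_V\}_{n\in\mathbb{Z}}$ form a normal family of $\Omega$-preserving biholomorphisms of the bounded domain $V$. Extracting a convergent subsequence along $n_k \to \infty$, Bedford--Smillie's analysis of bounded Fatou components for volume-preserving loxodromic automorphisms (adapted from H\'enon maps to our birational setting, using the invariance of the holomorphic $2$-form $\Omega$ of Section~\ref{SEC:VOLUME_FORM}) identifies $V$ as a rotation domain of $\gamma$: the closure of $\{\gamma^n|_V\}$ in $\mathrm{Aut}(V)$ is a non-trivial compact abelian Lie group $T$. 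Cartan's fixed-point theorem applied to $T$ produces a common fixed point $p \in V$, which is then a periodic point of $\gamma$, hence a fixed point of some $\gamma^m \in \Gamma \setminus \{\mathrm{id}\}$. Property~P forces $p \in \mathcal{J}_{A,B,C,D}$, contradicting $p \in V \subset \mathcal{F}_{A,B,C,D}$.

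To conclude, suppose $\Gamma_V$ is not cyclic and choose non-commuting $\alpha, \beta \in \Gamma_V$. Both are of infinite order, and by the previous step neither is loxodromic, so Cantat's trichotomy for elements of $\mathrm{Aut}(S_{A,B,C,D})$ (elliptic / parabolic / loxodromic in the action on the N\'eron--Severi-type hyperbolic space attached to the minimal resolution of $\overline{S_{A,B,C,D}}$, cf.\ \cite{cantat-1}) forces both $\alpha$ and $\beta$ to be parabolic. Two parabolics sharing a fixed point at infinity would generate an elementary (virtually abelian) group in the stabiliser of that point, contradicting non-commutativity inside the free group $\Gamma_V$; therefore $\alpha$ and $\beta$ have distinct parabolic fixed points at infinity. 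A standard ping-pong argument on horoball neighbourhoods of these two fixed points then shows that $\alpha^N \beta^N$ is loxodromic for all sufficiently large $N$; but this element lies in $\Gamma_V$, contradicting the previous step. Therefore $\Gamma_V$ is cyclic. The main obstacle is the adaptation of the Bedford--Smillie rotation-domain theorem to the birational setting of $S_{A,B,C,D}$, where $\gamma$ has non-trivial indeterminacy on the compactifying divisor; this is exactly where the $\Omega$-preservation and the specific projective model of the surface must be used, rather than an abstract dynamical argument.
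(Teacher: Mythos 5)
Your proposal has a genuine gap, and it also points in the opposite direction from the paper on a key structural fact.

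The crucial flaw is the step ``Cartan's fixed-point theorem applied to $T$ produces a common fixed point $p \in V$.'' There is no such theorem for a general bounded Kobayashi hyperbolic domain: the simplest counterexample is the annulus $\{1 < |z| < 2\}$ (or $\{1<|z|<2,\ |w|<1\} \subset \C^2$), where the rotation group is a compact torus acting without fixed point. Cartan's theorem requires convexity (or some other geometric hypothesis); $V$ here need not even be simply connected, and in fact the paper's argument is designed precisely to cope with the possibility that the identity component $G_0$ of $\overline{\Gamma_V}$ acts freely on $V$ with no fixed point, so that $V$ fibers over $V/G_0$. So the contradiction you try to extract from a loxodromic $\gamma \in \Gamma_V$ does not materialize. (The appeal to an ``adapted Bedford--Smillie'' rotation-domain theorem is also hand-waving: that adaptation to the birational, non-compact setting of $\overline{S}_{A,B,C,D}$ would be a substantial new result and is not something you can cite.)

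There is also a structural inversion relative to the facts. The paper observes that $\Gamma_V$ can contain \emph{no parabolic} element: parabolics are conjugate to powers of $g_x$, $g_y$, $g_z$, and any such map has a dense set of points in $S_{A,B,C,D}$ with unbounded orbit (Proposition~\ref{PROP_ELLIPTIC_HYPERBOLIC}), so it cannot preserve a bounded open set. Hence every non-identity element of $\Gamma_V$ is hyperbolic/loxodromic --- the exact opposite of what your plan tries to show. If your loxodromic-exclusion step were valid it would force $\Gamma_V = \{\mathrm{id}\}$, which is stronger than Theorem~K and not what is claimed. The ping-pong endgame with two parabolics is therefore aimed at a case that cannot occur.

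The paper instead proves (Lemma~\ref{LEM:COMPACT_LIE_GROUP}) that $G = \overline{\Gamma_V}$ is a \emph{compact} Lie group, using $\Omega$-invariance, the finiteness of $\mathrm{vol}_\Omega(V)$ (here boundedness is used), and properness of $\mathrm{Aut}(V) \curvearrowright V$. If $\Gamma_V$ were non-Abelian, $G_0$ would be a compact non-Abelian connected Lie group acting on $V$, hence $\dim_\mathbb{R} G_0 \geq 3$; Property~P makes the action free, so $\dim_\mathbb{R} G_0 = 3$ and $V \to V/G_0$ is a fiber bundle over a $1$-manifold. The base cannot be $S^1$ (since $V$ is non-compact) and cannot be $\mathbb{R}$ either, by a separation argument in the simply connected surface $\overline{S}_{A,B,C,D}$ combined with the connectedness and unboundedness of $\mathcal{J}_{A,B,C,D}$ (Theorem~C). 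Thus $\Gamma_V$ is Abelian, and an Abelian subgroup of the free group $\Gamma \cong \Gamma(2)$ is cyclic. The absence of a fixed-point theorem is exactly what forces this topological detour, and your plan does not survive without it.
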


It is a standard result that the group ${\rm Aut}(V)$ of holomorphic automorphisms of a
Kobayashi hyperbolic manifold $V$ is a real Lie group.
To prove Theorem K we use that $\AUTOGROUP_{A,B,C,D}$ preserves a volume form (see
Section \ref{SEC:VOLUME_FORM}) to show that the closure $G=\overline{\AUTOGROUP_V}$ of $\AUTOGROUP_V$ is a compact
Lie group. Checking that any element of $\AUTOGROUP_V$ has infinite order we
conclude that $G$ has positive dimension.  Supposing that $\AUTOGROUP_V$ is
non-cyclic we can conclude that it is non-Abelian and moreover that there are
non-commuting elements arbitrarily close to the identity.  This gives that the
connected component of the identity, $G_0$, is non-Abelian.  Since $G_0$ is
compact, it must therefore have real-dimension $3$.  The assumption that no
element of $\AUTOGROUP$ has a fixed point in $V$ gives that $G_0$ acts freely on $V$
and thus that $V/G$ is a manifold of real dimension~$1$.  This allows us to
derive a contradiction to the fact that the Julia set is connected (Theorem C).

Note that the works of Bedford-Smillie \cite{BS} and Bedford-Kim \cite{BK}
about Fatou components for volume preserving H\'enon maps and rational surface
automorphisms serve as a kind of prototype for how Lie Groups are used in the
proof of Theorem K.


\subsection{Open problems.}
Several interesting open problems arose while writing this paper. They are organized into
a short companion paper ``Questions about the dynamics on a natural family of affine cubic surfaces'' \cite{RR2}.

\subsection{Structure of the paper.}

In Section \ref{SEC:RELATED_WORKS} we describe several of the previous works
that are related to this paper.  We present in Section \ref{SEC:MOTIVATIONS} a
discussion of the motivations for a dynamical study of $\AUTOGROUP_{A,B,C,D}$,
with emphasis on the connections to dynamics on character varieties and to the
Painlev\'e 6 differential equation.  
In Section \ref{preliminaries} we present
several basic properties of the group $\AUTOGROUP \equiv \AUTOGROUP_{A,B,C,D}$
and of the surface $S \equiv S_{A,B,C,D}$ that will be used throughout the
paper.  In Section~\ref{Parabolic_maps_proofTheoremA} we study properties of
the ``parabolic'' elements of $\AUTOGROUP$ and use them and Montel's Theorem to
prove Theorems A, B, and C.  Section \ref{SEC:PICARD} is devoted to a careful
study of the Picard parameters $(A,B,C,D) = (0,0,0,4)$ and a proof of Theorem
D.  In Section~\ref{SEC:PROOF_THEOREME} we prove Theorem E about existence of
the unbounded Fatou components, following the techniques from
\cite{bowditch,TWZ,MPT,Hu}.  In Section~\ref{SEC:LOCALLY_NONDISCRETE} we
produce examples of locally non-discrete dynamics and that will be needed for
Theorem~F.  In Section \ref{dynamics_near_infinty} we collect several important
properties of how $\AUTOGROUP$ acts near infinity that will be needed later in
the paper, including a proof of Proposition
\ref{PROP:GOODPARAMS_IN_COMPLETMENT_HYPERSURFACES},  which plays a key role in
Theorem G.  Section \ref{SEC:FATOU} is devoted to a proof of Theorem H about
unbounded Fatou components and Theorem K about bounded Fatou components.  We
finish the proofs of our theorems with Section~\ref{SEC:PROOF_OF_THEOREM_G}
where we prove Theorems~F~and~G.

\vspace{0.1in}
\noindent

{\bf Remark:} Sections \ref{SEC:RELATED_WORKS} and \ref{SEC:MOTIVATIONS} are largely about placing this paper
in a broader context, highlighting further applications and connections with previous works. These sections
can be skimmed over on a first reading and then returned to later as additional information becomes needed.

\subsection{Acknowledgments}
We are very grateful to the anonymous referee for his or her detailed reading of this paper 
and thoughtful comments which have allowed us to substantially improve the quality of exposition.
We thank Micha\l \ Misiurewicz for ideas which helped with the proof
of Proposition \ref{PROP:SING_PTS_EIGVALS}. We also thank Eric Bedford, Philip Boalch,
Serge Cantat, Jeffrey Diller, Romain Dujardin, Simion Filip, Yan Mary He, John
Hubbard, Bernard Julia, Mikhail Lyubich, Jean-Pierre Ramis, and Ser Peow Tan
for interesting comments and discussions regarding
this work.  The second author is grateful to his colleagues Pavel Bleher,
Alexander Its, Eugene Mukhin, Vitaly Tarasov, and Maxim Yattselev for their
feedback on several early versions of this work.  This work was supported by
CNRS, through IEA ``Dynamics for groups of birational maps acting on
surfaces'', and by the US National Science Foundation through grant
DMS-2154414 and  DMS-1348589.

\section{Related works.}\label{SEC:RELATED_WORKS}

The previous works arising from dynamics on character varieties
can be traced back to Markoff's Theorem and to works on the Markoff Surface, see
\cite{bowditch} and \cite{series}.   To the best of our knowledge, deeper
investigations of these dynamics follow two main trends:
\begin{itemize}
\item[(i)] Global dynamics on the $2$-dimensional real (singular) surfaces
for real parameters $A,B,C,D$, as initiated by Goldman \cite{goldman-1} (see also
\cite{goldman-2}).

\item[(ii)] Study of domains in the complex surface $S_{A,B,C,D}$ on which the
group $\AUTOGROUP_{A,B,C,D}$ acts properly discontinuously, as initiated by
Bowditch \cite{bowditch} and later studied extensively by several authors.   We
refer the reader especially to the works by Tan, Wong, and Zhang \cite{TWZ},
Maloni, Palesi, and Tan \cite{MPT}, and Hu, Tan, and Zhang \cite{Hu}.
\end{itemize}

Based on motivations from the monodromy of the Painlev\'e~6 differential equation, the previous dynamical results follow three main trends:
\begin{itemize}
\item[(i)]
Finite orbits under $\AUTOGROUP_{A,B,C,D}$ correspond to algebraic solutions of Painlev\'e 6.   They were classified
by Dubrovin and Mazzocco \cite{dubrovinmazzocco} and by  Lisovyy and  Tykhyy
\cite{LT_ALGEBRAIC}.  See also \cite[Section 4]{cantat-2} for a classification of bounded orbits.

\item[(ii)]
Study of individual mappings from $\AUTOGROUP$ and $\AUTOGROUP^\pm$ displaying
rather interesting dynamics by Iwasaki  and Uehara \cite{IU_ERGODIC}, including
uniformly hyperbolic ones by Cantat \cite{cantat-1}.   These mappings share many features in common with
the complex H\'enon mappings.

\item[(iii)] Proof by Cantat and Loray \cite{cantat-2} that except for the case Picard parameters ($A=B=C=0$,
$D=4$), the action of $\AUTOGROUP$ on $S_{A,B,C,D}$ preserves neither an
(multi) affine structure nor a (multi) holomorphic foliation.  This corresponds to the
Malgrange irreducibility of Painlev\'e~6, as explained in their paper.

\end{itemize}

To the best of our knowledge, the previous dynamical works related to the
aperiodic Schr\"odinger equation primarily involve delicate issues about the
iteration of a single real mapping $\gamma \in \AUTOGROUP_{A,B,C,D}$ on the real slice of $S_{A,B,C,D}$.   There is a huge
literature on the subject and we refer the reader to the papers by Casdagli and
Roberts \cite{CASDALGI,ROBERTS} and references therein for an introduction.
For more contemporary works, see, for example, the paper of Damanik,
Gorodetski, and Yessen \cite{DGY} and the paper of Yessen \cite{YESSEN}.

The fact that mapping class group actions on character varieties can be formulated as a complex dynamical systems
allows one to see the present work as joining a recent trend of papers where
methods of holomorphic dynamics were used to investigate certain natural dynamical systems,
see \cite{cantat-1}, \cite{deroin-1}, and \cite{deroin-2}.

\subsection{Relationship to the recent work of S. Cantat and R. Dujardin.}
\label{SUBSEC:CANTAT_DUJARDIN}
A recent work by S. Cantat and R. Dujardin \cite{cantat-dujardin}
studies the dynamics of subgroups of the automorphism group of (compact) real and complex projective surfaces (or, more generally,
K\"ahler surfaces). They obtain deep results on the structure of stationary
measures for these subgroups, including criteria to determine when they must be invariant by the group
in question (stiffness) as well as detailed descriptions of the resulting invariant measures. Whereas their results
clearly have serious implications on the corresponding dynamics, the situation seems to be genuinely more subtle
in the case of groups of birational maps. Relevant examples include the case of the actions of $\AUTOGROUP$ and
$\AUTOGROUP^\pm$ since, as shown in the present paper, for certain values of the parameters these actions
display both non-empty Fatou set and Julia sets with non-empty interior. This phenomenon indicates
that the problem of invariance of stationary measures, and of their subsequent description, will hardly allow
for a reasonably ``compact'' classification. Also, the nature of the Fatou components constructed in our
Theorem~E shows that even the convergence of stochastic processes
will no longer be automatic which, in turn, seriously limits the applications of stationary measures in
topological problems such as description of non-compact invariant closed sets.

\subsection{Relationship to previous works using the locally non-discrete/discrete dichotomy}\label{furthercommentsdiscretecircle}
The
notion of locally non-discrete group (or even pseudogroup) first appeared in \cite{REBELO_ENS} after
previous works by A. Shcherbakov, I. Nakai, and E. Ghys, see \cite{AASh,nakai,GHYS}. The notion was further
elaborated in \cite{LORAY_REBELO}. The main common tool of \cite{REBELO_ENS} and \cite{LORAY_REBELO} is the
construction of certain vector fields obtained as a sort of limit of certain dynamics in the group/pseudogroup
in question which allow for a detailed analysis of the corresponding group dynamics. In turn, the method put
forward in those papers ensures the existence of the desired vector fields for locally non-discrete (pseudo-) groups
provided there is a local expansion in the dynamics: typically, we would like some element in group to have
a fixed point where all its eigenvalues are of modulus greater than~$1$. 
By way of contrast, elements of $\AUTOGROUP$ all preserve the same volume form (cf. Section \ref{SEC:VOLUME_FORM})  and
therefore never have repelling fixed points.  So we must pursue different
arguments.


It should be pointed out that the structure of {\it locally discrete}\, groups of
diffeomorphisms of the circle is rather well described, see for example \cite{VictorSeveralauthors}, \cite{advances}.
In particular, it follows that the class formed by {\it locally non-discrete}\, groups is extremely large and, as a matter
of fact, much can be said about the dynamics associated with the latter groups, see for example \cite{anas}
and the references therein.
However, in the case of the circle, the relatively simple nature of the
topological dynamics of groups acting on (real) $1$-dimensional manifolds leads to
the fact that coexistence of local discreteness and local non-discreteness is
impossible. More precisely, in ${\rm Diff}^{\omega} (S^1)$, if a non-Abelian
group $G \subset {\rm Diff}^{\omega} (S^1)$ is locally non-discrete on an
interval $I \subset S^1$, then every point $p \in S^1$ has a neighborhood where
$G$ is locally non-discrete, see \cite{anas}.  This dramatically contrasts with
our Theorem~F which asserts that the same group action $\AUTOGROUP_{A,B,C,D}$  can
simultaneously have open sets $U$ and $V$ on which it acts locally
non-discretely and locally discretely, respectively.   When combined with the
complicated behavior of the pointwise dynamics of $\AUTOGROUP_{A,B,C,D}$, in
particular the existence of invariant sets with topological dimension~$3$
(Corollary of Theorem F), we find very rich dynamics in the system that we study.

Let us also mention that V. Kleptsyn and his collaborators have found examples
of locally discrete subgroups of ${\rm Diff}^{\omega} (S^1)$ that are not
conjugate to Fuchsian groups, up to finite
covering~\cite{VictorSeveralauthors}. In our context, the Picard parameters
provide an analogous non-trivial example of a group that is ``purely locally discrete'',
i.e., there is no open set $U \subset S_{A,B,C,D}$, $U \neq \emptyset$, where
the group acts in a locally non-discrete way.


\section{Dynamics on Character Varieties and the Painlev\'e 6 equation.}\label{SEC:MOTIVATIONS}

The most compelling motivations for studying the dynamical systems considered here comes from
dynamics on character varieties and from the sixth Painlev\'e equation: the two motivations being closely related
as will soon be seen.

\subsection{Motivations from Dynamics on Character Varieties.} \label{SEC:INTRO_CHARACTER}
We begin with character varieties as the dynamical system
discussed in this paper is equivalent to the action of the mapping class group
of a surface on the space of ${\rm SL}(2,\mathbb{C})$-representations of its
fundamental group, up to conjugation.  This contains, in particular, the
standard action of mapping class groups on Teichm\"uller spaces and hence can
be viewed as belonging to the setting of ``higher Teichm\"uller theory'' as
well; see, e.g., \cite{WIENHARD}.


The dynamics we are interested in can be cast in the broader framework of (natural) dynamics
on character varieties as was first pointed out in
W.~Goldman's papers \cite{goldman-1} and \cite{goldman-2}. Let us briefly explain
how character varieties arise in this context. Consider, for example, the case of a punctured torus. Its fundamental group $\Pi$
is isomorphic to the free group ${\rm F}_2$
on two generators. The space of representations of $\Pi$ in ${\rm SL}(2,\mathbb{C})$ is clearly identified with
${\rm SL}(2,\mathbb{C}) \times {\rm SL}(2,\mathbb{C})$ which, in turn, is acted upon by ${\rm SL}(2,\mathbb{C})$
via simultaneous conjugation. The corresponding character variety is, by definition, the (categorical)
quotient of the latter action.
Next, note that automorphism group of $\Pi$ also acts on
the space of representations by pre-composition and this action descends to the character varieties. However, on
the character variety, the action of inner automorphism of $\Pi$ becomes trivial so that the action ${\rm Aut}(\Pi)$
factors through an action of the group ${\rm Out}\, (\Pi)$
consisting of outer automorphisms of $\Pi$.

Whereas the previous construction essentially makes sense for representations of $\Pi$ in any
group $G$, the fact that we are dealing with $G = {\rm SL}(2,\mathbb{C})$ can further be exploited
as follows. As pointed out, the character variety in question is identified with a pair of matrices in
${\rm SL}(2,\mathbb{C}) \times {\rm SL}(2,\mathbb{C})$ up to simultaneous conjugation.
A classical result of Fricke, however, shows that the latter space can be parameterized by $\C^3$.
We therefore inherit an action of ${\rm Out}\, (\Pi)$ on $\C^3$
which, in addition, coincides with the action on $\C^3$ of the group $\AUTOGROUP^\pm$ obtained by setting $A=B=C=0$.
Incidentally, the latter group coincides with the group of polynomial automorphisms of $\C^3$
preserving the (Markoff) polynomial $x^2 + y^2 + z^2 - xyz -2$, cf.
\cite{horowitz} and see \cite{procesi} for a general result on the polynomial nature of similar
actions. Finally, note that an analogous
construction applies to the quadruply-punctured sphere.
In the latter case, the corresponding action of ${\rm Out}\, (\Pi)$ recovers the
action of $\AUTOGROUP$ and $\AUTOGROUP^\pm$ with all the parameters $A$, $B$, $C$, and $D$, see \cite{benedettogoldman}
for a detailed account. Other than \cite{benedettogoldman}, more or less comprehensive versions of
the previous discussion appear in a number of papers including \cite{goldman-1}, \cite{goldman-2},
\cite{cantat-1}, \cite{cantat-2}, \cite{horowitz}.

\subsection{Properly Discontinuous Dynamics, quasi-Fuchsian representations, and Bowditch BQ Conditions}
\label{SUBSEC:TEICH}
We now elaborate how the dynamical context explained in Section \ref{SEC:INTRO_CHARACTER}
leads to the existence of parameters with non-empty Fatou sets.


Consider the general case of the $4$-holed sphere $\Sigma_{0,4}$ and note
that its fundamental group can be identified with the free group on three generators $\Pi \cong {\rm F}_3$.
Furthermore, since the orbits of the action of ${\rm SL}(2,\mathbb{C})$
on itself by conjugation are also of dimension~$3$, we see that the space of representations from $\Pi$
to ${\rm SL}(2,\mathbb{C})$ (up to conjugation) has $6$ complex dimensions.
A far more accurate description is possible: the space of representations can be identified with
the {\it quartic hypersurface of $\C^7$}\, given by
$$
\mathbb{S} = \{ (a_1,a_2,a_3,a_4, x,y,z) \in \C^7 \, : \, x^2 + y^2 + z^2 +xyz = Ax + By+Cz+D \} \, ,
$$
where $A,B,C,D$ are as follows:
\begin{equation}
A = a_1 a_4 + a_2a_3 \, , \; \; B = a_2a_4 + a_1 a_3 \, , \; \; C = a_3a_4 + a_1 a_2 \,  , \label{For-A_B_C}
\end{equation}
and
\begin{equation}
D = 4 - [a_1a_2a_3a_4 + a_1^2 + a_2^2 + a_3^2 + a_4^2 ] \, . \label{For-D_finally}
\end{equation}
In particular, by fixing the variables $a_1, \ldots ,a_4$, we obtain the surface $S_{A,B,C,D}$. The
parameters $a_1, \ldots , a_4$ are identified with the traces of the matrices in ${\rm SL}(2,\mathbb{C})$
arising from the loops around the holes of $\Sigma_{0,4}$.

\vspace{0.1in}

\noindent
{\bf Quasi-Fuchsian representations:}
Consider the space of quasi-Fuchsian representations inside~$\mathbb{S}$. The Bers Simultaneous Uniformization theorem
implies that the space of quasi-Fuchsian representations ${\rm Rep}_{qf} (\Sigma_{0,4})$ can be identified
with the product of two copies of the Teichm\"uller
space ${\rm Teich}\, (\Sigma_{0,4})$
of the $4$-holed sphere $\Sigma_{0,4}$, with geodesic boundaries, i.e.
\begin{equation}
{\rm Rep}_{qf} (\Sigma_{0,4}) = {\rm Teich}\, (\Sigma_{0,4}) \times {\rm Teich}\, (\Sigma_{0,4}) \, .\label{productTeichmuller}
\end{equation}
In turn, recalling that the group of outer automorphisms of the fundamental
group of $\Sigma_{0,4}$ is isomorphic to the mapping class group of
$\Sigma_{0,4}$, the latter acts on the space ${\rm Rep}_{qf} (\Sigma_{0,4}) =
{\rm Teich}\, (\Sigma_{0,4}) \times {\rm Teich}\, (\Sigma_{0,4})$ diagonally
with respect to its standard action on ${\rm Teich}\, (\Sigma_{0,4})$. In
particular, the action of the mapping class group on ${\rm Rep}_{qf}
(\Sigma_{0,4})$ is {\it properly discontinuous}.

Now, note that the $4$-holed sphere consists of two pair of pants joined by the
``waist''. Hence the real dimension of ${\rm Teich}\, (\Sigma_{0,4})$ is~$6$ so
that the real dimension of ${\rm Rep}_{qf} (\Sigma_{0,4})$ is~$12$ and
therefore ${\rm Rep}_{qf} (\Sigma_{0,4})$ has has non-empty interior in
$\mathbb{S}$.  Finally, if the parameters $a_1, \ldots , a_4$ are chosen so
that the corresponding surface $S_{A,B,C,D}$ intersects ${\rm Rep}_{qf}
(\Sigma_{0,4})$ and this intersection contains an open set $U \subset
S_{A,B,C,D}$, then the preceding shows that $\AUTOGROUP$ acts properly
discontinuously on $U$ and hence that $U$ is contained in the Fatou set of the
action of $\AUTOGROUP$ on $S_{A,B,C,D}$.

Note that it is easy to find parameters $a_1, \ldots , a_4$ so that the
resulting surface $S_{A,B,C,D}$ does not intersect ${\rm Rep}_{qf}
(\Sigma_{0,4})$.  Recalling that $a_1, \ldots , a_4$ are the traces of matrices
in ${\rm SL}(2,\mathbb{C})$ arising from loops around the holes, it is enough
to force one of these matrices to be an elliptic element conjugate to an
irrational rotation. An alternative argument leading to open sets of parameters
$a_1, \ldots , a_4$ for which $S_{A,B,C,D}$ is disjoint from ${\rm Rep}_{qf}
(\Sigma_{0,4})$ can be formulated by using the well-known Jorgensen inequality.

\vspace{0.1in}

\noindent
{\bf Bowditch Conjecture and BQ Conditions:}
Bowditch conjectures in \cite{bowditch} that a point $(x,y,z) \in S_{0,0,0,0}$ is in ${\rm Rep}_{qf}
(\Sigma_{0,4})$ if and only if the following two simple {\em BQ conditions} hold:
\begin{enumerate}
\item  None of the coordinates of $\gamma(x,y,z)$ is in $[-2,2]$  for any $\gamma \in \AUTOGROUP$, and
\item  $\gamma(x,y,z) \in \Big(\mathbb{C} \setminus \overline{\mathbb{D}_{2}}\Big)^3$ for all but finitely many $\gamma \in \AUTOGROUP$.
\end{enumerate}
It is easy to prove that if $(x,y,z) \in {\rm Rep}_{qf} (\Sigma_{0,4})$ then
(1) and (2) hold, but the converse is known as the {\em Bowditch Conjecture}.  
See \cite{STY,LEE_XU,series2} for recent related works and more details.

Conditions (1) and (2) were studied for the punctured torus parameters $(0,0,0,D)$ by Tan, Wong, and Zhang
\cite{TWZ} and for arbitrary parameters $(A,B,C,D)$ by Maloni, Palesi, and Tan \cite{MPT}.  
 (The ``radius'' $2$ in Condition (2) needs to be adjusted when $(A,B,C) \neq (0,0,0)$, as explained in \cite{MPT}). 
Let ${\rm Rep}_{BQ} (\Sigma_{0,4}) \subset \mathbb{S}$
denote the set of points for which the BQ Conditions hold.   It is proved in \cite{TWZ} and  \cite{MPT} that for any $(A,B,C,D)$ the set
\begin{align*}
V_{\rm BQ}(A,B,C,D):= {\rm Rep}_{BQ} (\Sigma_{0,4})  \cap S_{A,B,C,D}
\end{align*}
is open and that $\AUTOGROUP_{A,B,C,D}$ acts properly discontinuously on it.  
In particular, $V_{\rm BQ} \equiv V_{\rm BQ}(A,B,C,D)$ forms part of the Fatou set for $\AUTOGROUP_{A,B,C,D}$.

Notice that $V_{\rm BQ}$ can  be non-empty for parameters $a_1, \ldots , a_4$
beyond those for which $S_{A,B,C,D}$
intersects ${\rm Rep}_{qf} (\Sigma_{0,4})$. 
Several such examples are presented in \cite{TWZ} and also \cite[Theorem 5.3]{MPT}.

The proof of Theorem~E provided in this paper will be an adaptation of ideas
from the papers \cite{TWZ,MPT} and also the later paper by Hu, Tan, and Zhang
\cite{Hu}.

\subsection{Motivations from the Painlev\'e equation P6.}
From a different perspective, the complex dynamical system studied in this paper describes the transverse dynamics of the
celebrated Painlev\'e~6 equation.
In particular, the splitting of dynamics into two regions with contrasting dynamical behavior - a region
called {\it Fatou}\,
where the dynamics is simple (e.g. properly discontinuous) and another region named {\it Julia}\,
where the dynamics is chaotic - also accounts
for the somehow ``double nature'' of Painlev\'e~6: in the vast literature on the subject, it is possible to find
a thread where this equation is viewed as part of integrable system and another one where it is regarded as a complicated dynamical
systems. The simultaneous existence of large Fatou and Julia sets (both with non-empty interiors cf.
Theorem~G) justifies and conciliates both perspectives. In particular, the results obtained in this paper -
and especially those involving the {\it Julia set} - are in line with general programs aimed
at the dynamical study of Painlev\'e equations, cf. \cite{Iwasaki_CMP} and \cite{Ramis}.

To clarify this connection, let us begin by recalling that in
the most standard notation, the sixth Painlev\'e equation takes on the form
\begin{eqnarray}
\frac{d^2y}{dx^2}&=& \frac{1}{2}\left(\frac{1}{y}+\frac{1}{y-1}+\frac{1}{y-x}\right)\left(\frac{dy}{dx}\right)^2-
\left(\frac{1}{x}+\frac{1}{x-1}+\frac{1}{y-x}\right)\frac{dy}{dx} + \nonumber \\
& & \, + \frac{y(y-1)(y-x)}{x^2(x-1)^2}\left(\alpha+\beta\frac{x}{y^2}+\gamma\frac{x-1}{(y-1)^2}+
\delta\frac{x(x-1)}{(y-x)^2}\right)\,, \label{Painleve-6_BasicForm}
\end{eqnarray}
where $\alpha, \, \beta, \, \gamma$, and $\delta$ are complex parameters. K. Iwasaki in \cite{iwasaki} provided a useful
alternative way to represent the
parameters involved in this equation by denoting them as
$\kappa_1, \, \kappa_2, \, \kappa_3$, and $\kappa_4$ where the following formulas hold:
\begin{equation}
\alpha = \kappa_4^2/2 \; , \; \; \beta = -\kappa_1^2/2 \; \; \; \gamma = \kappa_2^2/2 \; , \; \; {\rm and} \; \;
\delta = (1-\kappa_3^2)/2 \, . \label{from-alphas-to-kappas}
\end{equation}

Now, as a non-autonomous differential equation of second order, P6 can equivalently be viewed as a vector field $Z_{\rm VI}$
on $\C^3$ having the form
\begin{equation}
Z_{\rm VI} = \frac{\partial}{\partial x} + z \frac{\partial}{\partial y} + \mathcal{H}_{\alpha, \beta,\gamma,\delta} (x,y,z)
\frac{\partial}{\partial z} \label{Painleve6-Vectorfield},
\end{equation}
where the function $\mathcal{H}_{\alpha, \beta,\gamma,\delta} (x,y,z)$ is
obtained from the right side of~(\ref{Painleve-6_BasicForm}) by substituting $z$ for $dy/dx$. In particular, the variable~$x$
can naturally be identified with ``time'' in the standard form~(\ref{Painleve-6_BasicForm}).

Let $\fol$ denote the foliation defined on $\C^3$ defined by the local orbits of $Z_{\rm VI}$. The foliation $\fol$
is holomorphic since it can be defined by a {\it polynomial vector field}\, obtained from $Z_{\rm VI}$ by multiplying it by the denominator
appearing in the rational expression for the function $\mathcal{H}_{\alpha, \beta,\gamma,\delta} (x,y,z)$. It is immediate to
check that the foliation $\fol$ on $\C^3$ is transverse to the fibers
of the standard fibration (projection) $\pi_x$ of $\C^3$ to the $x$-axis, away from the {\it invariant fibers}\, sitting over
$\{ x=0\}$ and~$\{ x=1\}$. In \cite{okamoto}, Okamoto obtained a much nicer birational model for the foliation $\fol$.
By compactifying the fibers of $\pi_x$ in a suitable Hirzebruch surface, performing a number of well chosen blow up maps,
and removing a certain resulting divisor the following setting for the foliation $\fol$
(see for example \cite{okamoto}, \cite{gausstopainleve}):
\begin{itemize}
  \item A complex (open) manifold $N$ of dimension~3 fibering over $\C \setminus \{ 0,1\}$ with an open surface
  denoted by $F$ as typical fiber. Moreover the projection map $\textsf{p} : M \rightarrow \C \setminus \{ 0,1\}$
  arises as the transform of the initial projection $\pi_x : \C^3 \rightarrow \C$ through the corresponding sequence of blow-up
  maps.

  \item $N$ is equipped with the corresponding transform (still denoted by $\fol$) of the extended foliation $\fol$
  of $\C \times {\rm F}_{\epsilon}$ by the corresponding blow-up maps.

  \item The foliation $\fol$ is transverse to the fibers of $\textsf{p}$ and the base $\C \setminus \{ 0,1\}$ can still
  be naturally identified with the ``time'' in~(\ref{Painleve-6_BasicForm}).

  \item The restriction of $\textsf{p}$ to a leaf $L$ of $\fol$ yields a covering map from $L$ to $\C \setminus \{ 0,1\}$.

\end{itemize}
Owing to the fourth condition, paths contained in $\C \setminus \{ 0,1\}$ can be lifted in the leaves of
$\fol$ so as to yield a homomorphism $\rho$ from the fundamental group of $\C \setminus \{ 0,1\}$
to the group of holomorphic diffeomorphisms ${\rm Diff}\, (F)$ of $F$. In other words,
$\rho (\pi_1 (\C \setminus \{ 0,1\}) \subset {\rm Diff}\, (F)$ is the holonomy group of $\fol$ whose action
on~$F$ encodes the transverse dynamics of the initial Painlev\'e~6 equation.

The Riemann-Hilbert map conjugates the monodromy action of $\rho (\pi_1 (\C
\setminus \{ 0,1\}))$ on $F$ at parameters $(\kappa_1,\ldots,\kappa_4)$ with the
action of $\AUTOGROUP_{A,B,C,D}$ on $S_{A,B,C,D}$.  More precisely there are two mappings
\begin{align*}
\mathfrak{rh}: \  \mathbb{C}^4  \rightarrow \mathbb{C}^4 \qquad \mbox{and} \qquad
{\rm RH}_{(\kappa_1,\kappa_2,\kappa_3,\kappa_4)}:  F  \rightarrow S_{A,B,C,D},
\end{align*}
where $(A,B,C,D) = \mathfrak{rh}(\kappa_1,\kappa_2,\kappa_3,\kappa_4)$.
They have the property that for any choice of parameters $(\kappa_1,\kappa_2,\kappa_3,\kappa_4)$ the mapping ${\rm RH}_{(\kappa_1,\kappa_2,\kappa_3,\kappa_4)}$ conjugates
the monodromy action of $\rho (\pi_1 (\C \setminus \{ 0,1\}))$ on $F$ to the 
action of $\AUTOGROUP_{A,B,C,D}$ on $S_{A,B,C,D}$.

The mapping $\mathfrak{rh}$ is relatively simple. If Iwasaki parameters $\kappa_1, \ldots ,\kappa_4$ are considered
for P6, then we define
$$
a_i = 2 \cos (\pi \kappa_i) \, ,
$$
for $i=1, \ldots ,4$. Next, 
to the $4$-tuple of complex numbers $(a_1, \ldots ,a_4)$, let us assign the $4$-tuple $(A,B,C,D)$ by
means of Formulas~(\ref{For-A_B_C}) and~(\ref{For-D_finally}).
Applying these changes of parameters defines $(A,B,C,D) = \mathfrak{rh} (\kappa_1, \kappa_2, \kappa_3, \kappa_4)$.

As mentioned, it is well known that the mapping
${\rm RH}_{(\kappa_1,\kappa_2,\kappa_3,\kappa_4)}$ provides a holomorphic conjugation
between the dynamics of $\rho (\pi_1 (\C \setminus \{ 0,1\})$ on $F$ and the dynamics of $\AUTOGROUP$ on
$S_{A,B,C,D}$, see \cite[Theorem 8.4]{inaba} and also to \cite{dubrovinmazzocco}
for further details.
Now, considering the
holonomy representation $\rho (\pi_1 (\C \setminus \{ 0,1\})) \subset {\rm Diff}\, (F)$, it is well known
that the mapping
${\rm RH}_{(\kappa_1,\kappa_2,\kappa_3,\kappa_4)}$ provides a holomorphic conjugation
between the dynamics of $\rho (\pi_1 (\C \setminus \{ 0,1\})$ on $F$ and the dynamics of $\AUTOGROUP$ on
$S_{A,B,C,D}$, see \cite[Theorem 8.4]{inaba} and also to \cite{dubrovinmazzocco}
for further details.

Whereas the a mapping ${\rm RH}_{(\kappa_1,\kappa_2,\kappa_3,\kappa_4)}$ can hardly be computed due to
its highly transcendental nature, it still provides a holomorphic conjugation between the dynamical systems
in question. In particular, at least on compact parts of $F$,
conjugation invariant properties of the dynamics of $\AUTOGROUP$ can immediately be
translated into dynamical properties of Painlev\'e~6, and conversely.
This applies in particular to our results
involving the dynamics of $\AUTOGROUP$ in its {\it Julia set}.

\subsection{Consequences of our results to Painlev\'e 6}
\label{SEC:RELATIONSHIP_TO_P6}
Taken together, our theorems provide plenty of rather explicit examples of parameters
$(A,B,C,D)$ along with open sets $U \subset {\mathcal J}_{A,B,C,D}$ in which the
action of $\AUTOGROUP_{A,B,C,D}$ has dense orbits. In connection with these examples, we would like
to point out a computational issue that deserves further attention. These have to do with the fact
that the group action $\AUTOGROUP$ is conjugate by the Riemann-Hilbert map to the monodromy of the actual
Painlev\'e~6 equation. In our examples, it is not too hard to estimate the ``size'' of the open sets $U$ provided by
Theorem~G: for example, we can provide some explicit $r >0$ and a point $p \in U$ such that the ball of radius~$r$
about $p$ is certainly contained in $U$. Yet, it for applications, it would be very interesting to have
similar estimates for the size/place of the image of $U$ by the Riemann-Hilbert map, so as to provide
accurate numerics for an open set in which the actual Painlev\'e~6 equation must have dense orbits.
The difficulty of this problem largely stems from well known difficulties in computing the Riemann-Hilbert map.

\section{Basic properties of $S_{A,B,C,D}$ and $\AUTOGROUP_{A,B,C,D}$}
\label{preliminaries}

\subsection{Projective compactification and triangle at infinity}\label{projective_compactification}
 We begin by considering the natural compactification of $\C^3$ in $\mathbb{CP}^3$ so that
$\bbCP^3 = \C^3 \cup \Pi_{\infty}$, where $\Pi_{\infty} \simeq \bbCP^2$ is the plane at infinity in $\bbCP^3$. Next, denote
by $\overline{S}_{A,B,C,D}$ the closure of $S_{A,B,C,D}$ in $\mathbb{CP}^3$. By using the standard affine atlas for
$\bbCP^3$, it is straightforward to check that $\overline{S}_{A,B,C,D} \cap \Pi_{\infty}$ consists of three projective
lines forming a triangle $\Delta_\infty$ in $\Pi_{\infty}$. Indeed, if $(u,v,w)$ are affine coordinates for
$\bbCP^3$ satisfying $(1/u ,v/u , w/u) = (x,y,z)$, then the surface $S_{A,B,C,D}$ is determined in $(u,v,w)$-coordinates by the
equation
$$
u+uv^2 + uw^2 + vw = Au^2 + Bu^2v + Cu^2w + Du^3 \, .
$$
In particular, it follows that $\overline{S}_{A,B,C,D} \cap \Pi_{\infty}$ locally coincides with the axes $\{ u=v=0\}$
and $\{ u=w=0\}$. An analogous use of the remaining coordinates shows that the
third side of the mentioned triangle coincides with the projective line of $\Pi_{\infty} \simeq \bbCP^2$
which is missed by the domain of the affine coordinates $(v,w) \simeq (u=0, v,w)$. A slightly less immediate computation with
the above equation for $\overline{S}_{A,B,C,D}$ also shows that
$\overline{S}_{A,B,C,D}$ is smooth on a neighborhood of $\Pi_{\infty}$.

\subsection{Relation to the family of all cubic surfaces.}\label{cubics} We now
explain how the families of surfaces $S_{A,B,C,D}$ (and their projective
compactifications $\overline{S}_{A,B,C,D}$) relate to the family of all cubic
surfaces.  This subsection is largely based on the work of Goldman and Toledo
\cite{toledo}.  Suitable general background on cubic surfaces is given in
\cite{segre,wall,manin}, and the papers quoted therein.

We start with the fact that two minimal projective cubic surfaces are
birationally equivalent if and only if they are projectively equivalent; see
\cite[p. 184]{manin}. Therefore, to decide if a given cubic surface appears in
the family $S_{A,B,C,D}$ (respectively $\overline{S}_{A,B,C,D}$) for any 
values of $(A,B,C,D)$, we can restrict ourselves to affine
(respectively projective) equivalence.

A plane meeting a cubic in three lines is called a {\it tritangent plane}. If
these three lines are in general position, then we say that we have a {\it
generic tritangent plane}.  As explained in
Section~\ref{projective_compactification}, for any choice of parameters
$(A,B,C,D)$, the plane at infinity $\Pi_\infty$ is a generic tritangent plane
for $\overline{S}_{A,B,C,D}$.  It is shown in \cite{toledo} that a projective
cubic surface belongs to the family $\overline{S}_{A,B,C,D}$ if and only if it
admits a generic tritangent plane.   Every smooth project cubic surface admits
a generic tritangent plane and therefore appears in $\overline{S}_{A,B,C,D}$
for a suitable choice of parameters. 
However, there exist singular cubic surfaces that do not admit a
generic tritangent plan, and therefore are not represented in the family
$\overline{S}_{A,B,C,D}$.  Alternatively, the singularities of the cubic
surfaces in $\overline{S}_{A,B,C,D}$ are classified in \cite{toledo} and the
possible types form a proper subset of the types of singular points occurring
for general cubic surfaces.

A generic projective cubic surface $\mathcal{S}$ has $45$ tritangent planes and
all of them are generic.  It follows from the argument in \cite{toledo} that
each choice of a tritangent plane of $\mathcal{S}$ leads to a projective
transformation of $\mathbb{CP}^3$ sending that tritangent plane to $\Pi_\infty$
and sending $\mathcal{S}$ to a member of the family $\overline{S}_{A,B,C,D}$.
One can then make any choice of $\epsilon_i \in \{-1,1\}$ for $i=1,2,3$ such
that $\epsilon_1 \epsilon_2 \epsilon_3 =1$ and apply the affine map $(x,y,z) \mapsto (\epsilon_1 x, \epsilon_2 y, \epsilon_3 z)$ 
to identify $\mathcal{S}$ with four different choices of surface
$\overline{S}_{A,B,C,D}$.   We have therefore identified $\mathcal{S}$ with
elements of the family $\overline{S}_{A,B,C,D}$ by applied $4 \times 45 = 180$
different projective transformations.   Since the automorphism group of a generic
projective cubic is trivial we conclude that $\mathcal{S}$ is identified a surface $\overline{S}_{A,B,C,D}$
for $180$ different choices of parameters $(A,B,C,D)$.

Note that if the generic cubic  $\mathcal{S}$ is identified with
$\overline{S}_{A,B,C,D}$ and also with $\overline{S}_{A',B',C',D'}$ by using
two different choices of tritangent planes of $\mathcal{S}$, then the
corresponding actions of $\AUTOGROUP_{A,B,C,D}$ and $\AUTOGROUP_{A',B',C',D'}$
are not typically conjugate because they are defined with respect to affine
coordinates corresponding to different choices of ``plane at infinity''.

\subsection{Singular points of $S_{A,B,C,D}$}
\label{SUBSEC_SINGULAR_POINTS}
As explained in Section \ref{projective_compactification}, for any parameters $(A,B,C,D)$ we have that $\overline{S}_{A,B,C,D}$ is smooth on a neighborhood of $\Pi_{\infty}$.
Therefore $\overline{S}_{A,B,C,D}$ is singular if and only if the affine surface
$S_{A,B,C,D}$ is so (and the corresponding singular sets always coincide). 
In particular, the singular set of $\overline{S}_{A,B,C,D}$ is a compact subvariety of $\mathbb{C}^3$
and it is therefore finite.

The surface
$S_{A,B,C,D}$ is singular if and only if at least one of the following conditions is satisfied (see, e.g.\ \cite{benedettogoldman,iwasaki}):
\begin{itemize}
\item We have $a_i = \pm 2$ for at least one~$i \in \{ 1,2,3,4\}$.

\item The coefficients $a_1, \ldots ,a_4$ satisfy the relation
$$
[2(a_1^2 + a_2^2 + a_3^2 +a_4^2) -a_1a_2a_3a_4 -16]^2 - (4-a_1^2)(4-a_2^2)(4-a_3^2)(4-a_4^2) = 0 \, .
$$
\end{itemize}
(The parameters $a_1,a_2,a_3$ and $a_4$ were introduced in Section \ref{SUBSEC:TEICH} and their relationship to $A,B,C,$ and $D$ was described there.)

Since $\AUTOGROUP$ must preserve ${\rm Sing}\, (S_{A,B,C,D})$, it that $\AUTOGROUP$ has a finite
orbit whenever ${\rm Sing}\, (S_{A,B,C,D}) \neq \emptyset$. The existence of a finite orbit for $\AUTOGROUP$ is naturally
yields some insights in the dynamics of $\AUTOGROUP$, as will be seen in the course of this work.

\begin{remark}
\label{REM:SINGULAR_POINTS_JULIA}
{\rm Every singular point $p$ of $S_{A,B,C,D}$ lies in the Julia set ${\mathcal J}_{A,B,C,D}$.
Indeed, if $U$ is a neighborhood
of $p$, \cite[Theorem C]{cantat-2} implies the existence of a point $q \in U$ and of a sequence $\gamma_n \in \AUTOGROUP$
such that $\gamma_n(q)$ diverges
to infinity.  Meanwhile, since every element of $\AUTOGROUP$ permutes the singular set of $S_{A,B,C,D}$, we  have that $\gamma_n(p)$ remains bounded.
}
\end{remark}

\subsection{Algebraic properties of $\AUTOGROUP^\pm_{A,B,C,D}$ and $\AUTOGROUP_{A,B,C,D}$.}
\label{SUBSEC:ALGEBRIAC_PROPERTIES}
The group of automorphisms of $\mathbb{C}^3$  generated by $s_x$,
$s_y$, and $s_z$ is isomorphic to the free product $\Z /2\Z \ast \Z /2\Z \ast
\Z /2\Z$. In particular, the group of automorphisms
generated by $g_x, g_y$, and $g_z$ is free on two generators.
However, a stronger statement holds: for every choice of the
parameters $A,B,C,$ and $D$, the group $\AUTOGROUP^\pm$
is isomorphic to the indicated free product and the group $\AUTOGROUP$ is free on two generators, {\em when
viewed as a group of
automorphisms of $S_{A,B,C,D}$.}

The non-existence of additional relations between the maps $s_x$, $s_y$, and
$s_z$ even when restricted to a particular surface $S_{A,B,C,D}$ is a
consequence of El-Huiti's theorem in \cite{huti} -- albeit not an immediate one.
For details the reader can check \cite{cantat-1} and \cite{cantat-2}.

\subsection{Invariant volume form}
\label{SEC:VOLUME_FORM}
Considering $s_x, s_y,$ and $s_z$ as self-mappings of $\mathbb{C}^3$, a simple calculation yields:
\begin{align*}
s_x^* (dx \wedge dy \wedge dz) = s_y^* (dx \wedge dy \wedge dz) = s_z^* (dx \wedge dy \wedge dz) = -dx \wedge dy \wedge dz.
\end{align*}
Therefore elements of the index two subgroup $\AUTOGROUP_{A,B,C} = \langle g_x,
g_y, g_z \rangle < {\rm Aut}(\mathbb{C}^3)$ preserve $dx \wedge dy \wedge dz$ and also the associated
Euclidean volume form $dx \wedge d\bar{x} \wedge dy \wedge d\bar{y} \wedge dz \wedge
d\bar{z}$ on $\mathbb{C}^3$.

The smooth part of $S_{A,B,C,D}$
comes equipped with a holomorphic volume form
\begin{align}\label{EQN:VOLUME_FORM}
\Omega = \frac{dx \wedge dy}{2z+xy-C} = \frac{dy \wedge dz}{2x+yz-A} = \frac{dz \wedge dx}{2y + zx-B}
\end{align}
which is obtained by contracting the form $dx \wedge dy \wedge dz$ with the gradient of the polynomial function on $\mathbb{C}^3$ that defines $S_{A,B,C,D}$.
We again have $s_x^* \Omega = -\Omega$ and similarly for
$s_y$ and $s_z$.  Therefore, the elements of $\AUTOGROUP_{A,B,C,D} < {\rm Aut}(S_{A,B,C,D})$ preserve $\Omega$ and
hence also preserve the associated real volume form $\Omega \wedge
\overline{\Omega}$.  It assigns infinite volume to the whole surface
$S_{A,B,C,D}$ but finite volume to a sufficiently small neighborhood of each singular point of $S_{A,B,C,D}$.

Let $p$ be a smooth point of $S_{A,B,C,D}$.  An immediate consequence of the
existence of the invariant volume form is that if $p$ is fixed for $\gamma \in 
\AUTOGROUP_{A,B,C,D}$ then ${\rm det}(D\gamma(p)) = 1$.  In particular, if $p$ is a
hyperbolic fixed point for $\gamma$, then it must be of saddle type.

\subsection{Pencils of rational curves}

Recall that $\pi_x: \mathbb{C}^3 \rightarrow \mathbb{C}$ sends $(x,y,z) \in \C^3$ to $x \in \C$.
Given $x_0 \in \mathbb{C}$, let
\begin{align*}
\Pi_{x=x_0} = \{(x,y,z) \in \mathbb{C}^3 \, : \,  \pi_x(x,y,z) = x_0\} \qquad \mbox{and} \qquad S_{x=x_0} = S_{A,B,C,D} \cap \Pi_{x=x_0}.
\end{align*}
Let $\overline{\Pi}_{x=x_0}$ be the closure of $\Pi_{x=x_0}$ in $\mathbb{CP}^3$ and let $\overline{S}_{x=x_0}$ denote the closure of $S_{x=x_0}$ in $\overline{S}_{A,B,C,D}$. Since $\overline{S}_{x=x_0}$ has degree two in $\overline{\Pi}_{x=x_0} \cong \mathbb{CP}^2$,
it is uniformized by the Riemann Sphere provided that $\overline{S_{x_0}}$ is smooth. The statement remains valid in the
case where $\overline{S}_{x=x_0}$ is singular (i.e.\ a union of two lines with a single simple intersection) up to passing from $\overline{S}_{x=x_0}$ to its normalization.

Denoting by $\pi_y$ and $\pi_z$ the projections of $\C^3$ to $\C$ respectively defined by $\pi_y(x,y,z) = y$ and
$\pi_z(x,y,z) = z$, the fibers $\Pi_{y=y_0}, S_{y=y_0}, \Pi_{z=z_0},$ and $S_{z=z_0}$ are analogously defined.

Clearly the collection of rational curves obtained from $\overline{S}_{x=x_0}$, $x_0 \in \C$, defines a {\it rational
pencil}\, in $\overline{S}_{A,B,C,D}$. By performing finitely many blow-ups, this pencil becomes a singular
rational fibration $\fol_x$, with connected fibers, over a suitable Riemann surface $\widetilde{\overline{S}}_{A,B,C,D}$.
Naturally, there are analogous pencils $\fol_y$ and $\fol_z$ defined on $\overline{S}_{A,B,C,D}$
with the help of the collection of rational curves $\overline{S}_{y=y_0}$ and $\overline{S}_{z=z_0}$
contained in $\overline{S}_{A,B,C,D}$.

Let us close this section with an elementary lemma, whose simple proof will be omitted.

\begin{lemma}
\label{intersectinginfinityatdistinctpoints}
For all but finitely many values of $x_0 \in \C$, the rational curve $\overline{S}_{x=x_0}$ is smooth and intersects the plane at
infinity $\Pi_{\infty}$ of $\bbCP^3$ in two distinct points. Analogous statements hold for the rational curves
$\overline{S}_{y=y_0}$ and $\overline{S}_{z=z_0}$.
\end{lemma}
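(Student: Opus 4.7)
\medskip

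\noindent
\textbf{Proof plan.} Working in the plane $\Pi_{x_0} \subset \bbCP^3$ determined by $X = x_0 W$ (with homogeneous coordinates $[X:Y:Z:W]$ on $\bbCP^3$), the closure $\overline{S_{x_0}}$ is cut out by the homogenization of the affine equation with respect to the variables $y,z$; concretely, in the plane coordinates $[Y:Z:W]$ on $\Pi_{x_0} \cong \bbCP^2$ it is the conic
\begin{equation*}
Q_{x_0}(Y,Z,W) \;=\; Y^2 + Z^2 + x_0\, YZ - BYW - CZW - (A x_0 + D - x_0^2)\, W^2 \;=\; 0.
\end{equation*}
Thus the strategy is simply to analyze this one-parameter family of conics: show that for $x_0 \in \C$ outside a finite exceptional set, the conic is smooth and meets the line at infinity $\{W=0\}$ of $\Pi_{x_0}$ transversally in two distinct points. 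The analogous statements for $\overline{S_{y_0}}$ and $\overline{S_{z_0}}$ then follow by permuting the roles of the coordinates and of the parameters $A,B,C$.

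\smallskip

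\noindent
For the first condition, smoothness of $\overline{S_{x_0}}$ is equivalent to the non-vanishing of the determinant of the associated symmetric matrix
\begin{equation*}
M(x_0) \;=\; \begin{pmatrix} 1 & x_0/2 & -B/2 \\ x_0/2 & 1 & -C/2 \\ -B/2 & -C/2 & x_0^2 - Ax_0 - D \end{pmatrix}.
\end{equation*}
The function $x_0 \mapsto \det M(x_0)$ is a polynomial in $x_0$, and a direct expansion reveals that its leading term is $-x_0^4/4$, which is in particular nonzero. Hence $\det M(x_0)$ vanishes for at most four values of $x_0 \in \C$, and away from these values $\overline{S_{x_0}}$ is a smooth conic in $\Pi_{x_0}$.

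\smallskip

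\noindent
For the second condition, setting $W = 0$ in $Q_{x_0}$ gives the binary form $Y^2 + x_0\, YZ + Z^2$ in $[Y:Z] \in \bbCP^1$, whose discriminant equals $x_0^2 - 4$. Therefore $\overline{S_{x_0}}$ meets the line at infinity of $\Pi_{x_0}$ in two distinct points whenever $x_0 \neq \pm 2$, and in a single point of tangency otherwise. Combining this with the smoothness analysis above, both conditions hold simultaneously outside a finite set of values of~$x_0$, proving the statement for~$\pi_x$. The symmetric roles of $x,y,z$ in the defining equation of $S_{A,B,C,D}$ then give the analogous conclusions for $\pi_y$ and $\pi_z$. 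No step presents a real obstacle; the only computation worth executing carefully is the identification of the leading coefficient of $\det M(x_0)$, since that is what guarantees the polynomial is not identically zero.
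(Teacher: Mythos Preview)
Your proof is correct. The paper omits the proof of this lemma entirely, calling it ``an elementary lemma, whose simple proof will be omitted,'' so there is nothing to compare against; your argument via the explicit conic $Q_{x_0}$ in the plane $\Pi_{x_0}$, the determinant of its associated symmetric matrix, and the discriminant $x_0^2-4$ at infinity is exactly the natural way to fill in the details.
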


For the pencil $\fol_x$ on $\overline{S}_{A,B,C,D}$, we fix a (minimal) blow-up procedure turning $\fol_x$
into a (singular) rational fibration $\widetilde{\fol}_x$.
Clearly there are only finitely many values of $x_0 \in \C$ corresponding to
singular fibers of this fibration. We then define a finite set $\mathcal{B}_x^+ \subset \C$ by saying that $x_0 \in \mathcal{B}_x$
if at least one of the following conditions fails to hold:
\begin{itemize}
  \item The rational curve $\overline{S}_{x=x_0}$ is smooth and intersects the plane at
infinity at two distinct points.
  \item The fibration $\widetilde{\fol}_x$ is regular on a neighborhood of the fiber sitting over $x_0$.
\end{itemize}
We also define $\mathcal{B}_x \subseteq \mathcal{B}_x^+$ to be the set of points at which the first of the two
conditions above fails to hold.
The corresponding sets for the coordinates~$y$ and~$z$ will similarly be denoted by
$\mathcal{B}_y^+$, $\mathcal{B}_y$ and by $\mathcal{B}_z^+$, $\mathcal{B}_z$.

\section{Dynamics of parabolic maps and proof of Theorems~A, B, and~C}\label{Parabolic_maps_proofTheoremA}

We begin by studying the generators $g_x = s_z \circ s_y, g_z = s_x \circ s_z,$ and $g_z = s_y \circ s_x$
of $\AUTOGROUP$. Here these mappings are explicitly written as
\begin{align*}
g_x\left(x, y, z\right) &= \left(x, \, -y-xz+B, \, xy + ({x}^{2}-1)z+C-Bx\right), \\
g_y\left(x, y, z \right) &=  \left(({y}^{2}-1)x+yz+A-Cy, \, y, \,  -yx-z+C \right), \quad \mbox{and} \\
g_z\left(x, y, z \right) &=  \left(-x-yz+A,\, zx + ({z}^{2}-1)y+B-Az,\,  z \right).
\end{align*}
The map $g_x$ preserves the coordinate~$x$ and hence each affine plane $\Pi_{x=x_0} \subset \C^3$. Since, in addition, $g_x$
clearly preserves the (affine) surface $S_{A,B,C,D}$, it follows that $g_x$ individually preserves each one of the curves
$S_{x=x_0}$ (and hence also the rational curves $\overline{S}_{x=x_0} \subset \overline{S}_{A,B,C,D}$). Similar conclusions
hold for the maps $g_y$ and $g_z$.

We can now complement the discussion in Section~\ref{preliminaries} revolving around
Lemma~\ref{intersectinginfinityatdistinctpoints}. Owing to these statements, we know that the rational curve
$\overline{S}_{x=x_0}$ is smooth and intersects the divisor at infinity $\Pi_{\infty}$ of $\bbCP^3$ in two distinct points
for all but finitely many values of $x_0 \in \C$. It is then natural to consider the automorphism of
$\overline{S}_{x=x_0}$ induced by $g_x$. The following proposition can be found in \cite{cantat-2} (see in particular
Proposition~4.1 in the paper in question). The proof is elementary, in the spirit of most of the discussion in the previous
section.

\begin{proposition}\label{PROP_ELLIPTIC_HYPERBOLIC}
Let $x_0 \in \mathbb{C}$ be such that the rational curve $\overline{S}_{x=x_0}$ is smooth and intersects the divisor
at infinity $\Pi_{\infty}$ in two distinct points. Then the restriction $\overline{g_{x_0}}$ of
$g_x$ to $\overline{S}_{x=x_0}$ is a M\"obius transformation whose two fixed points are at infinity. Furthermore, we have:
\begin{itemize}
\item If $x_0 \in (-2,2)$ then $\overline{g_{x_0}}$ is elliptic. It is periodic if and only if $x_0 = \pm 2 \cos(\theta \pi)$
with $\theta$ rational.
\item If $x_0 \in \mathbb{C} \setminus [-2,2]$ then $\overline{g_{x_0}}$ is loxodromic.
\end{itemize}
The analogous statements hold for restrictions of $g_y$ to the fibers $\overline{S}_{y=y_0}$ and of $g_z$ to the fibers~$\overline{S}_{z=z_0}$.
\end{proposition}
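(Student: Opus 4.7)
The strategy is to identify $\overline{g_{x_0}}$, which automatically lies in $\mathrm{Aut}(\overline{S}_{x_0})\cong \mathrm{PGL}(2,\mathbb{C})$ since $\overline{S}_{x_0}$ is a smooth rational curve, by pinning down its fixed points and multipliers. First I would restrict $g_x$ to the plane $\{x=x_0\}\cong \mathbb{C}^2_{y,z}$, on which it acts as an affine automorphism with linear part
\[
M(x_0)=\begin{pmatrix}-1 & -x_0\\ x_0 & x_0^2-1\end{pmatrix}\in \mathrm{SL}(2,\mathbb{C}),\qquad \mathrm{tr}\,M(x_0)=x_0^2-2.
\]
Homogenizing shows that $\overline{S}_{x_0}$ meets the line at infinity of $\mathbb{P}^2$ in the two points $(\alpha:1:0)$ and $(\beta:1:0)$, where $\alpha,\beta$ are the (distinct, by hypothesis) roots of $t^2+x_0 t+1=0$. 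A direct substitution using $\alpha^2+x_0\alpha+1=0$ verifies that the projective extension of $g_x$ fixes both of these points, so they are the two fixed points of $\overline{g_{x_0}}$, establishing the first part of the proposition.

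Next, to compute the multiplier $\mu$ of $\overline{g_{x_0}}$ at $(\alpha:1:0)$, I would pass to the affine chart $Z=1$ with coordinates $(\tilde y,\tilde w)=(Y/Z,W/Z)$, parameterize $\overline{S}_{x_0}$ near $(\alpha,0)$ by $\tilde w$ via the implicit function theorem, and differentiate the explicit formula for the transformed $\tilde w$. This yields
\[
\mu=\frac{1}{x_0\alpha+x_0^2-1}=\alpha^2,
\]
where the last equality uses $x_0\alpha=-\alpha^2-1$ together with $x_0=-(\alpha+\alpha^{-1})$. By symmetry the multiplier at $(\beta:1:0)$ is $\beta^2=\mu^{-1}$, and $\{\mu,\mu^{-1}\}$ is exactly the pair of eigenvalues of $M(x_0)$, since $\mu+\mu^{-1}=\alpha^2+\beta^2=x_0^2-2$.

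The classification then follows from the standard $\mathrm{SL}(2,\mathbb{C})$-trace dictionary: $|\mu|=1$ iff $\mu+\mu^{-1}\in[-2,2]$ iff $x_0^2\in[0,4]$; since $x_0^2$ being real and nonnegative forces $x_0\in\mathbb{R}$, this is equivalent to $x_0\in[-2,2]$. Combined with $\alpha\neq\beta$ (which rules out $x_0=\pm 2$ and in particular ensures $\mu\neq 1$), this gives elliptic for $x_0\in(-2,2)$ and loxodromic for $x_0\in\mathbb{C}\setminus[-2,2]$. For the periodicity clause, writing $x_0=\pm 2\cos(\theta\pi)$ yields $x_0^2-2=2\cos(2\pi\theta)$ and hence $\mu=e^{\pm 2\pi i\theta}$, which is a root of unity iff $\theta\in\mathbb{Q}$; conversely any $\mu=e^{2\pi i p/q}$ recovers $x_0=\pm 2\cos(\pi p/q)$. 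The $y$- and $z$-cases are identical modulo renaming, with corresponding matrices of trace $y_0^2-2$ and $z_0^2-2$. The only genuine subtlety is the multiplier computation: the naive ``slope'' $y/z$ is a two-to-one rational coordinate on the conic, so one must use a genuine local chart on $\overline{S}_{x_0}$ at the point at infinity to obtain $\mu$ correctly.
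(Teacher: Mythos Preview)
Your proof is correct. The paper does not actually supply a proof of this proposition; it merely cites \cite[Proposition~4.1]{cantat-2} and remarks that the argument is elementary. What you have written is precisely the kind of direct computation that substantiates that remark: restricting $g_x$ to the plane $\{x=x_0\}$ to obtain the $\mathrm{SL}(2,\mathbb{C})$ linear part with trace $x_0^2-2$, locating the two fixed points at infinity as the zeros of $t^2+x_0 t+1$, and then reading off the multiplier $\mu=\alpha^2$ in a genuine local chart on the conic at infinity. Your care in noting that the naive slope coordinate $y/z$ is two-to-one on the conic (so that one must instead use $\tilde w$ as a local parameter, after checking via $2\alpha+x_0=\alpha-\beta\neq 0$ that the implicit function theorem applies) is exactly the point where a careless argument could go wrong, and you handle it correctly. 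The trace dichotomy $x_0^2-2\in[-2,2]\Leftrightarrow x_0\in[-2,2]$ and the periodicity computation $x_0=\pm 2\cos(\theta\pi)\Rightarrow\mu=e^{\pm 2\pi i\theta}$ are both clean and accurate. In short, you have supplied the proof the paper chose to outsource.
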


\begin{remark}
\label{when_x=+-2}
{\rm The case $x_0 = \pm 2$ deserves an additional comment for the sake of clarity.  First note that the affine singular
points of the curves $\overline{S}_{x=x_0}$ are contained in the set of points where the intersection of $S_{A,B,C,D}$ and $\Pi_{x=x_0}$
is not transverse.  They are obtained by solving for where the second and third components of the gradient of the defining polynomial
for $S_{A,B,C,D}$ both vanish.  A short calculation shows that they lie in the affine curve given by
$$
x_0 \longrightarrow \left( x_0, \frac{Cx_0 -2B}{x_0^2-4} , \frac{Bx_0 -2C}{x_0^2-4} \right) \, .
$$
We see that this curve intersects the plane at infinity for $x_0 = \pm 2$. In particular,
the affine curve $S_{x=2}$ (resp. $S_{x=-2}$) is always smooth. On the other hand, this curve intersects the plane $\Pi_{\infty} \subset \bbCP^3$
at a single point (with multiplicity~$2$) which, depending on the coefficients, may or may not be a singular point of
$\overline{S}_{x=2}$ (resp. $\overline{S}_{x=-2}$). The restriction $\overline{g_{2}}$ of $g_x$ to $\overline{S}_{x=2}$
is as follows:
\begin{enumerate}
  \item When $\overline{S}_{x=2}$ is smooth, then $\overline{g_{2}}$ is a parabolic map whose single fixed point
  coincides with the intersection of $\overline{S}_{x=2}$ with $\Pi_{\infty}$.

  \item Otherwise $\overline{S}_{x=2}$ consists of the union of two projective lines intersecting each other at a point
  in $\Pi_{\infty}$. Each line is then preserved by $\overline{g_{2}}$ and, in each of these lines, $\overline{g_{2}}$
  induces a parabolic map whose fixed point coincides with their intersection.
\end{enumerate}
The analogous statement holds for $\overline{g_{-2}}$ and $\overline{S}_{x=-2}$.}
\end{remark}

Proposition~\ref{PROP_ELLIPTIC_HYPERBOLIC} yields the following lemma:

\begin{lemma}\label{COR:GX_JULIA}
If $x_0 \in (-2,2) \setminus \mathcal{B}_x$ then $S_{x=x_0} \subset \mathcal{J}(g_{x})$.
Analogous statements hold for the $g_y$ and $g_z$ mappings.
\end{lemma}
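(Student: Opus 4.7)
The plan is to show that for any $p \in S_{x_0}$ with $x_0 \in (-2,2) \setminus \mathcal{B}_x$, the iterates $\{g_x^n\}$ fail to form a normal family on every open neighborhood $U$ of $p$ in $S_{A,B,C,D}$, which by the definition recalled in Section~\ref{SUBSEC:FATOU_JULIA} places $p$ in $\mathcal{J}(g_x)$. The strategy is to exploit the coexistence within any such $U$ of two drastically different local behaviors for $g_x$: elliptic rotation on the fiber $S_{x_0}$ and loxodromic dynamics on nearby fibers $S_{x_1}$ with $x_1 \in \C \setminus [-2,2]$.

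First I would record the two competing behaviors using Proposition~\ref{PROP_ELLIPTIC_HYPERBOLIC}. On the one hand, $\overline{g_{x_0}}$ acts as an elliptic M\"obius transformation on $\overline{S}_{x_0}$ with both fixed points in $\Pi_\infty$, so the entire orbit $\{g_x^n(p)\}$ remains on a fixed compact subset of $S_{x_0} \cap \C^3$ bounded uniformly away from the plane at infinity. On the other hand, for any $x_1 \in \C \setminus ([-2,2] \cup \mathcal{B}_x)$ the restriction $\overline{g_{x_1}}$ is loxodromic with both fixed points again in $\Pi_\infty$, so any point $q \in S_{x_1} \cap \C^3$ satisfies $g_x^n(q) \to \alpha_{+} \in \Pi_\infty$ as $n \to +\infty$, leaving every compact subset of $\C^3$.

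Next I would verify that such fibers actually meet $U$. Since $x_0 \notin \mathcal{B}_x$ the curve $\overline{S}_{x_0}$ is smooth at $p$, so the projection $\pi_x \colon S_{A,B,C,D} \to \C$ must be a submersion at $p$, as otherwise the tangent plane to $S_{A,B,C,D}$ at $p$ would coincide with $\Pi_{x_0}$ and $S_{x_0}$ would be singular at $p$. Hence $\pi_x$ is open near $p$ and $\pi_x(U)$ contains an open neighborhood of $x_0$ in $\C$. Because $[-2,2] \cup \mathcal{B}_x$ has empty interior in $\C$, I may select some $x_1 \in \pi_x(U) \setminus ([-2,2] \cup \mathcal{B}_x)$ together with a corresponding point $q \in U \cap S_{x_1}$.

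Finally I would combine these facts. For any subsequence $\{g_x^{n_k}\}$ with $n_k \to +\infty$, the images $g_x^{n_k}(q)$ leave every compact subset of $\C^3$, which forbids uniform convergence of $g_x^{n_k}$ on compact subsets of $U$ in the standard sense; simultaneously the images $g_x^{n_k}(p)$ remain inside a fixed compact subset of $\C^3$, which forbids convergence to infinity on $U$ in the sense made precise in Section~\ref{SUBSEC:FATOU_JULIA}. No subsequence can therefore converge in either permitted mode, so $\{g_x^n\}$ is not normal on any neighborhood of $p$, giving $p \in \mathcal{J}(g_x)$. The analogous statements for $g_y$ and $g_z$ follow by the same argument applied to the pencils $\fol_y$ and $\fol_z$. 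The main delicacy I anticipate is handling the nonstandard ``convergence to infinity'' clause in the paper's normality definition; the simultaneous presence in $U$ of one bounded orbit (through $p$) and one escaping orbit (through $q$) is exactly what rules out both alternatives.
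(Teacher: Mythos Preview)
Your proof is correct and follows essentially the same approach as the paper's: exhibit in any neighborhood $U$ of $p$ a point $q$ on a loxodromic fiber $S_{x_1}$ with $x_1 \notin [-2,2]\cup\mathcal{B}_x$, then note that the bounded orbit through $p$ and the escaping orbit through $q$ together rule out normality. Your treatment is slightly more detailed than the paper's---you justify explicitly why $\pi_x(U)$ contains points off $[-2,2]\cup\mathcal{B}_x$ via the submersion argument, and you spell out how both convergence modes in the normality definition fail---but the structure and key idea are the same.
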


\begin{proof}
Suppose that $U$ is an open neighborhood of a point $p \in S_{x=x_0}$. Because
$x_0 \in (-2,2) \setminus \mathcal{B}_x$, Proposition~\ref{PROP_ELLIPTIC_HYPERBOLIC} ensures that $g_x$ restricted to the fiber
$S_{x=x_0}$ is elliptic with both fixed points lying in $\Pi_{\infty}$.  In particular, the
iterates $g_x^n(p)$ remain bounded.  Meanwhile, since $U$ is open in $S$ there
exists a point $q \in U$ with $x_1 = \pi_x(q) \not \in [-2,2] \cup
\mathcal{B}_x$.  According to Proposition \ref{PROP_ELLIPTIC_HYPERBOLIC}, the
restriction of $g_x$ to the fiber $\overline{S}_{x=x_1}$ is hyperbolic with both
fixed points at infinity. Therefore the orbit $g_x^n(q)$ tends to infinity.
Hence $U$ cannot be contained in the Fatou set of $g_x$ since it contains
points with both bounded and unbounded orbits. The lemma follows.
\end{proof}

We will need the following elementary lemma, whose simple proof is omitted.

\begin{lemma}\label{LEM:TRANSVERSALITY_OF_LEAVES}
Fix any $x_0 \in \mathbb{C}$.  Then, for all but finitely many choices of $y_0$ the fibers $S_{x=x_0}$ and
$S_{y=y_0}$ intersect transversally.  When the fibers intersect transversally, they do so at two distinct points.
\end{lemma}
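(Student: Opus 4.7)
The plan is to reduce the lemma to analyzing the discriminant of a quadratic polynomial in the variable $z$. Writing the defining polynomial of the surface as $F(x,y,z) = x^2 + y^2 + z^2 + xyz - Ax - By - Cz - D$, the intersection $S_{x_0} \cap S_{y_0}$ coincides with the zeros in $z$ of
\[
F(x_0, y_0, z) \;=\; z^2 + (x_0 y_0 - C) z + (x_0^2 + y_0^2 - A x_0 - B y_0 - D),
\]
which is monic of degree $2$ in $z$ with discriminant
\[
\Delta(y_0) \;=\; (x_0 y_0 - C)^2 - 4(x_0^2 + y_0^2 - A x_0 - B y_0 - D),
\]
a polynomial in $y_0$ of degree at most $2$. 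Provided $\Delta$ does not vanish identically, $\Delta(y_0) \neq 0$ for all but finitely many $y_0$, and for such $y_0$ the intersection consists of two distinct points.

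The next step is to check transversality at such intersection points. Since $\Pi_{x_0}$ and $\Pi_{y_0}$ meet along the line $\ell_{x_0, y_0} = \{x = x_0,\ y = y_0\}$, whose direction is $(0,0,1)$, we have $T_p S_{x_0} \subset \ker dx$ and $T_p S_{y_0} \subset \ker dy$ at any $p = (x_0, y_0, z_0) \in S_{x_0} \cap S_{y_0}$. Consequently, at a smooth point of $S$ the two tangent lines span $T_p S$ unless they both coincide with $T_p S \cap \mathbb{C}\cdot(0,0,1)$, and this last intersection is nonzero exactly when $(0,0,1) \in T_p S$, i.e.\ when $\partial_z F(p) = 2 z_0 + x_0 y_0 - C = 0$.

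The key observation that ties everything together is Vieta's formula: the sum of the two roots of $F(x_0, y_0, z)$ equals $C - x_0 y_0$. So if $\partial_z F$ vanishes at a root, that root must equal $(C - x_0 y_0)/2$, and Vieta then forces the second root to agree, yielding $\Delta(y_0) = 0$; conversely, when $\Delta(y_0) = 0$ the unique double root is $(C - x_0 y_0)/2$, at which transversality fails. Thus failure of transversality at an intersection point is equivalent to $\Delta(y_0) = 0$, so both exceptional behaviors are governed by the same finite subset of $y_0$. Excluding in addition the finitely many $y_0$ whose plane $\Pi_{y_0}$ contains one of the (isolated) singular points of $S$ completes the argument. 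The only mildly subtle step will be the degenerate regime $\Delta \equiv 0$, which forces $x_0 = \pm 2$ together with special relations on $(A,B,C,D)$; in that case $S_{x_0}$ itself is non-reduced and one must either handle it as an exceptional parameter configuration or verify the conclusion of the lemma directly from the explicit form of $S_{x_0}$.
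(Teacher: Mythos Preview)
The paper omits the proof of this lemma, calling it elementary. Your argument via the discriminant $\Delta(y_0)$ of the quadratic $F(x_0,y_0,z)$ in $z$, together with the key observation (via Vieta) that $\partial_z F(p)=2z_0+x_0y_0-C=0$ at an intersection point $p$ is equivalent to $\Delta(y_0)=0$, is correct and is presumably what the authors had in mind: away from the finitely many zeros of $\Delta$ and the finitely many $y_0$ whose plane $\Pi_{y_0}$ passes through a singular point of $S$, the two fibers meet transversally in two distinct points.

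You correctly flag the degenerate regime $\Delta\equiv 0$, which forces $x_0=\pm 2$ together with the special relations $B=\pm C$ and $C^2-16\pm 8A+4D=0$, but you leave it unresolved. In fact the lemma as literally stated does break down there: for $x_0=2$, say, one finds $F(2,y,z)=(y+z-B/2)^2$, so $S_{x_0}$ is a double line and for \emph{every} $y_0$ the intersection is a single point of multiplicity two, never transverse. This is harmless for the paper, since every application of the lemma takes $x_0\in(-2,2)\setminus\mathcal{B}_x^+$, hence $x_0\neq\pm 2$, so the leading coefficient $x_0^2-4$ of $\Delta(y_0)$ is nonzero and the degenerate regime is excluded automatically. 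It would be cleaner simply to record that observation (or add the standing hypothesis $x_0\neq\pm 2$) rather than leave the case hanging.
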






Now fix a point $x_0 \in \C \setminus \mathcal{B}_x^+$. For sufficiently small $\epsilon >0$, consider the ``tube''
$T^\epsilon_{x=x_0}$ defined by
\begin{align}
T^\epsilon_{x=x_0} = \{(x,y,z) \in S \, : \, |x-x_0| < \epsilon\} \label{Tube_epsilon_definition}
\end{align}
Clearly $T^\epsilon_{x=x_0}$ is filled (foliated) by the curves $S_{x=x_1}$ where $x_1$ satisfies $|x_1-x_0| < \epsilon$.
Let $y_0 \in \C$ be such that the curve $S_{y=y_0}$ intersects the curves $S_{x=x_1} \subset T^\epsilon_{x=x_0}$ transversally.

\begin{lemma}
\label{Kobayashi_hiperbolicity}
With the preceding notation and up to choosing $\epsilon >0$ sufficiently small, the open set
$T^\epsilon_{x=x_0} \setminus S_{y=y_0}$ is Kobayashi hyperbolic.
\end{lemma}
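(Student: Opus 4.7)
The plan is to realize $T_\epsilon(x_0)\setminus S_{y_0}$ as an open subset of $\mathbb{D}_\epsilon(x_0)\times(\mathbb{CP}^1\setminus\{0,1,\infty\})$ via an explicit biholomorphism. Since this product is Kobayashi hyperbolic as a product of two Kobayashi hyperbolic manifolds, and since the Kobayashi pseudo-distance on any open subset of a Kobayashi hyperbolic manifold dominates the ambient one, open subsets of hyperbolic manifolds are themselves hyperbolic. This will be enough.

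To build the biholomorphism, I first invoke the hypothesis $x_0\not\in\mathcal{B}_x^+$: passing if necessary to the resolved model $\widetilde{\overline{S}}_{A,B,C,D}$ on which the pencil $\widetilde{\fol}_x$ is regular near the fiber over $x_0$, I conclude that for small $\epsilon>0$ the proper transform $\widetilde{\overline{T}}_\epsilon(x_0)$ of $\overline{T}_\epsilon(x_0)$ is a smooth $\mathbb{CP}^1$-bundle over the disk $\mathbb{D}_\epsilon(x_0)$. On this bundle I produce four pairwise disjoint holomorphic sections: two at infinity, $\sigma_1,\sigma_2$, obtained by applying the implicit function theorem to the two distinct points of $\overline{S}_{x_0}\cap\Pi_\infty$ provided by Lemma~\ref{intersectinginfinityatdistinctpoints}; and two in the affine part, $\sigma_3,\sigma_4:\mathbb{D}_\epsilon(x_0)\to S_{y_0}\cap T_\epsilon(x_0)$, arising from the two transverse affine intersections $S_{x_0}\cap S_{y_0}$ guaranteed by the choice of $y_0$ together with Lemma~\ref{LEM:TRANSVERSALITY_OF_LEAVES}. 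For $\epsilon>0$ sufficiently small, the four sections remain pairwise disjoint on every fiber.

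The biholomorphism $\Phi:\widetilde{\overline{T}}_\epsilon(x_0)\to\mathbb{D}_\epsilon(x_0)\times\mathbb{CP}^1$ is then
\[
\Phi(p)=\bigl(\pi_x(p),\,[\sigma_1(x),\sigma_2(x);\sigma_3(x),p]\bigr),\qquad x=\pi_x(p),
\]
where the cross-ratio is computed on the smooth rational curve $\overline{S}_x$; fiberwise $\Phi_x$ is the unique Möbius isomorphism sending $\sigma_1(x),\sigma_2(x),\sigma_3(x)$ to $\infty,0,1$, and it depends holomorphically on $x$ because the cross-ratio is a rational function of its inputs. Under $\Phi$ the subset $T_\epsilon(x_0)\setminus S_{y_0}=\widetilde{\overline{T}}_\epsilon(x_0)\setminus(\sigma_1\cup\sigma_2\cup\sigma_3\cup\sigma_4)$ corresponds to $\mathbb{D}_\epsilon(x_0)\times\mathbb{CP}^1$ with the three constant sections $\{0\},\{1\},\{\infty\}$ and the graph of the holomorphic map $\Phi\circ\sigma_4:\mathbb{D}_\epsilon(x_0)\to\mathbb{CP}^1\setminus\{0,1,\infty\}$ removed. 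This is manifestly an open subset of the hyperbolic product $\mathbb{D}_\epsilon(x_0)\times(\mathbb{CP}^1\setminus\{0,1,\infty\})$, which completes the argument.

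The main obstacle I anticipate is purely bookkeeping around the blow-up model $\widetilde{\overline{S}}_{A,B,C,D}$: one must check that the modification from $\overline{T}_\epsilon(x_0)$ to $\widetilde{\overline{T}}_\epsilon(x_0)$ leaves the affine part $T_\epsilon(x_0)\setminus S_{y_0}$ untouched, and that the four sections genuinely persist with pairwise disjoint values on a common disk. Once the smooth $\mathbb{CP}^1$-bundle structure is available, which is precisely the content of the assumption $x_0\not\in\mathcal{B}_x^+$, no deeper analytic step is needed; the cross-ratio produces a fiberwise Möbius trivialization automatically, sidestepping any appeal to vanishing cohomology for $\mathbb{CP}^1$-bundles.
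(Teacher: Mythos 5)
Your proof is correct and reaches the same destination as the paper's — exhibiting $T_\epsilon(x_0)\setminus S_{y_0}$ as an open subset of $\mathbb{D}_\epsilon\times(\mathbb{C}\setminus\{0,1\})$ — but the mechanism is genuinely different and somewhat cleaner. The paper first invokes the Fischer--Grauert theorem to trivialize the regular $\mathbb{CP}^1$-fibration over $\mathbb{D}_\epsilon$, and then post-composes with further fiberwise automorphisms to move the two sections at infinity to $\{0,\infty\}$ and one branch of $S_{y_0}$ to $\{1\}$. You bypass Fischer--Grauert entirely: with two holomorphic sections at infinity and one affine section from a branch of $S_{y_0}$ in hand, the fiberwise cross-ratio \emph{is} the trivialization, and it lands those three sections on $\infty,0,1$ automatically. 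The price is that you must track the bookkeeping you flagged — that the blow-ups producing the regular model $\widetilde{\fol}_x$ occur only along $\Delta_\infty$ (at base points of the pencil), so the affine part $T_\epsilon(x_0)\setminus S_{y_0}$ is untouched, and that the four sections stay pairwise disjoint after shrinking $\epsilon$. Both of these are routine given Lemma~\ref{intersectinginfinityatdistinctpoints}, Lemma~\ref{LEM:TRANSVERSALITY_OF_LEAVES}, and the definition of $\mathcal{B}_x^+$. The benefit of your route is that it replaces a nontrivial theorem (holomorphic triviality of families of $\mathbb{CP}^1$'s) with an explicit elementary formula; the paper's route is shorter to write if one is willing to cite Fischer--Grauert as a black box.
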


\begin{proof}
Since $\mathcal{B}_x^+$ is finite and $x_0 \in \C \setminus \mathcal{B}_x^+$, there is $\epsilon >0$
such that $\mathbb{D}_{\epsilon} = \{ x_1 \in \C \, ; \; |x_1-x_0| < \epsilon\}$ is contained in
$\C \setminus \mathcal{B}_x^+$. Now, in
view of the definition of $\mathcal{B}_x^+$, for every $S_{x=x_1} \subset T^\epsilon_{x=x_0}$, the corresponding rational
curve $\overline{S}_{x=x_1}$ intersects $\Pi_{\infty}$ transversally and at two distinct points. In other words,
$\overline{S}_{x=x_1} \setminus S_{x=x_1}$ consists of two distinct points provided that $S_{x=x_1} \subset T^\epsilon_{x=x_0}$. On the other
hand, up to a  birational transformation, the projective curves $\overline{S}_{x=x_1}$ define a (regular) holomorphic
fibration over the disc $\mathbb{D}_{\epsilon}$. Because the fibers are rational
curves and therefore pairwise isomorphic as Riemann surfaces, the theorem of Fischer and Grauert \cite{fischergrauert}
implies that this fibration is holomorphically trivial, i.e., it is holomorphically equivalent to
$\mathbb{D}_{\epsilon} \times \bbCP^1$. Since $\overline{S}_{x=x_1} \setminus S_{x=x_1}$ consists of two points, there also
follows that
\begin{align}\label{EQN:TEPSILON}
T^\epsilon_{x=x_0} = \mathbb{D}_\epsilon \times  \mathbb{C} \setminus \{0\}
\end{align}
as complex manifolds.

The hypothesis that $S_{y=y_0}$ intersects $S_{x=x_0}$ transversally implies that
their intersection consists of exactly two (distinct) points (Lemma~\ref{LEM:TRANSVERSALITY_OF_LEAVES}). We can then reduce
$\epsilon > 0$, if necessary, so that $S_{y=y_0}$ intersects each $S_{x=x_1}$ from
$T^{\epsilon}_{x=x_0}$ transversally in two points, each of them varying
holomorphically with $x_1$. These points will be referred to as the two branches of
$S_{y=y_0}$ in $T^{\epsilon}_{x=x_0}$. Since each branch is the graph of a holomorphic function on~$x_1$,
we can pick either one of them and construct a further holomorphic diffeomorphism to make it
correspond to the point $1 \in \mathbb{C} \setminus \{0\}$. By means of this construction, $T^\epsilon_{x=x_0} \setminus S_{y=y_0}$
becomes identified with an open set in
$\mathbb{D}_\epsilon \times  \mathbb{C} \setminus \{0,1\}$.
The lemma follows since the latter
manifold is Kobayashi hyperbolic as the product of two hyperbolic Riemann surfaces.
\end{proof}


\begin{lemma}\label{PROP:HITTING_OTHER_FIBERS}
Let $x_0 \in (-2,2) \setminus \mathcal{B}_x^+$ and let $y_0$ be any point chosen so that $S_{x=x_0}$ and $S_{y=y_0}$
intersect transversally.  For any open $U \subset S$ with $U \cap S_{x=x_0}
\neq \emptyset$ there is an iterate $n$ such that $g_x^n(U) \cap S_{y=y_0}~\neq~\emptyset$.

\vspace{0.05in}
\noindent
Analogous statements hold when $x$ and $y$ are replaced with any two distinct variables from $\{x,y,z\}$.
\end{lemma}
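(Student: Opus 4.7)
The plan is to argue by contradiction using the Kobayashi hyperbolicity of $T_\epsilon(x_0)\setminus S_{y_0}$ provided by Lemma~\ref{Kobayashi_hiperbolicity}. First, shrink $\epsilon>0$ so that $\mathbb{D}_\epsilon(x_0)\subset \C\setminus \mathcal{B}_x^+$ and so that $S_{y_0}$ meets every fiber $S_x\subset T_\epsilon(x_0)$ transversally in two distinct points (using Lemma~\ref{LEM:TRANSVERSALITY_OF_LEAVES}). After replacing $U$ by a smaller open set contained in $T_\epsilon(x_0)$ while keeping $U\cap S_{x_0}\neq\emptyset$, the open mapping theorem yields that $\pi_x(U)$ is an open neighborhood of $x_0$ in $\C$ (since $U$, being open in $S$, cannot be contained in the $1$-dimensional subvariety $S_{x_0}$), so $\pi_x(U)$ intersects $\C\setminus[-2,2]$.

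Assume, for contradiction, that $g_x^n(U)\cap S_{y_0}=\emptyset$ for every $n\geq 0$. Since $g_x$ preserves each fiber $S_x$, the tube $T_\epsilon(x_0)$ is $g_x$-invariant, and therefore $\{g_x^n|_U\}_{n\geq 0}$ forms a sequence of holomorphic maps from $U$ into $Y:=T_\epsilon(x_0)\setminus S_{y_0}$. Since $Y$ is Kobayashi hyperbolic, the Montel-type theorem for Kobayashi hyperbolic targets provides a subsequence $g_x^{n_k}|_U$ that is either (a) convergent, uniformly on compact subsets of $U$, to a holomorphic map $h:U\to Y$, or (b) compactly divergent in $Y$, i.e., for any compact pair $K_0\subset U$ and $K_1\subset Y$ one has $g_x^{n_k}(K_0)\cap K_1=\emptyset$ for all $k$ sufficiently large.

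To rule out (a), pick $q'\in U$ with $\pi_x(q')\in \mathbb{D}_\epsilon(x_0)\setminus[-2,2]$. By Proposition~\ref{PROP_ELLIPTIC_HYPERBOLIC}, the restriction of $g_x$ to $\overline{S}_{\pi_x(q')}$ is loxodromic with both fixed points on $\Pi_\infty$, so $g_x^n(q')$ escapes every compact subset of $S$, which contradicts $g_x^{n_k}(q')\to h(q')\in Y\subset S$. To rule out (b), identify $S_{x_0}$ with $\C^*$ via a biholomorphism sending the two points of $\overline{S}_{x_0}\cap\Pi_\infty$ to $0$ and $\infty$; Proposition~\ref{PROP_ELLIPTIC_HYPERBOLIC} then expresses $g_x|_{S_{x_0}}$ as $\zeta\mapsto\lambda\zeta$ with $|\lambda|=1$. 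Its orbits lie on the round circles $\{|\zeta|=r\}$, and at most two such circles meet the two-point set $S_{x_0}\cap S_{y_0}$. Hence I may choose $p\in U\cap S_{x_0}$ lying on an invariant circle $C$ disjoint from $S_{y_0}$; such $p$ exists because the union of the (at most) two ``bad'' circles is a proper closed subset of the open set $U\cap S_{x_0}$. Then $C$ is compact in $S$ and entirely contained in $Y$, so taking $K_0=\{p\}$ and $K_1=C$ in (b) yields the contradiction that $g_x^{n_k}(p)\in C=K_1$ for every $k$, even though (b) requires $g_x^{n_k}(p)\notin K_1$ eventually. The arguments for the remaining pairs of coordinate projections are identical after permuting the roles of $x,y,z$.

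The main obstacle is ruling out the compactly divergent alternative (b): the sequence is not forced to converge, so one must exhibit a single point $p\in U\cap S_{x_0}$ whose $g_x$-orbit stays in a compact subset of $Y$. This is what the precise elliptic normal form on $S_{x_0}$ and the two-point nature of $S_{x_0}\cap S_{y_0}$ supply.
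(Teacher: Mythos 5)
Your proof is correct and follows essentially the same route as the paper's: both arguments hinge on Lemma~\ref{Kobayashi_hiperbolicity} (Kobayashi hyperbolicity of $T_\epsilon(x_0)\setminus S_{y_0}$) together with the $g_x$-invariance of the tube, and then derive a contradiction to normality of $\{g_x^n\}$ on $U$. The paper's version is shorter because it invokes Lemma~\ref{COR:GX_JULIA} as a black box (``$S_{x_0}\subset\mathcal J(g_x)$'') to contradict normality, whereas you unpack the normality statement into the Montel-type dichotomy for a taut target and rule out each horn directly. The one refinement worth noting is your observation that $p$ must be chosen on a $g_x$-invariant \emph{circle disjoint from $S_{y_0}$}: for the compactly-divergent alternative it is not enough that the orbit of $p\in S_{x_0}$ stays bounded in $S$ (as in the proof of Lemma~\ref{COR:GX_JULIA}), one needs the orbit to stay in a compact subset of $Y=T_\epsilon(x_0)\setminus S_{y_0}$ itself, and your choice of circle among the cofinite family missing the two points of $S_{x_0}\cap S_{y_0}$ supplies exactly this. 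In the paper this point is absorbed into the somewhat terse assertion that hyperbolicity of $Y$ forces normality, so your version makes the argument a bit more self-contained.
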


\begin{proof}
Owing to Lemma~\ref{Kobayashi_hiperbolicity}, we fix a tube $T^\epsilon_{x=x_0}$ as in~(\ref{Tube_epsilon_definition})
so that $T^\epsilon_{x=x_0} \setminus S_{y=y_0}$  is Kobayashi hyperbolic. Let then $U$ be a non-empty open set of $S_{A,B,C,D}$ intersecting
$S_{x=x_0}$. Up to trimming $U$, we can assume that $U \subset T^\epsilon_{x=x_0}$.
Since $g_x$ preserves the $S_{x={\rm const}}$ fibration and fixes the points in $\overline{S}_{x={\rm const}} \cap \Pi_{\infty}$
($g_x$ has no poles), it follows that $g_x^n(U)$ remains in $T^\epsilon_{x=x_0}$ for every $n \in \Z$. Now assume for
a contradiction that $g_x^n(U) \subset T^\epsilon_{x=x_0} \setminus S_{y=y_0}$ for
every $n$.  Since $T^{\epsilon}_{x=x_0} \setminus S_{y=y_0}$ is Kobayashi
hyperbolic, this implies that $\{ g_x^n \}$ forms a normal family on $U$.
This is, however, impossible since $U$ intersects $S_{x=x_0}$ and $S_{x=x_0} \subset \mathcal{J}(g_x)$
(Lemma~\ref{COR:GX_JULIA}). Thus there must exist $n$ such that
$g_x^n(U) \cap S_{y=y_0} \neq \emptyset$ and the lemma follows.
\end{proof}

\begin{remark}\label{REM:BRANCHES}
{\rm The above proof actually shows slightly more than the statement of Lemma~\ref{PROP:HITTING_OTHER_FIBERS}.
Once $\epsilon > 0$ is chosen sufficiently small so that $T^\epsilon_{x=x_0}$ intersects $S_{y=y_0}$ in two branches  (each a graph of
a holomorphic function of $x$) and once $U$ is chosen sufficiently small so that $U \subset T^\epsilon_{x=x_0}$
then for each branch of $S_{y=y_0}$ in $T^\epsilon_{x=x_0}$ there is an iterate $n$ so that $g_x^n(U)$ intersects
the branch in question.}
\end{remark}

Lemma~\ref{PROP:HITTING_OTHER_FIBERS} admits a useful quantitative version that can directly be proved, namely:

\begin{lemma}\label{PROP:HITTING_OTHER_FIBERS-QuantitativeVersion}
Assume that $x_0 = \pm 2 \cos(\theta \pi) \in (-2,2) \setminus \mathcal{B}_x^+$ with $\theta$ irrational
and let $U \subset S$ be an open set such that $U \cap S_{x=x_0} \neq \emptyset$. Next consider a sequence
$\{ q_j = (x_j , y_j ,z_j)\} \subset S_{A,B,C,D} \setminus S_{x=x_0}$ converging to some $q \in S_{x=x_0}$ and assume, in addition,
the existence of $\delta > 0$ such that the argument of $x_j$ lies in an interval of the form
$[\delta, \pi-\delta] \cup [\pi+\delta , 2\pi - \delta]$ for every $j$.
Then for every $j$ large enough, there exists $n_j \in \Z$ such that $g_x^{n_j} (q_j) \in U$.\\

\noindent
Analogous statements hold when $x$ is replaced by $y$ or $z$.
\end{lemma}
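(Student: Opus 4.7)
The approach is to linearize the action of $g_x$ on fibers close to $S_{x_0}$ and then exploit that the multiplier of $g_x|_{S_{x_j}}$ is close to an irrational rotation but has modulus off the unit circle in a controlled way, so that the $g_x$-orbit of $q_j$ in its fiber traces out a logarithmic spiral which becomes ever denser as $j \to \infty$. Continuity together with Weyl equidistribution then produces an iterate landing in $U$. The cases where $x$ is replaced by $y$ or $z$ follow symmetrically.

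Adapting the construction in the proof of Lemma~\ref{Kobayashi_hiperbolicity}, I fix a biholomorphism $\Psi: T_\epsilon(x_0) \to \mathbb{D}_\epsilon \times \mathbb{C}^{\ast}$ sending the fibration $\{S_x\}$ onto the product projection and sending the two points of $\overline{S}_x \cap \Pi_\infty$ to $\{w=0\}$ and $\{w=\infty\}$. In these coordinates $g_x(x,w) = (x, \mu(x) w)$ for some holomorphic $\mu$ on $\mathbb{D}_\epsilon$, with $|\mu(x_0)| = 1$ and $\arg\mu(x_0)/(2\pi)$ irrational (by Proposition~\ref{PROP_ELLIPTIC_HYPERBOLIC} and the irrationality of $\theta$). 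Since $|\mu| \equiv 1$ along the real segment $(x_0-\epsilon, x_0+\epsilon)$ (where the transformations remain elliptic), the derivative $(\log \mu)'(x_0)$ is purely imaginary; from the trace relation defining $\mu$ it is moreover non-zero, since $x_0 \in (-2,2)\setminus\mathcal{B}_x^+$ and $x_0 \neq 0$ (forced by $\theta$ irrational). Writing $(\log\mu)'(x_0) = i\alpha$ with $\alpha \in \mathbb{R}^\ast$,
\[
r_j := \log |\mu(x_j)| = -\alpha\,\Im(x_j - x_0) + O(|x_j - x_0|^2).
\]
I read the argument hypothesis as the bound $|\Im(x_j - x_0)| \geq (\sin\delta)|x_j - x_0|$ on the direction of approach (the only interpretation making the hypothesis non-vacuous when $x_0 \neq 0$); this gives $|r_j| \gtrsim |x_j - x_0|$, so $r_j \neq 0$ and $g_x|_{S_{x_j}}$ is loxodromic for $j$ large.

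Fix $p \in U \cap S_{x_0}$ and write $\Psi(p) = (x_0, w_p)$, $\Psi(q_j) = (x_j, w_j)$; then $w_j \to w(q) \in \mathbb{C}^{\ast}$. After shrinking $U$ so that $\Psi(U)$ contains $\{(x,w) : |x - x_0| < \epsilon_0,\; |w - w_p| < \rho\}$ for some $\epsilon_0, \rho > 0$, it suffices to find $n_j \in \mathbb{Z}$ with $|\mu(x_j)^{n_j} w_j - w_p| < \rho$. Let $N_j$ be the integer nearest $\log(|w_p|/|w_j|)/r_j$; then $|N_j| \to \infty$. For any fixed $K$ and $|k| \leq K$, the modulus $|w_j \mu(x_j)^{N_j + k}|$ equals $|w_p| e^{k r_j}$ up to lower-order terms, and hence converges to $|w_p|$ uniformly in $|k| \leq K$ as $j \to \infty$. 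The corresponding arguments $\arg w_j + (N_j + k)\phi_j \bmod 2\pi$, with $\phi_j := \arg \mu(x_j) \to \arg \mu(x_0)$, form an arithmetic progression in $\mathbb{R}/2\pi\mathbb{Z}$ whose step approaches an irrational multiple of $2\pi$. By Weyl equidistribution, the set $\{k \arg \mu(x_0) \bmod 2\pi : |k| \leq K\}$ is $\eta$-dense in $[0, 2\pi)$ once $K$ is large enough (for any prescribed $\eta > 0$); by continuity the same holds for $\phi_j$ once $j$ is large. Taking $\eta \ll \rho / |w_p|$ and then enlarging $j$ accordingly yields $k \in \{-K, \ldots, K\}$ realizing both the modulus and argument approximations, so $n_j := N_j + k$ satisfies $g_x^{n_j}(q_j) \in U$.

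The main obstacle is the simultaneous control of modulus and argument of $\mu(x_j)^{n_j} w_j$ using only the integer $n_j$: the modulus tolerance $\rho$ permits a window of $O(\rho/|r_j|)$ consecutive values of $n_j$, and this window — which diverges as $j \to \infty$ because $r_j \to 0$ — must combine with the irrationality of $\arg\mu(x_0)/(2\pi)$ to generate an $\eta$-net of residues modulo $2\pi$. The continuity of the equidistribution bound in the step parameter is what lets the limiting irrational rotation govern the approximation uniformly in $j$.
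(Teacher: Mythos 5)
Your proof is correct, but it takes a genuinely different (and more quantitative) route than the paper's. The paper's argument avoids any computation of the multiplier: it first \emph{saturates} $U$ under $g_x$, using the fact that $g_x|_{S_{x_0}}$ is an irrational rotation to reduce, without loss of generality, to the case where $U$ is a disc bundle over a $g_x$-invariant round annulus $A\subset S_{x_0}$. On the nearby fiber $S_{x_j}$, the map $g_x$ is loxodromic (since $x_j\notin\mathbb{R}$) with multiplier modulus tending to $1$, so its fundamental annulus is too thin to contain the round annulus traced out by $A$; since the moduli of the orbit points sweep through all of $(0,\infty)$, the orbit of $q_j$ must land inside. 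This reduction to a \emph{full} round annulus is what lets the paper dispense with any control of arguments; no equidistribution is needed. You instead keep $U$ small (a metric ball in the linearizing coordinates), which forces you to control modulus and argument simultaneously, and you achieve this by the explicit linearization $g_x(x,w)=(x,\mu(x)w)$, the nearest-integer choice of $N_j$ to pin down the modulus, and Weyl equidistribution of $\{k\arg\mu(x_0)\}$ together with a continuity estimate to pin down the argument. Both proofs hinge on the same two facts — the angular hypothesis forces $x_j\notin\mathbb{R}$ so $g_x|_{S_{x_j}}$ is genuinely loxodromic, and $|\mu(x_j)|\to 1$ as $x_j\to x_0$ — but where the paper trades a saturation argument for simplicity, you pay with an equidistribution estimate and gain a direct, effective statement about the iterate $n_j$. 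One minor remark: you use the nonvanishing of $\mu'(x_0)$ (via the ``trace relation'') to conclude $r_j\neq 0$, but this detour is unnecessary: the angular hypothesis already forces $x_j$ non-real, and Proposition~\ref{PROP_ELLIPTIC_HYPERBOLIC} directly gives that $g_x|_{S_{x_j}}$ is loxodromic, hence $|\mu(x_j)|\neq 1$. The estimate $|r_j|\gtrsim |x_j-x_0|$ that you derive from $\mu'(x_0)\neq 0$ plays no further role in your argument, so the unjustified assertion about the trace relation can simply be dropped.
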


\begin{proof}
Recall that $g_x$ preserves every leaf of the foliation $\mathcal{D}_x$ induced by the collection
of rational curves $\overline{S}_{x={\rm const}} \subset \overline{S}_{A,B,C,D}$.
The assumption on $x_0$ implies that
$g_x$ on the curve $\overline{S}_{x=x_0}$ corresponds to an elliptic element that is conjugate to an irrational rotation.
Thus, up to saturating $U$ by the dynamics of $g_x$, there is no loss of generality in assuming
that $U$ is a disc bundle over an annulus $A \subset S_{x=x_0}$, where $A$ is invariant under $g_x$. Now
note that the action $g_{x_j}$
of $g_x$ on $S_{x=x_j}$ is loxodromic since $x_j \not\in \mathbb{R}$. Furthermore the multiplier at the attracting fixed point
of $g_{x_j}$ has modulus tending to~$1$ as $q_j \rightarrow q$. In particular, for $j$ large
enough, the annulus $A$ contains a fundamental domain of $g_{x_j}$ on $S_{x=x_j} \simeq \bbCP^1$. The lemma
follows at once.

\end{proof}

The preceding lemmas are summarized by the proposition below.

\begin{proposition}\label{PROP:TRANSITIVE_IN_NBHD_OF_ONE_FIBER}
Let $x_0 \in (-2,2) \setminus \mathcal{B}_x^+$. Given two open sets $U_1, U_2 \subset S$, both of which
intersect $S_{x=x_0}$, there is an iterate $n$ such that $g_x^n(U_1) \cap U_2 \neq \emptyset$.
Analogous statements hold when $x_0$ is replaced with $y_0$ or $z_0$.
\end{proposition}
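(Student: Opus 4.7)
The plan is to reduce the proposition to a direct application of Lemma~\ref{PROP:HITTING_OTHER_FIBERS-QuantitativeVersion}, which essentially packages the desired transitivity statement but requires the rotation angle $\theta\pi$ of $g_x$ on $S_{x_0}$ to satisfy $\theta\notin\mathbb{Q}$. Since the proposition allows arbitrary $x_0 \in (-2,2)\setminus\mathcal{B}_x^+$, the first step is to pass from $x_0$ to a nearby point with irrational rotation angle. The key observation is that the set
\[
I \;=\; \{\,x\in(-2,2)\setminus\mathcal{B}_x^+ \;:\; x=\pm 2\cos(\theta\pi),\ \theta\notin\mathbb{Q}\,\}
\]
is dense in $(-2,2)$, because $\mathcal{B}_x^+$ is finite while the values with rational $\theta$ form a countable set.

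To realize the reduction, I would pick points $p_i \in U_i \cap S_{x_0}$, $i=1,2$, at which $\pi_x|_S$ is a submersion; this is possible because the critical locus of $\pi_x|_S$ meets the one-dimensional curve $S_{x_0}$ in at most a discrete set, while each $U_i \cap S_{x_0}$ is open in $S_{x_0}$. At each such regular point the restriction $\pi_x|_S$ is locally an open map, so $\pi_x(U_i)$ contains an open neighborhood $W_i$ of $x_0$ in $\mathbb{C}$. Selecting $x_0' \in W_1 \cap W_2 \cap I$ (taking $x_0' = x_0$ if the latter already has irrational angle), I obtain an irrational-type $x_0' \in (-2,2)\setminus \mathcal{B}_x^+$ with $U_i \cap S_{x_0'} \neq \emptyset$ for both $i$.

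Next, at a regular point $p \in U_1 \cap S_{x_0'}$ of $\pi_x|_S$, the local openness of $\pi_x$ guarantees that $\pi_x(U_1)$ contains a full disk around $x_0'$. I therefore can construct a sequence $\{q_j\} \subset U_1 \setminus S_{x_0'}$ with $q_j \to p$ such that $\arg(\pi_x(q_j) - x_0')$ remains in $[\delta,\pi-\delta]\cup[\pi+\delta, 2\pi-\delta]$ for some fixed $\delta > 0$ and every $j$; that is, each $q_j$ approaches $p$ non-tangentially in the $x$-direction. Applying Lemma~\ref{PROP:HITTING_OTHER_FIBERS-QuantitativeVersion} with $x_0$ replaced by $x_0'$, with $U$ taken to be $U_2$, and with the sequence $\{q_j\}$, produces for every $j$ sufficiently large an integer $n_j$ with $g_x^{n_j}(q_j) \in U_2$. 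Since $q_j \in U_1$, this gives $g_x^{n_j}(U_1) \cap U_2 \neq \emptyset$, as desired.

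The main obstacle I anticipate is technical rather than conceptual: verifying that the non-tangential approach condition in the $x$-parameter can be realized by a sequence staying in $U_1$, which crucially uses the local openness of $\pi_x|_S$ at a regular point of the projection. The countable set of rational rotation angles prevents a direct application of the quantitative lemma for every admissible $x_0$, but the density of $I$ together with the openness of $\pi_x(U_i)$ at regular points handles this cleanly. Once this is set up, the transitivity statement reduces immediately to the quantitative lemma.
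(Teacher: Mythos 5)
Your proposal is correct, but it takes a genuinely different route from the paper's own proof. The paper does not perturb $x_0$; instead it picks a point $(x_0,y_0,z_0)\in U_2\cap S_{x_0}$ with $S_{y_0}$ transverse to $S_{x_0}$, shrinks $\epsilon$ so that the branch $S_{y_0}(\epsilon)$ of $S_{y_0}\cap T_\epsilon(x_0)$ through that point is entirely contained in $U_2$, and then invokes Lemma~\ref{PROP:HITTING_OTHER_FIBERS} together with Remark~\ref{REM:BRANCHES} to produce an $n$ with $g_x^n(U_1)\cap S_{y_0}(\epsilon)\neq\emptyset$, hence $g_x^n(U_1)\cap U_2\neq\emptyset$. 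That argument rests on the Kobayashi-hyperbolicity/normality mechanism of Lemma~\ref{PROP:HITTING_OTHER_FIBERS} and works directly at the original $x_0$, with no irrationality hypothesis needed. Your proof instead perturbs to a nearby irrational-angle fiber $x_0'\in(-2,2)\setminus\mathcal{B}_x^+$ (which is legitimate: $\mathcal{B}_x^+$ is finite, the rational-angle set is countable, and $\pi_x|_S$ is automatically a submersion along the smooth curve $S_{x_0}$, so $\pi_x(U_1)$ and $\pi_x(U_2)$ are both open neighborhoods of $x_0$), picks a non-tangential sequence $q_j\in U_1$ with $\pi_x(q_j)\to x_0'$, and applies the quantitative Lemma~\ref{PROP:HITTING_OTHER_FIBERS-QuantitativeVersion} with $U=U_2$. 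This is more constructive and avoids the Kobayashi-hyperbolicity machinery of Lemma~\ref{Kobayashi_hiperbolicity}, at the cost of the extra density/perturbation step, and mirrors more closely what the paper itself does later inside the proof of Theorem~A. One small caveat: the published statement of the quantitative lemma reads ``the argument of $x_j$'' rather than ``the argument of $x_j-x_0$''; taken literally this would only make sense for $x_0=0$, but since the paper applies that lemma at arbitrary grid points $x_0$ in the proof of Theorem~A, the intended reading is clearly the one you used (non-tangential approach $\arg(x_j-x_0')$ bounded away from $0$ and $\pi$), so your application is sound. You could also streamline the first paragraph of your argument: because $x_0\notin\mathcal{B}_x$ forces $S_{x_0}$ to be smooth, $\pi_x|_S$ is a submersion at \emph{every} point of $S_{x_0}$, so no discussion of the critical locus is actually needed.
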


\begin{proof}
As in the proof of Lemma~\ref{PROP:HITTING_OTHER_FIBERS}, we work within
the tube $T^\epsilon_{x=x_0}$ defined in Equation (\ref{EQN:TEPSILON}).  Since $U_2$
is open and $U_2 \cap S_{x=x_0} \neq \emptyset$ they have infinitely many points of
intersection. We can therefore pick a point $(x_0,y_0,z_0) \in U_2 \cap S_{x=x_0}$
so that $S_{y=y_0}$ intersects $S_{x=x_0}$ transversally (Lemma~\ref{LEM:TRANSVERSALITY_OF_LEAVES}). We can make $\epsilon > 0$ smaller, if necessary,
so that $S_{y=y_0} \cap T_\epsilon(x_0)$ is expressed as two smooth branches, each of them
coinciding with the graph of a holomorphic
function of $x$. Let $S_{y=y_0}(\epsilon)$ denote the branch passing through $(x_0,y_0,z_0)$.
Since $U_2$ is open, it follows that $U_2 \cap S_{y=y_0}(\epsilon)$ is an open neighborhood of $(x_0,y_0,z_0)$
in $S_{y=y_0}(\epsilon)$. This means that we can reduce $\epsilon > 0$ even further, if necessary, so
that we can assume the entire branch $S_{y=y_0}(\epsilon)$ is contained in $U_2$.

With the above setting, the combination of Lemma~\ref{PROP:HITTING_OTHER_FIBERS} and
Remark~\ref{REM:BRANCHES} ensures the existence of $n$ such that
$g_x^n(U_1) \cap S_{y=y_0}(\epsilon) \neq \emptyset$.  Since $S_{y=y_0}(\epsilon) \subset U_2$, the proposition follows.
\end{proof}

Recalling that the sets $\mathcal{B}_x^+$, $\mathcal{B}_y^+$, and $\mathcal{B}_z^+$ are all finite,
Lemma~\ref{LEM:TRANSVERSALITY_OF_LEAVES} allows us to
choose points $x_0 \neq x_1 \in (-2,2) \setminus \mathcal{B}_x$,
$y_0 \neq y_1 \in (-2,2) \setminus \mathcal{B}_y$, $z_0 \neq z_1 \in (-2,2) \setminus \mathcal{B}_z$
and to form the ``grid''
\begin{align}\label{EQN:GRID}
\GRID = S_{x=x_0} \cup S_{x=x_1} \cup S_{y=y_0} \cup S_{y=y_1} \cup S_{z=z_0} \cup S_{z=z_1}
\end{align}
so that
\begin{itemize}
\item[(i)]every pair of curves (fibers) have transverse intersection (possibly empty), and
\item[(ii)] each $x_0, x_1, y_0, y_1, z_0,$ and $z_1$ is of the form $\pm 2 \cos(\theta \pi)$
for some irrational $\theta$.
\end{itemize}
We will also say that any pair of irreducible components
of $\mathcal{G}$ with empty intersection in $S_{A,B,C,D}$ are ``parallel'', e.g. $S_{x=x_0}$ and $S_{x=x_1}$ are parallel.

\begin{proposition}\label{PROP:FROM_ONE_IRREDUCIBLE_COMP_TO_ANOTHER}
Let $U$ be any open set in $S_{A,B,C,D}$ that intersects
the grid $\GRID$.  Then, for any irreducible component of the grid (say $S_{z=x_0}$)
there exists $\gamma \in \AUTOGROUP$ with
$\gamma(U)$ intersecting that chosen irreducible component (say $\gamma(U) \cap S_{z=x_0} \neq \emptyset$).
\end{proposition}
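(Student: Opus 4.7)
The plan is to prove the proposition by case analysis on which irreducible component of the grid $\mathcal{G}$ the open set $U$ meets, using Lemma~\ref{PROP:HITTING_OTHER_FIBERS} as the basic tool for pushing $U$ off of one component of $\mathcal{G}$ and onto another transversal component by iterating the appropriate generator $g_x$, $g_y$, or $g_z$ of $\Gamma$. Since the statement is symmetric in the six components of $\mathcal{G}$, it suffices to treat the case where the chosen target is $S_{x_0}$.

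If $U \cap S_{x_0} \neq \emptyset$ there is nothing to do and one takes $\gamma = \id$. Otherwise $U$ meets some other component of $\mathcal{G}$. Consider first the case where this component is \emph{not} parallel to $S_{x_0}$, say $U \cap S_{y_0} \neq \emptyset$ (the cases $S_{y_1}$, $S_{z_0}$, $S_{z_1}$ are entirely analogous, with $g_y$ replaced by $g_y$ or $g_z$ as appropriate). The grid conditions ensure that $y_0$ lies in the good range for the coordinate $y$ and that $S_{y_0}$ and $S_{x_0}$ intersect transversally. The analogue of Lemma~\ref{PROP:HITTING_OTHER_FIBERS} for the variable $y$ (with $S_{y_0}$ in the role of the preserved fiber and $S_{x_0}$ in the role of the target transversal fiber) then produces an integer $n$ with $g_y^n(U) \cap S_{x_0} \neq \emptyset$, and one sets $\gamma = g_y^n$.

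The remaining case is the ``parallel'' one, $U \cap S_{x_1} \neq \emptyset$. Here the idea is to bridge through the intermediate fiber $S_{y_0}$ of the grid, which meets both $S_{x_1}$ and $S_{x_0}$ transversally by construction of $\mathcal{G}$. A first application of Lemma~\ref{PROP:HITTING_OTHER_FIBERS}, with $x_1$ playing the role of $x_0$ and $y_0$ as the transversal fiber, yields an integer $n_1$ so that $V := g_x^{n_1}(U)$ satisfies $V \cap S_{y_0} \neq \emptyset$. The open set $V$ now falls into the case treated in the previous paragraph, and a second application of the same lemma (with the roles of $x$ and $y$ switched) produces an integer $n_2$ with $g_y^{n_2}(V) \cap S_{x_0} \neq \emptyset$. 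One then sets $\gamma = g_y^{n_2} \circ g_x^{n_1} \in \Gamma$, which is an element of $\Gamma$ because $g_x, g_y \in \Gamma$.

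The principal work lies in verifying that Lemma~\ref{PROP:HITTING_OTHER_FIBERS} is actually available at each step---namely that the six grid coordinates avoid the finite bad sets $\mathcal{B}_\bullet^+$ and that all non-parallel pairs of components of $\mathcal{G}$ meet transversally (and hence in two distinct points, since any such intersection is cut out by a quadratic in the remaining coordinate). Both conditions are built into the construction of $\mathcal{G}$ that precedes the proposition, so the argument goes through with at most two applications of $g_x$, $g_y$, or $g_z$; no deeper obstacle arises.
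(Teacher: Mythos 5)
Your proof is correct and follows essentially the same route as the paper's: handle the trivial case, use Lemma~\ref{PROP:HITTING_OTHER_FIBERS} directly when $U$ meets a component not parallel to $S_{x_0}$, and in the parallel case first push $U$ onto $S_{y_0}$ with an iterate of $g_x$ before applying the lemma a second time. The only cosmetic difference is that you explicitly name $\gamma = \mathrm{id}$ in the trivial case and spell out the composed element $g_y^{n_2}\circ g_x^{n_1}$; the underlying argument is identical.
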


\begin{proof}
It is clearly enough to show the existence of $\gamma\in \AUTOGROUP$ with $\gamma(U) \cap S_{z=x_0} \neq \emptyset$.
If $U$ already has non-trivial intersection with $S_{x=x_0}$ then there is nothing to prove.  Otherwise, there
are two cases to be considered:

\vspace{0.1in}

\noindent {\bf Case 1}: $U$ intersects an irreducible components of $\GRID$ that is not parallel to $S_{x=x_0}$.
Without loss of generality, we can suppose $U$ intersects $S_{y=y_0}$.
Lemma~\ref{PROP:HITTING_OTHER_FIBERS} then implies the existence of an iterate $g_y^n$ of $g_y$ so
that $g_y^n(U) \cap S_{x=x_0} \neq \emptyset$.

\vspace{0.1in}

\noindent {\bf Case 2}: $U$ intersects the component $S_{x=x_1}$ that is parallel to $S_{x=x_0}$.
In this case, we first apply Lemma~\ref{PROP:HITTING_OTHER_FIBERS} to find an iterate $g_x^n$ of $g_x$ such that
$g_x^n(U) \cap S_{y=y_0} \neq \emptyset$. Hence the problem is reduced to the situation treated in Case 1 so that it suffices
to proceed accordingly.
\end{proof}

\begin{corollary}\label{COR:TRANSITIVE_ON_G}
Let $U_1$ and $U_2$ be any two open sets in $S_{A,B,C,D}$ both of which intersect the grid~$\GRID$.  Then, there
exists $\gamma \in \AUTOGROUP$ with $\gamma(U_1) \cap U_2 \neq \emptyset$.
\end{corollary}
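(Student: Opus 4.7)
The plan is to obtain the desired $\gamma$ as a composition $\gamma = g_x^n \circ \gamma_1$ (or an analog with $y$ or $z$ in place of $x$), by first ``transporting'' $U_1$ to a neighborhood of the same fiber of $\mathcal{G}$ that $U_2$ meets, and then applying the transitivity statement within that fiber's tube. The two previous results feed directly into this: Proposition~\ref{PROP:FROM_ONE_IRREDUCIBLE_COMP_TO_ANOTHER} supplies the transport step, and Proposition~\ref{PROP:TRANSITIVE_IN_NBHD_OF_ONE_FIBER} supplies the local transitivity step.

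More precisely, since $U_2$ intersects $\mathcal{G}$, some irreducible component of $\mathcal{G}$ meets $U_2$; without loss of generality take this component to be $S_{x_0}$, noting that by construction $x_0 \in (-2,2) \setminus \mathcal{B}_x^+$ and $x_0 = \pm 2\cos(\theta\pi)$ with $\theta$ irrational. Applying Proposition~\ref{PROP:FROM_ONE_IRREDUCIBLE_COMP_TO_ANOTHER} to $U_1$ with $S_{x_0}$ as the designated component, I would obtain $\gamma_1 \in \Gamma$ such that $\gamma_1(U_1) \cap S_{x_0} \neq \emptyset$. Since $\gamma_1$ is a biholomorphism of $S_{A,B,C,D}$, the set $\gamma_1(U_1)$ is open.

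At this stage both $\gamma_1(U_1)$ and $U_2$ are open subsets of $S_{A,B,C,D}$ meeting the same fiber $S_{x_0}$, with $x_0 \in (-2,2) \setminus \mathcal{B}_x^+$. Proposition~\ref{PROP:TRANSITIVE_IN_NBHD_OF_ONE_FIBER} then yields an integer $n$ such that
\[
g_x^n\bigl(\gamma_1(U_1)\bigr) \cap U_2 \neq \emptyset.
\]
Setting $\gamma = g_x^n \circ \gamma_1 \in \Gamma$ gives the desired element.

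There is no real obstacle here beyond bookkeeping: the only thing to be careful about is that the fiber chosen to run the second step lies in $(-2,2) \setminus \mathcal{B}_x^+$ (or the analogous set in $y$ or $z$), which is guaranteed by conditions~(i)--(ii) in the definition of the grid $\mathcal{G}$ in~\eqref{EQN:GRID}. The argument is symmetric in the three coordinates, so whichever irreducible component of $\mathcal{G}$ one picks to meet $U_2$, the corresponding generator $g_x$, $g_y$, or $g_z$ serves the role played above by $g_x$.
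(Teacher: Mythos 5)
Your proposal is correct and follows essentially the same route as the paper: first use Proposition~\ref{PROP:FROM_ONE_IRREDUCIBLE_COMP_TO_ANOTHER} to bring $U_1$ onto the same irreducible component of $\mathcal{G}$ that $U_2$ already meets, then conclude with Proposition~\ref{PROP:TRANSITIVE_IN_NBHD_OF_ONE_FIBER}. The only minor caveat is that the hypothesis $x_0 \in (-2,2) \setminus \mathcal{B}_x^+$ needed for Proposition~\ref{PROP:TRANSITIVE_IN_NBHD_OF_ONE_FIBER} is not literally guaranteed by conditions (i)--(ii) in the grid construction, which only states $x_0 \in (-2,2) \setminus \mathcal{B}_x$; since $\mathcal{B}_x^+$ is also finite the grid points can simply be chosen to avoid it, and the paper's proof relies on this same implicit strengthening, so this does not affect the substance of your argument.
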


\begin{proof}
Using Propositition \ref{PROP:FROM_ONE_IRREDUCIBLE_COMP_TO_ANOTHER} we can find some $\gamma_1 \in \AUTOGROUP$
so that $\gamma_1(U_1)$ and $U_2$ both intersect the same irreducible component of $\GRID$.
The result then follows immediately
from Proposition~\ref{PROP:TRANSITIVE_IN_NBHD_OF_ONE_FIBER}.
\end{proof}

\begin{remark}\label{REM:PROBLEM_WITH_PROP}
{\rm Concerning Corollary~\ref{COR:TRANSITIVE_ON_G}, note that the non-empty intersection
$\gamma(U_1) \cap U_2$ may be disjoint from $\GRID$ (and hence it might be disjoint
from $\mathcal{J}_{A,B,C,D}$ as well).}
\end{remark}

Proposition~\ref{PROP:HITTING_G} below is the last ingredient in the proof of
Theorem~A. Notwithstanding its very elementary nature, this proposition is likely to
find further applications in the study of the dynamics associated with the
group $\AUTOGROUP$.

\begin{proposition}\label{PROP:HITTING_G}
For any open set $U$ intersecting the Julia set $\mathcal{J}_{A,B,C,D}$ of $\AUTOGROUP$ non-trivially,
there exists some element $\gamma \in \AUTOGROUP$ such that
$\gamma(U) \cap \GRID \neq \emptyset$.
\end{proposition}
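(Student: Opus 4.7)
The plan is to prove the contrapositive: assuming that $\gamma(U) \cap \mathcal{G} = \emptyset$ for every $\gamma \in \Gamma$, I will show that $\Gamma$ forms a normal family on $U$, so $U \subset \mathcal{F}_{A,B,C,D}$, contradicting the hypothesis that $U \cap \mathcal{J}_{A,B,C,D} \neq \emptyset$.

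Under the contrapositive assumption, for every $\gamma \in \Gamma$ the holomorphic function $\pi_x \circ \gamma : U \to \mathbb{C}$ avoids the two values $x_0$ and $x_1$. Since it is $\mathbb{C}$-valued it also avoids $\infty$, so it is a holomorphic map from $U$ into the thrice-punctured sphere $\mathbb{CP}^1 \setminus \{x_0, x_1, \infty\}$, which is Kobayashi hyperbolic. By Montel's theorem, the family $\{\pi_x \circ \gamma : \gamma \in \Gamma\}$ is normal on $U$. The same reasoning, using that $\pi_y \circ \gamma$ avoids $\{y_0, y_1, \infty\}$ and $\pi_z \circ \gamma$ avoids $\{z_0, z_1, \infty\}$, shows that the families $\{\pi_y \circ \gamma\}$ and $\{\pi_z \circ \gamma\}$ are normal as well.

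Given any sequence $\{\gamma_n\} \subset \Gamma$, a diagonal extraction yields a subsequence (still denoted $\gamma_n$) such that $\pi_x \circ \gamma_n \to X$, $\pi_y \circ \gamma_n \to Y$, $\pi_z \circ \gamma_n \to Z$ uniformly on compact subsets of $U$, where $X, Y, Z: U \to \mathbb{CP}^1$ are holomorphic. By Hurwitz's theorem applied to each avoided value, each of $X, Y, Z$ is either constantly equal to one of its three avoided values or is $\mathbb{C}$-valued and avoids the two relevant coordinate values. Two cases arise. If any one of $X, Y, Z$ is identically $\infty$, say $X \equiv \infty$, then for any compact $\overline{V} \subset U$ and any compact $K \subset S_{A,B,C,D}$, the set $\pi_x(K) \subset \mathbb{C}$ is bounded whereas $\pi_x \circ \gamma_n$ eventually leaves every bounded set uniformly on $\overline{V}$; hence $\gamma_n(\overline{V}) \cap K = \emptyset$ for all large $n$, meaning that $\gamma_n \to \infty$ in the sense of Section~\ref{SUBSEC:FATOU_JULIA}. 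Otherwise all three limits are $\mathbb{C}$-valued, and the limit $(X, Y, Z) : U \to \mathbb{C}^3$ takes values in $S_{A,B,C,D}$ by continuity of the defining polynomial, giving uniform convergence of $\gamma_n$ on compact subsets of $U$ to a holomorphic limit into $S_{A,B,C,D}$.

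In either case $\Gamma$ is a normal family on $U$, producing the desired contradiction. The main subtle point is the use of Hurwitz to prevent the three coordinate limits from exhibiting mixed behavior (such as $X$ attaining $\infty$ only on a discrete set): once each limit is forced to be either $\mathbb{C}$-valued or identically one of the avoided values, the dichotomy between ``honest'' limit in $S_{A,B,C,D}$ and convergence to infinity is clean. I do not expect any serious obstacle beyond checking that the notion of ``converging to infinity'' used in Section~\ref{SUBSEC:FATOU_JULIA} is indeed implied by having any one coordinate projection diverge uniformly, which is immediate because compact subsets of $S_{A,B,C,D} \subset \mathbb{C}^3$ have bounded coordinates.
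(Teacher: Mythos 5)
Your proof is correct and follows essentially the same route as the paper: assume the orbit of $U$ misses $\mathcal{G}$, deduce that each coordinate projection $\pi_x \circ \gamma$, $\pi_y \circ \gamma$, $\pi_z \circ \gamma$ omits two finite values, and apply Montel's theorem coordinate-by-coordinate to conclude normality on $U$, contradicting $U \cap \mathcal{J}_{A,B,C,D} \neq \emptyset$. The paper compresses the final step into the single phrase ``Applying Montel's Theorem to each coordinate\ldots implies that the whole group $\Gamma$ forms a normal family on $U$,'' whereas you spell out the Hurwitz dichotomy and verify explicitly that the case where a coordinate limit is identically $\infty$ falls under the paper's notion of ``converging to infinity''; this is a legitimate and worthwhile elaboration of what the paper leaves implicit, not a different argument.
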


\begin{proof}
Assume aiming at a contradiction that $\gamma(U)$ is disjoint from $\GRID$ for every $\gamma \in \AUTOGROUP$.
Then, for every $\gamma \in \AUTOGROUP$ we have that
\begin{align*}
\iota \circ \gamma(U) \subset \big(\mathbb{C} \setminus \{x_0,x_1\}\big) \times \big(\mathbb{C} \setminus \{y_0,y_1\}\big) \times \big(\mathbb{C} \setminus \{z_0,z_1\}\big),
\end{align*}
where $\iota: S_{A,B,C,D} \hookrightarrow \mathbb{C}^3$ denotes the inclusion.
Applying Montel's Theorem to each coordinate, this implies that the whole group
$\AUTOGROUP$ forms a normal family on $U$. This contradicts the assumption that $U \cap  \mathcal{J}_{A,B,C,D} \neq \emptyset$
and establishes the statement.
\end{proof}


\begin{proof}[Proof of Theorem A]
The proof is based on Baire's argument. First note that ${\mathcal
J}_{A,B,C,D}$ has the Baire property since it is a complete metric space as a
closed subset of a manifold. The topology in ${\mathcal J}_{A,B,C,D}$ is the
one inherited from the topology of $S_{A,B,C,D}$ and hence it is second
countable, i.e., there is a countable basis $\{V_{k}\}_{k=1}^{\infty}$ for the
topology of ${\mathcal J}_{A,B,C,D}$. By definition the open sets $V_{k}
\subset {\mathcal J}_{A,B,C,D}$ are given by
\begin{align}\label{EQN:DEF_UV}
V_{k} = U_k \cap {\mathcal J}_{A,B,C,D} \, ,
\end{align}
where $U_k$ is an open set of $S_{A,B,C,D}$. To prove the theorem, it suffices to show that the restriction to
${\mathcal J}_{A,B,C,D}$ of the action of $\AUTOGROUP$ is topologically transitive in the basis
$\{V_{k}\}_{k=1}^{\infty}$. Namely, given $k_1$ and $k_2$, we need to prove the existence of
$\gamma \in \AUTOGROUP$ such that $\gamma (V_{k_1}) \cap V_{k_2} \neq \emptyset$.
In fact, assuming the existence of these elements $\gamma$, for every~$n \in \N$, consider the set
$$
\bigcup_{\gamma \in \AUTOGROUP} \gamma^{-1}(V_{n})
$$
formed by all points in ${\mathcal J}_{A,B,C,D}$ whose orbit intersects
$V_{n}$. This set is clearly open since $V_{n}$ is so. It is
also dense in ${\mathcal J}_{A,B,C,D}$ since it intersects non-trivially
every set $V_{k}$ defining a basis for the topology of ${\mathcal J}_{A,B,C,D}$.
Taking then the intersection over~$n$
$$
\bigcap_{n=1}^\infty \bigcup_{\gamma \in \AUTOGROUP} \gamma^{-1}(V_{n})
$$
we obtain a $G_{\delta}$-dense subset of ${\mathcal J}_{A,B,C,D}$. By definition, the $\AUTOGROUP$-orbit
of any point in this intersection visits all the open sets $V_{k}$ so that these points have dense
orbits in ${\mathcal J}_{A,B,C,D}$.

It remains to show that for any two open sets $V_{k_1}$ and $V_{k_2}$ from our basis
there exists $\gamma \in \AUTOGROUP$ satisfying
$\gamma (V_{k_1}) \cap V_{k_2} \neq \emptyset$.

We start by working with the corresponding open sets $U_{k_1}$ and $U_{k_2}$ of
$S$.  We first use Proposition~\ref{PROP:HITTING_G} to find $\gamma_1, \gamma_2
\in \AUTOGROUP$ such that $\gamma_1(U_{k_1})$ and $\gamma_2(U_{k_2})$ each hit the
grid $\GRID$.  We can then use Proposition~\ref{PROP:FROM_ONE_IRREDUCIBLE_COMP_TO_ANOTHER} to find $\gamma_3 \in \AUTOGROUP$
such that $\gamma_3 \circ \gamma_1(U_{k_1})$ and $\gamma_2(U_{k_2})$ intersect the
same irreducible component of $\GRID$.  Without loss of generality, we
suppose it is $S_{x=x_0}$; i.e.\ that it is the first of the six irreducible components of $\GRID$ listed in (\ref{EQN:GRID}).

Since $S_{y=y_0}$ is transverse to $S_{x=x_0}$, we can choose a
sequence of points $\{q_j\}_{j=1}^\infty \subset S_{y=y_0}$ converging to $q \in S_{x=x_0} \cap
S_{y=y_0}$ that satisfies the hypotheses of Lemma
\ref{PROP:HITTING_OTHER_FIBERS-QuantitativeVersion}.  Moreover, by Lemma
\ref{COR:GX_JULIA}, $S_{y=y_0} \subset {\mathcal J}_{A,B,C,D}$ so each element of
the sequence is in ${\mathcal J}_{A,B,C,D}$.  We therefore find a point $q_N
\in {\mathcal J}_{A,B,C,D}$ and $\gamma_4, \gamma_5 \in \AUTOGROUP$ with
$\gamma_4(q_N) \in \gamma_3 \circ \gamma_1(U_{k_1})$ and $\gamma_5(q_N) \in \gamma_2(U_{k_2})$.
In other words,
\begin{align*}
q_N \in \gamma_4^{-1} \circ \gamma_3 \circ \gamma_1(U_{k_1}) \quad \bigcap \quad  \gamma_5^{-1} \circ \gamma_2(U_{k_2})  \quad \bigcap \quad {\mathcal J}_{A,B,C,D}.
\end{align*}
Since ${\mathcal J}_{A,B,C,D}$ is invariant under $\AUTOGROUP$, this proves that
\begin{align*}
\gamma_4^{-1} \circ \gamma_3 \circ \gamma_1(V_{k_1}) \cap \gamma_5^{-1} \circ \gamma_2(V_{k_2}) \neq \emptyset.
\end{align*}
We conclude that $\gamma(V_{k_1}) \cap V_{k_2} \neq \emptyset$ with
$\gamma = \gamma_2^{-1} \circ \gamma_5 \circ \gamma_4^{-1} \circ \gamma_3 \circ
\gamma_1$.
\end{proof}

\begin{remark}
{\rm After finding $\gamma_1, \gamma_2
\in \AUTOGROUP$ such that $\gamma_1(U_{k_1})$ and $\gamma_2(U_{k_2})$ each hit the
grid $\GRID$, it is
tempting to use Corollary \ref{COR:TRANSITIVE_ON_G} to find $\gamma_3$ with
\begin{align}\label{EQN:NONTRIVIAL_INTERSECTION}
\gamma_3(\gamma_1(U_{k_1})) \cap \gamma_2(U_{k_2}) \neq \emptyset.
\end{align}
However, this does not necessarily prove that $\gamma_3(\gamma_1(V_{k_1})) \cap \gamma_2(V_{k_2}) \neq \emptyset$ because the intersection
(\ref{EQN:NONTRIVIAL_INTERSECTION}) need not be in $\GRID$ and hence it
potentially might not contain any points of ${\mathcal J}_{A,B,C,D}$; see Remark~\ref{REM:PROBLEM_WITH_PROP}.  This is why we use the ``quantitative'' Lemma \ref{PROP:HITTING_OTHER_FIBERS-QuantitativeVersion} instead.}
\end{remark}


\begin{proof}[Proof of Theorem B]
Let us define a new ``grid'' $\GRID'$ using
Equation (\ref{EQN:GRID}), but this time we will use
\begin{align*}
x_0 = y_0 = z_0 = 0 \qquad \mbox{and}  \qquad x_1 = y_1 = z_1 = \sqrt{2}.
\end{align*}
These values are chosen so that 
\begin{align*}
g_x^2 |_{S_{x=x_0}} = {\rm id}, \quad g_y^2 |_{S_{y=y_0}} = {\rm id}, \quad g_z^2 |_{S_{z=z_0}} = {\rm id}, \qquad g_x^4 |_{S_{x=x_1}} = {\rm id}, \quad g_y^4 |_{S_{y=y_1}} = {\rm id}, \quad \mbox{and} \quad g_z^4 |_{S_{z=z_1}} = {\rm id}.
\end{align*}
As in Proposition \ref{PROP:HITTING_G}, if $U$ is any open set that intersects
${\mathcal J}_{A,B,C,D}$ non-trivially then there is an element $\gamma \in
\AUTOGROUP$ with $\gamma(U)$ intersecting $\GRID'$. In fact, the proof of
Proposition \ref{PROP:HITTING_G} does not use the choices that $x_0, x_1 \not \in
\mathcal{B}_x$, $y_0, y_1 \not \in \mathcal{B}_y$, or $z_0, z_1 \not \in
\mathcal{B}_z$ that were made in the construction of our original grid $\GRID$ so that it applies
equally well to $\GRID'$.

Conjugating by $\gamma$, if necessary, it then suffices to prove that shear fixed
points are dense in our newly chosen grid $\GRID'$.  We will prove it for
$S_{x=x_0}$ and $S_{x=x_1}$ and leave the completely analogous proofs for $S_{y=y_0},
S_{z=z_0}, S_{y=y_1}$ and $S_{z=z_1}$ to the reader.

Every point of $S_{x=x_0}$ is a fixed point for $g_x^2$.  We will show that all but finitely many of them are shear fixed points.
Clearly this assertion is, in turn, equivalent to showing that
the derivative $D(g_x)^2$ has two dimensional generalized eigenspace associated to eigenvalue $1$ but only one eigenvector associated to eigenvalue~$1$.
Considering $g_x^2$ as a mapping from $\mathbb{C}^3 \rightarrow \mathbb{C}^3$ we have
\begin{align*}
D(g_x^2)|_{x=0} =  \left[ \begin {array}{ccc} 1&0&0\\ \noalign{\medskip}2\,z-C&1&0
\\ \noalign{\medskip}B-2\,y&0&1\end {array} \right].
\end{align*}
If $z \neq C/2$ or $y \neq B/2$ then this matrix has generalized eigenspace of dimension $3$ associated to the eigenvalue $1$ but only two eigenvectors,
namely ${\bm e}_2=[0,1,0]$ and ${\bm e}_3 =[0,0,1]$.
Therefore, it suffices to prove that 
\begin{align}\label{EQN:CONDITION_ON_TANGENT_SPACE}
T_{p} S_{A,B,C,D} \neq {\rm span}({\bm e}_2,{\bm e}_3)
\end{align}
for every $p \in S_{x=x_0}$ bar some finite set. Taking the gradient of the defining equation for $S_{A,B,C,D}$ yields that
\begin{align*}
T_{p} S_{A,B,C,D} = {\rm ker} \left[\begin {array}{ccc} yz-A+2\,x&zx-B+2\,y&xy-C+2\,z\end {array}\right].
\end{align*}
Restricted to $x=x_0=0$ we can only have ${\bm e}_2 \in T_{p} S_{A,B,C,D}$ if $y=B/2$ and we can only have ${\bm e}_3 \in T_{p} S_{A,B,C,D}$ if $z=C/2$.  
Combined with $x= 0$ each of these conditions amounts to at most two points of $S_{x=x_0}$.  Therefore,
all but at most finitely many points of $S_{x=x_0}$ are shear fixed points of~$g_x^2$.

The situation for $S_{x=x_1}$ is essentially the same, except that one must work with $g_x^4$.
We leave the details to the reader.
\end{proof}


\begin{proof}[Proof of Theorem C]
Let $p_1$ and $p_2$ be arbitrary points in ${\mathcal J}_{A,B,C,D}$. We will show that for any neighborhoods
$U_1$ of $p_1$ and $U_2$ of $p_2$, there is a path in ${\mathcal J}_{A,B,C,D}$ from $U_1$ to $U_2$.
Theorem~C will immediately follow.

The grid $\GRID$ given in~(\ref{EQN:GRID}) is path connected and, by virtue of
Lemma~\ref{COR:GX_JULIA}, we have
$\GRID \subset {\mathcal J}_{A,B,C,D}$. Therefore, it suffices to find a path from 
$U_1$ to $\GRID$ and a path from $U_2$ to $\GRID$.
As the situation is symmetric, it suffices to consider $U_1$.

Since $p_1 \in U_1 \cap {\mathcal J}_{A,B,C,D}$, Proposition~\ref{PROP:HITTING_G}
gives some $\gamma \in \AUTOGROUP$ such that $\gamma(U_1) \cap
\GRID \neq \emptyset$.  Let $C$ be an irreducible component of
$\GRID$ with $\gamma(U_1) \cap C \neq \emptyset$.  Since we have chosen
the irreducible components of $\GRID$ to be smooth, $C$ is biholomorphic
to $\mathbb{C} \setminus \{0\}$.

Consider now the Riemann surface $\gamma^{-1}(C)$ contained in $S_{A,B,C,D} \subset \C^3$.
Since $\gamma$ is a holomorphic diffeomorphism of $S_{A,B,C,D}$, it follows that
$\gamma^{-1}(C)$ is again biholomorphic to $\mathbb{C} \setminus \{0\}$ and hence it is
uniformized by $\C$ and contained in $\C^3$. We claim that $\gamma^{-1}(C)$
intersects the grid $\GRID$. Indeed, if we had $\gamma^{-1}(C) \cap \GRID = \emptyset$,
the uniformization map from $\C$ to
$\gamma^{-1}(C)$ would yield a (non-constant) holomorphic map from $\C$ to $\C^3$ each of whose
coordinates omits two values in $\C$. Picard's Theorem would then imply that this map must be constant
and this is impossible.

Finally, note that $\gamma^{-1}(C) \subset {\mathcal J}_{A,B,C,D}$ since ${\mathcal J}_{A,B,C,D}$
is invariant by $\AUTOGROUP$ and $C\subset \GRID \subset {\mathcal J}_{A,B,C,D}$. Furthermore,
by construction, $\gamma^{-1}(C)$ also intersects $U_1$. Since $\gamma^{-1}(C)$ is path connected,
we can therefore find a path contained in $\gamma^{-1}(C) \subset {\mathcal J}_{A,B,C,D}$ going from
$U_1$ to $\GRID$. The proof of Theorem~C is complete.
\end{proof}

\section{Picard parameters and proof of Theorem D}\label{SEC:PICARD}

The parameters $(A,B,C,D) = (0,0,0,4)$ are quite special for at least two reasons:
\begin{itemize}
\item The surface $S_{(0,0,0,4)}$ has the maximal number of singularities among all cubic
surfaces and for this reason it is called the {\em Cayley Cubic}. They are at the four points
\begin{align}\label{EQN:SING_PTS_CAYLEY}
 \{(-2,-2,-2), \, (-2,2,2), \, (2,-2,2), \, (2,2,-2)\}.
\end{align}
\item  It was proved by Cantat-Loray \cite[Theorem 5.4]{cantat-2} that $\AUTOGROUP$ has an invariant affine structure
on $S_{A,B,C,D}$ if and only if $(A,B,C,D) = (0,0,0,4)$.  
\end{itemize}
An {\em affine structure} on a complex surface consists of a collection of coordinate charts whose
transition functions are (restrictions of) affine mappings of $\mathbb{C}^2$.   One says that a group $G$ {\em preserves an
affine structure} if the expression of each element of $G$ in the preferred collection of charts (associated to the specified
affine structure) consists again of affine
mappings.
Existence of the invariant affine structure dates back to work of Picard on the Painlev\'e~6 equation corresponding
to the parameters $(A,B,C,D) = (0,0,0,4)$. It is for this reason that the parameters 
$(A,B,C,D) = (0,0,0,4)$ are called the {\em Picard Parameters}.

From our point of view, this case is also very interesting as it will soon be clear. More importantly, however,
the information collected in the course of this discussion will enable us to prove
Proposition~\ref{PROP:GOODPARAMS_IN_COMPLETMENT_HYPERSURFACES}
in Section~\ref{SEC:FATOU}. Albeit a somewhat technical statement,
Proposition~\ref{PROP:GOODPARAMS_IN_COMPLETMENT_HYPERSURFACES} plays an important role in the proofs
of Theorems~G and~K.

Throughout this section we will typically drop the parameters from our
notation, writing $S \equiv S_{0,0,0,4}$, $\AUTOGROUP \equiv \AUTOGROUP_{0,0,0,4}$,
$\mathcal{J} \equiv \mathcal{J}_{0,0,0,4}$, and so on. The singular
locus of $S$ will be denoted by $S_{\rm sing}$.

\begin{proposition}\label{PROP:PICARD_PARAM_DISCRETE}
For the Picard parameters, $\AUTOGROUP$ acts locally discretely on any open 
$U \subset S$.  
\end{proposition}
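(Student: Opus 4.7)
The plan is to exploit the classical semi-conjugacy, mentioned in the introduction, between the action of $\Gamma \equiv \Gamma_{0,0,0,4}$ on $S = S_{0,0,0,4}$ and a monomial (linear) action on the $2$-torus $(\mathbb{C}^*)^2$. Recall that there is a degree-$2$ branched covering $\pi: (\mathbb{C}^*)^2 \to S$ whose deck involution $\sigma(u,v) = (u^{-1}, v^{-1})$ has exactly four fixed points, which map onto the four singular points of $S$ listed in~(\ref{EQN:SING_PTS_CAYLEY}). The involutions $s_x, s_y, s_z$ lift through $\pi$ to monomial involutions of $(\mathbb{C}^*)^2$, so every $\gamma \in \Gamma$ lifts, in two ways differing by composition with $\sigma$, to a monomial automorphism
\begin{align*}
\tilde\gamma(u,v) \;=\; (u^{a}v^{b},\, u^{c}v^{d}), \qquad M_\gamma = \begin{pmatrix} a & b \\ c & d \end{pmatrix} \in SL(2,\mathbb{Z}),
\end{align*}
and $\gamma \mapsto \pm M_\gamma$ is injective; see Cantat--Loray~\cite{cantat-2}.

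Now suppose, for a contradiction, that $\Gamma$ is locally non-discrete on some non-empty open $U \subset S$: there is a sequence $f_n \in \Gamma \setminus \{\mathrm{id}\}$ converging uniformly to the identity on compact subsets of $U$. Since $S_{\text{sing}}$ is finite, we shrink $U$ to a non-empty open subset disjoint from $S_{\text{sing}}$ over which $\pi$ admits a holomorphic section, yielding a domain $\tilde U \subset (\mathbb{C}^*)^2$ with $\pi|_{\tilde U}$ a biholomorphism onto $U$. Choosing sheets appropriately, we lift each $f_n$ to a map $\tilde f_n: \tilde U \to (\mathbb{C}^*)^2$ so that $\tilde f_n \to \mathrm{id}$ uniformly on compact subsets of $\tilde U$.

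Each $\tilde f_n$ agrees on $\tilde U$ with the restriction of a monomial map $(u,v) \mapsto (u^{a_n}v^{b_n}, u^{c_n}v^{d_n})$. Fix $(u_0, v_0) \in \tilde U$. Uniform convergence $\tilde f_n \to \mathrm{id}$ on a neighborhood of $(u_0, v_0)$ gives $\tilde f_n(u_0, v_0) \to (u_0, v_0)$ and, by the Cauchy estimates, convergence of partial derivatives: writing $\tilde f_n^{(1)}$ for the first coordinate one has
\begin{align*}
a_n\, u_0^{a_n-1} v_0^{b_n} \to 1 \quad\text{and}\quad b_n\, u_0^{a_n} v_0^{b_n-1} \to 0.
\end{align*}
Dividing by $u_0^{a_n}v_0^{b_n} \to u_0$ yields $a_n/u_0 \to 1/u_0$ and $b_n/v_0 \to 0$, so $a_n \to 1$ and $b_n \to 0$. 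The analogous argument for the second coordinate gives $c_n \to 0$ and $d_n \to 1$. As these exponents are integers, for all large $n$ we must have $(a_n, b_n, c_n, d_n) = (1,0,0,1)$, forcing $\tilde f_n = \mathrm{id}$ and hence $f_n = \mathrm{id}$, a contradiction.

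The main obstacle is bookkeeping in the semi-conjugacy setup, in particular verifying that the index-$2$ sign ambiguity introduced by the deck involution $\sigma$ does not spoil the rigidity conclusion. It does not: replacing $M_n$ by $-M_n$ still produces a monomial map whose uniform convergence to the identity forces all four exponents to tend to the correct values in $\{-1,0,1\}$, and the integer rigidity argument forces $\pm M_n = I$ for large $n$.
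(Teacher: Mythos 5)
Your proposal is correct and follows essentially the same route as the paper: lift the dynamics through the degree-$2$ orbifold cover $\Phi\colon (\C^*)^2 \to S$ to the monomial action of $\Gamma(2)$, and invoke the discreteness of $\Gamma(2)$ in ${\rm SL}(2,\mathbb{Z})$. The one place you diverge is in how discreteness is harvested at the end: the paper observes that passing to $(\log|u|,\log|v|)$ linearizes the monomial action, so local non-discreteness of $\Gamma(2)$ acting linearly on $\mathbb{R}^2$ is immediately absurd, whereas you instead apply Cauchy estimates to extract convergence of the integer exponents and conclude by integer rigidity that $M_n = I$ for $n$ large. Both work; yours is marginally more hands-on, and it has the small merit of making explicit the $\pm M_n$ ambiguity coming from the deck involution $\sigma = \eta_{-I}$, a point the paper's two-sentence proof does not spell out (the component $V$ of $\Phi^{-1}(U)$ may be sent toward the other sheet, so the lifted maps could converge to $\sigma$ rather than ${\rm id}$, and one must replace $M_n$ by $-M_n$ before arguing). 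Either way $\pm M_n = I$ eventually, forcing $f_{M_n} = {\rm id}$ and the contradiction.
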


We will use the existence of a semi-conjugacy
between the action of $\AUTOGROUP$ and the group action of monomial mappings on $\mathbb{C}^*
\times \mathbb{C}^*$, which we describe now (see for example \cite[Section 1.5]{cantat-2}).
Consider the following two matrix groups
\begin{align}\label{definition_Gamma2}
\tilde{\MATRIXGP}_2 := \{M \in {\rm SL}(2,\mathbb{Z}) \, : \, M \equiv {\rm Id} \,  {\rm mod} 2\} \qquad \mbox{and} \qquad \MATRIXGP_2 := \{[M] \in {\rm PSL}(2,\mathbb{Z}) \, : \, M \equiv {\rm Id} \,  {\rm mod} 2\}.
\end{align}
The square brackets around $M$ in the definition of $\MATRIXGP_2$ denote that we take the equivalence class modulo multiplication
by $\pm {\rm Id}$.   Note that  $\MATRIXGP_2$  is the famous congruence subgroup which has well-known generating set consisting of $[M_x]$ and $[M_y]$ where 
\begin{align}\label{EQN:MATRIX_GP_GENS}
M_x := \left[\begin{array}{cc} 1 & 2 \\ 0 & 1 \end{array}\right] \quad \mbox{and} \quad  M_y := \left[\begin{array}{cc} 1 & 0 \\ 2 & 1\end{array}\right].
\end{align}
See, for example, \cite[Section 16.3]{CONWAY} or Exercises 6 and 7 from \cite[Chapter 13]{ZAK}.   Meanwhile, $\tilde{\MATRIXGP}_2$ is 
generated by $M_x, M_y$, and $-{\rm Id}$.

There is a group isomorphism from $\MATRIXGP_2$ to $\AUTOGROUP$ induced by
sending $[M_x]$ to $g_x$ and $[M_y]$ to $g_y$.
More generally, we denote the image of any $[M] \in
\MATRIXGP_2$ under this isomorphism by $f_{[M]} \in \AUTOGROUP$.  This
isomorphism can be seen directly, but it also fits nicely within the
context of dynamics on character varieties; see \cite[Section 2.3]{cantat-2} for more details.

Associated with a matrix $M = \{ m_{ij} \}\in \tilde{\MATRIXGP}_2$ is a monomial mapping $\eta_M: \mathbb{C}^* \times
\mathbb{C}^* \rightarrow \mathbb{C}^* \times \mathbb{C}^*$ given by
\begin{align*}
\eta_M \left(u, v \right) = \left(u^{m_{11}} v^{m_{12}}, u^{m_{21}} v^{m_{22}}\right).
\end{align*}
Also let $\Phi: \mathbb{C}^* \times \mathbb{C}^* \rightarrow S$ be defined by
\begin{align*}
\Phi(u,v) = \left(-u - 1/u, -v - 1/v, -u/v - v/u \right).
\end{align*}
It turns out that $\Phi$ is a degree two orbifold cover. Furthermore, a straightforward verification
shows that that the critical points of
$\Phi$ are precisely the four points $(u,v) = (\pm 1, \pm 1)$ while the corresponding critical values
are the four points of $S_{\rm sing}$.

\begin{proposition}\label{PROP:SEMI_CONJUGACY}
$\Phi$ semi-conjugates the action of $\tilde{\MATRIXGP}_2$ on $\mathbb{C}^* \times \mathbb{C}^*$ to the action of
$\AUTOGROUP$ on $S$. More specifically, given $M \in \tilde{\MATRIXGP}_2$ and $(u,v) \in \mathbb{C}^* \times \mathbb{C}^*$,
we have
\begin{align}\label{EQN:SEMI_CONJ}
\Phi \circ \eta_M(u,v) = f_{[M]} \circ \Phi(u,v).
\end{align}
\end{proposition}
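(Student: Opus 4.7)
The plan is to reduce the verification of the semi-conjugacy relation (\ref{EQN:SEMI_CONJ}) to a finite check on a generating set of $\Gamma(2)$. Since the three matrices displayed in (\ref{EQN:GAMMA2_GENERATORS}) generate $\Gamma(2)$, and their images under the indicated isomorphism are $g_x$, $g_y$, and $g_z$, it would suffice to verify (\ref{EQN:SEMI_CONJ}) for these three specific matrices and then bootstrap to all of $\Gamma(2)$.

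For the bootstrap, I would first observe that the assignment $M \mapsto \eta_M$ is a group homomorphism from $\Gamma(2)$ into the group of holomorphic automorphisms of $\mathbb{C}^* \times \mathbb{C}^*$. Indeed, a direct computation using the rule $(MN)_{ij} = \sum_k M_{ik} N_{kj}$ shows that $\eta_{MN} = \eta_M \circ \eta_N$ (the exponents add correctly because multiplication of matrices in $\mathrm{SL}(2,\mathbb{Z})$ corresponds precisely to composition of the associated monomial maps). On the other side, $M \mapsto f_M$ is a group homomorphism by definition, as it is the composition of the identification $\Gamma(2) \cong \Gamma$ (sending the generators in (\ref{EQN:GAMMA2_GENERATORS}) to $g_x,g_y,g_z$) with the inclusion $\Gamma \hookrightarrow \mathrm{Aut}(S)$. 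Hence, if (\ref{EQN:SEMI_CONJ}) holds for $M_1$ and $M_2$, then
\begin{align*}
\Phi \circ \eta_{M_1 M_2} = \Phi \circ \eta_{M_1} \circ \eta_{M_2} = f_{M_1} \circ \Phi \circ \eta_{M_2} = f_{M_1} \circ f_{M_2} \circ \Phi = f_{M_1 M_2} \circ \Phi,
\end{align*}
and similarly for inverses. Thus verification on generators implies the identity on all of $\Gamma(2)$.

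It then remains to check (\ref{EQN:SEMI_CONJ}) for each of the three generators in (\ref{EQN:GAMMA2_GENERATORS}). For example, with $M_1 = \bigl[\begin{smallmatrix} 1 & 0 \\ -2 & 1 \end{smallmatrix}\bigr]$, one has $\eta_{M_1}(u,v) = (u, u^{-2}v)$, so $\Phi \circ \eta_{M_1}(u,v) = (-u - u^{-1},\ -u^{-2}v - u^2 v^{-1},\ -u^3 v^{-1} - u^{-3}v)$. On the other side, with the Picard values $(A,B,C,D)=(0,0,0,4)$ the map $g_x$ takes the simple form $(x,y,z) \mapsto (x,\, -y - xz,\, xy + (x^2-1)z)$, and substituting $\Phi(u,v)$ yields the same triple after elementary algebraic manipulation. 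The analogous checks for the other two generators are structurally identical and require no new ideas.

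The only substantive aspect is recognizing that no further work beyond the three symbolic identities is needed, and the main potential pitfall is simply carrying out the three algebraic verifications without computational slip; once those are in hand, the homomorphism argument closes the proof.
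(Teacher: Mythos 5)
Your proposal is correct and takes essentially the same approach as the paper: the paper's proof is simply the one-sentence remark that the identity can be checked by direct calculation on the generators, which is precisely the verification you outline (you just spell out the homomorphism bootstrap that the paper leaves implicit). Your sample computation for $M_1 = \bigl[\begin{smallmatrix} 1 & 0 \\ -2 & 1 \end{smallmatrix}\bigr]$ and $g_x$ is also correct.
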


\begin{proof}
One can directly check Formula (\ref{EQN:SEMI_CONJ}) for the three generators of $\tilde{\MATRIXGP}_2$.
\end{proof}

\begin{proof}[Proof of Proposition \ref{PROP:PICARD_PARAM_DISCRETE}]
Suppose that there is an open $U \subset S$ and a sequence ${[M_n]} \in \MATRIXGP_2 \setminus \{[{\rm Id}]\}$  such that
$f_{[M_n]} |_U$ converges locally uniformly to the identity on $U$. Making $U$ smaller,
if necessary, we can assume that $U$
is evenly covered by $\Phi$ and that $V$ is one of the two connected components of $\Phi^{-1}(U)$. Then, up to appropriately
choosing the matrix $M_n$ representing $[M_n]$,  the monomial
maps $\eta_{M_n}$ converge locally uniformly to the identity on $V$.
However, this is impossible because $\tilde{\MATRIXGP}_2$ is a discrete subgroup of $SL(2,\mathbb{Z})$. Since
the action is question is linear, from the discrete character of $\tilde{\MATRIXGP}_2$ it follows that the induced
action on pairs $(\log|u|,\log|v|)$ is locally discrete as well.
\end{proof}

The semiconjugacy from Proposition \ref{PROP:SEMI_CONJUGACY} also allows us to completely determine the Julia set
associated with Picard parameters. Namely, we have:

\begin{proposition}\label{PROP:PICARD_PARAM_JULIA}
The Julia set ${\mathcal J}_{0,0,0,4}$ is the whole surface $S$.
\end{proposition}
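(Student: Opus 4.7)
The plan is to use the semiconjugacy $\Phi$ from Proposition~\ref{PROP:SEMI_CONJUGACY} to reduce the statement to showing that the Fatou set of the action of $\Gamma(2)$ on $\mathbb{C}^* \times \mathbb{C}^*$ via the monomial maps $\eta_M$ is empty. Since $\Phi$ is a local biholomorphism away from the four critical points $(\pm 1, \pm 1)$, whose images are exactly the four points of $S_{\rm sing}$, and since singular points of $S$ lie in $\mathcal{J}$ by Remark~\ref{REM:SINGULAR_POINTS_JULIA}, standard arguments about lifting normal families through unramified covers reduce the theorem to showing that the family $\{\eta_M\}_{M \in \Gamma(2)}$ fails to be normal on every open set $V \subset \mathbb{C}^* \times \mathbb{C}^*$.

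The main tool is the tropicalization $L : \mathbb{C}^* \times \mathbb{C}^* \to \mathbb{R}^2$, $L(u,v) = (\log|u|, \log|v|)$, which satisfies the equivariance $L \circ \eta_M = M \circ L$ with $M$ acting linearly on $\mathbb{R}^2$. Set $W = L(V) \subset \mathbb{R}^2$, a non-empty open set. The key input from the theory of Fuchsian groups is that $\Gamma(2)$ has index six in $\mathrm{SL}(2,\mathbb{Z})$ and so is a lattice in $\mathrm{SL}(2,\mathbb{R})$; in particular the fixed points on $\mathbb{R}P^1$ of its hyperbolic elements form a dense subset. Picking any nonzero $w_0 \in W$, I would use this density together with the openness of $W$ to choose a hyperbolic $M \in \Gamma(2)$ with eigenvalues $\lambda^{\pm 1}$, $|\lambda|>1$, whose contracting eigenline $\ell^s \subset \mathbb{R}^2$ meets $W$ at a non-zero point.

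Now consider the subfamily $\{\eta_{M^n}\}_{n \geq 1}$. For $p \in V$ with $L(p) \in \ell^s$, the equivariance gives $L(\eta_{M^n}(p)) = \lambda^{-n} L(p) \to 0$, so $\eta_{M^n}(p)$ remains in a fixed compact subset of $\mathbb{C}^* \times \mathbb{C}^*$. For $p \in V$ with $L(p) \notin \ell^s$, the expanding eigencomponent of $L(p)$ is nonzero, so $\|L(\eta_{M^n}(p))\| \to \infty$ and $\eta_{M^n}(p)$ leaves every compact set. The coexistence of these two regimes inside the single open set $V$ rules out $V$ being contained in the Fatou set: no subsequence of $\{\eta_{M^n}\}$ can converge uniformly on compact subsets to a holomorphic limit (the escaping points forbid this), nor can any subsequence converge uniformly to infinity in the sense of Section~\ref{SUBSEC:FATOU_JULIA} (the bounded orbits forbid this). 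Transferring this back through $\Phi$ and combining with Remark~\ref{REM:SINGULAR_POINTS_JULIA} yields $\mathcal{F}_{0,0,0,4} = \emptyset$.

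The main obstacle is the density step: one must verify that contracting eigenvector directions of hyperbolic elements of the lattice $\Gamma(2) \subset \mathrm{SL}(2,\mathbb{R})$ are dense in $\mathbb{R}P^1$, and then use openness of $W$ to arrange that the chosen eigenline actually meets $W$ at a nonzero point. One should also write out carefully the correspondence between normal families for $\Gamma$ on $S \setminus S_{\rm sing}$ and for $\Gamma(2)$ on $\mathbb{C}^* \times \mathbb{C}^* \setminus \{(\pm 1, \pm 1)\}$, which requires a pigeonhole extraction across the two sheets of $\Phi$ to handle subsequences; everything else is elementary linear algebra of hyperbolic $2 \times 2$ matrices transported via $L$ to the monomial dynamics.
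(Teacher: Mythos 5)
Your proposal is correct and takes essentially the same approach as the paper: reduce via the degree-two semiconjugacy $\Phi$ to the monomial action of $\Gamma(2)$, invoke the density of eigendirections of hyperbolic elements (the paper's Lemma~\ref{LEM:EIGENDIRECTIONS}), and then observe that, for a well-chosen hyperbolic $M$, iteration by $\eta_M$ produces both bounded orbits (on the three-real-dimensional level set $L^{-1}(\ell^s)$, which is the stable manifold $\mathcal{W}^s_M(\mathbb{T}^2)$ of the paper's proof) and orbits escaping to infinity, inside any given open set. The paper phrases this by asserting density of the stable and unstable manifolds of $\mathbb{T}^2$ and closedness of the Julia set, whereas you argue non-normality directly on an arbitrary $V$; the underlying mechanism and the key lemma are identical.
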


The proof of Proposition~\ref{PROP:PICARD_PARAM_JULIA} will, however, require the following lemma:

\begin{lemma}\label{LEM:EIGENDIRECTIONS} The set formed by the union over all hyperbolic elements of $\tilde{\MATRIXGP}_2$
of the corresponding eigendirections is dense in $\mathbb{P}^1(\mathbb{R})$.
\end{lemma}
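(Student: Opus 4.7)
The plan is to reduce the statement to the analogous (classical) fact for $SL(2,\mathbb{Z})$ and then invoke the density of fixed points of hyperbolic Möbius transformations in the limit set of a non-elementary Fuchsian group.

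First I would observe that $\Gamma(2)$ has finite index in $SL(2,\mathbb{Z})$ (index~$6$), and consequently for any $M \in SL(2,\mathbb{Z})$ there is $n \geq 1$ with $M^n \in \Gamma(2)$. If $M$ is hyperbolic (i.e.\ $|{\rm tr}(M)| > 2$) then $M^n$ is also hyperbolic and acts on $\mathbb{P}^1(\mathbb{R})$ with the \emph{same} two fixed points as $M$, since the eigenspaces of $M$ and $M^n$ coincide. Therefore the set of eigendirections of hyperbolic elements of $\Gamma(2)$ coincides with that of $SL(2,\mathbb{Z})$, and it suffices to prove density of the latter.

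Next I would interpret $\mathbb{P}^1(\mathbb{R})$ as the ideal boundary $\partial \mathbb{H}^2$ of the hyperbolic plane; the two eigendirections of a hyperbolic $M \in SL(2,\mathbb{Z})$ become the two endpoints of the axis of $M$ viewed as an isometry of $\mathbb{H}^2$. Since $SL(2,\mathbb{Z})$ is a lattice in $SL(2,\mathbb{R})$, it is a non-elementary Fuchsian group with limit set equal to the entire circle at infinity $\partial \mathbb{H}^2 = \mathbb{P}^1(\mathbb{R})$. A classical fact from the theory of Fuchsian groups asserts that in any non-elementary Fuchsian group the set of fixed points of hyperbolic elements is dense in the limit set (see, e.g., Katok's monograph on Fuchsian groups). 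Applied to $SL(2,\mathbb{Z})$ this yields the desired density.

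Alternatively, and more elementarily, I could identify eigendirections with roots of the characteristic polynomial acting on $\mathbb{P}^1(\mathbb{R})$: the fixed points of $\left[\begin{smallmatrix} a & b \\ c & d \end{smallmatrix}\right] \in SL(2,\mathbb{Z})$ are the roots of $cx^2 + (d-a)x - b = 0$, which, under the hyperbolicity condition $|a+d| > 2$, are real quadratic irrationals. Conversely, every real quadratic irrational $\alpha$ is fixed by some hyperbolic element of $SL(2,\mathbb{Z})$ — this is the standard ``periodicity of continued fraction expansion'' argument (Lagrange's theorem). Since real quadratic irrationals are dense in $\mathbb{R}$, the lemma follows.

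No serious obstacle is expected: the only subtle point is the passage from $SL(2,\mathbb{Z})$ to $\Gamma(2)$, and this is handled cleanly by replacing an element by a suitable power without altering its eigendirections. The rest is classical material.
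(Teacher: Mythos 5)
Your proof is correct, but it takes a genuinely different route from the one in the paper. You reduce to $SL(2,\mathbb{Z})$ via the observation that $\Gamma(2)$ has finite index (indeed is normal of index~$6$), so every hyperbolic $M\in SL(2,\mathbb{Z})$ has a power $M^6\in\Gamma(2)$ with the same eigendirections; you then quote either the classical density of hyperbolic fixed points in the limit set of a non-elementary Fuchsian group, or Lagrange's theorem on periodicity of continued fractions together with the density of real quadratic irrationals. The paper instead gives a self-contained, constructive argument: given a target direction, it chooses a large prime $p$ and an even $q$ close to the desired slope, explicitly builds a positive matrix $\bigl(\begin{smallmatrix} p & a \\ q & b \end{smallmatrix}\bigr)\in\Gamma(2)$ with $pb - aq = 1$ (adjusting the parities of $a,b$ by the replacement $a\mapsto a+p$, $b\mapsto b+q$), and uses Perron--Frobenius to see that the leading eigenvector is close to $(p,q)^T$; a preliminary conjugation by $\Gamma(2)$ reduces the problem to an arc of directions. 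Your approach is shorter and leans on standard external results (and, as a small remark, the reduction to $SL(2,\mathbb{Z})$ is unnecessary if you invoke the Fuchsian-group fact directly for $\Gamma(2)$, which is itself a lattice with full limit set); the paper's approach is longer but elementary, self-contained, and produces explicit matrices, which is of a piece with the rest of the Picard-parameter analysis.
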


\begin{proof}
Conjugating by elements of $\tilde{\MATRIXGP}_2$, the problem reduces to considering
directions between $(1,0)^T$ and $(1,1)^T$.  Let $p$ be any prime number
and let $1 \leq q \leq p-1$ be any even number.  Then, there exist positive
integers $a$ and $b$ such that $b p - a q = 1$.  Since $q$ is even, $b$ is odd.
If $a$ is odd then we can replace $a$ and $b$ by $p+a$ and $q+b$, respectively.
This allows us to assume that
\begin{align*}
\left(\begin{array}{cc} p & a \\ q & b \end{array}\right) \in \tilde{\MATRIXGP}_2.
\end{align*}
Choosing $p$ sufficiently large, it is a consequence of the Perron-Frobenius
Theorem that this matrix has an eigenvector whose direction is arbitrarily
close to $(p,q)^T$. Finally, by appropriately choosing $q$, every vector
$(v_1,v_2)^T$ between $(1,0)^T$ and $(1,1)^T$ can be approximated. The lemma follows.
\end{proof}

\begin{proof}[Proof of Proposition \ref{PROP:PICARD_PARAM_JULIA}]
It suffices to prove that the Julia set $J$ for the monomial action on $\mathbb{C}^* \times \mathbb{C}^*$
is all of $\mathbb{C}^* \times \mathbb{C}^*$.
Indeed, suppose there is an open $U \subset S$ contained in the Fatou set for the action of $\AUTOGROUP$ on $S$.
Making $U$ smaller, if necessary, we can assume that $U$
is evenly covered by $\Phi$ and that $V$ is one of the two connected components of $\Phi^{-1}(U)$.
Because of the semi-conjugacy $\Phi$, the assumption on $U$ would imply
that $V$ is in the Fatou set for the monomial action of $\tilde{\MATRIXGP}_2$ on $\mathbb{C}^* \times \mathbb{C}^*$.

The unit torus $\mathbb{T}^2 \subset \mathbb{C}^* \times \mathbb{C}^*$ is invariant under the monomial
action of $\tilde{\MATRIXGP}_2$ with the hyperbolic
elements of $\tilde{\MATRIXGP}_2$ corresponding to Anosov mappings $\eta_M: \mathbb{T}^2 \rightarrow  \mathbb{T}^2$.
Therefore, $\mathbb{T}^2 \subset J$.
Moreover, for each hyperbolic $M \in \tilde{\MATRIXGP}_2$ the hyperbolic set $\mathbb{T}^2$ has stable and
unstable manifolds under $\eta_M$, namely:
\begin{align*}
\mathcal{W}^s_M(\mathbb{T}^2) & = \{(u,v) \in \mathbb{C}^* \times \mathbb{C}^* \, : \, (\log|u|,\log|v|)^T \,\,
\mbox{is a stable eigenvector for $M$}\}, \,\,  \mbox{and} \\
\mathcal{W}^u_M(\mathbb{T}^2) & = \{(u,v) \in \mathbb{C}^* \times \mathbb{C}^* \, : \, (\log|u|,\log|v|)^T \,\,
\mbox{is an  unstable eigenvector for $M$}\}.
\end{align*}
These invariant manifolds are of real-dimension three.  Note that $\mathcal{W}^s_M(\mathbb{T}^2)$ is
in the Julia set for the monomial mapping $\eta_M$ associated to $M$ and
$\mathcal{W}^u_M(\mathbb{T}^2)$ is in the Julia set for $\eta_{M}^{-1}$.
It follows from Lemma \ref{LEM:EIGENDIRECTIONS} that
the union of these stable and unstable manifolds is dense. Since $J$ is
closed, it must be all of $\mathbb{C}^* \times \mathbb{C}^*$.
\end{proof}

In the remainder of this section, we will focus on points stabilized by non-trivial elements of
$\AUTOGROUP$. As mentioned, the discussion below will allow us to prove
Proposition~\ref{PROP:GOODPARAMS_IN_COMPLETMENT_HYPERSURFACES}.

Let $S(\mathbb{R}) = S \cap \mathbb{R}^3$ denote the real slice of $S$.  It is well known
that $S(\mathbb{R}) \setminus S_{\rm sing}$ consists of one bounded component and three unbounded components
(see \cite{benedettogoldman}).
Let $S(\mathbb{R})_0$ denote the closure of the bounded component of $S(\mathbb{R}) \setminus S_{\rm sing}$.
It is straightforward to check that
\begin{align*}
S(\mathbb{R})_0 = S \cap [-2,2]^3 = \Phi(\mathbb{T}^2).
\end{align*}

For every $M \in \tilde{\MATRIXGP}_2$ and any fixed point $p \in \mathbb{C}^* \times \mathbb{C}^*$
of $\eta_M$, there exist suitable local coordinates (complexified angular coordinates) in which
$D \eta_M(p) = M$. In particular, the eigenvalues of $D\eta_M(p)$ and of $M$ coincide. Moreover,
by noticing that $(\eta_M)^k = \eta_{M^k}$ for every integer $k > 1$, it follows that the analogous
statement holds for periodic points as well.

Note also that when $M \in \tilde{\MATRIXGP}_2$ is parabolic, $\eta_M$ may have fixed
points away from of the real torus $\mathbb{T}^2 \subset \mathbb{C}^* \times
\mathbb{C}^*$. However, by the preceding discussion, the derivative
$D\eta_m(p)$ at such a fixed point will always have eigenvalues equal to $\pm
1$.  When $M \in \tilde{\MATRIXGP}_2$ is hyperbolic, any fixed point $p$ of $\eta_M$ is
on $\mathbb{T}^2$ and the fixed point is a saddle, with one of the eigenvalues
of $D \eta_M(p)$ having absolute value less than one and the other having
absolute value greater than one.

\begin{lemma}\label{LEM:EIGENVALUES_FIXED_POINTS}
Let $M \in \tilde{\MATRIXGP}_2$ and let $f_{[M]}: S \rightarrow S$ be the associated element of $\AUTOGROUP$.
If $p$ is a smooth point of $S$ and a fixed point of $f_{[M]}$ then the eigenvalues of $Df(p)$ 
have the same absolute values as the eigenvalues of $M$.
\end{lemma}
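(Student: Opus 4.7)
The plan is to use the semi-conjugacy $\Phi: \mathbb{C}^* \times \mathbb{C}^* \to S$ from Proposition~\ref{PROP:SEMI_CONJUGACY} to reduce the derivative computation at $p$ to the (essentially trivial) computation of the derivative of a monomial map in logarithmic coordinates. The crucial point is that the set of critical values of $\Phi$ coincides with $S_{\rm sing}$, so at any smooth point $p$ the map $\Phi$ is a local biholomorphism at each preimage. Consequently, if $q \in \Phi^{-1}(p)$, there is a local biholomorphic conjugacy between $f_M$ near $p$ and $\eta_M$ (or a power of it) near $q$.

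First I would lift $p$: pick any $q \in \Phi^{-1}(p)$. The other preimage is $\sigma(q)$, where $\sigma(u,v) = (1/u, 1/v)$ is the non-trivial deck transformation of the cover $\Phi$ (one checks directly that $\Phi \circ \sigma = \Phi$). Applying the semi-conjugacy $\Phi \circ \eta_M = f_M \circ \Phi$ yields $\Phi(\eta_M(q)) = f_M(p) = p$, so either $\eta_M(q) = q$ or $\eta_M(q) = \sigma(q)$. I would then split into these two cases.

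In Case 1, $q$ is a fixed point of $\eta_M$. Introducing logarithmic coordinates $u = e^{w_1}$, $v = e^{w_2}$ (defined near $q$ since $q \in \mathbb{C}^* \times \mathbb{C}^*$), the monomial mapping $\eta_M$ becomes the \emph{linear} map $(w_1,w_2) \mapsto M \cdot (w_1,w_2)^T$. Hence $D\eta_M(q)$ is conjugate to $M$, and since $\Phi$ is a biholomorphism from a neighborhood of $q$ onto a neighborhood of $p$, we conclude $Df_M(p)$ is conjugate to $M$, giving the stronger statement that their eigenvalues are equal. In Case 2, I would use the key observation that every monomial map commutes with $\sigma$ (because $\eta_M(1/u,1/v) = \sigma(\eta_M(u,v))$, which is immediate from the integer exponents). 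Therefore $\eta_M^2(q) = \eta_M(\sigma(q)) = \sigma(\eta_M(q)) = \sigma^2(q) = q$, so $q$ is a fixed point of $\eta_M^2 = \eta_{M^2}$. By Case~1 applied to $M^2$, the derivative $Df_M^2(p) = Df_M(p)^2$ is conjugate to $M^2$, so the eigenvalues of $Df_M(p)^2$ coincide with those of $M^2$. Taking absolute values of square roots, the eigenvalues of $Df_M(p)$ and of $M$ have the same absolute values.

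I do not foresee a serious obstacle; the only delicate point is to confirm that the smoothness of $p$ really does force $\Phi$ to be a local biholomorphism at $q$, which is handled by the explicit statement recorded just before Proposition~\ref{PROP:SEMI_CONJUGACY} that the critical values of $\Phi$ are exactly the four points of $S_{\rm sing}$. Everything else reduces to the elementary facts that monomial maps linearize in logarithmic coordinates and that they commute with the involution $\sigma$.
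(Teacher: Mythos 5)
Your proof is correct and uses essentially the same approach as the paper: lift $p$ through the degree-two orbifold cover $\Phi$, observe that $\eta_M$ either fixes the two preimages or swaps them, and compare $Df_M(p)$ with $M$ (or $Df_M(p)^2$ with $M^2$) via the conjugacy $\Phi$. The paper phrases the two cases uniformly in terms of $(Df_M(p))^2$ being conjugate to $D\eta_M(q_2)\,D\eta_M(q_1)$, while you make the deck involution $\sigma(u,v)=(1/u,1/v)$ and the logarithmic linearization explicit, but the underlying argument is the same.
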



\begin{proof}
The critical values of $\Psi$ are precisely the singular point of $S$, which
are permuted by elements of $\AUTOGROUP$.  Therefore, $\Psi^{-1}(p) = \{q_1,q_2\}$ 
and they will either each be a fixed point for 
$\eta_M$ or they will form a period two cycle for $\eta_M$. In either case $(Df_{[M]}(p))^2$ 
and $D\eta_M(q_2) D\eta_M(q_1)$ will be conjugate matrices and hence have the same eigenvalues.
Meanwhile $D\eta_M(q_2) D\eta_M(q_1)$ and $M^2$ have the same eigenvalues, so the result follows.
\end{proof}

\begin{proposition}\label{PROP:SADDLE_POINTS_NOT_DENSE_PICARD}
For the Picard parameters, whenever $M$ is a hyperbolic matrix, every fixed point
of the corresponding mapping $f_{[M]}$ not lying in $S_{\rm sing}$ must be a hyperbolic saddle. In addition,
these fixed points are all located on $S(\mathbb{R})_0 = \Phi(\mathbb{T}^2)$.

In particular, there is no dense subset ${\mathcal J}^*_{0,0,0,4} \subset {\mathcal J}_{0,0,0,4}$
consisting of points with hyperbolic stabilizers.
\end{proposition}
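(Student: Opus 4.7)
The plan is to combine the semiconjugacy from Proposition~\ref{PROP:SEMI_CONJUGACY} with Lemma~\ref{LEM:EIGENVALUES_FIXED_POINTS} and the remark (stated just before that lemma) that any fixed point of $\eta_M$, for hyperbolic $M \in \Gamma(2)$, necessarily lies on the real torus $\mathbb{T}^2 \subset \mathbb{C}^\ast \times \mathbb{C}^\ast$. Fix such a hyperbolic $M$ and let $p$ be a smooth fixed point of $f_M$. For the saddle assertion, Lemma~\ref{LEM:EIGENVALUES_FIXED_POINTS} equates the moduli of the eigenvalues of $Df_M(p)$ with those of $M$. Since $M$ is hyperbolic its eigenvalues are $\lambda, \lambda^{-1}$ with $|\lambda| \neq 1$, so the eigenvalues of $Df_M(p)$ have distinct moduli away from $1$. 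Together with $\det Df_M(p) = 1$ (from invariance of the volume form $\Omega$ of Section~\ref{SEC:VOLUME_FORM}) this says that $p$ is a hyperbolic saddle.

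For the location of $p$, since $p$ is smooth it is not a critical value of $\Phi$, so $\Phi^{-1}(p) = \{q_1, q_2\}$ consists of exactly two points. The semiconjugacy~(\ref{EQN:SEMI_CONJ}) forces $\eta_M$ to permute $\{q_1, q_2\}$; hence either each $q_i$ is a fixed point of $\eta_M$, or $\{q_1, q_2\}$ is a $2$-cycle for $\eta_M$ in which case each $q_i$ is a fixed point of $\eta_{M^2}$. The matrix $M^2$ is still hyperbolic (if $|\mathrm{tr}(M)| > 2$ then $|\mathrm{tr}(M^2)| = |\mathrm{tr}(M)^2 - 2| > 2$), so in both cases $q_i$ is a fixed point of a monomial map associated to a hyperbolic matrix, and therefore $q_i \in \mathbb{T}^2$ by the remark cited above. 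Consequently $p = \Phi(q_i) \in \Phi(\mathbb{T}^2) = S(\mathbb{R})_0$. Fixed points of $f_M$ in $S_{\rm sing}$ are automatically in $S(\mathbb{R})_0 = S \cap [-2,2]^3$, so the location claim is established without exception.

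For the non-density statement, recall from Proposition~\ref{PROP:PICARD_PARAM_JULIA} that $\mathcal{J}_{0,0,0,4} = S$. A point $p \in \mathcal{J}^\ast_{0,0,0,4}$ must be fixed by some $f_M \neq \mathrm{id}$ with $Df_M(p)$ hyperbolic. If $M$ were not hyperbolic in $\Gamma(2)$ (i.e.\ parabolic, elliptic, or $\pm I$), its eigenvalues would have modulus~$1$, and Lemma~\ref{LEM:EIGENVALUES_FIXED_POINTS} would force the eigenvalues of $Df_M(p)$ to have modulus~$1$ as well, contradicting hyperbolicity. Thus $M$ is hyperbolic, and the previous paragraph gives $p \in S(\mathbb{R})_0$. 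Therefore $\overline{\mathcal{J}^\ast_{0,0,0,4}} \subset S(\mathbb{R})_0 \subset S \cap [-2,2]^3$, which is a real two-dimensional closed set in the complex surface $S$, hence a proper subset of $\mathcal{J}_{0,0,0,4} = S$.

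The main obstacle is the second paragraph: a priori $\Phi^{-1}(p)$ need not consist of fixed points of $\eta_M$, so one must pass to $\eta_{M^2}$ and verify that hyperbolicity of a matrix in $\Gamma(2)$ is preserved under squaring; once this is done the rest of the argument is essentially bookkeeping.
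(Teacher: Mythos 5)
Your proposal is correct and follows essentially the same route as the paper: combine the semiconjugacy $\Phi$, the eigenvalue lemma (Lemma~\ref{LEM:EIGENVALUES_FIXED_POINTS}), and the fact that fixed points of $\eta_M$ for hyperbolic $M$ lie on $\mathbb{T}^2$, then compare with $\mathcal{J}_{0,0,0,4} = S$ from Proposition~\ref{PROP:PICARD_PARAM_JULIA}. You are merely more explicit than the paper about the two-cycle case and the persistence of hyperbolicity under $M \mapsto M^2$, both of which the paper leaves implicit in Lemma~\ref{LEM:EIGENVALUES_FIXED_POINTS} and the discussion preceding it.
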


\begin{proof}
It follows from Lemma \ref{LEM:EIGENVALUES_FIXED_POINTS} and from the discussion in the paragraph
before this lemma that a hyperbolic fixed point of an arbitrary element in $\AUTOGROUP$, in fact, must be
a fixed point of some mapping $f_{[M]}$, where $M$ is hyperbolic. Every such fixed point is therefore a hyperbolic
saddle and is contained in $S(\mathbb{R})_0$. Clearly $S(\mathbb{R})_0$ is a proper subset of $S$ which,
in turn, coincides with $\mathcal{J}$ in view of Proposition~\ref{PROP:PICARD_PARAM_JULIA}. The proposition
follows.
\end{proof}

\begin{proof}[Proof of Theorem D]
It follows directly from the combination of Propositions~\ref{PROP:PICARD_PARAM_DISCRETE},
\ref{PROP:PICARD_PARAM_JULIA}, and~\ref{PROP:SADDLE_POINTS_NOT_DENSE_PICARD}.
\end{proof}

Recall that, by construction, every mapping $f_{[M]}: S \rightarrow S$ is the restriction of a polynomial
diffeomorphism of $\mathbb{C}^3$ which will be denoted by
$F_{[M]}: \mathbb{C}^3 \rightarrow \mathbb{C}^3$. These maps $F_{[M]}$ leave invariant all the surfaces
of the form $S_{0,0,0,D}$, with $D \in \mathbb{C}$. From this it follows that if $p \in
S_{0,0,0,4}$ is a fixed point of $f_{[M]}$ then two of the eigenvalues of the $3
\times 3$ matrix $DF_{[M]}(p)$ are the same as those of $Df_{[M]}(p)$ and the third
eigenvalue is $1$, provided that $p$ is a regular point of $S_{0,0,0,D}$. Owing to
Lemma~\ref{LEM:EIGENVALUES_FIXED_POINTS}, we conclude that two
of the eigenvalues of $DF_{[M]}(p)$ have the same absolute values as the eigenvalues
of $M$ and the remaining eigenvalue is $1$.

The following proposition describes the eigenvalues of $DF_{[M]}(p)$ at singular points $p$ of $S$.
The fact that the eigenvalues of $M$ are squared is essentially the same phenomenon that occurs for the classical
one-dimensional Chebeyshev map, and we are grateful to Micha{\l} Misiurewicz for explaining it to us.

\begin{proposition}\label{PROP:SING_PTS_EIGVALS}
Let $(A,B,C,D)$ be the Picard Parameters $(0,0,0,4)$.  For any $M \in \tilde{\MATRIXGP}_2$ let $f_{[M]}: S \rightarrow S$
be the corresponding element of $\AUTOGROUP$ and let $F_{[M]}: \mathbb{C}^3 \rightarrow \mathbb{C}^3$ be its extension to $\mathbb{C}^3$.  For any $p \in S_{\rm sing}$
two of the eigenvalues of 
$D F_{[M]}(p)$ are the squares of the eigenvalues of $M$ and the remaining eigenvalue is $1$.
\end{proposition}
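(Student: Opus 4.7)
The strategy is to analyze the semi-conjugacy $\Phi \circ \eta_M = F_M \circ \Phi$ (which holds with the ambient extension $F_M$ on the right since $\Phi$ maps into $S$ and $F_M|_S = f_M$) to second order near a critical point of $\Phi$. As a preliminary step, because any matrix $M \in \Gamma(2)$ satisfies $m_{11}, m_{22}$ odd and $m_{12}, m_{21}$ even, every monomial $\eta_M$ individually fixes each of the four critical points $(\pm 1, \pm 1)$ of $\Phi$. Consequently each singular point $p = \Phi(q)$ of $S$ is a fixed point of $F_M$, and it suffices to analyze one such $q$; by symmetry the argument at the other three critical points is essentially the same.

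Fix $q$ and translate coordinates so that $q$ and $p$ both sit at the origin, with local coordinates $v = (s,t)$ at $q$. A direct computation shows that $D\Phi(q) = 0$: indeed, the partial derivatives of each of $-u - 1/u$, $-v - 1/v$, and $-u/v - v/u$ vanish at every point of $\{(\pm 1, \pm 1)\}$. Hence the leading Taylor term of $\Phi$ at $q$ is a purely quadratic map $\Phi_0 : \mathbb{C}^2 \to \mathbb{C}^3$; for instance, at $q = (1,1)$ one finds
\begin{equation*}
\Phi_0(s,t) = \bigl(-s^2,\; -t^2,\; -(s-t)^2\bigr).
\end{equation*}
Since $\eta_M(q + v) = q + Mv + O(|v|^2)$, substituting the Taylor expansions into $\Phi \circ \eta_M = F_M \circ \Phi$ and collecting terms homogeneous of degree two in $v$ produces the key identity
\begin{equation*}
L \cdot \Phi_0(v) = \Phi_0(Mv) \qquad \text{for all } v \in \mathbb{C}^2,
\end{equation*}
where $L := DF_M(p)$.

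Polarizing $\Phi_0$ gives a linear map $\tilde\Phi_0 : \mathrm{Sym}^2 \mathbb{C}^2 \to \mathbb{C}^3$, and the identity above is equivalent to the intertwining relation $L \circ \tilde\Phi_0 = \tilde\Phi_0 \circ \mathrm{Sym}^2(M)$. A brief determinant computation at each of the four critical points shows that $\tilde\Phi_0$ is a linear isomorphism (at $q = (1,1)$, for example, the natural basis of $\mathrm{Sym}^2 \mathbb{C}^2$ is sent to $(-1,0,-1)$, $(0,-1,-1)$, $(0,0,1)$, whose determinant is $1$). Therefore
\begin{equation*}
L = \tilde\Phi_0 \circ \mathrm{Sym}^2(M) \circ \tilde\Phi_0^{-1},
\end{equation*}
so $L$ is conjugate to $\mathrm{Sym}^2(M)$ and in particular has the same spectrum.

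It remains to identify the spectrum of $\mathrm{Sym}^2(M)$: if $\lambda_1, \lambda_2$ are the eigenvalues of $M$, then the eigenvalues of $\mathrm{Sym}^2(M)$ are the products $\lambda_1^2$, $\lambda_2^2$, and $\lambda_1 \lambda_2$, and the last of these equals $1$ since $\det M = 1$. This gives exactly the claimed eigenvalues of $DF_M(p)$: two squares of the eigenvalues of $M$ together with $1$. The main technical obstacle is the degree-two Taylor expansion at each critical point and the verification that $\tilde\Phi_0$ is an isomorphism there, but these are both finite, explicit, and modest computations. A pleasant feature of this approach is that it handles all $M \in \Gamma(2)$ uniformly: no separate argument is required for parabolic $M$, because conjugation preserves both spectrum and Jordan type.
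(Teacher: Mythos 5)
Your proof is correct and takes a genuinely different route from the paper's. The paper works with the trigonometric parametrization $(x,y,z) = (-2\cos\theta, -2\cos\phi, -2\cos(\theta-\phi))$ of $\Phi(\mathbb{T}^2)$, differentiates the semi-conjugacy along the coordinate axes, and evaluates limits as $\theta \to 0$ to extract the three diagonal entries $n_{11}, n_{22}, n_{33}$ of $N = DF_M(p)$; combining the resulting trace identity with the facts $\det N = 1$ and that $N$ has eigenvalue $1$ (from invariance of $Q$ and of the volume form) it shows the characteristic polynomial of $N$ factors as $P_{M^2}(x)(x-1)$. You instead exploit the fact that $\Phi$ has a non-degenerate quadratic critical point at each $q \in \Phi^{-1}(S_{\rm sing})$: the degree-two term of the semi-conjugacy gives the intertwining $DF_M(p) \circ \tilde\Phi_0 = \tilde\Phi_0 \circ \mathrm{Sym}^2(D\eta_M(q))$, and since $\tilde\Phi_0$ is an isomorphism this yields $DF_M(p)$ \emph{conjugate} to $\mathrm{Sym}^2(M)$, not merely having the same characteristic polynomial. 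This is cleaner, does not require the auxiliary trace computation, and in fact proves a stronger statement, since it pins down the Jordan type of $DF_M(p)$ (in particular handling parabolic $M$ transparently). One small imprecision: the identity $\eta_M(q+v) = q + Mv + O(|v|^2)$ is literally correct only at $q=(1,1)$; at the other three critical points one has $D\eta_M(q) = DMD^{-1}$ for a diagonal sign matrix $D$ depending on $q$. Since this conjugation does not affect $\mathrm{Sym}^2$-eigenvalues or the invertibility computation for $\tilde\Phi_0$, the argument goes through unchanged — but it would be worth stating explicitly rather than appealing to ``symmetry.''
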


\begin{proof}
The proof relies upon the semiconjugacy $\Psi$ from Proposition~\ref{PROP:SEMI_CONJUGACY}.
We apply it to points  $(u,v) = ({\rm e}^{i \theta},{\rm e}^{i \phi}) \in \mathbb{T}^2$ and abuse notation
slightly by
writing
\begin{align*}
(x,y,z) = \Psi(\theta,\phi) = (-2\cos \theta, -2 \cos \phi, -2 \cos(\theta-\phi)).
\end{align*}
The points $(0,0), (0,\pi), (\pi,0),$ and $(\pi,\pi)$ map by $\Psi$ to the
singular points in (\ref{EQN:SING_PTS_CAYLEY}) in the respective order that they are listed there.

Here we focus on the singular point $p=(-2,-2,-2) = \Psi(0,0)$. The minor adaptations required for the
other singular points essentially amount to some sign modifications in the equations below and thus can
safely be left to the reader.

Since $M \in  \tilde{\MATRIXGP}_2$ we have $\det(M) = 1$.  Using this, the characteristic polynomial for $M^2$ is
\begin{align*}
P_{M^2}(x) = x^2 - (m_{11}^2+m_{22}^2+2 m_{12} m_{21}) \ x + 1,
\end{align*}
where $m_{jk}$ denotes $jk$-th entry of $M$.


Let $N = DF_{[M]}(p)$ and 
recall from Section \ref{SEC:VOLUME_FORM} that any element of 
$\AUTOGROUP_{A,B,C} = \langle g_x, g_y, g_z \rangle~<~{\rm Aut}(\mathbb{C}^3)$ preserves
the Euclidean volume form on $\mathbb{C}^3$.  This implies that ${\rm det}(N) = 1$.
Moreover, $F_{[M]}(S_D)~=~S_D$ for every $D \in \mathbb{C}$, implying that $Q \circ F_{[M]} (x,y,z) = Q (x,y,z)$ for
the polynomial $Q(x,y,z) = x^2+ y^2 + z^2 +xyz$.
This gives that one of the eigenvalues of $N$ equals~$1$.

Hence,the characteristic polynomial of $N$ is
\begin{align*}
P_{N}(x) = x^3 - (n_{11}+n_{22}+n_{33}) \ x^2 + (n_{11}+n_{22}+n_{33}) \ x -1 \,,
\end{align*}
where $n_{jk}$ denotes $jk$-th entry of $N$.
In the sequel we will show that 
\begin{align}\label{EQN:DESIRED_RELATION_M_N}
n_{11}+n_{22}+n_{33} = m_{11}^2+m_{22}^2+2 m_{12} m_{21}+1 \, .
\end{align}
This will imply that $P_N(x) = P_{M^2}(x)(x-1)$ therefore completing the proof of
Proposition~\ref{PROP:SING_PTS_EIGVALS}.

To begin, consider the $x$-coordinate of the semi-conjugacy (\ref{EQN:SEMI_CONJ}):
\begin{align*}
-2 \cos(m_{11} \theta + m_{12} \phi) = F_{[M],1}(-2\cos \theta,-2\cos \phi,-2\cos(\theta-\phi)),
\end{align*}
where we have added the subscript $1$ to denote the first coordinate of $F_{[M]}$.
Setting $\phi = 0$ and taking the partial derivative with respect to $\theta$ yields
\begin{align*}
2 \sin(m_{11} \theta) m_{11} = \frac{\partial F_{[M],1}}{\partial x}(-2\cos \theta, -2, -2 \cos \theta) 2\sin \theta + \frac{\partial F_{[M],1}}{\partial z}(-2\cos \theta, -2, -2 \cos \theta) 2\sin \theta.
\end{align*}
Next, for $\theta \neq 0$, we divide both sides of the above equation by $2 \theta$ so as to obtain
\begin{align*}
 \frac{\sin(m_{11} \theta)}{m_{11} \theta}  m_{11}^2 = \frac{\partial F_{[M],1}}{\partial x}(-2\cos \theta, -2, -2 \cos \theta) \frac{\sin \theta}{\theta} + \frac{\partial F_{[M],1}}{\partial z}(-2\cos \theta, -2, -2 \cos \theta) \frac{\sin \theta}{\theta} \, .
\end{align*}
Now it suffices to take the limit as $\theta$ goes to $0$ to conclude that
$m_{11}^2 = n_{11} + n_{13}$.

Similarly, setting $\theta = 0$ and doing the analogous computation involving partial derivatives with respect
to $\phi$ yields
$m_{12}^2 = n_{12} + n_{13}$.
Finally, the analogous computation with $\phi = \theta$ lead to
$(m_{11}+m_{12})^2 = n_{11}+n_{12}$.
The three previous equations can be solved for $n_{11}$ to find
\begin{align}\label{EQN:N11}
n_{11} = m_{11}^2 + m_{11} m_{12}.
\end{align}
The same computations with the second and third coordinate of the semi-conjugacy~(\ref{EQN:SEMI_CONJ}) yield
\begin{align} \label{EQN:N22_N33}
n_{22} = m_{22}^2 + m_{22} m_{21}, \qquad \mbox{and} \qquad 
n_{33} = (m_{11} - m_{21}) (m_{22} - m_{12}). 
\end{align}
Combined with the fact that ${\rm det}(M) = 1$, Equations~(\ref{EQN:N11}) and~(\ref{EQN:N22_N33})
imply that the condition expressed by~(\ref{EQN:DESIRED_RELATION_M_N}) holds. The proof of the proposition
is completed.
\end{proof}


\begin{corollary}\label{COR:C3_EIGVALS}
Let $(A,B,C,D)$ be the Picard Parameters $(0,0,0,4)$. Assume that $M \in
\tilde{\MATRIXGP}_2$ is hyperbolic. Denote by $f_{[M]}: S \rightarrow S$ the element of $\AUTOGROUP$ associated with
$M$ and let $F_{[M]}: \mathbb{C}^3 \rightarrow \mathbb{C}^3$ be the corresponding extension of $f_{[M]}$
to $\mathbb{C}^3$. If $p \in S$ is a fixed point of $F_{[M]}$ then 
$D F_{[M]}(p)$ has one eigenvalue of modulus less than one, one eigenvalue equal to one, and one eigenvalue
of modulus greater than one.
\end{corollary}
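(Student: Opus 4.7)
The plan is to split the argument into two cases depending on whether the fixed point $p$ is a smooth point of $S$ or a singular point, and in each case to invoke results already established in the excerpt.

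First I would handle the case $p \in S \setminus S_{\rm sing}$. The paragraph just before Proposition \ref{PROP:SING_PTS_EIGVALS} records that $F_M$ preserves every surface $S_{0,0,0,D}$ for $D \in \mathbb{C}$, so at a smooth fixed point $p$ the derivative $DF_M(p)$ has the normal direction to $S_{0,0,0,4}$ as an eigenvector with eigenvalue $1$, while the other two eigenvalues are exactly those of the restriction $Df_M(p)$ to $T_p S$. Lemma \ref{LEM:EIGENVALUES_FIXED_POINTS} then tells me that these two eigenvalues have the same absolute values as the eigenvalues of $M$. Since $M \in \Gamma(2) \subset \mathrm{SL}(2,\mathbb{Z})$ is hyperbolic with determinant $1$, its eigenvalues form a real pair $\lambda, \lambda^{-1}$ with $|\lambda| > 1$, so the conclusion of the corollary follows immediately in this case.

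Next I would handle the case $p \in S_{\rm sing}$. Here Proposition \ref{PROP:SING_PTS_EIGVALS} gives directly that two eigenvalues of $DF_M(p)$ equal the squares of the eigenvalues of $M$ and the remaining one equals $1$. Again using that the eigenvalues of $M$ are $\lambda$ and $\lambda^{-1}$ with $|\lambda| > 1$, their squares are $\lambda^2$ and $\lambda^{-2}$, one of modulus greater than $1$ and one of modulus less than $1$. This is precisely the asserted spectral structure.

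Since $S = (S \setminus S_{\rm sing}) \cup S_{\rm sing}$, these two cases exhaust all fixed points in $S$, and in both cases the spectrum of $DF_M(p)$ consists of one eigenvalue of modulus less than one, one equal to one, and one of modulus greater than one.

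There is essentially no obstacle in this corollary: it is a clean assembly of the regular-point discussion together with Lemma \ref{LEM:EIGENVALUES_FIXED_POINTS} on one hand, and Proposition \ref{PROP:SING_PTS_EIGVALS} on the other. The only small point requiring care is to confirm that hyperbolicity of $M$ in $\mathrm{SL}(2,\mathbb{Z})$ guarantees the eigenvalues are real of the form $\lambda, \lambda^{-1}$ with $|\lambda| \neq 1$, so that squaring preserves the hyperbolic splitting; this is immediate from $\det M = 1$ together with $|\mathrm{tr}\,M| > 2$.
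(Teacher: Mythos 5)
Your proof is correct and follows exactly the route the paper leaves implicit: the corollary is stated without its own proof precisely because the smooth case is handled by the paragraph preceding Proposition \ref{PROP:SING_PTS_EIGVALS} together with Lemma \ref{LEM:EIGENVALUES_FIXED_POINTS}, and the singular case by Proposition \ref{PROP:SING_PTS_EIGVALS} itself. The only tiny imprecision is your phrase that the normal direction to $S_{0,0,0,4}$ is the eigenvector for eigenvalue $1$; the paper's text (and the $Q$-invariance argument) only gives that $1$ occurs as an eigenvalue and that the other two eigenvalues are those of $Df_M(p)$ on $T_pS$ — the corresponding eigenvector need not be the gradient direction — but this has no bearing on the conclusion.
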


\section{Existence of Fatou Components and Proof of Theorem E}\label{SEC:PROOF_THEOREME}

Recall from Section \ref{SUBSEC:STRATEGY} that for fixed choice of $(A,B,C)$ the equations for $s_x, s_y$, and $s_z$ can be interpreted as polynomial
automorphisms of $\mathbb{C}^3$.  We denote the group of automorphisms of $\mathbb{C}^3$ generated by $s_x, s_y$, and $s_z$ by $\AUTOGROUP^\pm_{A,B,C}$.
We can consider the Fatou set of this action on $\mathbb{C}^3$ and denote it by $\BIGFATOU^\pm_{A,B,C}$.
As in Section~\ref{SUBSEC:FATOU_JULIA}, convergence to infinity is again allowed in our definition of normal families.

The first step toward proving Theorem~E will be Proposition~\ref{PROP:FATOU_C3}, below, where 
for any choice of parameters $(A,B,C)$ will provide a point $p_0 \in \mathbb{C}^3$
and $\epsilon > 0$ 
so that the ball $B_\epsilon(p_0)$ of radius $\epsilon$ around $p_0$ is contained in
$\BIGFATOU_{A,B,C}$.   Then to 
obtain $4$-tuples of parameters
$(A,B,C,D) \in \mathbb{C}^4$ for which the
action of $\AUTOGROUP_{A,B,C,D}$ on $S_{A,B,C,D}$ has non-empty Fatou set
$\mathcal{F}_{A,B,C,D}$, it will be enough to select $D$ so that
$S_{A,B,C,D} \cap B_\epsilon(p_0) \neq \emptyset$; see Corollary \ref{COR:FATOU_EXISTENCE}.


\begin{proposition}\label{PROP:FATOU_C3}
For any choice of parameters $(A,B,C) \in \mathbb{C}^3$, let  $r={\rm max}\{|A|,|B|,|C|\}$. Next, given
$R > 2+\sqrt{r}$, let
\begin{align}\label{EQN:FATOU_EPS-I}
\epsilon = {\rm min}\{R-(2+\sqrt{r}),R+1-\sqrt {4\,R+r+1}\} > 0.
\end{align}
If $|u| = R$  and $p_0 = (u,u,u) \in \mathbb{C}^3$ then the open ball
$B_{\epsilon}(p_0)$ is contained in the Fatou set $\BIGFATOU^\pm_{A,B,C}$ for the action of
$\AUTOGROUP^\pm_{A,B,C}$ on $\mathbb{C}^3$. In particular $\BIGFATOU^\pm_{A,B,C} \neq \emptyset$.
\end{proposition}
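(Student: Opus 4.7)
The plan is to show that for every $q \in B_{\epsilon}(p)$ the orbit $\Gamma^{\ast}(q) \subset \C^3$ escapes to infinity uniformly, so that the family $\{w : w \in \Gamma^{\ast}\}$ restricted to $B_{\epsilon}(p)$ is normal in the extended sense that allows convergence to infinity. First, the bound $\epsilon \le R-(2+\sqrt{r})$ immediately forces every $q = (x,y,z) \in B_{\epsilon}(p)$ to satisfy $R-\epsilon \le |x|,|y|,|z| \le R+\epsilon$, and in particular $|x|,|y|,|z| > 2+\sqrt{r}$.

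Next, the second bound $\epsilon \le R+1-\sqrt{4R+r+1}$ is algebraically equivalent to $(R-\epsilon)^2 \ge 2(R+\epsilon)+r$, and this is the estimate powering the one-step escape. Indeed, applying $s_x$ to such $q$ gives $x' = A-x-yz$ with
\[
|x'| \ge |y|\,|z| - |x| - |A| \ge (R-\epsilon)^2 - (R+\epsilon) - r \ge R+\epsilon,
\]
and analogous estimates hold for $s_y$ and $s_z$. Thus after one application of any generator the just-modified coordinate has modulus strictly exceeding the initial maximum $R+\epsilon$. The core of the proof is to propagate this initial gain along any reduced word $w = s_{i_n}\cdots s_{i_1}$: letting $\alpha_k$ denote the modulus of the coordinate modified at step $k$, the same triangle-inequality argument gives a recursion of Fibonacci type $\alpha_{k+1} \ge \alpha_k \cdot (R-\epsilon) - \beta_k - r$, where $\beta_k$ is an upper bound on the prior value of the coordinate being modified at step $k+1$. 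A careful induction tracking $\alpha_k$ from below together with upper bounds on the two unchanged coordinates should then yield $\alpha_k \to \infty$ uniformly in $q \in B_{\epsilon}(p)$.

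Since $\Gamma^{\ast} \cong \Z/2 \ast \Z/2 \ast \Z/2$, any infinite sequence $\{w_n\}$ of distinct elements admits a subsequence whose reduced-word-length tends to infinity; the uniform escape then forces $w_n(K) \to \infty$ for every compact $K \subset B_{\epsilon}(p)$. This is exactly the convergence-to-infinity condition in the definition of normality, so $B_{\epsilon}(p) \subset \BIGFATOU_{A,B,C}$, and in particular $\BIGFATOU_{A,B,C} \ne \emptyset$. The main technical obstacle is the inductive step: the triangle-inequality lower bound on the newly modified coordinate subtracts the prior value of that coordinate, whose magnitude may itself have grown at earlier stages. The precise form of $\epsilon$ is sharply tuned so that already at step one one obtains $\alpha_1 \ge R+\epsilon$, giving the margin needed for the subsequent Fibonacci-type growth to dominate the subtracted terms along every reduced word.
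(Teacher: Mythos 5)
Your proof takes a genuinely different route from the paper's, and the sketch has a gap at its central step. The paper does not prove escape to infinity. It proves instead the weaker and much easier fact that, for $q \in B_\epsilon(p)$ and any reduced word $w_1 w_2 \cdots w_k$ in $s_x,s_y,s_z$, the modulus of each coordinate is non-decreasing as the letters are applied one by one. This immediately forces every orbit to stay in $\left(\C \setminus \overline{\mathbb{D}_{2+\sqrt{r}}(0)}\right)^3$, and Montel's theorem applied coordinate-wise yields normality of $\Gamma^*$ on $B_\epsilon(p)$; whether orbits actually run off to infinity is never needed. The monotonicity itself is established by a short minimal-counterexample argument: if $j$ is the first step at which some coordinate's modulus decreases, then minimality guarantees all coordinates at step $j$ still have modulus exceeding $2+\sqrt{r}$, and combining the non-decrease at step $j$ with the assumed decrease at step $j+1$ forces one of the three coordinates at step $j$ to have modulus strictly below $2+\sqrt{r}$ --- a contradiction. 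No growth estimate and no bookkeeping along the word is required.

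Your one-step bound $\alpha_1 \geq R+\epsilon$ is correct and is the same as the paper's inequality (\ref{EQN:CHECK1}), and your algebraic reformulation of the second constraint on $\epsilon$ is also right. But the iteration is not carried out, and this is a genuine gap rather than a routine omission. To pass from step $k$ to step $k+1$ you need a \emph{lower} bound on the third (unchanged) coordinate, which multiplies $\alpha_k$ in the triangle inequality, and an \emph{upper} bound on the coordinate being overwritten, which is subtracted; your remark about needing ``upper bounds on the two unchanged coordinates'' has the sign wrong for one of them. The upper bound is the hard part: the overwritten coordinate was last modified at some earlier step $m<k$, where its modulus may already have been pushed up, and controlling $\beta_k=\alpha_m$ in terms of $\alpha_k$ is essentially a monotonicity statement in itself. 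You flag this as ``the main technical obstacle'' and then assert, without proof, that the Fibonacci growth ``should'' dominate. The paper's insight --- that non-decrease, not growth, is the right target, and that a minimal-counterexample argument sidesteps all the recursion --- is exactly what avoids the difficulty you identify but do not resolve.
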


The idea for the proof of Proposition~\ref{PROP:FATOU_C3} comes from the papers
of Bowditch \cite{bowditch}, Tan, Wong, and Zhang \cite{TWZ},
Maloni, Palesi, and Tan \cite{MPT}, and  Hu, Tan, and Zhang
\cite{Hu}.  See Section \ref{SUBSEC:TEICH} for more details.

\begin{proof}
Let $p = (x,y,z) \in B_\epsilon(p_0)$ and note that this implies that
each coordinate of $p$ has modulus larger than $2+\sqrt{r}$. 
 We will show that for any integer $k \geq 1$, and any reduced word $w_k w_{k-1} \ldots w_1$ in
the mappings $s_x, s_y$, and $s_z$ that each
coordinate of $w_k w_{k-1} \ldots w_1(p)$ has modulus at least as large as the
corresponding coordinate for $w_{k-1} \ldots w_1(p)$.
In particular, this will imply that
\begin{align*}
w_k w_{k-1} \ldots w_1 \left( B_\epsilon(p_0) \right) \subset \left(\mathbb{C} \setminus \overline{\mathbb{D}_{2+\sqrt{r}}(0)}\right)^3
\end{align*}
for any such word of any length $k \geq 1$.
Applying Montel's Theorem to each coordinate implies that the action of $\AUTOGROUP^\pm$ is normal on~$B_\epsilon(p_0)$
so that the statement follows.

We first check that our claim holds for $k = 1$. Consider the involution $s_x$ and the
points $p$ and~$s_x(p)$. Clearly the coordinates $y$ and $z$ of these two points coincide. 
To show that the modulus of the $x$ coordinate of $s_x(p)$ is strictly larger 
than the modulus of the $x$ coordinate of $p$, note that
\begin{align}\label{EQN:CHECK1}
|\pi_x(s_x(p))| &= |-yz - x +A| > (R-\epsilon)^2-(R+\epsilon)-r \geq R+\epsilon > |\pi_x(p)| \, ,
\end{align}
where the second inequality follows from the assumption that $\epsilon \leq
R+1-\sqrt {4\,R+r+1}$. Indeed, this condition can be reformulated as $R - \epsilon + 1 \geq
\sqrt {4\,R+r+1}$ which, by taking squares on both sides, leads right away to the inequality in question.
Naturally, analogous
estimates hold with respect to the coordinates $y$ or $z$ when
$s_x$ is replaced by $s_y$ and $s_z$. Therefore, we have shown that for every point $p \in B_\epsilon(p_0)$,
applying $s_x, s_y,$ or $s_z$ to $p$ strictly increases the modulus of one of the coordinates while
leaving the other two coordinates unchanged.

We now prove the claim for arbitrary $k \geq 2$ by means of contradiction.  We
therefore assume that $k \geq 2$ is the smallest index for which there is a
reduced word $w_k w_{k-1} w_{k-2} \ldots w_1$ such that some coordinate of $w_{k-1} w_{k-2} \ldots
w_1(p)$ has has modulus strictly larger than the corresponding coordinate of
$w_{k} w_{k-1} w_{k-2} \ldots w_{1}(p)$.
Note also that taking $k$ to be minimal
implies that each coordinate of $w_{k-1} \ldots w_1(p)$ has modulus greater than or equal to 
the minimal modulus of a coordinate of $p$ which, in turn, exceeds $2+\sqrt{r} = 2+\sqrt{{\rm max}\{|A|,|B|,|C|\}}$.
Let
\begin{align*}
(x,y,z) = w_{k-2} \ldots w_{1}(p), \qquad (x',y',z') = w_{k-1} \ldots w_{1}(p), \quad \mbox{and} \quad (x'',y'',z'') = w_{k} \ldots w_{1}(p).
\end{align*}
(If $k= 2$ then we interpret $w_{k-2} \ldots w_{1}$ as the identity mapping.)
In particular, the preceding ensures that ${\rm min}\{|x'|,|y'|,|z'|\} \geq 2+\sqrt{r}$. On the other hand,
since the word $w$ is reduced and all generators $s_x$, $s_y$, $s_z$ are involutions, we must have
$w_{k-1} \neq w_{k}$. Without loss of generality, we can then suppose $w_{k-1} = s_x$
and $w_{k} = s_y$. This yields
\begin{align*}
(x',y',z') = (-x - yz+A, y,z),  \qquad \mbox{and}  \qquad (x'',y'',z'') = (x', -y' -x'z'+B, z').
\end{align*}

Our assumption on $k$ implies $|x| \leq |x'|$ and $|y'| > |y''|$. Therefore, 
\begin{align}\label{EQN:INEQUALITIES-I}
2|x'| &\geq |x'+x| = |-yz+A| = |-y'z'+A| \qquad \mbox{and} \\
2|y'| &> |y'+y''| = |-x'z'+B|. \nonumber
\end{align}
We now split the discussion in two cases. Assume first that $|x'| \geq |y'|$.
Then, the second inequality from~(\ref{EQN:INEQUALITIES-I}) gives
\begin{align*}
|x'z'| - |B| \leq 2|y'| \leq 2|x'|.
\end{align*}
In turn, moving $|B|$ to the right side, dividing by $|x'|$, and recalling that $|x'| \geq 2+\sqrt{r}$ leads to
\begin{align*}
|z'| \leq 2 + \frac{|B|}{|x'|} < 2 + \frac{r}{2+\sqrt{r}} < 2 + \sqrt{r}.
\end{align*}
This is impossible since ${\rm min}\{|x'|,|y'|,|z'|\} \geq 2+\sqrt{r}$. If we consider now the case where
$|y'| > |x'|$, we just need to use the first inequality from (\ref{EQN:INEQUALITIES-I}) to
similarly show that
$$
|z'| < 2 +  \frac{|A|}{|y'|} < 2+\sqrt{r} \, .
$$
Thus, in any event, we obtain a contradiction that
proves our initial claim.

In summary, for any $p \in B_\epsilon(p_0)$ we have shown
that for any integer $k \geq 1$, and any reduced word $w_k w_{k-1} \ldots w_1$ in
the mappings $s_x, s_y$, and $s_z$ that each
coordinate of $w_k w_{k-1} \ldots w_1(p)$ has modulus at least as large as the
corresponding coordinate for $w_{k-1} \ldots w_1(p)$.   As already pointed out, this implies that the ball $B_\epsilon(p)$ must therefore lie in the Fatou set
of the action of $\AUTOGROUP^\pm_{A,B,C}$ on $\mathbb{C}^3$. 
\end{proof}

\begin{corollary}\label{COR:FATOU_EXISTENCE}  For any parameters $(A_0,B_0,C_0) \in \mathbb{C}^3$
suppose that $p_0 = (u,u,u)$ with $|u| > 2+\sqrt{r}$,
where $r$ is given as in
Proposition \ref{PROP:FATOU_C3}. Let $D_0$ be chosen so that $p_0 \in S_{A_0,B_0,C_0,D_0}$.   

Then there is some $\delta > 0$ such that for all parameters $(A,B,C,D) \in \mathbb{B}_\delta (A_0,B_0,C_0,D_0)
\subset \C^4$ the Fatou set
$\mathcal{F}_{A,B,C,D}$ for the action of $\AUTOGROUP_{A,B,C,D}$ on $S_{A,B,C,D}$ is non-empty.
\end{corollary}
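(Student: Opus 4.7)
The plan is to transfer the Fatou ball on $\mathbb{C}^3$ produced by Proposition~\ref{PROP:FATOU_C3} to the surfaces $S_{A,B,C,D}$, uniformly over parameters near $(A_0,B_0,C_0,D_0)$. Set $r_0=\max\{|A_0|,|B_0|,|C_0|\}$. The hypothesis $R=|u|>2+\sqrt{r_0}$ is strict, and both quantities inside the minimum in~(\ref{EQN:FATOU_EPS-I}) depend continuously on $(A,B,C)$. Hence there exist $\delta_1>0$ and $\epsilon^*>0$ such that for every $(A,B,C)\in \mathbb{D}_{\delta_1}(A_0,B_0,C_0)$ the hypotheses of Proposition~\ref{PROP:FATOU_C3} hold at $p=(u,u,u)$ with a value of $\epsilon$ bounded below by $\epsilon^*$. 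The proposition therefore yields $B_{\epsilon^*}(p)\subset \BIGFATOU_{A,B,C}$ for all such parameters simultaneously.

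Next I would show that $S_{A,B,C,D}$ meets $B_{\epsilon^*}(p)$ for $(A,B,C,D)$ close enough to $(A_0,B_0,C_0,D_0)$. Writing $F_{A,B,C,D}(x,y,z)=x^2+y^2+z^2+xyz-Ax-By-Cz-D$, we have by hypothesis $F_{A_0,B_0,C_0,D_0}(p)=0$, and the partial derivative satisfies
\begin{align*}
\bigl|\partial_x F_{A_0,B_0,C_0,D_0}(p)\bigr| \;=\; |2u+u^2-A_0| \;\geq\; R(R-2)-r_0 \;>\; 0,
\end{align*}
since $R-2>\sqrt{r_0}$ and $R>\sqrt{r_0}$ give $R(R-2)>r_0$. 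By the holomorphic implicit function theorem applied at $(p;A_0,B_0,C_0,D_0)$, there exists $\delta\in(0,\delta_1]$ such that for every $(A,B,C,D)\in \mathbb{D}_\delta(A_0,B_0,C_0,D_0)$ the equation $F_{A,B,C,D}=0$ has a solution $q=q(A,B,C,D)\in B_{\epsilon^*}(p)$, i.e.\ $S_{A,B,C,D}\cap B_{\epsilon^*}(p)\neq \emptyset$.

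To finish, I would observe that $\Gamma_{A,B,C,D}<\Gamma^*_{A,B,C,D}$, each of whose elements is the restriction to $S_{A,B,C,D}$ of an automorphism of $\mathbb{C}^3$ belonging to $\Gamma^*_{A,B,C}$. A sequence of such automorphisms that is normal on $B_{\epsilon^*}(p)\subset\mathbb{C}^3$ restricts to a normal sequence on the relatively open subset $S_{A,B,C,D}\cap B_{\epsilon^*}(p)$; moreover, since $S_{A,B,C,D}$ is closed in $\mathbb{C}^3$, convergence to infinity in the $\mathbb{C}^3$-sense forces convergence to infinity in the surface-sense of Section~\ref{SUBSEC:FATOU_JULIA}. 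Hence $q\in\mathcal{F}_{A,B,C,D}$, and the Fatou set of $\Gamma_{A,B,C,D}$ is non-empty. The only delicate point is the uniform control in the first two steps, ensuring that a single $\delta$ works simultaneously for the Fatou-ball estimate and for the intersection with the perturbed surface; beyond this bookkeeping, the argument is routine.
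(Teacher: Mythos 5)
Your argument is correct and follows essentially the same approach as the paper: continuity of $\epsilon(r)$ from~(\ref{EQN:FATOU_EPS-I}) yields a uniform Fatou ball $B_{\epsilon^*}(p) \subset \BIGFATOU_{A,B,C}$ over a parameter neighborhood, one then shows $S_{A,B,C,D} \cap B_{\epsilon^*}(p) \neq \emptyset$ persists, and the inclusion $\BIGFATOU_{A,B,C} \cap S_{A,B,C,D} \subset \mathcal{F}_{A,B,C,D}$ finishes. The only small variation is in the persistence step: you invoke the implicit function theorem after checking $\lvert\partial_x F_{A_0,B_0,C_0,D_0}(p)\rvert = \lvert 2u+u^2-A_0\rvert \geq R(R-2)-r_0 > 0$, whereas the paper notes that $(x,u,u)\in S_{A,B,C,D}$ is a monic quadratic equation in $x$ with a root at $x=u$ at the base parameters, so a nearby root persists by continuity of roots of monic polynomials — a marginally cleaner route since it dispenses with the non-degeneracy check.
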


\begin{proof}
The condition on $\epsilon > 0$ given in (\ref{EQN:FATOU_EPS-I}) depends
continuously on $r={\rm max}\{|A|,|B|,|C|\}$.  Therefore there exists
$\delta_0 > 0$ and $\epsilon_0 > 0$ such that if $(A,B,C) \in
\mathbb{B}_{\delta_0}(A_0,B_0,C_0) \subset \C^3$ then $B_{\epsilon_0}(p_0) \subset
\BIGFATOU^\pm_{A,B,C} \subset \C^3$. On the other hand, the point $(x,u,u)$ lies in the surface $S_{A,B,C,D}$ where
\begin{align*}
D = x^2+2u^2+xu^2-Ax-Bu-Cu.
\end{align*}
This polynomial is monic and non-constant in $x$ so that its roots vary continuously with $(A,B,C,D)$.  Since
it has a root at $x=u$ when $(A,B,C,D) = (A_0,B_0,C_0,D_0)$ we can find some  
$0 < \delta < \delta_1$ such that if $(A,B,C,D) \in \mathbb{B}_{\delta}(A_0,B_0,C_0,D_0)$ 
then 
\begin{align*}
(x,u,u) \in B_{\epsilon_0}(p_0) \cap S_{A,B,C,D} \, \subset \, \BIGFATOU^\pm_{A,B,C} \cap S_{A,B,C,D} \, \subset \, \mathcal{F}_{A,B,C,D}.
\end{align*}
\end{proof}

The proof of Theorem E will be a quick application of Corollary~\ref{COR:FATOU_EXISTENCE}
combined with the following elementary lemma whose proof we leave to the reader.

\begin{lemma}\label{windingnumber=2}
For every $D \in \C \setminus \{ 4\}$, the polynomial $q(u) = u^3 + 3u^2 = D$ has a solution 
with modulus strictly larger than~$2$.
\end{lemma}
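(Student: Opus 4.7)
The plan is to prove the contrapositive: assuming every root $u_1, u_2, u_3$ (counted with multiplicity) of $u^3 + 3u^2 - D = 0$ satisfies $|u_i| \le 2$, I would deduce $D = 4$. The key observation driving the proof is that the critical point $u = -2$ of the cubic $u^3 + 3u^2$ lies on the circle $|u| = 2$, which combined with Gauss--Lucas will pin at least one root of the cubic to the point $u = -2$.

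First I would shift variables by setting $\nu_i = u_i + 2$, so the assumption $|u_i| \le 2$ becomes $\nu_i \in \overline{\mathbb{D}_2(2)}$, the closed disk of radius $2$ about $2$. This disk lies in the closed right half-plane $\{\mathrm{Re}(\nu) \ge 0\}$ and meets the imaginary axis at exactly the single point $\nu = 0$. A direct substitution shows that the monic polynomial with roots $\nu_1, \nu_2, \nu_3$ is $\nu^3 - 3\nu^2 - (D - 4)$, whose derivative factors as $3\nu(\nu - 2)$.

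Next I would invoke the Gauss--Lucas theorem, which guarantees that both critical points $\nu = 0$ and $\nu = 2$ lie in the closed convex hull of $\{\nu_1, \nu_2, \nu_3\}$. Writing $0 = \sum_i \lambda_i \nu_i$ with $\lambda_i \ge 0$ and $\sum_i \lambda_i = 1$, and taking real parts yields $0 = \sum_i \lambda_i \, \mathrm{Re}(\nu_i)$. Each summand is non-negative, so any index $i$ with $\lambda_i > 0$ must satisfy $\mathrm{Re}(\nu_i) = 0$; by the geometry of $\overline{\mathbb{D}_2(2)}$ this forces $\nu_i = 0$. Hence some $\nu_i = 0$, i.e., some $u_i = -2$, and substituting back gives $D = (-2)^3 + 3(-2)^2 = 4$, as required.

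The main obstacle is recognizing the correct geometric setup: the translation $\nu = u + 2$ sends $\overline{\mathbb{D}_2(0)}$ to a disk tangent to the imaginary axis at the single point $0$, and this same point $0$ happens to be a critical point of the translated polynomial. Once this is spotted, the convex-combination argument is essentially automatic. A more elementary alternative would use Vieta's relations $\sum u_i = -3$, $\sum u_i u_j = 0$, $\prod u_i = D$ together with case analysis on whether the roots are all real or form a real-plus-conjugate-pair configuration, but that path is noticeably longer and the Gauss--Lucas approach neatly bypasses it.
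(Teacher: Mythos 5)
The paper itself leaves this lemma as an exercise (``whose proof we leave to the reader''), so there is no official proof to compare against. Your argument is correct. The translation $\nu = u+2$ sends $\overline{\mathbb{D}_2(0)}$ to the disk $\lvert \nu - 2 \rvert \le 2$, which lies in the closed right half-plane and touches the imaginary axis only at $\nu = 0$; the translated polynomial is $\nu^3 - 3\nu^2 - (D-4)$ with critical points $\nu \in \{0,2\}$; Gauss--Lucas places $0$ in the convex hull of the roots, and the half-plane geometry then forces a root at $\nu = 0$, i.e.\ $u = -2$, giving $D = p(-2) = 4$. This is airtight, and it handles repeated roots gracefully since Gauss--Lucas makes no simplicity assumption. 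The only remark worth making is that the authors' label \texttt{windingnumber=2} suggests they had in mind an argument-principle computation (counting how many times the image of the circle $\lvert u\rvert = 2$ under $p$ winds around a given $D$, so that at most two of the three roots can lie in the closed disk once $D \ne 4$). Your Gauss--Lucas route avoids any contour computation and is arguably cleaner; the winding-number route has the minor advantage of not requiring one to notice the felicitous placement of the critical point $u=-2$ on the circle, but it is more work to make precise when $D$ lies on the image curve. Both are valid; yours is a nice find.
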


\begin{proof}[Proof of Theorem E]

Consider first the Punctured Torus Parameters $A=B=C=0$.
The condition for a point $p_0 = (u,u,u)$ to lie in $S_{0,0,0,D_0}$ is
$u^3 + 3u^2 = D_0$.  If $D_0 \neq 4$, 
Lemma~\ref{windingnumber=2} ensures 
that there is a 
point $p_0 = (u, u, u) \in S_{0,0,0,D_0}$ with $\vert u \vert >2$.
Corollary~\ref{COR:FATOU_EXISTENCE} ensures the existence of
$\delta > 0$ such that for
all $(A,B,C,D) \in \mathbb{B}_\delta((0,0,0,D_0))$ the Fatou set
$\mathcal{F}_{A,B,C,D}$ is non-empty. This establishes the first part of Theorem~E.

Now consider the Dubrovin-Mazzocco parameters
$A(a) = B(a) = C(a) = 2a+4,$ and $D(a) = -(a^2 + 8a +8)$
for $a \in (-2,2)$. Let us denote the surface $S_{A,B,C,D}$ at these parameters
by $S_a$. The condition for $(u,u,u)$ to belong to $S_a$ is given by
\begin{align*}
q_a(u) = {u}^{3}-3\, \left( 2\,a+4 \right) u+3\,{u}^{2}+{a}^{2}+8\,a+8 = 0 \, .
\end{align*}
A direct calculation shows that if you substitute $u = -(2+\sqrt{2\,a + 4}) = -(2+\sqrt{r})$
into $q_a(u)$ the result is positive. Hence, there is a real $u_0 < -(2+\sqrt{r})$
satisfying $q(u_0) = 0$. Hence, again Corollary~\ref{COR:FATOU_EXISTENCE} implies that for any $a \in (-2,2)$
there exists $\delta > 0$ such that for
all $(A,B,C,D) \in \mathbb{B}_\delta((A(a),B(a),C(a),D(a)))$ the Fatou set $\mathcal{F}_{A,B,C,D}$ is non-empty.
The proof of Theorem~E is complete.
\end{proof}

\section{Locally non-discrete dynamics in $\AUTOGROUP_{A,B,C,D}$.}\label{SEC:LOCALLY_NONDISCRETE}

Let $M$ be a  (possibly open) connected complex manifold and consider a group $G$ of holomorphic diffeomorphisms of $M$.
The group $G$ is said to be {\it locally non-discrete} on an open $U \subset M$
if there is a sequence of maps $\{ f_n \}_{n=0}^{\infty} \in G$ satisfying the following conditions (see for example \cite{REBELO_REIS}):
\begin{enumerate}
 \item For every $n$, $f_n$ is different from the identity.
  \item The sequence of maps $f_n$ converges uniformly to the identity on compact subsets of $U$.
\end{enumerate}
If no such sequence of maps $\{ f_n \}_{n=0}^{\infty} \in G$ exists then $G$ is said to be {\it locally discrete on $U$}.
The reader will note that a group $G$ may be locally non-discrete on one open set $U$ and locally discrete on a disjoint open set $V$.


\vspace{0.1in}
We remind the reader from Section \ref{SUBSEC:LOCALLY_DISCR} that we denote the open subset of $S_{A,B,C,D}$ on which $\AUTOGROUP_{A,B,C,D}$ is locally
non-discrete (the ``locally non-discrete locus'') by $\nondiscrete_{A,B,C,D}$ and its complement (the ``locally discrete locus'') by $\discrete_{A,B,C,D}$.
\vspace{0.1in}

Let us return for a while to a more general context.
It is convenient to begin our discussion with Proposition~\ref{PROP:CONV_TO_ID}
below. This proposition constitutes a simple general result of which specific
variants appear in \cite[p. 9-10]{REBELO_REIS} and in \cite[Sec.
3]{LORAY_REBELO} while the main idea dates back to Ghys \cite{GHYS}.  Both Proposition~\ref{PROP:CONV_TO_ID}
and Lemma \ref{tangentidentitycloseidentity}, below, are stated in the wider context of pseudogroups $G$ of holomorphic maps of open sets of $M$
to $M$.  In this context, the definition of $G$ being locally non-discrete on $U$ becomes:
\begin{itemize}
  \item[(0)] The open set $U$ is contained in the domain of definition of $f_n$ (as an element of the pseudogroup $G$), for every~$n$.
  \item[(1)] For every~$n$, the restriction of $f_n$ to $U$ is different from the identity.
  \item[(2)] The sequence of maps $f_n$ converges uniformly to the identity on compact subsets of $U$.

\end{itemize}
Condition (0) is required for Conditions (1) and (2) to make sense and Condition (1) has been modified because of the possibility
that $U$ not be connected.

\vspace{0.05in}

Consider a ball $B_{\epsilon} (0) \subset \C^n$
of radius $\epsilon >0$ around the origin of $\C^n$. Assume we are given local holomorphic diffeomorphisms
$F_1, F_2 : B_{\epsilon} (0) \rightarrow \C^n$ and denote by $G$ the pseudogroup of maps from $B_{\epsilon} (0)$ to $\C^n$
generated by $F_1, F_2$. Naturally the inverses of $F_1$, $F_2$ are respectively denoted by $F_1^{-1}$ and $F_2^{-1}$. In what
follows we can assume without loss of generality that the domain of definition of $F_1^{-1}$ and $F_2^{-1}$ as elements
of $G$ is non-empty. Let us then define a sequence $S(n)$ of sets of elements in $G$ by letting
$S(0) = \{ F_1, F_1^{-1}, F_2, F_2^{-1} \}$. The sets $S(n)$ are now inductively defined by stating that
$S(n+1)$ is constituted by all elements of the form $[\gamma_i, \gamma_j] = \gamma_i \circ \gamma_j \circ \gamma_i^{-1} \circ
\gamma_j^{-1}$ with $\gamma_i, \gamma_j \in S(n)$. Note that the construction of these elements is so far purely formal
in the sense that the domain of definition (contained in $B_{\epsilon} (0)$) of diffeomorphisms in $S(n)$ viewed as elements
of $G$ may be empty. Nonetheless, we have:

\begin{proposition}\label{PROP:CONV_TO_ID}  Given $\epsilon > 0$, there is $K = K(\epsilon) > 0$ such that,
if
$$
\left\{ \sup_{z \in B_{\epsilon} (0)} \Vert F_1 (z) - z \Vert \; , \; \sup_{z \in B_{\epsilon} (0)} \Vert F_2 (z) - z \Vert
\right\} < K \, ,
$$
then the following hold:
\begin{itemize}
  \item[(1)] For every $n$ and every $\gamma \in S(n)$, the domain of definition of $\gamma$ as element in $G$
  contains the ball $B_{\epsilon/2} (0) \subset \C^n$ of radius~$\epsilon /2$ around the origin.

  \item[(2)] Furthermore, if $\gamma$ belongs to $S(n)$ then we have
$\sup_{p \in B_{\epsilon/2} (0)} \Vert\gamma (p) - p \Vert \leq \frac{K}{2^n}$.
\end{itemize}
\end{proposition}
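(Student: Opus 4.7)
My plan is to prove both conclusions simultaneously by induction on $n$, using a decreasing sequence of radii $r_n$ that interpolates between $r_0 < \epsilon$ and $\epsilon/2$. Concretely I would set $r_n = \epsilon/2 + \epsilon/2^{n+2}$, so that $r_0 = 3\epsilon/4$, $r_n \searrow \epsilon/2$, and $r_n - r_{n+1} = \epsilon/2^{n+3}$. The joint inductive statement $H(n)$ I would carry is: every $\gamma \in S(n)$ is well-defined and holomorphic on $B_{r_n}(0)$ and satisfies $\sup_{z \in B_{r_n}(0)} \Vert \gamma(z) - z \Vert \leq K/2^n$. Since $r_n > \epsilon/2$ for every $n$, $H(n)$ immediately implies conclusions~(1) and~(2) of the proposition.

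For the base case, I would take $K < \epsilon/8$. From $\Vert F_i - \id \Vert < K$ on $B_\epsilon(0)$ together with the local injectivity of $F_i$, a standard topological degree argument gives $F_i(B_\epsilon(0)) \supset B_{\epsilon-K}(0) \supset B_{r_0}(0)$, so that $F_i^{-1}$ is well-defined on $B_{r_0}(0)$. On this set, writing $w = F_i^{-1}(z)$, one has $\Vert F_i^{-1}(z) - z \Vert = \Vert w - F_i(w) \Vert < K$, which together with the trivial estimate $\Vert F_i - \id \Vert < K$ on $B_{r_0}(0) \subset B_\epsilon(0)$ establishes $H(0)$.

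For the inductive step, suppose $H(n)$ holds and let $\gamma_i, \gamma_j \in S(n)$. First, a domain-tracking argument based on the fact that each of the four factors moves points by at most $K/2^n$ shows that the partial compositions making up $[\gamma_i,\gamma_j] = \gamma_i \gamma_j \gamma_i^{-1} \gamma_j^{-1}$, starting from $z \in B_{r_{n+1}}(0)$, remain inside $B_{r_{n+1} + 4K/2^n}(0) \subset B_{r_n}(0)$ provided $K$ is taken small with respect to $\epsilon$, so that the composition is defined on $B_{r_{n+1}}(0)$. The key analytic step is the commutator estimate. Writing $\gamma_i = \id + u_i$ and noting $[\gamma_i,\gamma_j] = (\gamma_i\gamma_j) \circ (\gamma_j\gamma_i)^{-1}$, a first-order expansion gives $\gamma_i\gamma_j - \gamma_j\gamma_i \approx (Du_i)\,u_j - (Du_j)\,u_i$ plus purely quadratic remainders. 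Combined with Cauchy's estimate $\Vert Du_i \Vert \leq (K/2^n)/(r_n - r_{n+1}) = 8K/\epsilon$ on $B_{r_{n+1}}(0)$, this yields
\begin{equation*}
\sup_{z \in B_{r_{n+1}}(0)} \bigl\Vert [\gamma_i,\gamma_j](z) - z \bigr\Vert \;\leq\; C \cdot \frac{\max(\Vert u_i \Vert,\Vert u_j \Vert)^2}{r_n - r_{n+1}} \;\leq\; \frac{8 C K^2}{\epsilon \cdot 2^n}
\end{equation*}
for some absolute constant $C$. Taking $K \leq \epsilon/(16 C)$ makes the right-hand side $\leq K/2^{n+1}$, closing the induction.

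The principal technical obstacle is the commutator estimate itself: one must perform the Taylor expansion of $\gamma_i \gamma_j \gamma_i^{-1} \gamma_j^{-1}$ carefully enough to verify that all apparent first-order contributions in $u_i, u_j$ cancel, leaving only genuinely quadratic residues (each involving one factor of $u$ and one factor of $Du$), and then apply Cauchy's inequality to trade a derivative for a loss of radius. Once this is in hand, the choice of the sequence $r_n$, the domain bookkeeping of the four-fold composition, and the final choice of $K$ depending on $\epsilon$ are routine.
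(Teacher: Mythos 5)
Your proposal is, in substance, the paper's proof. Both arguments proceed by induction on $n$, showing that every $\gamma \in S(n)$ is defined on a ball of radius $r_n > \epsilon/2$ and moves points by at most $K/2^n$, and both deduce the inductive step from a commutator estimate of the form $\Vert [f_1,f_2] - \id \Vert \le 2\tau^{-1} \Vert f_1 - \id \Vert \, \Vert f_2 - \id\Vert$ valid after shrinking the domain by $4K_n + \tau$. There are two organizational differences, neither affecting correctness. First, the paper simply quotes this commutator estimate from \cite[Lemma~3.0]{LORAY_REBELO} as a black box, whereas you sketch a re-derivation from scratch (a first-order Taylor expansion of $\gamma_i\gamma_j - \gamma_j\gamma_i$ controlled by Cauchy's inequality for $Du_i$); citing is shorter, but your sketch makes the mechanism self-contained, and you correctly flag the one genuinely delicate point, namely that the first-order contributions in $u_i, u_j$ must be checked to cancel. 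Second, the paper's radius sequence $\epsilon_n$ depends on $K$ (via $\tau_n = 4K/2^n$), so that $\epsilon_n \ge \epsilon/2$ becomes an extra constraint resolved at the end; you instead fix $r_n = \epsilon/2 + \epsilon/2^{n+2}$ in advance, independent of $K$, and impose $K$ small up front so that the Cauchy loss $(r_n - r_{n+1})^{-1}$ and the domain-tracking loss $4K/2^n$ both fit. The two bookkeepings are interchangeable and each produces the same linear constraint between $K$ and $\epsilon$ -- a linearity the paper explicitly records via (\ref{EQN:EPSILON_K_LINEAR_RELATION}) and relies on later in Lemma~\ref{tangentidentitycloseidentity}, so it is worth preserving in whichever version you write up.
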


Note that, in general, the above proposition falls short of implying
that the pseudogroup $G$ is locally non-discrete since the sets $S(n)$ may degenerate so as to
only contain the identity map.

As mentioned, variants of Proposition~\ref{PROP:CONV_TO_ID} can be found in the literature and,
to the best of our knowledge, the idea goes back to Ghys in \cite{GHYS}. Otherwise, its proof is
nearly identical to that from \cite[p. 9-10]{REBELO_REIS}, so we provide only a
sketch.

\begin{proof}[Proof of Proposition \ref{PROP:CONV_TO_ID}:]
The proof is based on the following estimate from
\cite[Lemma 3.0]{LORAY_REBELO}.  Let $B_\epsilon( 0) \subset \mathbb{C}^n$ be an open ball and suppose
$f_1, f_2: B_\epsilon( 0) \rightarrow \mathbb{C}^n$
are holomorphic local diffeomorphisms.  If
\begin{align}\label{EQN:HYPOTH_CLOSE_TO_ID}
\max \left\{\sup _{z \in B_{\epsilon}}\left\|f_{1}^{ \pm 1}(z)-z\right\|,
\sup _{z \in B_{\epsilon}}\left\|f_{2}^{ \pm 1}(z)-z\right\|\right\} \leq K
\end{align}
for some $K > 0$, then for any $\tau > 0$ satisfying $4K + \tau < \epsilon$ the commutator $[f_1,f_2]$ is defined
on the ball of radius $\epsilon-4K -\tau$ and satisfies
\begin{align}\label{EQN:CONCLUSION_CLOSE_TO_ID}
\sup _{z \in B_{\epsilon-4 \delta-\tau}}\left\|\left[f_{1}, f_{2}\right](z)-z\right\| \leq \frac{2}{\tau}
\sup _{z \in B_{\epsilon}}\left\|f_{1}(z)-z\right\| \cdot \sup _{z \in B_{\epsilon}}\left\|f_{2}(z)-z\right\|.
\end{align}
Starting with $S(0)$, we choose $\tau=\tau_0=K=K_0$ and $\epsilon_1 = \epsilon -8K$ so that the preceding yields
$$
\sup_{z \in B_{\epsilon_1}}  \left\|\gamma (z)-z\right\| \leq \frac{K}{2} \, ,
$$
for every $\gamma \in S(1)$. Now inductively setting $K_i = K/2^i$, $\tau_i = 4 K_i$ it is straightforward to conclude
that
$$
\sup_{z \in B_{\epsilon_n}}  \left\|\gamma (z)-z\right\| \leq \frac{K}{2^n} \, ,
$$
for every $\gamma \in S(n)$, where
\begin{align}\label{EQN:EPSILON_K_LINEAR_RELATION}
\epsilon_n = \epsilon-8K - K \sum_{j=1}^{n-1} 2^{3-j}.
\end{align}
The proposition then follows by choosing $K$ sufficiently small that $\epsilon_n \geq \epsilon/2$ for all $n$.
\end{proof}

The following simple lemma complements Proposition~\ref{PROP:CONV_TO_ID}.

\begin{lemma}
\label{tangentidentitycloseidentity}
Let $F_1, F_2 : B_{\epsilon} (0) \rightarrow \C^n$ be as above with $F_1 (0) = F_2(0) =0$. Assume that
their derivatives at the origin satisfy
\begin{align}\label{EQN:ESTIMATE_ON_DERIVATIVES}
\max \left\{ \Vert D_0F_1 - {\rm Id} \Vert , \Vert D_0F_2 - {\rm Id} \Vert \right\} < \tau,
\end{align}
where $\tau >0$ is some uniform (universal) to be determined later (the norm used here is the standard norm on linear
operators on $\C^n$).
Then, there is some $0 < \delta < \epsilon$ such that $F_1$ and $F_2$ satisfy the hypotheses
of Proposition~\ref{PROP:CONV_TO_ID} on~$B_\delta(0)$.
\end{lemma}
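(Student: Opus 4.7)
The plan is to reduce the statement to a direct application of Proposition~\ref{PROP:CONV_TO_ID} by shrinking the ball. First I would revisit the proof of Proposition~\ref{PROP:CONV_TO_ID} to extract the dependence of the threshold $K$ on the radius. The formula~(\ref{EQN:EPSILON_K_LINEAR_RELATION}), combined with the geometric sum $\sum_{j\ge 1} 2^{3-j} = 8$, shows that the requirement $\epsilon_n \ge \epsilon/2$ for every $n$ is automatically satisfied as soon as $K \le \epsilon/32$. Consequently one may take $K(\delta) = c\,\delta$ for a universal constant $c>0$ (e.g.\ $c = 1/32$), and the hypotheses of Proposition~\ref{PROP:CONV_TO_ID} on $B_\delta(0)$ reduce to
\[
\sup_{z \in B_\delta(0)} \| F_i(z) - z \| \,<\, c\,\delta, \qquad i=1,2.
\]

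Next, since each $F_i$ is holomorphic on $B_\epsilon(0)$ with $F_i(0)=0$, the Taylor expansion at the origin yields
\[
F_i(z) - z \;=\; (D_0 F_i - \mathrm{Id})\,z + R_i(z),
\]
with a remainder $R_i(z)$ satisfying $\|R_i(z)\| \le M\,\|z\|^2$ on, say, $B_{\epsilon/2}(0)$, for a constant $M$ depending only on the pair $(F_1,F_2)$ (bounded via Cauchy's estimates by $\sup_{B_\epsilon(0)} \|F_i\|$ and $\epsilon$). Using hypothesis~(\ref{EQN:ESTIMATE_ON_DERIVATIVES}), for any $z \in B_\delta(0)$ with $\delta \le \epsilon/2$ we obtain
\[
\| F_i(z) - z \| \;\le\; \tau\,\|z\| + M\,\|z\|^2 \;\le\; (\tau + M\,\delta)\,\delta.
\]

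To conclude it suffices to choose $\tau$ universally and $\delta$ depending on $(F_1,F_2)$ so that $\tau + M\,\delta < c$. For example, fix any universal $\tau < c/2$ and then take any $\delta$ with $0 < \delta < \min\{\epsilon/2,\, c/(2M)\}$. With these choices, $\| F_i(z) - z \| < c\,\delta = K(\delta)$ on $B_\delta(0)$ for $i=1,2$, so Proposition~\ref{PROP:CONV_TO_ID} applies on $B_\delta(0)$. The argument is essentially routine; the only substantive observation is that $K(\epsilon)$ scales linearly in $\epsilon$, which is what allows the quadratic Taylor remainder $M\|z\|^2$ to be absorbed by shrinking the ball while $\tau$ is kept universal.
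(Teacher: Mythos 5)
Your proof is correct and uses essentially the same idea as the paper's: exploit the linearity of $K(\epsilon)$ in $\epsilon$ so that the second-order Taylor remainder becomes negligible on a small enough ball while the first-order term is controlled by a universal $\tau$. The paper phrases this via conjugation by the homothety $\Lambda_\delta$ (normalizing to the unit ball and then writing the bound as $\tau + O(\delta) < K(1)$), whereas you work directly on $B_\delta(0)$, but the two are the same argument.
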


\begin{proof}
It follows from the proof of Proposition~\ref{PROP:CONV_TO_ID} that the
relation between $\epsilon$ and $K = K(\epsilon)$ given by
(\ref{EQN:EPSILON_K_LINEAR_RELATION}) is linear. Thus for $\epsilon =1$, $K =
K(1)$ becomes a uniform (universal) constant. We then choose $\tau = K/2$ so
that $\tau$ is also universal.

In view of the above remark, we proceed as follows. Fix $\delta < \epsilon$ and consider the homothety $\Lambda_{\delta} : \C^n
\rightarrow \C^n$ sending $(x_1, \ldots ,x_n)$ to $(\delta x_1, \ldots ,\delta x_n)$. Set $F_{j,\delta} = \Lambda_{\delta}^{-1} \circ
F_j \circ \Lambda_{\delta}$ and note that $F_1, F_2$ satisfy the conditions of Proposition~\ref{PROP:CONV_TO_ID} on the
ball of radius $\delta$ if and only if we have
\begin{equation}
\max \left\{ \sup _{z \in B_{1}}\left\| F_{1,\delta} (z) -z \right\|,  \sup _{z \in B_{1}}\left\|
F_{2,\delta} (z) -z \right\| \right\} < K = K(1) \, . \label{estimate_for_applications}
\end{equation}
We will now check that Estimate~(\ref{estimate_for_applications}) is always satisfied provided
that $\delta$ is small enough. Clearly, it suffices to consider the case of $F_{1,\delta}$. Owing to Taylor's formula, we have
\begin{eqnarray*}
\sup_{z \in B_1} \Vert F_{1,\delta} (z) -z \Vert  \leq   \Vert D_0 F_1 (z) - z \Vert + O \, (\delta) 
  \leq  K/2 + O \, (\delta) < K
\end{eqnarray*}
provided that $\delta$ is small enough.   The result is proved.
\end{proof}

Let $\mathcal{ND} \subset \C^4$ the set of those parameters $(A,B,C,D)$ giving
rise to a group $\AUTOGROUP_{A,B,C,D}$ acting locally non-discretely on some non-empty open subset of
the surface $S_{A,B,C,D}$. The remainder of this section is devoted to exhibiting 
several explicit examples of parameters in the interior of $\mathcal{ND}$. 

\begin{proposition}\label{PROP:ND_INTERIOR_PTS}
Suppose that $(A_0,B_0,C_0)$ are parameters for which there are two non-commuting elements $F_1,F_2
\in \AUTOGROUP$ sharing a common fixed point $p \in \mathbb{C}^3$. Assume also that the derivatives of $F_1, F_2$
at $p$ satisfy inequality~(\ref{EQN:ESTIMATE_ON_DERIVATIVES}). Then the following holds:
\begin{enumerate} 
\item There exists $r > 0$ such that for all parameters $(A,B,C)$ sufficiently close to $(A_0,B_0,C_0)$ the group
$\AUTOGROUP$ acting on $\mathbb{C}^3$ is locally non-discrete on the open ball $B_r(p) \subset \mathbb{C}^3$.

\vspace{0.05in}

\item Let $D_0$ be chosen so that $p \in S_{A_0,B_0,C_0,D_0}$.   Then for all parameters $(A,B,C,D)$ sufficiently close
to $(A_0,B_0,C_0,D_0)$ the group $\AUTOGROUP$ acting on $S_{A,B,C,D}$ is locally non-discrete on the open set $S_{A,B,C,D} \cap B_r(p)$.
In other words, $(A_0,B_0,C_0,D_0)$ is an interior point of $\mathcal{ND}$.
\end{enumerate}
\end{proposition}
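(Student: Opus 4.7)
The plan is to apply Proposition \ref{PROP:CONV_TO_ID} to $F_1, F_2$ in order to produce a family of iterated commutators that tends to the identity uniformly on a small ball around $p$, and then invoke the free-group structure of $\Gamma$ to ensure that infinitely many of these commutators are actually non-trivial.

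After translating so that $p$ becomes the origin, Lemma~\ref{tangentidentitycloseidentity} applied to $F_1, F_2$ (which now fix $0$ and whose derivatives there satisfy (\ref{EQN:ESTIMATE_ON_DERIVATIVES}) by hypothesis) yields some $\delta_0 > 0$ such that, after rescaling by a factor of $\delta_0$, the pair meets the smallness hypothesis of Proposition~\ref{PROP:CONV_TO_ID} on the unit ball; unravelling the rescaling, $F_1, F_2$ themselves satisfy that hypothesis on $B_{\delta_0}(p)$. Proposition~\ref{PROP:CONV_TO_ID} then guarantees that every element of the formal iterated-commutator sets $S(n)$ is defined on $B_{\delta_0/2}(p)$ with $\sup_{B_{\delta_0/2}(p)}\|\gamma -\id\|\leq K/2^n$ for all $\gamma\in S(n)$. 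I set $r := \delta_0/2$. What remains --- and this is the essential step that the remark following Proposition~\ref{PROP:CONV_TO_ID} explicitly flags --- is to ensure that $S(n)$ contains a non-identity element for infinitely many $n$.

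To prevent collapse, I would use the free-group structure of $\Gamma$ recalled in Section~\ref{preliminaries}: for every choice of parameters, $\Gamma_{A,B,C,D}$ is free of rank two as a group of automorphisms of $S_{A,B,C,D}$. Since any two non-commuting elements of a rank-two free group share no common root, $F_1$ and $F_2$ generate a rank-two free subgroup of $\Gamma$ by the Nielsen-Schreier theorem. Inside such a subgroup, iterated commutators of $F_1$ and $F_2$ never trivialize; this is a classical consequence of the Magnus embedding into the ring of formal power series in two non-commuting variables, under which a nested commutator of depth $n$ acquires a non-zero leading term lying in the degree-$n$ piece of the free Lie algebra on two generators. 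Combining this with Proposition~\ref{PROP:CONV_TO_ID} produces a sequence $\gamma_n \in \Gamma \setminus \{\id\}$ with $\gamma_n \to \id$ uniformly on $B_r(p)$, establishing local non-discreteness at $(A_0,B_0,C_0,D_0)$.

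Openness in parameters then follows from a short continuity argument. The involutions $s_x,s_y,s_z$ are polynomial maps of $\mathbb{C}^3$ whose coefficients depend polynomially on $(A,B,C)$, hence so do $F_1, F_2$ and each iterated commutator $\gamma_n$. After possibly shrinking $r$ slightly, the inequality $\sup_{z \in B_r(p)}\|F_j(z)-z\| < K$ persists on an open neighborhood of $(A_0,B_0,C_0)$, so Proposition~\ref{PROP:CONV_TO_ID} continues to apply; non-triviality of the $\gamma_n$ is a purely combinatorial feature of the free product $\Gamma^\ast \cong \mathbb{Z}/2\mathbb{Z}\ast\mathbb{Z}/2\mathbb{Z}\ast\mathbb{Z}/2\mathbb{Z}$, which holds at every parameter. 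This proves part~(1). For part~(2), the defining polynomial of $S_{A,B,C,D}$ evaluated at $p$ varies continuously and vanishes at $(A_0,B_0,C_0,D_0)$, so $S_{A,B,C,D}\cap B_r(p)$ is a non-empty open subset of the surface for all nearby parameters; the $\gamma_n$ restrict to automorphisms of $S_{A,B,C,D}$ with the same convergence and non-triviality, since the free-product isomorphism from Section~\ref{preliminaries} is valid for the restricted action on the surface as well. The main obstacle throughout is the potential collapse of the iterated commutators, which is precisely what the freeness of $\Gamma$ rules out.
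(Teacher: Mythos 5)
Your strategy is the same as the paper's: translate $p$ to the origin, apply Lemma~\ref{tangentidentitycloseidentity} to enter the regime of Proposition~\ref{PROP:CONV_TO_ID}, observe that the smallness hypothesis of that proposition is $C^0$-open in the parameters, and then restrict to the surface using the fact that non-trivial elements of $\Gamma$ never act as the identity on any $S_{A,B,C,D}$. Parts of this are fine. The step where you argue that the iterated-commutator sets $S(n)$ do not degenerate, however, contains a genuine gap, and this is precisely the step that the remark after Proposition~\ref{PROP:CONV_TO_ID} warns about.

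You appeal to the Magnus embedding, claiming that a nested commutator of depth $n$ has a non-zero leading term in the degree-$n$ component of the free Lie algebra. That is true for basic (Hall) commutators, but the elements of $S(n)$ are not basic commutators: by construction, $S(n+1)$ consists of commutators of \emph{pairs of elements of $S(n)$}, not commutators against the original generators. Already at the first stage the leading-term argument fails. Writing $a = F_1$, $b = F_2$ and $a \mapsto 1+x$, $b\mapsto 1+y$ for the Magnus expansion, one computes $[a,b]\mapsto 1 + [x,y] + O(3)$, $[a^{-1},b^{-1}]\mapsto 1 + [x,y] + O(3)$, $[a,b^{-1}]\mapsto 1 - [x,y] + O(3)$, and $[a^{-1},b]\mapsto 1 - [x,y] + O(3)$. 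Every element of $S(1)$ therefore has leading term proportional to $[x,y]$, so the leading Lie-algebra term of any commutator of two elements of $S(1)$ is $\pm[[x,y],[x,y]] = 0$. The free Lie algebra computation thus gives no information about whether the elements of $S(2)$, and a fortiori of $S(n)$ for $n\geq 2$, are non-trivial. Your phrase ``iterated commutators of $F_1$ and $F_2$ never trivialize'' is also false as stated: $[F_1,F_1]$, $[[F_1,F_2],[F_1,F_2]]$, etc., are all trivial, and the content you need is that \emph{some} element of each $S(n)$ is non-trivial.

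What the paper does instead (Lemma~\ref{LEM:ALGEBRA}) is a direct word-level argument: if $a,b$ do not commute in a free group, then $[a,b]$ and $[a^{-1},b^{-1}]$ do not commute either, because the reduced spelling of $[[a,b],[a^{-1},b^{-1}]]$ in $a,b,a^{-1},b^{-1}$ is non-trivial. This immediately gives the inductive invariant you need: each $S(n)$, being closed under inversion, contains a non-commuting pair $\{c_n, c_n^{-1}\} \leadsto \{[c_n,c_n'],[c_n^{-1},(c_n')^{-1}]\} \subset S(n+1)$, so each $S(n)$ contains non-trivial elements. To repair your proof you should replace the Magnus-embedding paragraph with this explicit bookkeeping (or an equivalent observation, such as: the derived subgroup of a rank-two free group is free and non-abelian, and $\langle [a,b],[a^{-1},b^{-1}]\rangle$ is again free of rank two). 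The rest of your argument, including the openness in parameters and the passage from $\mathbb{C}^3$ to the surfaces $S_{A,B,C,D}$, matches the paper and is correct.
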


Recall from the introduction that El-Huiti's theorem implies that $\AUTOGROUP$ is
the free group on two generators (one can choose any two of the three mappings
$g_x, g_y,$ and $g_z$ as generators).  Therefore,  two elements $\gamma_1,
\gamma_2 \in \AUTOGROUP$ commute if and only if there exists $a \in \AUTOGROUP$ so that
$\gamma_1$ and $\gamma_2$ are both powers of $a$.  This is an immediate
consequence of the Nielsen-Schreier Theorem which states that any subgroup of a
free group is free.

The proof of Proposition \ref{PROP:ND_INTERIOR_PTS} will require a simple algebraic lemma:

\begin{lemma}\label{LEM:ALGEBRA}
Let $F_n$ denote the free group on $n \geq 2$ symbols and suppose 
$a, b \in F_n$ do not commute.  Then the subgroup 
\begin{align*}
H = \langle [a,b], [a^{-1},b^{-1}] \rangle \leq F_n
\end{align*}
is again free on two or more symbols.
\end{lemma}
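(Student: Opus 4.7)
The plan is to show that $[a,b]$ and $[a^{-1},b^{-1}]$ do not commute in $F_n$; then, since $H$ is a $2$-generated subgroup of the free group $F_n$, the Nielsen--Schreier theorem forces $H$ itself to be free, and the fact that its two generators do not commute prevents $H$ from being cyclic, leaving only the possibility that $H$ is free of rank $2$.

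To establish non-commutation, I would first reduce to the case $n = 2$. Indeed, since $a, b \in F_n$ do not commute, the subgroup $\langle a, b\rangle \leq F_n$ is a $2$-generated non-cyclic subgroup of a free group; by Nielsen--Schreier it is free, and having rank $\leq 2$ but not $1$, it is free of rank exactly $2$. A standard application of the Hopfian property of $F_2$ then shows that $a,b$ themselves must be a free basis of $\langle a,b\rangle$. Consequently, working inside $\langle a, b\rangle \cong F_2$ is equivalent to assuming $a$ and $b$ are free generators of $F_2$.

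With $a, b$ free generators, I would verify directly by concatenation that the reduced words
\[
[a,b]\cdot[a^{-1},b^{-1}] \qquad \text{and} \qquad [a^{-1},b^{-1}]\cdot[a,b]
\]
differ. Each side is a length-$8$ word with no interior cancellations; the first begins with $a$ whereas the second begins with $a^{-1}$, so they are distinct reduced words and hence represent distinct elements of $F_2$. Therefore $[a,b]$ and $[a^{-1},b^{-1}]$ do not commute. The main (and only) obstacle is really bookkeeping in this computation, which is routine.

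To conclude, the subgroup $H = \langle [a,b],[a^{-1},b^{-1}]\rangle \leq F_n$ is free by Nielsen--Schreier, and the fact that its two generators do not commute rules out the possibility that $H$ is cyclic of rank $\leq 1$. Hence $H$ is free of rank at least $2$, as claimed.
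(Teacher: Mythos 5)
Your proof is correct and follows essentially the same route as the paper's: Nielsen--Schreier gives that $H$ is free, and the rank-two claim reduces to showing $[a,b]$ and $[a^{-1},b^{-1}]$ do not commute, which both arguments settle by a reduced-word computation in $\langle a,b\rangle\cong F_2$ (you compare the two length-eight products $[a,b][a^{-1},b^{-1}]$ and $[a^{-1},b^{-1}][a,b]$, the paper checks that the length-sixteen commutator $[[a,b],[a^{-1},b^{-1}]]$ is a nontrivial reduced word, which is the same thing). You are somewhat more careful than the paper in justifying that $a,b$ form a free basis of $\langle a,b\rangle$ via Hopficity of $F_2$, a step the paper's proof takes for granted.
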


\begin{proof}
The Nielsen-Schreier Theorem implies that $H$ is a free group.  To see that it has rank two, it suffices
check that $[a,b]$ and $[a^{-1},b^{-1}]$ do not commute.  This follows because
\begin{align*}
[a,b] [a^{-1},b^{-1}] [a,b]^{-1} [a^{-1},b^{-1}]^{-1}
\end{align*}
is a non-trivial reduced word in $a, b, a^{-1},$ and $b^{-1}$.  Since $a$ and $b$ do not
commute this word does not reduce to the identity in $F_n$.
\end{proof}

\begin{proof}[Proof of Proposition \ref{PROP:ND_INTERIOR_PTS}]
Lemma \ref{tangentidentitycloseidentity} implies the existence of some
$\delta > 0$ such that for parameters $(A_0,B_0,C_0)$ the mappings  $F_1$ and
$F_2$ satisfy the hypotheses of Proposition \ref{PROP:CONV_TO_ID} on
$B_\delta(p)$.  Moreover the conditions of Proposition \ref{PROP:CONV_TO_ID}
are open in the $C^0$ topology (on $B_\delta(p)$).  This implies that for all
parameters $(A,B,C)$ sufficiently close to $(A_0,B_0,C_0)$ the mappings $F_1$
and $F_2$ continue to satisfy the hypotheses of Proposition
\ref{PROP:CONV_TO_ID} on $B_\delta(p)$. Thus, for these parameters $(A,B,C)$, the
elements in the iterated commutators $S(n)$ converge uniformly to the identity
on the ball $B_{\delta/2}(0) \subset \mathbb{C}^3$.

Since $F_1$ and $F_2$ do not commute, Lemma~\ref{LEM:ALGEBRA} can inductively
be applied to ensure that each set $S(n)$ contains at least two
non-commuting elements.  Therefore, for every $n \geq 0$, there are elements in $S(n)$ that are
different from the identity which, in turn, proves
that the pseudogroup generated by $F_1$ and
$F_2$ is locally non-discrete on $B_r(p)$. 

Statement (2) then follows immediately because elements of
$\AUTOGROUP$ different from the identity cannot coincide with the identity when restricted to any
surface $S_{A,B,C,D}$.
In other words, for every choice of parameters $(A,B,C,D)$, any non-trivial reduced word in $g_x$ and $g_y$
(or in any two of the three mappings $g_x, g_y,$ and $g_z$) induces a mapping of $S_{A,B,C,D}$ that
is different from the identity.   This is a consequence of El-Huiti's theorem in \cite{huti}; see Section \ref{SUBSEC:ALGEBRIAC_PROPERTIES}.
\end{proof}

\vspace{0.1cm}

\noindent {\bf Example 1 - Markoff Parameters}. It is an easy observation that for the parameters $A = B = C = 0$
the origin $(0,0,0)  \in \mathbb{C}^3$ is a common fixed
point for $g_x, g_y,$ and $g_z$ and that their derivatives at the origin satisfy
$D_0 g_x = {\rm diag}(1,-1,-1), \, D_0 g_y = {\rm diag}(-1,1,-1), \, \mbox{and}
\,  D_0 g_z = {\rm diag}(-1,-1,1)$.

If we let
$h_x = g_x^2, h_y=g_y^2,$ and $h_z=g_z^2$, 
then each of these maps is tangent to the identity at the origin. 
Notice that when $D = 0$ the surface $S_{0,0,0,D}$ passes through $(0,0,0)$.  Therefore,
applying Proposition \ref{PROP:ND_INTERIOR_PTS}
to the non-commuting pair of elements $h_x$ and $h_y$ yields: 

\begin{lemma}
\label{Firstexamples}
There is a neighborhood $W$ of the origin in $\mathbb{C}^4$ such that for all $(A,B,C,D) \in W$, the action of
$\AUTOGROUP$ is locally non-discrete on an open subset of $S_{A,B,C,D}$ obtained by intersecting $S_{A,B,C,D}$
with a small ball centered at the origin in $\mathbb{C}^3$. In other words, $(0,0,0,0)$ is an interior
point of~${\mathcal ND}$.
\end{lemma}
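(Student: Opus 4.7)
The plan is to verify the hypotheses of Proposition~\ref{PROP:ND_INTERIOR_PTS} at the base parameters $(A_0,B_0,C_0,D_0) = (0,0,0,0)$ with $F_1 = h_x$, $F_2 = h_y$, and $p = (0,0,0) \in \mathbb{C}^3$, then simply quote its conclusion.

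First, at the Markoff parameters $(0,0,0,0)$, a direct substitution into the formulas for $g_x, g_y, g_z$ given at the beginning of Section~\ref{Parabolic_maps_proofTheoremA} shows that the origin of $\mathbb{C}^3$ is fixed by each of $g_x, g_y, g_z$, and hence also by $h_x$ and $h_y$. The origin also lies on $S_{0,0,0,0}$ since its defining equation reduces to $0 = 0$ at that point. Next, since the chain rule gives $D_0 h_x = (D_0 g_x)^2 = \operatorname{diag}(1,-1,-1)^2 = \mathrm{Id}$ and similarly $D_0 h_y = \mathrm{Id}$, inequality~(\ref{EQN:ESTIMATE_ON_DERIVATIVES}) is satisfied trivially for any $\tau > 0$.

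The only slightly non-automatic point is to verify that $h_x$ and $h_y$ do not commute as elements of $\Gamma_{0,0,0,0}$. By the discussion at the start of Section~\ref{preliminaries}, El-Huiti's theorem guarantees that $\Gamma_{0,0,0,0}$ is free on the two generators $g_x, g_y$ when viewed as acting on $S_{0,0,0,0}$. In a free group, two elements commute if and only if they are both powers of a single common element, so it suffices to observe that $g_x^2$ and $g_y^2$ admit no common root in $\langle g_x, g_y \rangle \cong F_2$; this is immediate because the centralizer of $g_x^k$ in $F_2$ is the cyclic group $\langle g_x \rangle$, which does not contain $g_y^2$.

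All hypotheses of Proposition~\ref{PROP:ND_INTERIOR_PTS} are now in place. Part~(2) of that proposition immediately produces a neighborhood $W$ of $(0,0,0,0)$ in $\mathbb{C}^4$ and an $r > 0$ such that for every $(A,B,C,D) \in W$ the group $\Gamma_{A,B,C,D}$ acts locally non-discretely on $S_{A,B,C,D} \cap B_r(\mathbf{0})$, which is the desired conclusion. The hardest step here is really only a bookkeeping one, namely confirming the non-commutation of $h_x$ and $h_y$; once that is in hand, everything else is an immediate consequence of the already-established machinery.
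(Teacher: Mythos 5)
Your proof is correct and takes essentially the same approach as the paper: identify $h_x = g_x^2$ and $h_y = g_y^2$ as a non-commuting pair tangent to the identity at the common fixed point $(0,0,0) \in S_{0,0,0,0}$, then invoke Proposition~\ref{PROP:ND_INTERIOR_PTS}(2). The paper's write-up is terser; you supply the (brief but worthwhile) centralizer argument confirming that $g_x^2$ and $g_y^2$ do not commute in the free group.
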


\noindent {\bf Example 2 - Dubrovin-Mazzocco parameters}.
Recall from the introduction the 
real $1$-parameter family studied by Dubrovin and  Mazzocco \cite{dubrovinmazzocco}:
In our notations, the Dubrovin-Mazzocco parameters correspond to
\begin{align*}
A(a) = B(a) = C(a) = 2a+4, \quad \mbox{and} \quad D(a) = -(a^2 + 8a +8)
\end{align*}
for $a \in (-2,2)$.
To simplify notations, let us denote the surface $S_{A,B,C,D}$ for these parameters by $S_a$ and
the group $\AUTOGROUP_{A,B,C,D}$ by $\AUTOGROUP_a$.

\begin{lemma}
\label{lemma_for_DubrovinMazzocco}
For the  Dubrovin-Mazzocco family introduced above the surface $S_a$ contains
exactly three singular points $p_1, p_2, p_3$ given by
\begin{align}\label{EQN:SING_PTS}
p_1 = (x_1,y_1,z_1) =  (a,2,2), \quad p_2 = (x_2,y_2,z_2) = (2,a,2), \quad \mbox{and} \quad 
p_3= (x_3,y_3,z_3) = (2,2,a).
\end{align}
\end{lemma}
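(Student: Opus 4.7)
The plan is to set $F(x,y,z) = x^2 + y^2 + z^2 + xyz - (2a+4)(x+y+z) - D(a)$, so that a point is singular on $S_a$ if and only if $\nabla F = 0$ together with $F = 0$. The three partial derivative equations take the symmetric form
\begin{align*}
2x + yz = 2a+4, \qquad 2y + xz = 2a+4, \qquad 2z + xy = 2a+4.
\end{align*}
Subtracting them pairwise immediately yields the three factored conditions $(x-y)(2-z) = 0$, $(y-z)(2-x) = 0$, and $(x-z)(2-y) = 0$. A short case analysis on these three equations will show that either at least two of the coordinates are equal to $2$, or else $x = y = z$.

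In the first case, a quick inspection rules out the possibility that $x = y = z = 2$ (this would force $8 = 2a+4$, i.e.\ $a=2$, excluded by hypothesis). Hence exactly two coordinates equal $2$; up to the $S_3$-symmetry of the system we may suppose $y = z = 2$, and then the first gradient equation forces $x = a$. A direct substitution into $F$, which I will carry out once, shows that $(a,2,2) \in S_a$, and by symmetry the three candidates $p_1, p_2, p_3$ all lie in $S_a \cap {\rm Sing}(S_a)$. These three points are distinct because $a \ne 2$.

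The remaining work is to rule out the case $x = y = z = t$ for $a \in (-2,2)$. From $t^2 + 2t = 2a+4$ one can reduce $t^3$ and $t^2$ modulo this relation to express $F(t,t,t)$ as an affine function of $t$; explicitly I expect to obtain $F(t,t,t) = (a^2 + 10a + 12) - (4a+10)t$. Thus $F(t,t,t) = 0$ forces $t = (a^2 + 10a + 12)/(4a+10)$, and feeding this value back into $t^2 + 2t - (2a+4) = 0$ produces a single polynomial equation in $a$ alone. The main obstacle is organizing the resulting algebra compactly; the pay-off is that this polynomial factors as $(a-2)^3(a+2) = 0$, so the only values of $a$ for which an $(t,t,t)$-type singular point occurs are $a = \pm 2$, both outside $(-2,2)$. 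Combined with the previous paragraph this gives exactly the three singular points claimed.
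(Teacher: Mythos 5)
Your proof is correct and follows the same essential strategy as the paper's: identify the singular locus with the common zero set of the gradient equations (equivalently, the common fixed points of $s_x$, $s_y$, $s_z$), verify that the three listed points solve this system and lie on $S_a$, and then rule out the remaining candidates. Where you add genuine value is in the last step: the paper simply asserts that ``there are other common fixed points for $s_x, s_y,$ and $s_z$ but they are located away from $S_a$'' without proof, whereas you carry out the case analysis. Your pairwise factorization $(x-y)(2-z)=0$, etc., cleanly yields the dichotomy (two coordinates equal $2$, or $x=y=z$), and your reduction of $F(t,t,t)$ modulo $t^2+2t-(2a+4)$ is correct (I verified $F(t,t,t)=(a^2+10a+12)-(4a+10)t$ and the final resultant $a^4-4a^3+16a-16=(a-2)^3(a+2)$). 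So your argument is a complete version of the proof that the paper gives only in outline, and the pay-off is that one sees explicitly why the endpoint values $a=\pm 2$ are the obstructions: precisely at $a=\pm 2$ an additional diagonal singular point $(t,t,t)$ would appear (with the point $(2,2,2)$ emerging with multiplicity three from the factor $(a-2)^3$, as it must, since it merges with $p_1,p_2,p_3$ there).
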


\begin{proof}
The singular points correspond to the common fixed points of $s_x, s_y,$ and $s_z$ which, in turn, correspond
to solutions in $(x,y,z)$ of the equations
\begin{align}\label{EQN:FIXED_PTS}
-x-yz+2a+4 = x, \qquad -y-xz+2a+4 = y, \qquad \mbox{and} \qquad -z-xy+2a+4 = z \; .
\end{align}
It is immediate to check that the three above singular points satisfy these equations.

Meanwhile, we must check that these three points lie on the surface $S_a$. By symmetry, it suffices to check
for $p_1 = (x_1,y_1,z_1)$ and, in this case, we have
\begin{eqnarray}
x_1^2+y_1^2+z_1^2 + x_1 y_1 z_1 - Ax_1 - By_1 - \! C z_1  & \! = \! & a^2+4+4+4a-(2a+4)[a-2-2] \nonumber \\
& = & -(a^2+8a+8) = D \,  \label{EQN:SURFACE}
\end{eqnarray}
so that $p_1 \in S_a$. Note that there are other common fixed points for $s_x, s_y,$ and $s_z$ but they are located
away from $S_a$.
\end{proof}

\begin{proposition}
\label{prop_stillDubrovinMazzocco}
For every $a \in (-2,2)$ the group $\AUTOGROUP_a$ acting on $S_a$ has locally non-discrete dynamics
in some neighborhood of each of the singular points $p_1, p_2$ and $p_3$.

Moreover, the Dubrovin-Mazzocco parameters at this fixed value of $a$ yield an interior point of ${\mathcal ND}$. More precisely,
given parameters $(A,B,C,D)$ sufficiently close to the Dubrovin-Mazzocco parameters in question,
the group $\AUTOGROUP$ is locally non-discrete
on the intersection of $S_{A,B,C,D}$ with $B_r(p_1) \cup B_r(p_2) \cup B_r(p_3)$ for some $r > 0$.
\end{proposition}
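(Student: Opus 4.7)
The plan is to apply Proposition \ref{PROP:ND_INTERIOR_PTS} at each of the three singular points. The first observation I will make is that the equations (\ref{EQN:FIXED_PTS}) used to identify $p_1, p_2, p_3$ in Lemma \ref{lemma_for_DubrovinMazzocco} are exactly the fixed-point equations for each of the three involutions $s_x$, $s_y$, and $s_z$. Hence each singular point $p_i$ is simultaneously fixed by $s_x$, $s_y$, $s_z$, so it is fixed by every element of $\Gamma^{\ast}$, and in particular by every element of $\Gamma$. The common-fixed-point hypothesis of Proposition \ref{PROP:ND_INTERIOR_PTS} is therefore automatic in $\mathbb{C}^3$.

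I will focus on $p_1 = (a, 2, 2)$; the analysis at $p_2$ and $p_3$ is then identical by the cyclic symmetry $(x,y,z) \mapsto (y,z,x)$, which preserves the Dubrovin-Mazzocco parameters and permutes $g_x, g_y, g_z$. Since $g_x$ preserves the $x$-coordinate, the $(y,z)$-plane is invariant under $Dg_x(p_1)$, and a short computation shows that the restriction has trace $a^2 - 2$ and determinant $1$, so its eigenvalues lie on the unit circle. Combined with the obvious eigenvalue $1$ coming from the block triangular form, $Dg_x(p_1)$ has spectrum $\{1, e^{i\theta}, e^{-i\theta}\}$ with $2\cos\theta = a^2-2$, and turns out to be diagonalizable for every $a \in (-2,2)$ (the ostensibly problematic case $a = 0$, i.e.\ $\theta = \pi$, is fine because the $2\times 2$ block becomes $-\mathrm{Id}$). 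Consequently $\{(Dg_x(p_1))^n\}_{n \in \mathbb{Z}}$ is relatively compact in $GL(3,\mathbb{C})$ and accumulates at the identity, either periodically or through recurrent powers when $\theta/\pi$ is irrational.

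Let $\tau > 0$ be as in Lemma \ref{tangentidentitycloseidentity}. I will choose $n$ from the recurrent sequence above so that $\|(Dg_x(p_1))^n - \mathrm{Id}\|$ is small enough to ensure, for
\begin{align*}
F_1 = g_x^n \quad \text{and} \quad F_2 = g_y \circ g_x^n \circ g_y^{-1},
\end{align*}
that both $\|DF_i(p_1) - \mathrm{Id}\| < \tau$. For $F_2$ this uses the identity $DF_2(p_1) - \mathrm{Id} = Dg_y(p_1)\bigl[(Dg_x(p_1))^n - \mathrm{Id}\bigr]Dg_y(p_1)^{-1}$, and $Dg_y(p_1)$ has fixed finite operator norm at fixed $a$. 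Non-commutativity of $F_1$ and $F_2$ inside $\Gamma$ follows from the freeness of $\Gamma$ on $g_x, g_y$ recalled in Section \ref{preliminaries} (via El-Huiti's theorem): the centralizer of $g_x^n$ in $\langle g_x, g_y \rangle$ equals $\langle g_x \rangle$, which does not contain the conjugate $g_y g_x^n g_y^{-1}$. All hypotheses of Proposition \ref{PROP:ND_INTERIOR_PTS} are now satisfied, so that proposition produces both the local non-discreteness on some $B_r(p_1) \cap S_a$ and the persistence under small perturbations of the four parameters. Running the same argument at $p_2$ (respectively $p_3$) with the roles of $g_x, g_y$ played by $g_y, g_z$ (respectively $g_z, g_x$) completes the proof.

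The step I expect to require the most care is the diagonalizability verification of $Dg_x(p_1)$ uniformly across all $a \in (-2, 2)$, especially at values of $a$ where the elliptic angle $\theta$ collapses to a multiple of $\pi$; if such an $a$ produced a genuinely parabolic $2 \times 2$ block, then $(Dg_x(p_1))^n$ would not return near the identity and the strategy would need to be rerouted through iterated commutator arguments in the spirit of Lemma \ref{LEM:ALGEBRA}. A direct Jordan-form check confirms that this pathology does not occur within the Dubrovin-Mazzocco family.
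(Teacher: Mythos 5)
Your proof is correct and takes essentially the same approach as the paper: compute that $D_{p_1}g_x$ has the eigenvalue $1$ plus a unimodular conjugate pair, use recurrence of its powers to make $g_x^n$ and $g_y g_x^n g_y^{-1}$ simultaneously close to tangent-to-identity, and feed these into Proposition~\ref{PROP:ND_INTERIOR_PTS}. The only difference is cosmetic: the paper treats $a = 0$ (where $(D_{p_1}g_x)^2 = \mathrm{Id}$ exactly) as a separate case, while you verify diagonalizability uniformly across $a \in (-2,2)$ so a single argument suffices.
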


Note that the parameter $a=-2$, which we do not consider a Dubrovin Mazzocco parameter, corresponds to the Picard Parameters $(A,B,C,D) = (0,0,0,4)$ for
which $\AUTOGROUP$ is locally discrete on all of the corresponding surface, as was shown in Section \ref{SEC:PICARD}.

\begin{proof}
Because $A(a) = B(a) = C(a)$ the action of $\AUTOGROUP_{A,B,C,D}$ on $S_{A,B,C,D}$ is 
self-conjugate under any permutation of the variables $x$, $y$, and $z$.
Therefore it suffices to consider 
$p_1$, with the corresponding results for $p_2$ and $p_3$ following via these conjugacies.   Note that $p_1$ is fixed by every element in $\AUTOGROUP$ (in fact, by every element of the group generated by
$s_x$, $s_y$, and $s_z$).

Let us first show that a suitable iterate of $g_x = s_z \circ s_y$ is close to the identity on some suitable neighborhood
of $p_1$.  
A direct calculation yields that
the eigenvalues of $D_{p_1} g_x$ are
\begin{align*}
\lambda_1 = \frac{a^{2}}{2}-1+\frac{\sqrt{a^{4}-4 a^{2}}}{2}, \quad \lambda_2 =  \frac{a^{2}}{2}-1-\frac{\sqrt{a^{4}-4 a^{2}}}{2}, \quad \mbox{and} \quad \lambda_3 = 1 \, .
\end{align*}
For $a \in (-2,0) \cup (0,2)$ we see that $\lambda_1$ and $\lambda_2$ form a complex conjugate pair of eigenvalues, each
of which has modulus one.  In this case, $D_{p_1} g_x$ is conjugate to a rotation in suitable coordinates. 
In particular, for any $\tau > 0$ there is $k$ sufficiently large so that
\begin{align*}
\|\big(D_{p_1}g_x \big)^k - {\rm Id} \| < \tau.
\end{align*}
To find a second mapping satisfying the hypotheses of Lemma~\ref{tangentidentitycloseidentity},
it suffices to consider the map $h$ obtained by
conjugating $g_x^k$ by, say, $g_y$. Since $p_1$ is fixed by $g_y$ as well, it follows that $D_{p_1} h$ is conjugate
to $D_{p_1} g_x^k$ and that the conjugating matrix does not depend on~$k$ (it is simply the matrix given by
$D g_y (p_1)$). Therefore, up to making $k$ larger if needed, we
can assume that both $g_x^k$ and $h$ satisfy the hypotheses of Lemma~\ref{tangentidentitycloseidentity}.
Since $g_x^k$ and $h$ do not commute in $\AUTOGROUP$, the result then follows from Proposition~\ref{PROP:ND_INTERIOR_PTS}.

In the special case that $a = 0$ another direct calculation yields that 
$\left(D_{p_1} g_x\right)^2 = {\rm Id}$,
in which case $g_x^2$ is tangent to the identity at $p_1$.  Letting $h = g_y
g_x^2 g_y^{-1}$ we obtain a second mapping tangent to the identity at $p_1$.
Since, as previously seen, these two maps do not commute, the desired result in this
special case follows again from
Proposition~\ref{PROP:ND_INTERIOR_PTS}.
\end{proof}


\begin{remark}\label{genuinelynonlinear}
The combination of the results obtained in this section
and in the previous one substantiate our claim that, in general, the action of
$\AUTOGROUP_{A,B,C,D}$ is genuinely non-linear in the sense that
it cannot preserve a rigid geometric structure.  More precisely, whenever the
locally non-discrete locus $\nondiscrete_{A,B,C,D}$ is non-empty but not dense (as in the above
examples), the group $\AUTOGROUP_{A,B,C,D}$ cannot preserve any rigid geometric structure
on $S_{A,B,C,D}$.
		
Roughly speaking, a structure on a manifold $M$
is called rigid if there is a uniform~$k$ such that the $k$-jet of a local
isometry at a given point determines its $C^{\infty}$-jet; see \cite{gromov}
for detail. Examples of these structures include (pseudo-) Riemannian metrics
and affine connections. Also, the action of a finite dimensional Lie group $G$
on a manifold can be viewed as an action by isometries of some rigid structure
provided that there is some $k$ such that the induced action on the $k$-jet
bundle $J^k (M)$ over $M$ is free and proper.
		
It follows essentially from the Frobenius
theorem that, at least in real analytic category, every germ of isometry can be
extended along paths in $M$. In particular, the standard results on
continuous/differentiable dependence on the initial conditions for solutions of
differential equations can be applied to these extensions. Consider a sequence
of local isometries $\{f_n \}$ of some $k$-rigid structure fixing a point $p$
and let $c: [0,1] \rightarrow M$ be a fixed path with $c(0) =p$. Denoting by
$\{ f_{n,c} \}$ the extension of $f$ along~$c$, the preceding implies that if
the $k$-jets of $f_n$ at $p$ converge to the identity then so do the $k$-jets
of $f_{n,c}$ at the point $c(1)$. Since holomorphic convergence implies
$C^{\infty}$-convergence, we conclude that if
$\AUTOGROUP_{A,B,C,D}$ preserves some rigid structure on $M$ then the locally
non-discrete locus $\nondiscrete_{A,B,C,D}$ is either empty or coincides with the whole surface
$S_{A,B,C,D}$.
		
A specific instance of this is the case of the Picard parameters where the
action of $\AUTOGROUP_{0,0,0,4}$ is known to preserve an affine structure, which is 
an example of a rigid structure.   We have seen in Theorem~D(ii) that for these parameters the locally
non-discrete locus is empty.
\end{remark}

\section{Dynamics near infinity}
\label{dynamics_near_infinty}

In this section we will collect a few important, if slightly technical, results on the dynamics of the group
$\AUTOGROUP$ as well as the dynamics of (individual) hyperbolic maps near $\Delta_{\infty}$. The corresponding
results, especially Proposition~\ref{fixedpoints_remainincompactparts} and Lemma~\ref{LEM:ESCAPE_TO_INFINITY},
will play major roles in the proofs of Theorems~H and~K to be supplied in the forthcoming sections.

Let us begin with a general review of the behavior of a given map in $\AUTOGROUP = \AUTOGROUP_{A,B,C,D}$
near infinity. Up to compactifying $\C^3$ into the projective space, we begin by recalling that the
closure of any surface $S_{A,B,C,D}$ intersects the hyperplane at infinity $\Pi_\infty \subset \mathbb{CP}^3$
in a triangle $\Delta_\infty$. In homogeneous coordinates, this triangle is given by
\begin{align*}
\Delta_\infty = \{(X:Y:Z:W) \in \mathbb{CP}^3 \quad : \quad W = 0 \quad \mbox{and} \quad XYZ = 0\}.
\end{align*}
The vertices of $\Delta_\infty$ are denoted by
\begin{align*}
\mathcal{V}_\infty =\big\{v_1 = (1:0:0:0), \qquad v_2 = (0:1:0:0), \qquad \mbox{and} \qquad v_3 = (0:0:1:0) \big\}.
\end{align*}

As previously seen in Section \ref{projective_compactification}, for every choice of the parameters $(A,B,C,D)$, the surface $S_{A,B,C,D}$ is smooth on
a neighborhood of $\Delta_{\infty}$. Furthermore, $S_{A,B,C,D}$ is tangent to the plane at infinity
exactly at the vertices in $\mathcal{V}_{\infty}$ and, hence, it has transverse intersection with the
plane at infinity elsewhere in $\Delta_{\infty}$. In the sequel, to abridge notation, the
parameters $A,B,C,D$ will be dropped whenever there is no possibility of misunderstanding. Thus,
we will sometimes write $\AUTOGROUP$ for $\AUTOGROUP_{A,B,C,D}$,
$S$ for $S_{A,B,C,D}$, and $\overline{S}$ for the closure of $S$.

Because $\overline{S}$ is tangent to the plane at infinity at the vertices of $\Delta_{\infty}$,
near $v_1$ we can use the affine coordinates
$(Y/X,Z/X,W/X)$ on $\mathbb{CP}^3$ and express the surface $\overline{S}$ so that $W/X$ becomes a
holomorphic function of $(Y/X,Z/X)$. In other words, $S$ is locally given as the graph of a holomorphic
function in the variables $(Y/X,Z/X)$ on a neighborhood of $v_1$. This local representation
will be referred to as the ``standard
coordinates'' on $\overline{S}$ in a neighborhood of $v_1$. The analogous
construction leads to ``standard coordinates'' on $\overline{S}$ near $v_2$ and
near $v_3$, respectively.

Inside the plane at infinity, consider a neighborhood $N_1$ of $v_1$ where $\overline{S}$ is
the graph of some holomorphic function $h_1$ as indicated above. Neighborhoods $N_2$ and
$N_3$ respectively of $v_2$ and $v_3$ are similarly defined along with the corresponding holomorphic
functions $h_2$ and $h_3$.  The following lemma is rather immediate:

\begin{lemma}\label{surfaces_movecontinuously}
	Consider parameters $A_0, B_0, C_0$ and $D_0$ and apply the above construction to the surface
	$\overline{S}_{A_0, B_0, C_0, D_0}$. Up to reducing the neighborhoods $N_i$, $i=1,2,3$, there exists
	a neighborhood $\mathcal{N} \subset \C^4$ of $(A_0, B_0, C_0, D_0)$ such that all of the following
	holds:
\begin{itemize}
	\item[(1)] For every $(A,B,C,D) \in \mathcal{N}$, the surface $\overline{S}_{A, B, C, D}$ is (locally)
	the graph of a function $h_i$ defined on $N_i$. Furthermore, for $i=1,2,3$, the functions $h_i$
	vary continuously with the parameters.
	
	\item[(2)] For every $(A,B,C,D) \in \mathcal{N}$, the intersection of the surface
	$\overline{S}_{A, B, C, D}$ with the plane at infinity over the set $\Delta_{\infty} \setminus
	(N_1 \cup N_2 \cup N_3)$ is uniformly transverse. Furthermore, the slopes vary continuously with the
	point and with the parameters.
\end{itemize}
\mbox{}\qed
\end{lemma}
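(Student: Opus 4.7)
The plan is to prove both parts by the holomorphic implicit function theorem with parameters, combined with a compactness argument. The triangle $\Delta_\infty$ is independent of parameters: substituting $W=0$ in the homogeneous defining equation gives $XYZ=0$, so $\overline{S}_{A,B,C,D} \cap \Pi_\infty = \Delta_\infty$ for every choice of parameters. Thus both statements concern a fixed compact subset of $\Pi_\infty$.

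For (1), I work in affine coordinates near each vertex. Near $v_1 = (1{:}0{:}0{:}0)$ introduce $(u,v,w) = (Y/X, Z/X, W/X)$; dividing the homogenization of the defining equation by $X^3$ gives
\begin{equation*}
F(u,v,w; A,B,C,D) \;:=\; w(1+u^2+v^2) + uv - Aw^2 - Bw^2 u - Cw^2 v - D w^3 \;=\; 0.
\end{equation*}
At $(u,v,w) = (0,0,0)$ we have $\partial F/\partial w = 1$ for every value of the parameters. The holomorphic implicit function theorem with parameters then produces, for $i=1$, an open neighborhood $W_1$ of $v_1$ in $\Pi_\infty$, a parameter neighborhood $\mathcal{W}_1 \subset \mathbb{C}^4$ of $(A_0,B_0,C_0,D_0)$, and a jointly holomorphic function $h_1(u,v;A,B,C,D)$ with $F(u,v,h_1;\cdot) \equiv 0$ and $h_1(0,0;\cdot) = 0$. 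Joint holomorphy automatically yields continuous (in fact holomorphic) dependence on the parameters. Carrying out the analogous construction in charts centered at $v_2$ and $v_3$ produces $h_2,h_3$ with neighborhoods $W_2,W_3$ and $\mathcal{W}_2,\mathcal{W}_3$. Setting $\mathcal{W} = \mathcal{W}_1 \cap \mathcal{W}_2 \cap \mathcal{W}_3$ proves (1).

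For (2), transversality of $\overline{S}_{A,B,C,D}$ with $\Pi_\infty$ at a point $p$ of $\Delta_\infty$ amounts, in the chart above, to the non-vanishing of $\partial F/\partial w(p)$. Restricted to $\{w=0\}$ we computed $\partial F/\partial w = 1+u^2+v^2$, which vanishes only at $u=v=0$; that is, tangency occurs only at the vertex $v_1$ itself, and analogously for $v_2,v_3$. Consequently the angle between $T_p\overline{S}_{A_0,B_0,C_0,D_0}$ and $\Pi_\infty$ (computable, e.g., from the Fubini--Study metric applied to the normalized gradient of $F$) is a strictly positive continuous function of $p$ on the compact set $K := \Delta_\infty \setminus (W_1 \cup W_2 \cup W_3)$. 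By compactness it has a positive uniform lower bound; by joint continuity in $(p;A,B,C,D)$, shrinking $\mathcal{W}$ if necessary preserves this bound for all $(A,B,C,D) \in \mathcal{W}$. The ``slope'' data is continuous in both point and parameters for the same reason.

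The argument is essentially routine; the only point requiring coordination is the joint choice of $W_i$ and $\mathcal{W}$ for both parts. This is handled by first picking the $W_i$ small enough to carry out the implicit function step in (1), and then shrinking $\mathcal{W}$ once more so that uniform transversality on the resulting compact complement $K$ survives perturbation of parameters.
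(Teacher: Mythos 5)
Your proof of Part (1) via the implicit function theorem with parameters is fine, and your computation of $F$ agrees (after relabeling coordinates) with the equation the paper records in Section~\ref{SUBSEC_SINGULAR_POINTS}. The paper itself records the lemma as an immediate consequence of that setup with no written proof, so you were right to supply the argument. However, Part~(2) contains a concrete error. Transversality of $\overline{S}$ with $\Pi_\infty = \{w=0\}$ at a point $p$ does \emph{not} amount to $\partial F/\partial w(p)\neq 0$; it amounts to $T_p\overline{S}\neq T_p\Pi_\infty$, i.e., to the gradient of $F$ at $p$ not being parallel to $(0,0,1)$, which is the condition $(\partial F/\partial u(p),\,\partial F/\partial v(p)) \neq (0,0)$. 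The quantity $\partial F/\partial w$ controls instead whether $\overline{S}$ is locally a graph of $w$ over $(u,v)$, which is relevant to Part~(1) but not to the transversality claim in Part~(2). Moreover, even the auxiliary assertion you make is false over $\mathbb{C}$: $1+u^2+v^2$ does not vanish only at $(u,v)=(0,0)$; it vanishes on the conic $u^2+v^2=-1$, which meets the edges $\{uv=0\}$ at the four points $(u,v)=(0,\pm i)$ and $(\pm i,0)$. At those points one has $\partial F/\partial w=0$ but the intersection with $\Pi_\infty$ is in fact still transverse (for instance at $(u,v,w)=(0,i,0)$ the gradient is $(i,0,0)$, which is not parallel to $(0,0,1)$), which underscores that your criterion was the wrong one.

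The correct computation is short and parameter-free: along the edge $\{u=0,\,w=0\}$ one has $(\partial F/\partial u,\,\partial F/\partial v)=(v,0)$, and along $\{v=0,\,w=0\}$ one has $(0,u)$; each is nonzero off the vertex. (The parameters appear in $\partial F/\partial u$ and $\partial F/\partial v$ only through terms with a factor of $w$, so they drop out on $\Delta_\infty$.) The analogous computation holds in the charts at $v_2,v_3$. With this corrected criterion, your remaining argument — positivity of a continuous angle function on the compact set $K=\Delta_\infty\setminus(W_1\cup W_2\cup W_3)$, followed by joint continuity in $(p;A,B,C,D)$ to shrink $\mathcal{W}$ and preserve a uniform lower bound, which also gives continuity of the slopes — is sound and completes the proof.
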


We will also need some facts about birational extensions of elements from $\AUTOGROUP^\pm_{A,B,C,D}$ and $\AUTOGROUP_{A,B,C,D}$ to $\overline{S}_{A,B,C,D}$.

\begin{remark}
Doing this birational extension is more subtle than one might initially guess.  
Any $f \in \AUTOGROUP^\pm_{A,B,C,D}$ is obtained as the restriction of a polynomial automorphism $F: \mathbb{C}^3 \rightarrow
\mathbb{C}^3$ to $S_{A,B,C,D}$.   However, the birational extension $F: \mathbb{CP}^3 \dashrightarrow \mathbb{CP}^3$ typically
has entire sides of the triangle at infinity $\Delta_\infty$ contained in its indeterminacy set.   Meanwhile the birational
extension $f: \overline{S}_{A,B,C,D} \dashrightarrow \overline{S}_{A,B,C,D}$ must have a finite set of indeterminate
points because the indeterminacy locus of any rational mapping is always of codimension at least two.  Indeed, restricting the mapping $F$ to the
surface $\overline{S}_{A,B,C,D}$ resolves this ``excess indeterminacy'' for the birational extension $F: \mathbb{CP}^3 \dashrightarrow \mathbb{CP}^3$.
For this reason, working with the birational extension $f: \overline{S}_{A,B,C,D} \dashrightarrow \overline{S}_{A,B,C,D}$
is best done by working within local coordinates on $\overline{S}_{A,B,C,D}$ in the domain and codomain of $f$, as opposed to
working with $3$-dimensional coordinates.
\end{remark}

Recalling that
$\AUTOGROUP^\pm$ is generated by the involutions $s_x$, $s_y$, and $s_z$.  An element $\gamma$ in
$\AUTOGROUP^\pm$ is said to be {\it cyclically reduced} if its reduced spelling as a word in the
``letters'' $s_x$, $s_y$, and $s_z$ has the property that every cyclic permutation of the
word has no further possible cancellations.   This can be expressed equivalently by saying that
the word's reduced spelling in the
``letters'' $s_x$, $s_y$, and $s_z$ is not conjugate in $\AUTOGROUP^\pm$ to another element
in $\AUTOGROUP^\pm$ having strictly smaller length when spelled in the same ``letters''.

We also recall that a meromorphic self-map from a complex surface to itself is said to be
{\it algebraically stable}\, if it does not contract a hypersurface to its indeterminate set,
cf. \cite{Sibony,DF}.
In the present case where elements of $\AUTOGROUP$ act on the surface $S$, to be algebraically stable
amounts to saying that $\gamma$ does not contradict any of the sides of $\Delta_{\infty}$ to an indeterminacy
point which, in turn, necessarily lies in $\Delta_{\infty}$ as well.

With this terminology, the following definition/proposition and remark summarize 
\cite[Prop. 3.2]{cantat-1} and \cite[Prop. 2.3]{cantat-2}.

\begin{definitionproposition}\label{PROP:DESCRIPTION_HYPERBOLIC_ELEMENTS}
	For any parameters $A,B,C,D$ and any $\gamma \in \AUTOGROUP$ we have
	\begin{enumerate}
		\item[(i)] $\gamma$ is said to be hyperbolic if and only if it conjugate to a cyclically
		reduced word in $s_x, s_y,$ and $s_z$ that contains all three mappings.
		
		\item[(ii)] A hyperbolic map $\gamma \in \AUTOGROUP$ possesses a single indeterminate point which
		coincides with a vertex of $\Delta_{\infty}$ and will be denoted by ${\rm Ind}(\gamma)$.
		
		\item[(iii)] A hyperbolic map $\gamma \in \AUTOGROUP$ contracts all of  $\Delta_{\infty} \setminus
		\{ {\rm Ind}(\gamma) \}$ to a vertex of $\Delta_{\infty}$ denoted by ${\rm Attr}(\gamma)$. The
		vertices ${\rm Ind}(\gamma)$ and ${\rm Attr}(\gamma)$ may or may not coincide. In particular, the map
		$\gamma$ is algebraically stable if and only if ${\rm Ind}(\gamma) \neq {\rm Attr}(\gamma)$.
		
		\item[(iv)] Alternatively, $\gamma: \overline{S} \dashrightarrow \overline{S}$ is algebraically
		stable if and only if it is a cyclically reduced composition of $s_x, s_y,$ and $s_z$ of length
		at least two.

		\item[(v)]  If $\gamma$ is algebraically stable and hyperbolic then $\gamma$ is holomorphic around
		${\rm Attr}(\gamma)$ and, in fact, ${\rm Attr}(\gamma)$ is a superattracting fixed point of $\gamma$.
		Moreover, the roles of
		${\rm Ind}(\gamma)$ and ${\rm Attr}(\gamma)$ are interchanged if we pass from $\gamma$ to
		$\gamma^{-1}$, i.e., ${\rm Attr}(\gamma)={\rm Ind}(\gamma^{-1})$ and
		${\rm Ind}(\gamma) = {\rm Attr}(\gamma^{-1})$.
		
		\item[(vi)] An element $\gamma$ is said to be parabolic if it is conjugate in $\gamma$ to one
		of the maps $g_x$, $g_y$, or $g_z$. Every element of $\AUTOGROUP$ different from the identity is either
		hyperbolic or parabolic.
	\end{enumerate}
\end{definitionproposition}

\begin{remark}\label{REM:ELLIPTICAL_ELEMENTS}
While $\AUTOGROUP$ only contains hyperbolic and parabolic elements, the bigger
group $\AUTOGROUP^\pm$ also contains elliptic elements.
An element of $\AUTOGROUP^\pm$ is elliptic if and only if it is conjugate in $\AUTOGROUP^\pm$ to one of the involutions $s_x, s_y,$ or $s_z$, if and only if it is periodic.  (The characterizations
of parabolic and hyperbolic elements of $\AUTOGROUP$ carry over to $\AUTOGROUP^\pm$, with conjugation
in $\AUTOGROUP$ replaced by conjugation in $\AUTOGROUP^\pm$.)
\end{remark}

\begin{remark} 

There is an equivalent but more intrinsic way of classifying the elements of $\AUTOGROUP^\pm_{A,B,C,D}$ 
as being  hyperbolic, parabolic, or elliptic.   It will not be needed in this paper, but we describe it here
for the benefit of the reader.
Let us introduce one additional matrix group:
\begin{align*}
\MATRIXGP^\pm_2 := \{M \in {\rm PGL}(2,\mathbb{Z}) \, : \, M \equiv {\rm Id} \,  {\rm mod} 2\}.
\end{align*}
which has the classical congruence group $\MATRIXGP_2$ as an index two subgroup.  It is generated by (the projective equivalence
classes of) the following
matrices
\begin{align*}
N_x:= \left[\begin{array}{cc} -1 & -2 \\ 0 & 1 \end{array}\right], \qquad N_y:= \left[\begin{array}{cc} 1 & 0 \\ -2 & -1\end{array}\right], \qquad \mbox{and} \qquad
N_z:=\left[\begin{array}{cc} 1 & 0 \\ 0 & -1\end{array}\right].
\end{align*}
For any parameters $(A,B,C,D)$ there is a group isomorphism from $\MATRIXGP^\pm_2$
to $\AUTOGROUP^\pm_{A,B,C,D}$ induced by sending $N_x, N_y,$ and $N_z$
to the involutions $s_x, s_y,$ and $s_z$, respectively.   It is an extension of
the group isomorphism $\MATRIXGP_2 \rightarrow \AUTOGROUP_{A,B,C,D}$ discussed
in the case of the Picard Parameters (Section \ref{SEC:PICARD}) and it arises
naturally from the context of dynamics on character varieties;
see \cite[Section 2.3]{cantat-2}.

Given $[N] \in \MATRIXGP^\pm_2$ let us denote $f_{[N]} \in \AUTOGROUP^\pm_{A,B,C,D}$ the associated automorphism.
One then has that $f_{[N]}$ is
hyperbolic, parabolic, or elliptic if and only if the associated matrix $N$
induces a hyperbolic, parabolic, or elliptic transformation of the hyperbolic
plane $\mathbb{H}$.

Moreover, the degrees of the $k$-th iterate $f_{[N]}^k$ grows at the same rate
as the entries of $N^k$.  Therefore, one has yet another equivalent
classification that $f_{[N]}$ is hyperbolic, parabolic, or elliptic if and only
if the degrees of $f^k_{[N]}$ grow exponentially, grow polynomially, or remain
bounded.   Therefore, the 
classification of elements of $\AUTOGROUP^\pm$ into the classes (hyperbolic, parabolic, elliptic) is
compatible with use of this terminology for elements of the Cremona group (see \cite[Theorem 4.6]{CANTAT_CREMONA}) and also with the use of this terminology
for automorphisms of compact surfaces (see \cite[Section 2.4.4]{CANTAT_SURVEY}).
\end{remark}

Note that the properties described in
Definition/Proposition~\ref{PROP:DESCRIPTION_HYPERBOLIC_ELEMENTS} are independent of the parameters
$A,B,C,D$. They depend only on the spelling of $\gamma$ as an element in the abstract group
generated by three letters $a$, $b$, and $c$ with the relation $abc ={\rm id}$ (up to the substitutions
$a=g_x$, $b=g_y$, and $c=g_z$).
In particular, the points ${\rm Ind}(\gamma)$ and ${\rm Ind}(\gamma^{-1})
={\rm Attr}(\gamma)$ are independent of the parameters and this plays an important role
in the statement of the following proposition.

\begin{proposition}\label{fixedpoints_remainincompactparts}
	Consider an hyperbolic element $\gamma \in \AUTOGROUP$ and, for a choice of parameters
	$A_0$, $B_0$, $C_0$, and $D_0$, let $\overline{f}_{A_0,B_0,C_0,D_0}$ be the resulting birational map
	of the (compact) surface $\overline{S}_{A_0,B_0,C_0,D_0}$. Then there exist a neighborhood $U_{\Delta}$
	of $\Delta_{\infty} \subset \mathbb{CP}^3$ and a neighborhood $\mathcal{U}_0 \subset \C^4$
	of $(A_0,B_0,C_0,D_0) \in \C^4$ such that the following holds:
	\begin{itemize}
		\item For every $(A,B,C,D) \in \mathcal{U}_0$, the intersection $U_{\Delta} \cap
		\overline{S}_{A,B,C,D}$ is a neighborhood of $\Delta_{\infty}$ in $\overline{S}_{A,B,C,D}$.
		
		\item For every $(A,B,C,D) \in \mathcal{U}_0$, the map $\overline{f}_{A,B,C,D}$ has no 
		periodic point in $(U_{\Delta} \cap \overline{S}_{A,B,C,D})\setminus
		\Delta_{\infty}$.
	\end{itemize}
\end{proposition}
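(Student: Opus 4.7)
My plan is to split the neighborhood of $\Delta_{\infty}$ into three zones: a forward-invariant basin $B_a$ at $v_a := {\rm Attr}(\gamma)$, a backward-invariant basin $B_i$ at $v_i := {\rm Ind}(\gamma)$, and a remainder, on which the collapsing behavior of $\overline{f}_{A_0,B_0,C_0,D_0}$ along $\Delta_{\infty}$ transfers points into one of the two basins after a single iterate. Two preliminary remarks set the stage. First, since periodic points of $\gamma$ coincide with those of any iterate, and since any hyperbolic element admits an algebraically stable iterate by the combinatorial description in Definition/Proposition~\ref{PROP:DESCRIPTION_HYPERBOLIC_ELEMENTS}, I may assume throughout that $v_a \neq v_i$. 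Second, both $v_a$ and $v_i$ are parameter-independent vertices of $\Delta_{\infty}$, and by Lemma~\ref{surfaces_movecontinuously} the surfaces $\overline{S}_{A,B,C,D}$ vary continuously as graphs near $\Delta_{\infty}$ for parameters in a fixed neighborhood of $(A_0, B_0, C_0, D_0)$.

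At the base parameter, Definition/Proposition~\ref{PROP:DESCRIPTION_HYPERBOLIC_ELEMENTS}(v) asserts that $v_a$ is a superattracting fixed point of $\overline{f}_{A_0,B_0,C_0,D_0}$. Standard local dynamics then gives an open set $B_a \ni v_a$ in $\overline{S}_{A_0,B_0,C_0,D_0}$ with $\overline{f}(B_a) \subset B_a$ and $\overline{f}^{\,n}|_{B_a}$ converging uniformly to $v_a$; applying the same to $\gamma^{-1}$ at $v_i = {\rm Attr}(\gamma^{-1})$ yields $B_i \ni v_i$ with $\overline{f}^{\,-n}|_{B_i} \to v_i$ uniformly. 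Since $\Delta_{\infty} \setminus B_i$ is compact, lies in the regular locus of $\overline{f}$, and is collapsed to $\{v_a\} \subset B_a$, continuity provides an open neighborhood $N \subset \mathbb{CP}^3$ of $\Delta_{\infty} \setminus B_i$ with $\overline{f}(N \cap \overline{S}_{A_0,B_0,C_0,D_0}) \subset B_a$; symmetrically I obtain $N'$ around $\Delta_{\infty} \setminus B_a$ with $\overline{f}^{\,-1}(N' \cap \overline{S}_{A_0,B_0,C_0,D_0}) \subset B_i$. Setting $U_{\Delta} := N \cup N'$, any $p \in (U_{\Delta} \cap \overline{S}_{A_0,B_0,C_0,D_0}) \setminus \Delta_{\infty}$ falls into one of the following cases: if $p \in B_a$ (respectively $B_i$) then the forward (respectively backward) orbit is trapped in the basin and converges to $v_a \in \Delta_{\infty}$ (respectively $v_i$), so $p$ cannot be periodic off $\Delta_{\infty}$; otherwise $\overline{f}(p) \in B_a$ or $\overline{f}^{\,-1}(p) \in B_i$, and the subsequent orbit is again confined to the basin and converges to the corresponding vertex, precluding any return to $p \notin \Delta_{\infty}$.

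It remains to promote everything to a parameter neighborhood $\mathcal{U}_0$. The map $\overline{f}_{A,B,C,D}$ is given by polynomials with coefficients polynomial in $(A,B,C,D)$, so away from the parameter-independent indeterminacy point $v_i$ it is jointly holomorphic in position and parameters; combined with Lemma~\ref{surfaces_movecontinuously} this gives a continuous family of holomorphic maps of $\overline{S}_{A,B,C,D}$ on any compact subset of $\overline{S}_{A_0,B_0,C_0,D_0}$ off $v_i$. Since $d\overline{f}(v_a) = 0$ at the base parameter, the forward invariance of $B_a$ and the super-attracting estimate persist on a neighborhood of parameters after possibly shrinking $B_a$; the analogous statement for $B_i$ goes through $\overline{f}^{\,-1}$ at $v_i$ (whose regular locus omits $v_a$). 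The trapping conditions defining $N$ and $N'$ are open in the parameters as well, because they concern compact sets disjoint from the relevant indeterminacy point. After shrinking $\mathcal{U}_0$ if necessary, the case analysis of the previous paragraph goes through verbatim for every $(A,B,C,D) \in \mathcal{U}_0$. The principal obstacle is the indeterminacy of $\overline{f}$ at $v_i$ under parameter variation, where direct continuity fails; this is systematically circumvented by using $\overline{f}^{\,-1}$ (which is regular at $v_i$, with $v_i$ super-attracting) on a neighborhood of $v_i$, and using $\overline{f}$ only on the complement where it is regular.
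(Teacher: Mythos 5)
The reduction you propose---replacing $\gamma$ by an iterate to guarantee algebraic stability---is incorrect. If the reduced spelling of $\gamma$ in $s_x, s_y, s_z$ begins and ends with the same letter, then so does the reduced spelling of every power $\gamma^n$, and by Definition/Proposition~\ref{PROP:DESCRIPTION_HYPERBOLIC_ELEMENTS}(iv) no iterate of $\gamma$ is algebraically stable. For instance, $\gamma = g_z^{-1}(g_y g_x^{-1})g_z$, whose reduced spelling is $s_x s_y \, s_x s_z s_y s_z \, s_y s_x$, is hyperbolic, yet $\gamma^n = s_x s_y \,(s_x s_z s_y s_z)^n\, s_y s_x$ is never cyclically reduced. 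The paper's reduction is instead by conjugation inside $\Gamma$: every hyperbolic $\gamma$ equals $h\gamma'h^{-1}$ for some algebraically stable $\gamma'$ and some $h \in \Gamma$, and the periodic points of $\gamma$ are the $h$-images of those of $\gamma'$. Because $h$ restricts to a polynomial automorphism of the affine surface $S_{A,B,C,D}$ depending continuously on the parameters, it maps the compact set of periodic points of $\gamma'$ to a compact subset of $\C^3$ bounded away from $\Delta_\infty$, uniformly over a compact parameter neighborhood. The iterate reduction cannot do this job, so as written your phrase ``I may assume $v_a \neq v_i$'' is unjustified.

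After that correction, your decomposition into the basin $B_a$ at $v_a$, the basin $B_i$ at $v_i$ for $\overline{f}^{\,-1}$, the collars $N$, $N'$, and the resulting case analysis is essentially the paper's construction (its $U$ and $V$ are your $B_i$ and $N$). The one other substantive difference is the persistence of $B_a$ over a parameter neighborhood. You assert $d\overline{f}(v_a)=0$ and deduce a quadratic contraction estimate that is manifestly open in the parameters; the paper instead invokes Favre's classification of contracting rigid germs to conjugate the germ at $v_a$ to a parameter-independent monomial model with a continuously varying conjugacy. Your route is shorter, but the vanishing of $d\overline{f}(v_a)$ is not established anywhere you cite: Definition/Proposition~\ref{PROP:DESCRIPTION_HYPERBOLIC_ELEMENTS}(v) only says ``superattracting,'' which in two variables is also consistent with a nonzero nilpotent derivative (e.g.\ the monomial model $(x,y)\mapsto(y,xy)$), in which case you would need to pass to $\overline{f}^{\,2}$ before the quadratic estimate appears. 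This point should be justified explicitly; it is precisely what the paper's appeal to Favre's theory is doing.
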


\begin{proof}
It suffices to prove the proposition for an algebraically stable hyperbolic $\gamma$ since, every
hyperbolic element in $\AUTOGROUP$ is conjugate in $\AUTOGROUP$ to an algebraically stable one and all
elements in $\AUTOGROUP$ preserve infinity.

Let then $\overline{f}_{A_0,B_0,C_0,D_0}$ be an algebraically stable hyperbolic map as indicated
above. The statement amounts to showing the existence of a neighborhood $V_0 \subset \overline{S}_{A_0,B_0,C_0,D_0}$
of $\Delta_{\infty} \subset \overline{S}_{A_0,B_0,C_0,D_0}$ satisfying the two conditions below:
	\begin{enumerate}
		\item $\overline{f}_{A_0,B_0,C_0,D_0}$ has no periodic point in
		$V_0 \setminus \Delta_{\infty}$.
		
		\item The  neighborhood $V_0$ can be chosen to vary continuously with the parameters $A$, $B$,
		$C$, and~$D$.
	\end{enumerate}
To prove the first assertion, we consider the points ${\rm Attr}(\overline{f}_{A_0,B_0,C_0,D_0})$
and ${\rm Ind}(\overline{f}_{A_0,B_0,C_0,D_0})$ in $\Delta_{\infty}$. As mentioned, these points
are distinct and do not depend on the choice of the parameters. To abridge notation, we then set
$P = {\rm Attr}(\overline{f}_{A_0,B_0,C_0,D_0})$ and $Q= {\rm Ind}(\overline{f}_{A_0,B_0,C_0,D_0})$.
We also recall that $P$ is a super-attracting fixed point of $\overline{f}_{A_0,B_0,C_0,D_0}$
and, in fact, a super-attracting fixed point of $\overline{f}_{A,B,C,D}$ for every choice of the parameters
$A$, $B$, $C$, and $D$. Similarly, $Q$ is a super-attracting fixed point
of $\overline{f}^{-1}_{A,B,C,D}$ for any $(A,B,C,D) \in \C^4$.
Now let $U \subset \overline{S}_{A_0,B_0,C_0,D_0}$
be a small neighborhood of $Q$ which is contained in the basin
of attraction of $Q$ for $\overline{f}^{-1}_{A_0,B_0,C_0,D_0}$.
Since $\overline{f}_{A_0,B_0,C_0,D_0}$ sends $\Delta_{\infty} \setminus
\{ Q \}$ to $P$, there exists
a neighborhood $V$ of $\Delta_{\infty} \setminus U$ which is 
in the basin of attraction of $P$, with respect to $\overline{f}_{A_0,B_0,C_0,D_0}$.
Therefore
$V \cup U$ contains no periodic point of $\overline{f}_{A_0,B_0,C_0,D_0}$: in fact, every point in
$(V \cup U) \setminus \{P,Q\}$ converges to $P$ under iteration by
$\overline{f}_{A_0,B_0,C_0,D_0}$ or to $Q$
under iteration by $\overline{f}^{-1}_{A_0,B_0,C_0,D_0}$.

It remains to prove that the above neighborhood can be assumed to vary continuously with the
parameters. For this, we first consider the (local) description of the surfaces
$\overline{S}_{A,B,C,D}$ provided by Lemma~\ref{surfaces_movecontinuously}. The lemma in question
shows that the surface $\overline{S}_{A,B,C,D}$ converges towards $\overline{S}_{A_0,B_0,C_0,D_0}$
as $(A,B,C,D) \rightarrow (A_0,B_0,C_0,D_0)$ on a neighborhood of $\Delta_{\infty}
\subset \mathbb{CP}^3$. In fact, on a neighborhood of the points $P$ and $Q$, this
follows from the local structure of these surfaces as graphs of holomorphic functions. Conversely,
away from these neighborhoods of $P$ and $Q$, the statement follows from the (uniform) transverse intersection
of these surfaces and the plane at infinity of $\mathbb{CP}^3$.
In particular the maps
$\overline{f}_{A,B,C,D}$ converge uniformly to $\overline{f}_{A_0,B_0,C_0,D_0}$ on a
neighborhood of $\Delta_{\infty}$.
Thus, the statement is reduced to showing the existence of
$\epsilon >0$ such that for all parameters $(A,B,C,D)$ sufficiently close to $(A_0,B_0,C_0,D_0)$,
the ball of radius $\epsilon$ around $P$ still is contained in the basin of attraction of $P$ with
respect to $\overline{f}_{A,B,C,D}$. Here, the same argument applies to $Q$ and
$\overline{f}^{-1}_{A,B,C,D}$ and the notion of ``ball'' is relative to some auxiliary metric,
for example, the Euclidean metric in the ``standard coordinates'' of Lemma~\ref{surfaces_movecontinuously}.

The last claim can be proved as follows. The germ of $\overline{f}_{A,B,C,D}$ is a rigid, reducible,
super-attracting germ in the sense of \cite{Favre} (cf. \cite{cantat-1}). It actually falls in the
``class 6'' of the classification provided in \cite{Favre} and hence it is conjugate to a monomial
map, owing to results of Dloussky and Favre. Since the monomial map does not depend on the parameters,
the claim follows from checking directly in the argument of \cite{Favre} that the conjugating map
depends continuously on the initial map.
\end{proof}


As a consequence of Proposition~\ref{fixedpoints_remainincompactparts}, we also obtain
the following result (cf. Lemma~16 from~\cite{IU_ERGODIC}).

\begin{corollary}
\label{isolatedfixedpoints_hyperbolicmaps}
For every choice of the parameters $(A,B,C,D) \in \C^4$ and every hyperbolic element
in $\AUTOGROUP$, all the fixed points of the resulting (hyperbolic) map $f_{A,B,C,D}$ are isolated.
\end{corollary}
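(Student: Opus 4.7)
The plan is to argue by contradiction, combining the algebraicity of the fixed locus of $f = f_{A,B,C,D}$ with the description of its dynamics near $\Delta_\infty$ supplied by Proposition~\ref{fixedpoints_remainincompactparts}. I fix parameters $(A,B,C,D) \in \mathbb{C}^4$ and a hyperbolic element $\gamma \in \Gamma$, and assume towards a contradiction that some fixed point $q \in S_{A,B,C,D}$ of $f$ fails to be isolated.

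The first step is to produce an algebraic curve of fixed points. Since every element of $\Gamma$ extends to a polynomial automorphism of $\mathbb{C}^3$ preserving $S_{A,B,C,D}$, the fixed set of $f$ in $S_{A,B,C,D}$ is cut out by the polynomial equations $f(x,y,z) = (x,y,z)$ together with the defining equation of $S_{A,B,C,D}$, and is therefore an algebraic subvariety. The failure of $q$ to be isolated then yields an irreducible algebraic curve $C \subset S_{A,B,C,D}$ consisting entirely of fixed points of $f$. I denote by $\overline{C}$ its closure in $\mathbb{CP}^3$; this is an irreducible projective curve contained in $\overline{S}_{A,B,C,D}$.

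The second step is to show that $C$ has points arbitrarily close to $\Delta_{\infty}$. Indeed, a positive-dimensional projective variety in $\mathbb{CP}^n$ cannot be disjoint from a hyperplane, for otherwise it would sit inside an affine space as a compact variety, hence be a finite set. Applied to $\overline{C}$ and the hyperplane $\Pi_\infty$, this provides a point
$$
p \in \overline{C} \cap \Pi_\infty \subset \overline{S}_{A,B,C,D} \cap \Pi_\infty = \Delta_\infty.
$$
Every neighborhood of $p$ in $\overline{S}_{A,B,C,D}$ then meets $C$, and since $C \subset S_{A,B,C,D}$ is disjoint from $\Delta_\infty$, these meeting points lie in any prescribed punctured neighborhood of $\Delta_\infty$ in $\overline{S}_{A,B,C,D}$.

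Finally, I apply Proposition~\ref{fixedpoints_remainincompactparts} with the chosen $(A,B,C,D)$ playing the role of the ``center'' $(A_0,B_0,C_0,D_0)$. This furnishes a neighborhood $U_\Delta$ of $\Delta_\infty$ in $\mathbb{CP}^3$ such that $f$ has no fixed points in $(U_\Delta \cap \overline{S}_{A,B,C,D}) \setminus \Delta_\infty$; but the previous paragraph produces points of $C$ precisely inside this locus, and all points of $C$ are fixed points of $f$. This is the desired contradiction. I do not expect any serious obstacle here; the only small item worth verifying is that even if $p$ happens to coincide with the indeterminacy point ${\rm Ind}(\gamma)$, the neighboring points of $C$ are distinct from $p$ and hence remain regular points of $f$, so they still qualify as genuine fixed points violating the proposition.
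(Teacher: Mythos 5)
Your proof is correct and takes essentially the same route as the paper: both exhibit an algebraic curve of fixed points, observe that such a curve must accumulate on $\Delta_\infty$ (the paper invokes the maximum principle; you use the equivalent fact that a positive-dimensional projective variety meets every hyperplane), and then derive a contradiction from Proposition~\ref{fixedpoints_remainincompactparts}.
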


\begin{proof}
As a composition of the algebraic maps $g_x$, $g_y$, and $g_z$, $f_{A,B,C,D}$ is itself an algebraic
map from $\C^3$ to $\C^3$. Thus the set of its fixed points is an algebraic set of $\C^3$ so that
its intersection with $S_{A,B,C,D}$ is an algebraic subset of $S_{A,B,C,D}$. 
Since this set is compact (Lemma \ref{fixedpoints_remainincompactparts}) is must be finite.
\end{proof}

The remainder of this section will be devoted to the proof of Lemma~\ref{LEM:ESCAPE_TO_INFINITY}
below which is crucial for understanding the structure of unbounded Fatou components as will
be seen later on.

We resume the terminology used at the beginning of the section. Thus $(X:Y:Z:W)$ are
homogeneous coordinates on $\mathbb{CP}^3$. For any $A,B,C,D$, the surface
$\overline{S} = \overline{S}_{A,B,C,D}$ is tangent to the hyperplane at infinity at each of the vertices
$v_1, v_2,$ and $v_3$ of $\Delta_\infty$. Again, near, say $v_1$, we can use the affine coordinates
$(Y/X,Z/X,W/X)$ on $\mathbb{CP}^3$ and express the surface so that $W/X$ is a
holomorphic function of $(Y/X,Z/X)$. Similar descriptions apply to the other two vertices,
and the resulting coordinates were called ``standard coordinates'' around the vertices in question.

Next, let $\infinityneighborhood_\infty(i)$ be an open neighborhood of $v_i$ in $\overline{S}$ so that
$\infinityneighborhood_\infty(i)$ is contained in the graph of $h_i : N_i \rightarrow \C$, $i=1,2,3$. Without loss of generality, we assume that the
three neighborhoods $\infinityneighborhood_\infty(i)$, $i=1,2,3$, are pairwise disjoint.
If $p \in \infinityneighborhood_\infty(i)$, we denote by
	${\rm dist}(p,v_i)$
the Euclidean distance between $p$ and $v_i$ in the standard coordinate chart on $\infinityneighborhood_\infty(i)$.
Let
\begin{align*}
	\infinityneighborhood_\infty = \infinityneighborhood_\infty(1) \cup \infinityneighborhood_\infty(2) \cup \infinityneighborhood_\infty(3).
\end{align*}
For any $p \in \infinityneighborhood_\infty$, we define
	${\rm dist}(p,\mathcal{V}_\infty)$
to equal ${\rm dist}(p,v_i)$ where $\infinityneighborhood_\infty(i)$ is the unique one of the three neighborhoods containing $p$.

Let $S(0)$ be a set consisting of six hyperbolic elements $\gamma_{i,j} \in \AUTOGROUP$, $i,j \in \{ 1,2,3\}$, $i\neq j$.
As in Proposition~\ref{PROP:CONV_TO_ID},
for every natural number $n$ we will consider the inductively defined
sets of iterated commutators $S(n)$.  For every $n$ the set $S(n+1)$ contains
every possible commutator of any two distinct elements of $S(n)$.

\begin{lemma}\label{LEM:ESCAPE_TO_INFINITY}
Given parameters $A, B, C, D$, assume there are six hyperbolic elements $\gamma_{i,j} \in \AUTOGROUP$ as above
such that for every pair $i\neq j \in \{ 1,2,3\}$ we have
${\rm Ind}(\gamma_{i,j}) = v_i$ and ${\rm Attr}(\gamma_{i,j}) = v_j$.
Let $S(0)$ be the set consisting of the six elements $\gamma_{i,j}$ and let $\{ S(n) \}$ be the
corresponding sequence of inductively defined subsets of $\AUTOGROUP$. Then, up to reducing the neighborhood
$\infinityneighborhood_\infty \subset \overline{S}_{A,B,C,D}$, for any point $q \in \infinityneighborhood_\infty$,
there exists a constant $0 < \lambda < 1$ and a sequence $\{\eta_n\}_{n=0}^\infty \subset \AUTOGROUP$ 
satisfying the two conditions below:
\begin{itemize}
	\item[(i)] $\eta_n \in S(n)$ for every $n \geq 0$, and
	\item[(ii)] ${\rm dist}(\eta_n(q),\mathcal{V}_\infty) \leq \lambda^{4^n}$ for every $n \geq 0$.
\end{itemize}
\end{lemma}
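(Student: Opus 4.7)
The plan is to construct $\eta_n$ as an iterated commutator whose word expansion of length $4^n$ in the alphabet $\{\gamma_{i,j}^{\pm 1}\}$ is arranged so that, when its letters are applied one-by-one to $q$ (right-to-left), the trajectory stays inside $\infinityneighborhood_\infty$ and is strongly contracted at each step. The exponent $4^n$ in the conclusion matches exactly the word-length of an $n$-fold iterated commutator, and the task is to pick the commutator so that every letter actually contracts.

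The basic uniform estimate I would first record is the following. By Definition/Proposition~\ref{PROP:DESCRIPTION_HYPERBOLIC_ELEMENTS}(iii),(v) together with Proposition~\ref{fixedpoints_remainincompactparts}, each letter $\delta \in \{\gamma_{i,j}^{\pm 1}\}$ is holomorphic around ${\rm Attr}(\delta)$, has ${\rm Attr}(\delta)$ as a super-attracting fixed point (conjugate to a monomial map in standard coordinates), and contracts all of $\Delta_\infty \setminus \{{\rm Ind}(\delta)\}$ to ${\rm Attr}(\delta)$. Shrinking $\infinityneighborhood_\infty$ if necessary, I can then fix a single $\lambda \in (0,1)$ such that for every letter $\delta$ and every $z \in \infinityneighborhood_\infty \setminus \infinityneighborhood_\infty({\rm Ind}(\delta))$, the image $\delta(z)$ lies in $\infinityneighborhood_\infty({\rm Attr}(\delta))$ with ${\rm dist}(\delta(z),{\rm Attr}(\delta)) \leq \lambda$, and moreover if $z$ is already in $\infinityneighborhood_\infty({\rm Attr}(\delta))$ then ${\rm dist}(\delta(z),{\rm Attr}(\delta)) \leq \lambda\cdot {\rm dist}(z,{\rm Attr}(\delta))^2$ by virtue of super-attraction.

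With this estimate in hand, the construction of $\eta_n$ proceeds by induction on $n$. The base case is immediate: if $q \in \infinityneighborhood_\infty(k)$, take $\eta_0 = \gamma_{i,k}$ for any $i \neq k$. For $n \geq 1$, I would write $\eta_n = [\alpha_n,\beta_n]$ with $\alpha_n,\beta_n \in S(n-1)$ to be specified, and expand $\eta_n$ into a word $\delta_1\delta_2\cdots\delta_{4^n}$ in $\gamma_{i,j}^{\pm 1}$. Setting $q_0 = q$ and $q_\ell = \delta_{4^n-\ell+1}(q_{\ell-1})$, the aim is to choose $\alpha_n,\beta_n$ so that each $q_\ell$ lands in $\infinityneighborhood_\infty({\rm Attr}(\delta_{4^n-\ell+1}))$ while never sitting at the indeterminacy of the next letter. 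Under such control, the uniform estimate applies at every one of the $4^n$ steps and composing the contractions yields ${\rm dist}(\eta_n(q),\mathcal{V}_\infty) \leq \lambda^{4^n}$ (in fact something far stronger, since once $q_\ell$ enters the super-attracting regime the bound becomes doubly-exponential in $\ell$).

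The main obstacle is the combinatorial engineering of $\alpha_n,\beta_n$ so that the orbit of $q$ under the letters of the commutator never visits the indeterminacy vertex of the letter about to be applied. The subtlety is inherent to the commutator structure $[\alpha,\beta] = \alpha\beta\alpha^{-1}\beta^{-1}$, which by design uses both $\alpha$ and $\alpha^{-1}$ (and both $\beta$ and $\beta^{-1}$), and these have opposite $({\rm Ind},{\rm Attr})$ pairs. I plan to handle this by strengthening the inductive hypothesis to produce, at each level $n$, a full family of iterated commutators in $S(n)$ indexed by all possible pairs (entry vertex, exit vertex), and by heavily exploiting the hypothesis that we have at our disposal \emph{all six} of the $\gamma_{i,j}$ -- with every possible $({\rm Ind},{\rm Attr})$ assignment. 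This richness of generators is exactly the flexibility required: whenever the trajectory lands at a vertex $v$, there is always a generator whose indeterminacy is not $v$ available to be applied next, and one can connect any entry vertex to any exit vertex while keeping the entire trajectory in $\infinityneighborhood_\infty$ and away from indeterminacies.
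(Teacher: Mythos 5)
Your overall plan is essentially the paper's: strengthen the induction to produce, at each level, a family of iterated commutators with prescribed indeterminacy and contraction behavior at each vertex of $\Delta_\infty$, and use the availability of all six $\gamma_{i,j}$ (every ordered pair of vertices as $({\rm Ind},{\rm Attr})$) to guarantee that the trajectory of $q$ under the letters of the commutator never lands at an indeterminacy point of the letter about to be applied. The paper realizes this with an even/odd alternation --- at even levels a six-element family $\gamma_{i,j}^{(n)}$ that sends $\infinityneighborhood_\infty \setminus \infinityneighborhood_\infty(i)$ into $\infinityneighborhood_\infty(j)$ with controlled contraction, at odd levels a three-element family $\tau_i^{(n)}$ that sends $\infinityneighborhood_\infty \setminus \infinityneighborhood_\infty(i)$ into $\infinityneighborhood_\infty(i)$ --- but this is a concrete instance of the strengthened induction you describe.

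There is, however, a genuine gap in the uniform estimate you record. You state that for $z \in \infinityneighborhood_\infty \setminus \infinityneighborhood_\infty({\rm Ind}(\delta))$ one has only ${\rm dist}(\delta(z),{\rm Attr}(\delta)) \leq \lambda$, and that multiplicative contraction (you invoke super-attraction) holds only when $z$ is already in $\infinityneighborhood_\infty({\rm Attr}(\delta))$. With such an estimate the argument cannot close: inside a commutator the trajectory changes vertices at essentially every step (in the paper's $\tau_i^{(n+1)}$, for instance, the trajectory visits $v_k$, then $v_j$, then $v_i$, then $v_i$), so your additive bound would simply reset the distance to $\lambda$ at each step rather than compounding, and one would not obtain $\lambda^{4^n}$. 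What is actually true --- and what the paper's base case proves --- is the \emph{multiplicative} estimate ${\rm dist}(\gamma_{i,j}(z),v_j) \leq \lambda \cdot {\rm dist}(z,\mathcal{V}_\infty)$ for all $z \in \infinityneighborhood_\infty \setminus \infinityneighborhood_\infty(i) = \infinityneighborhood_\infty(j) \cup \infinityneighborhood_\infty(k)$, i.e.\ near the third vertex $v_k$ as well as near ${\rm Attr}(\gamma_{i,j}) = v_j$. The mechanism is that $\gamma_{i,j}$ crushes all of $\Delta_\infty \setminus \{v_i\}$ to $v_j$; hence in the standard coordinates near $v_k$ (or near $v_j$) both coordinate functions of $\gamma_{i,j}$ vanish along both coordinate axes (the local branches of $\Delta_\infty$), so $\|\gamma_{i,j}(u)\| = O(\|u\|^2)$, and one can extract a uniform linear contraction factor $\lambda$ on a small enough neighborhood. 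Your quadratic ``super-attraction'' estimate is thus not special to $\infinityneighborhood_\infty({\rm Attr}(\delta))$; it holds on all of $\infinityneighborhood_\infty \setminus \infinityneighborhood_\infty({\rm Ind}(\delta))$, and it is this stronger form that makes the contractions compose to $\lambda^{4^n}$.
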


\begin{proof}
Note that we might have asked the elements $\gamma_{i,j}$ of the set $S(0)$ to satisfy $\gamma_{i,j} =
\gamma_{j,i}^{-1}$ though this is not necessary. Also, in view of
Definition/Proposition~\ref{PROP:DESCRIPTION_HYPERBOLIC_ELEMENTS}, all elements $\gamma_{i,j}$ are algebraically
stable and $\gamma_{i,j}$ is holomorphic around ${\rm Attr}(\gamma_{i,j}) = v_j$.

The proposition will be proved by induction. In fact,	
we will prove a stronger statement. Namely, there exists $\lambda$, $0 < \lambda < 1$,
such that for each integer $n \geq 0$, we have:
	
\vspace{0.1in}
\noindent If $n$ is even then for every pair of distinct $i,j \in \{1,2,3\}$ there exists some
$\gamma_{i,j}^{(n)} \in S(n)$ such that
\begin{itemize}
	\item[(E1)] $\gamma_{i,j}^{(n)}$ is holomorphic on $\infinityneighborhood_\infty \setminus \infinityneighborhood_\infty(i)$, satisfies
	$\gamma_{i,j}^{(n)}(\infinityneighborhood_\infty \setminus \infinityneighborhood_\infty(i)) \subset \infinityneighborhood_\infty(j)$,
	and for every $q \in \infinityneighborhood_\infty \setminus \infinityneighborhood_\infty(i)$ we
	have ${\rm dist}(\gamma_{i,j}^{(n)}(q),v_j) \leq \lambda^{4^n} \ {\rm
			dist}(q,\mathcal{V}_\infty)$.
		
	\item[(E2)] 
	$(\gamma_{i,j}^{(n)})^{-1}$ is holomorphic on $\infinityneighborhood_\infty \setminus \infinityneighborhood_\infty(j)$, 
	satisfies $(\gamma_{i,j}^{(n)})^{-1}(\infinityneighborhood_\infty \setminus \infinityneighborhood_\infty(j)) \subset \infinityneighborhood_\infty(i)$,
	and for every $q \in \infinityneighborhood_\infty \setminus
	\infinityneighborhood_\infty(j)$ we have ${\rm dist}\left((\gamma_{i,j}^{(n)})^{-1}(q),v_i\right) \leq
	\lambda^{4^n} \ {\rm dist}(q,\mathcal{V}_\infty)$.
\end{itemize}
	
\vspace{0.1in}
\noindent If $n$ is odd then for each $i \in \{1,2,3\}$ there exists some $\tau_i^{(n)} \in S(n)$ such that
\begin{itemize}
	\item[(O)] $\tau_i^{(n)}$ and $(\tau_i^{(n)})^{-1}$ are holomorphic on
	$\infinityneighborhood_\infty \setminus \infinityneighborhood_\infty(i)$, they satisfy
	$(\tau_i^{(n)})^{\pm 1}(\infinityneighborhood_\infty \setminus \infinityneighborhood_\infty(i))~\subset~\infinityneighborhood_\infty(i)$, and for any $q \in \infinityneighborhood_\infty \setminus
	\infinityneighborhood_\infty(i)$ we have ${\rm dist}\left((\tau_i^{(n)})^{\pm 1}(q),
	v_i\right) \leq \lambda^{4^n} \ {\rm dist}(q,\mathcal{V}_\infty)$.
\end{itemize}

The base of the induction is $n=0$, in which case for each pair $i \neq j$
we let $\gamma_{i,j}^{(0)} = \gamma_{i,j}$ be the corresponding element of $S(0)$.
Fix then a pair $i\neq j$ and let $k \in \{1,2,3\}$ be such that $k \neq i$ and $k \neq j$.
Consider standard local coordinates $(u_1,u_2)$ around $v_k$ and let $(w_1,w_2)$
stand for standard local coordinates in a neighborhood of $v_j$.
By hypothesis
we have ${\rm Ind}(\gamma_{i,j}) = v_i$ and ${\rm Attr}(\gamma_{i,j}) = v_j$.
Therefore, item~(iii) of Definition/Proposition~\ref{PROP:DESCRIPTION_HYPERBOLIC_ELEMENTS} gives 
that $\gamma_{i,j}(\Delta_\infty \setminus \{v_i\}) = v_j$. Hence, if $\gamma_{i,j}$ is expressed
in local coordinates under the form
$(w_1,w_2) = \gamma_{i,j}(u_1,u_2)$, both
coordinates of $\gamma_{i,j}(u_1,u_2)$ will vanish along both axes $\{u_1=0\}$ and $\{u_2 = 0\}$.
Similarly, if we express $\gamma_{i,j}$ from the $(w_1,w_2)$ coordinates to
themselves, then both coordinates of $\gamma_{i,j}(w_1,w_2)$ vanish along both axes $\{w_1=0\}$
and $\{w_2 = 0\}$.  This implies that for any $0 < \lambda < 1$, we can choose
the neighborhoods $\infinityneighborhood_\infty(k)$ and $\infinityneighborhood_\infty(j)$
sufficiently small so as to ensure that for any $q \in \infinityneighborhood_\infty(k) \cup \infinityneighborhood_\infty(j)$ the
estimate
\begin{align}\label{EQN:CONTRACTION1}
	{\rm dist}(\gamma_{i,j}(q),v_j) \leq \lambda \ {\rm dist}(q,\mathcal{V}_\infty)
\end{align}
holds.
After choosing a sufficiently small neighborhood $\infinityneighborhood_\infty(i)$ of $v_i$ and perhaps making $\infinityneighborhood_\infty(k)$ smaller, the same reasoning 
applies to show
that for any $q \in \infinityneighborhood_\infty(k) \cup \infinityneighborhood_\infty(i)$ we have
\begin{align}\label{EQN:CONTRACTION2}
	{\rm dist}\left(\gamma_{i,j}^{-1}(q),v_i \right) \leq \lambda \ {\rm dist}(q,\mathcal{V}_\infty).
\end{align}
Repeating for all six distinct pairs $i \neq j$ we obtain sufficiently small neighborhoods $\infinityneighborhood_\infty(1)$,
$\infinityneighborhood_\infty(2)$, and $\infinityneighborhood_\infty(3)$ such that (\ref{EQN:CONTRACTION1}) and (\ref{EQN:CONTRACTION2}) hold.
If we then let $\infinityneighborhood_\infty(1)$, $\infinityneighborhood_\infty(2)$, and $\infinityneighborhood_\infty(3)$ be round balls of equal sufficiently small radius in the standard local coordinates, both estimates~(\ref{EQN:CONTRACTION1})
and~(\ref{EQN:CONTRACTION2}) will
continue to hold. In addition, for all six distinct pairs $i \neq j$ we have
$\gamma_{i,j}(\infinityneighborhood_\infty \setminus \infinityneighborhood_\infty(i)) \subset \infinityneighborhood_\infty(j)$ and $\gamma_{i,j}^{-1}(\infinityneighborhood_\infty \setminus \infinityneighborhood_\infty(j)) \subset \infinityneighborhood_\infty(i)$.
Therefore we can assume that (E1) and (E2) hold when $n=0$.

\vspace{0.1in}
	
For the remainder of the proof we keep the neighborhood $\infinityneighborhood_\infty =
\infinityneighborhood_\infty(1) \cup \infinityneighborhood_\infty(2) \cup \infinityneighborhood_\infty(3)$
fixed and inductively prove that (E1) and (E2) hold on $\infinityneighborhood_\infty$ for
every even $n$ and that (O) holds on $\infinityneighborhood_\infty$ for every odd
$n$.

\vspace{0.1in}
	
Suppose now that $n$ is even and that the collection of six elements
$\gamma_{i,j}^{(n)} \in S(n)$ exist and satisfy~(E1) and~(E2). For each $i \in \{1,2,3\}$, we will prove
the existence of an element $\tau_i^{(n+1)}  \in S(n+1)$ satisfying~(O).

Fix then $i \in \{1,2,3\}$ and let $j,k \in \{1,2,3\} \setminus \{i\}$ be the other two indices. We define
\begin{align*}
	\tau_i^{(n+1)} = \left[\gamma_{i,j}^{(n)},\gamma_{i,k}^{(n)}\right] = \left(\gamma_{i,j}^{(n)}\right)^{-1} \left(\gamma_{i,k}^{(n)}\right)^{-1} \gamma_{i,j}^{(n)} \, \gamma_{i,k}^{(n)}.
\end{align*}
Using (E1) and (E2) we can see that the above composition is holomorphic on all
of $\mathcal{V}_\infty \setminus \mathcal{V}_\infty(i)$ and that it maps
$\mathcal{V}_\infty \setminus \mathcal{V}_\infty(i)$ into
$\mathcal{V}_\infty(i)$.  Moreover, for any $q \in \mathcal{V}_\infty \setminus \mathcal{V}_\infty(i)$ we have
that
\begin{align*}
	\gamma_{i,k}^{(n)}(q) \in \mathcal{V}_\infty(k), \quad \gamma_{i,j}^{(n)} \, \gamma_{i,k}^{(n)}(q) \in \mathcal{V}_\infty(j), \quad \mbox{and} \quad \left(\gamma_{i,k}^{(n)}\right)^{-1} \gamma_{i,j}^{(n)} \, \gamma_{i,k}^{(n)}(q) \in  \mathcal{V}_\infty(i).
\end{align*}
Again using (E1) and (E2) we have the each of the four mappings in the commutator used
to define $\tau_i^{(n+1)}$  contracts distance to $\mathcal{V}_\infty$ by
a factor of $\lambda^{4^n}$ and hence
\begin{align*}
	{\rm dist}\left(\tau_i^{(n+1)}(q),v_i \right) = {\rm dist}\left(\left(\gamma_{i,j}^{(n)}\right)^{-1} \left(\gamma_{i,k}^{(n)}\right)^{-1} \gamma_{i,j}^{(n)} \gamma_{i,k}^{(n)}(q),v_i \right) \leq \lambda^{4^{n+1}} {\rm dist}(q,\mathcal{V}_\infty).
\end{align*}
	
Notice that 
\begin{align*}
	(\tau_i^{(n+1)})^{-1} = \left[\gamma_{i,k}^{(n)},\gamma_{i,j}^{(n)}\right] = \left(\gamma_{i,k}^{(n)}\right)^{-1} \left(\gamma_{i,j}^{(n)}\right)^{-1} \gamma_{i,k}^{(n)} \, \gamma_{i,j}^{(n)}.
\end{align*}
Therefore the same proof as in the previous paragraph applies to $(\tau_i^{(n+1)})^{-1}$ after switching $j$ and~$k$.
We conclude that (O) holds for $n+1$.

\vspace{0.1in}
	
Suppose now that $n$ is odd and that the collection of three elements
$\tau_{i}^{(n)} \in S(n)$ exist and satisfy~(O). We will prove
that all six elements $\gamma_{i,j}^{(n+1)}  \in S(n+1)$ satisfying~(E1) and~(E2) exist.
	
For any distinct $i,j \in \{1,2,3\}$ let $k \in \{1,2,3\} \setminus \{i,j\}$ be the remaining element.
Now define
\begin{align*}
	\gamma_{i,j}^{(n+1)} = \left[\tau_j^{(n)}, \tau_i^{(n)}   \right] = \left(\tau_j^{(n)}\right)^{-1}  \left(\tau_i^{(n)}\right)^{-1} \tau_j^{(n)} \tau_i^{(n)}.
\end{align*}
Using~(O) we can see that the above composition of mappings is holomorphic on $\infinityneighborhood_\infty \setminus \infinityneighborhood_\infty(i)$ and that it maps $\infinityneighborhood_\infty \setminus \infinityneighborhood_\infty(i)$
into $\infinityneighborhood_\infty(j)$. 
Again using~(O), each of these four mappings contracts distance to $\mathcal{V}_\infty$ by a factor of $\lambda^{4^n}$
and hence 
\begin{align*}
	{\rm dist}\left(\gamma_{i,j}^{(n+1)}(q),v_j\right) &= {\rm dist}\left(\left(\tau_j^{(n)}\right)^{-1}  \left(\tau_i^{(n)}\right)^{-1} \tau_j^{(n)} \tau_i^{(n)}(q), v_j \right) \leq \lambda^{4^{n+1}}{\rm dist}(q,\mathcal{V}_\infty).
\end{align*}
We conclude that (E1) holds for $n+1$.
	
To see that (E2) holds for $n+1$, note that
\begin{align*}
	(\gamma_{i,j}^{(n+1)})^{-1} = \left[\tau_i^{(n)}, \tau_j^{(n)}   \right] = \left(\tau_i^{(n)}\right)^{-1}  \left(\tau_j^{(n)}\right)^{-1} \tau_i^{(n)} \tau_j^{(n)}.
\end{align*}
Therefore the same proof as in the previous paragraph applies to
$(\gamma_{i,j}^{(n+1)})^{-1}$ after switching $i$ and~$j$.
We conclude that (E2) holds for $n+1$.
	
Therefore, we conclude that statements (E1) and (E2) hold for every even $n \geq 0$ and that (O) holds for every odd $n \geq 0$.  
\end{proof}

\section{General properties of Fatou components and good set of parameters $\goodparams$.}\label{SEC:FATOU}

Recall that the Fatou set ${\mathcal F}_{A,B,C,D}$ for the action of
$\AUTOGROUP_{A,B,C,D}$ on $S_{A,B,C,D}$ is the set of points $p$ admitting a
neighborhood on which the restrictions of all elements in $\AUTOGROUP$ form a normal family.
Recall also that, by way of definition, this normal family may contain sequences of maps
converging to infinity, cf. Section~\ref{SUBSEC:FATOU_JULIA}. In particular, ${\mathcal F}_{A,B,C,D}$ is an
open (possibly empty) set and, according to Remark~\ref{REM:SINGULAR_POINTS_JULIA}, none
of the possible singular points of $S_{A,B,C,D}$ lies in ${\mathcal F}_{A,B,C,D}$.

A {\it Fatou component} $V \subset S_{A,B,C,D}$ is a connected component of
${\mathcal F}_{A,B,C,D}$.  Since the Fatou set is invariant, one can consider
the stabilizer $\AUTOGROUP_V \leq \AUTOGROUP$ of $V$, which consists of those elements
of $\AUTOGROUP$ that map $V$ to~$V$.  The purpose of this section is to establish
several general properties of Fatou components.  By combining these properties
with the previous material, the proofs of Theorems~H and~K will quickly be
derived in the next section.

We begin with the following dichotomy which plays an important role in the proof of Theorem~F.

\begin{proposition}\label{PROP:FATOU_DICHOTOMY}
Let $V$ be a Fatou component for $\AUTOGROUP_{A,B,C,D}$.   Then either:
\begin{itemize}
\item[(i)]  there is a sequence of mappings $\gamma_n \in \AUTOGROUP_{A,B,C,D} \setminus \{\rm id\}$ that converge uniformly on compact subsets of $V$ to the identity, or
\item[(ii)] the action of $\AUTOGROUP_{A,B,C,D}$ on $V$ is properly discontinuous.
\end{itemize}
\end{proposition}

\begin{proof}

First suppose that $V$ intersects the locally non-discrete locus $\nondiscrete$
non-trivially.  Then there exists a non-empty open set $W \subset V$ and a
sequence $\{ f_j\} \subset \AUTOGROUP$, $f_j \neq {\rm id}$ for all
$j \in \N$, such that the restrictions of the elements $f_j$ to $W$ converge
uniformly to the identity. To show that Claim~(i) holds in this case, it suffices to show that
$\{ f_{j_k}\}$ actually converges to the identity on compact subsets of $V$. This can be checked as follows.
Consider a (connected, relatively compact) set $U \subset V$ containing $W$. Owing to the fact that $\{ f_j\}$ is a normal
family on $V$, we can assume without loss of generality that $\{ f_j\}$ converges uniformly on $U$. However, by assumption, the
limit map coincides with the identity on $W$ and hence must coincide with the identity on all of $U$ as required.

Now suppose that $V$ is entirely contained in the locally-discrete locus $\discrete$.  
In this case we will prove that $\AUTOGROUP$ acts properly discontinuously on $V$.
 Let $K \subset V$ be a compact set and, aiming at
a contradiction, assume the existence of an infinite sequence $\{ f_j\}$ of pairwise
distinct elements in $\AUTOGROUP$ such that $K \cap f_j (K) \neq
\emptyset$ for all $j$. Thus, since $K$ is compact, we can find a subsequence
$j_k$ and points $p$ and $q$ in $K$ such that the sequence $\{ y_k = f_{j_k}
(p)\}_{k \in \N}$ converges to $q$.  Up to enlarging $K$, we can assume without
loss of generality that $q$ lies in the interior of $K$.  Setting $l_k =
j_{k+1} - j_k$, it follows that $g_k = f^{l_k}$ sends $y_k$ to $y_{k+1}$ and
both points converge towards $q$ as $k \rightarrow \infty$.

On the other hand, since $V$ is contained in the Fatou set, normality
implies that the derivatives of the elements $g_k$ are uniformly bounded in
$K$. Similarly, the derivatives of their inverses $g_k^{-1}$ are also bounded on $g_k (K)$.
Using the uniform bound on the derivatives of $g_k$, we conclude that
$g_k$ sends some fixed neighborhood $U_q$ of
$q$ to a bounded subset of $K$ for $k$ large enough. Again, normality implies that a subsequence
$\{g_{k(i)} \}_i$ of $\{ g_k\}$ converges uniformly on (compact subsets of)
$U_q$ to a non-constant map $g_{\infty} : U_q \rightarrow K$. However, since there is also
convergence of derivatives (Cauchy formula), the existence of uniform bounds on the derivatives of
$g_k$ and $g_k^{-1}$ implies that limit map $g_\infty$ is locally invertible.
Up to reducing the size of $U_q$ we can suppose that $g_\infty$ is actually invertible. It follows
that the sequence of maps $h_i = g_{k(i+1)}^{-1} \circ g_{k(i)}$ converges
uniformly to the identity on compact subsets of $U_q$. This contradicts the assumption that
$V$ is entirely contained in the locally-discrete locus $\discrete$.
We therefore conclude that Claim (ii) holds.
\end{proof}


\begin{proposition}\label{PROP:FATOU_COMP_ARE_HYPERBOLIC}
Any Fatou component $V$ of $\AUTOGROUP_{A,B,C,D}$ is Kobayashi hyperbolic.
\end{proposition}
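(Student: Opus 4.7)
My plan is to show that $V$ sits inside an ambient product of hyperbolic curves, and then to invoke the distance-decreasing property of the Kobayashi pseudometric. The key observation is that the fibers $S_{x_0}, S_{y_0}, S_{z_0}$ considered in Section~\ref{Parabolic_maps_proofTheoremA} lie entirely in the Julia set, so they must be avoided by $V$, and this yields enough ``omitted values'' in each coordinate to force hyperbolicity.

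More precisely, I would first pick two distinct values $x_0, x_1 \in (-2,2) \setminus \mathcal{B}_x$; this is possible since $\mathcal{B}_x$ is finite. By Lemma~\ref{COR:GX_JULIA}, each fiber $S_{x_i}$ is contained in $\mathcal{J}(g_x)$. Since every Fatou component of $\Gamma$ is in particular a Fatou component of the cyclic subgroup $\langle g_x \rangle$, we have $\mathcal{J}(g_x) \subset \mathcal{J}_{A,B,C,D}$, so $S_{x_0}, S_{x_1} \subset \mathcal{J}_{A,B,C,D}$. Carrying out the same procedure for the $y$- and $z$-coordinates yields distinct pairs $y_0, y_1$ and $z_0, z_1$ with the six corresponding fibers all contained in $\mathcal{J}_{A,B,C,D}$.

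Second, since $V$ is a Fatou component it is disjoint from $\mathcal{J}_{A,B,C,D}$, hence disjoint from these six fibers. Viewed through the inclusion $\iota : S_{A,B,C,D} \hookrightarrow \C^3$, this means
\begin{align*}
\iota(V) \;\subset\; W \;:=\; \bigl(\C \setminus \{x_0,x_1\}\bigr) \times \bigl(\C \setminus \{y_0,y_1\}\bigr) \times \bigl(\C \setminus \{z_0,z_1\}\bigr).
\end{align*}
Each factor is biholomorphic to $\C \setminus \{0,1\}$ and is therefore Kobayashi hyperbolic, so the product $W$ is Kobayashi hyperbolic as well.

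Finally, the inclusion $\iota|_V : V \to W$ is holomorphic, so by the distance-decreasing property of the Kobayashi pseudometric we have $k_W(\iota(p), d\iota(v)) \leq k_V(p,v)$ for every $p \in V$ and every $v \in T_p V$. Since $W$ is Kobayashi hyperbolic, the left-hand side is strictly positive for $v \neq 0$, hence so is $k_V(p,v)$, proving that $V$ is Kobayashi hyperbolic. There is no serious obstacle here: the entire argument rests on the fact -- already established in Lemma~\ref{COR:GX_JULIA} -- that each parabolic generator $g_x, g_y, g_z$ already produces a one-parameter family of invariant curves in the Julia set, and the conclusion is then a direct application of the product property of the Kobayashi metric and Montel-type hyperbolicity of $\C$ minus two points.
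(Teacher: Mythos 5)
Your proof is correct and is essentially the same as the paper's: both use Lemma~\ref{COR:GX_JULIA} to place six fibers (two per coordinate) in the Julia set, conclude that $V$ embeds in the product $(\C\setminus\{x_0,x_1\})\times(\C\setminus\{y_0,y_1\})\times(\C\setminus\{z_0,z_1\})$, and then invoke the distance-decreasing property of the Kobayashi pseudometric. One minor wording nit: it is the Fatou \emph{set} of $\Gamma$ that is contained in the Fatou set of $\langle g_x\rangle$ (a Fatou component of the whole group need not be a component of the subgroup), but this does not affect your argument since only the set containment $\mathcal{J}(g_x)\subset\mathcal{J}_{A,B,C,D}$ is used.
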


\begin{proof}
Since $V$ is an open subset of $S_{A,B,C,D}$ and $V$ does not contain any singular points of
$S_{A,B,C,D}$, $V$ is itself a complex (open) manifold. Recall the ``grid''
\begin{align*}
\GRID = S_{x=x_0} \cup S_{x=x_1} \cup S_{y=y_0} \cup S_{y=y_1} \cup S_{z=z_0} \cup S_{z=z_1},
\end{align*}
defined in~(\ref{EQN:GRID}), which is a subset of the Julia set $\mathcal{J}_{A,B,C,D}$.
Hence $V$ does not intersect $\mathcal{G}$ and we can therefore consider the inclusion
\begin{align*}
\iota: V \rightarrow  \big(\mathbb{C} \setminus \{x_0,x_1\}\big) \times \big(\mathbb{C} \setminus \{y_0,y_1\}\big) \times \big(\mathbb{C} \setminus \{z_0,z_1\}\big).
\end{align*}
Clearly the image of $\iota$ is contained in a product of hyperbolic Riemann surfaces which is naturally
a Kobayashi hyperbolic domain.  The fact that holomorphic mappings do not increase the
Kobayashi psuedometric then implies that $V$ is also Kobayashi hyperbolic as well; see
\cite[Proposition 3.2.2]{KOBAYASHI}.
\end{proof}

Let then $V$ be a given component of ${\mathcal F}_{A,B,C,D}$ and let ${\rm Aut}(V)$ denote its
group of holomorphic automorphisms. By building on the general theory of topological transformation
groups of Gleason, Montgomery, and Zippin \cite{topologicaltrans-groups}, Cartan was able to show
that the automorphism group of a bounded domain in $\C^n$ is a finite-dimensional real Lie group. In turn,
Kobayashi \cite{KOBAYASHI} was able to extend Cartan's theorem to general (Kobayashi) hyperbolic manifolds.
Owing to Proposition~\ref{PROP:FATOU_COMP_ARE_HYPERBOLIC}, Corollary~\ref{COR:KOBAYASHI_CONSEQUENCES}
below summarizes these results in the case of a Fatou component.

Recall that the action $\varphi : G \times M \rightarrow M$ of a group $G$ on a manifold $M$ is said to
be {\it proper}\, if the preimage by $\varphi$ of any compact set of $M$ is again compact in $G \times M$.

\begin{corollary}\label{COR:KOBAYASHI_CONSEQUENCES}
Let $V$ be a Fatou component of $\AUTOGROUP_{A,B,C,D}$.  Then,
\begin{enumerate}
\item ${\rm Aut}(V)$ is a real Lie Group of finite dimension in the topology of uniform convergence on compact sets.
\item ${\rm Aut}(V)$ acts properly on $V$.
\item For any $p \in V$ the stabilizer ${\rm Aut}(V)_p = \{f \in {\rm Aut}(V) \, : \, f(p) = p\}$ is compact,
since the action of ${\rm Aut}(V)$ on $V$ is proper.
\end{enumerate}
\end{corollary}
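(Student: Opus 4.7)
The plan is to apply the classical theorem of H.~Cartan, extended by Kobayashi to arbitrary Kobayashi hyperbolic complex manifolds, directly to the Fatou component $V$. Since $V$ is Kobayashi hyperbolic by Proposition~\ref{PROP:FATOU_COMP_ARE_HYPERBOLIC}, this extended theorem yields item~(1) essentially for free: ${\rm Aut}(V)$, equipped with the topology of uniform convergence on compact subsets of $V$, is a real Lie group of finite dimension acting smoothly on $V$. This is precisely the content of the Cartan--Kobayashi theorem referred to in the paragraph preceding the statement.

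For item~(2), the first step would be to recall that every element $f \in {\rm Aut}(V)$ is an isometry of the Kobayashi pseudometric $d_V$ on $V$. Since $V$ is Kobayashi hyperbolic, $d_V$ is a genuine distance inducing the underlying manifold topology, and $(V,d_V)$ is locally compact as a finite-dimensional complex manifold. The classical theorem of van~Dantzig and van der Waerden asserts that the full isometry group of a connected locally compact metric space acts properly on the space. Since ${\rm Aut}(V)$ embeds as a closed subgroup of ${\rm Isom}(V,d_V)$, the induced action of ${\rm Aut}(V)$ on $V$ is therefore proper as well.

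Item~(3) is an immediate formal consequence of~(2): the stabilizer ${\rm Aut}(V)_p$ of any point $p \in V$ is the preimage of the compact set $\{(p,p)\} \subset V \times V$ under the proper continuous map ${\rm Aut}(V) \times V \to V \times V$ defined by $(f,q) \mapsto (f(q),q)$, so it is compact.

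No serious obstacle is anticipated, as the proof amounts to invoking two classical theorems with the essential geometric input being Proposition~\ref{PROP:FATOU_COMP_ARE_HYPERBOLIC}. Alternatively, item~(2) could be established directly from the definition of properness by a standard normal-families argument: a sequence $\{f_n\} \subset {\rm Aut}(V)$ such that both $f_n(p)$ and $f_n^{-1}(p)$ stay in some compact $K \subset V$ must be precompact in the compact-open topology on $V$, since ${\rm Aut}(V)$ forms a normal family on any relatively compact subset of $V$ by Kobayashi hyperbolicity. This shows that the map $(f,q) \mapsto (f(q),q)$ has compact preimages on compact sets, which is exactly properness.
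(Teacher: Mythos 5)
Your proposal is correct and follows essentially the same route as the paper: the paper proves the corollary simply by citing Kobayashi's Theorems 5.4.1 and 5.4.2 (which are precisely items (1)--(3) for a Kobayashi hyperbolic manifold), and your argument reconstructs in outline the standard proof of those theorems — the Cartan--Kobayashi theorem for the Lie group structure, van Dantzig--van der Waerden (via the Kobayashi-distance isometry embedding) for properness, and the formal deduction of compact stabilizers from properness. The normal-families alternative you sketch for item (2) is also sound and is in fact close to how Kobayashi establishes properness in the cited source.
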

\noindent
For more details, see \cite[Theorems 5.4.1 and  5.4.2]{KOBAYASHI}.

Let us now consider the stabilizer $\AUTOGROUP_V \leq \AUTOGROUP$ of the hyperbolic component $V$.  

\begin{proposition}\label{PROP_CLOSURE_GAMMA_V_IS_LIE}
Suppose that $\AUTOGROUP_{A,B,C,D}$ is locally non-discrete on an connected open
$U \subset {\mathcal F}_{A,B,C,D}$ and let $V$ be the Fatou component containing $U$. Recalling that
$\AUTOGROUP_V$ stands for the stabilizer of $V$ in $\AUTOGROUP$, the following holds:
\begin{enumerate}
\item The closure $G= \overline{\AUTOGROUP_V}$ of $\AUTOGROUP_V$ in ${\rm Aut}(V)$ is a real Lie Group of dimension at least~$1$.
\item For every point $p \in V$, the stabilizer $G_p$ of $p$ in $G$ is such that its local action around $p$
is conjugate to the local (linear) action of a closed subgroup of ${\rm SU}\, (2)$ on a neighborhood of
$(0,0) \in \C^2$.
\end{enumerate}
\end{proposition}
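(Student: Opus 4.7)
The plan is to prove Part (1) via Cartan's closed subgroup theorem combined with an argument promoting local non-discreteness on $U$ to non-discreteness at the identity in ${\rm Aut}(V)$. Part (2) will follow from a standard holomorphic linearization of a compact group action at a fixed point, combined with the fact that $\Gamma$ preserves the holomorphic volume form $\Omega$ from Section \ref{SEC:VOLUME_FORM}.

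For Part (1), since ${\rm Aut}(V)$ is a real Lie group by Corollary \ref{COR:KOBAYASHI_CONSEQUENCES} and $G = \overline{\Gamma_V}$ is by construction a closed subgroup, Cartan's closed subgroup theorem guarantees that $G$ is a real Lie subgroup. It remains to prove $\dim G \geq 1$, which is equivalent to showing that the identity is an accumulation point of $\Gamma_V$ in the compact-open topology. By hypothesis, there is a sequence $\{f_n\}_{n=1}^{\infty} \subset \Gamma \setminus \{{\rm id}\}$ converging uniformly to the identity on compact subsets of $U$. I would first note that $\Gamma$ permutes the Fatou components of $\Gamma_{A,B,C,D}$, and that for $n$ large enough $f_n(U) \subset V$; the connected set $f_n(U)$ then lies in the single Fatou component $f_n(V)$ and meets $V$, so $f_n(V) = V$ and $f_n \in \Gamma_V$ eventually. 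Because $V$ is Kobayashi hyperbolic (Proposition \ref{PROP:FATOU_COMP_ARE_HYPERBOLIC}), $\Gamma_V$ forms a normal family on $V$, so a subsequence of $\{f_n\}$ converges uniformly on compact subsets of $V$ to a holomorphic map $f: V \to S_{A,B,C,D}$. Since $f = {\rm id}$ on the open set $U$, the identity principle forces $f = {\rm id}$ on all of $V$. As each $f_n$ is distinct from the identity, this exhibits the identity as a non-isolated point of $G$, giving $\dim G \geq 1$.

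For Part (2), fix $p \in V$. By Corollary \ref{COR:KOBAYASHI_CONSEQUENCES}(3) the stabilizer $G_p$ is a compact subgroup of ${\rm Aut}(V)$ acting holomorphically on the complex $2$-manifold $V$ with fixed point $p$. Applying the Bochner--Cartan holomorphic linearization theorem, I obtain local holomorphic coordinates $(z_1,z_2)$ centered at $p$ in which $G_p$ acts linearly, realizing it as a compact subgroup of ${\rm GL}(2,\C)$. Because elements of $\Gamma$ preserve the holomorphic volume form $\Omega$ and pullback of holomorphic forms is continuous under uniform convergence on compact subsets, the form $\Omega$ is also preserved by every element of $G$. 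Evaluating this invariance at the fixed point $p$ yields $\det(Dg(p)) = 1$ for every $g \in G_p$, so $G_p$ sits in ${\rm SL}(2,\C)$ in the linearizing coordinates. A standard averaging of a Hermitian inner product over the compact group $G_p$ then produces a $G_p$-invariant Hermitian form on $\C^2$, so after a further linear change of coordinates the action of $G_p$ is conjugate to that of a closed subgroup of ${\rm SL}(2,\C) \cap {\rm U}(2) = {\rm SU}(2)$.

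The hard part is the promotion step inside Part (1): one must argue that the sequence $\{f_n\}$ provided by local non-discreteness on $U$ really does lie (eventually) in the stabilizer $\Gamma_V$ and really does converge to the identity in the Lie group topology of ${\rm Aut}(V)$, not merely on $U$. The combination of the permutation of Fatou components with the normal-family/identity-principle argument is what makes this work. Once Part (1) is in place, the linearization and volume-form computation in Part (2) are routine.
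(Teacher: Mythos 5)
Your argument is correct and follows essentially the same strategy as the paper: a locally non-discrete sequence on $U$ eventually lies in $\Gamma_V$ because $\Gamma$ permutes Fatou components, normality on the Kobayashi hyperbolic $V$ together with the identity principle upgrades convergence from $U$ to all of $V$, and Bochner linearization combined with $\Omega$-invariance and compactness of $G_p$ places the local action in ${\rm SU}(2)$. The only cosmetic difference is that you produce an invariant Hermitian form by averaging, whereas the paper directly cites that ${\rm SU}(2)$ is a maximal compact subgroup of ${\rm SL}(2,\mathbb{C})$.
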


\begin{proof}
As a closed subgroup of a Lie Group, $G$ is itself a real Lie Group.
Moreover, since $\AUTOGROUP$ is locally
non-discrete on the open set $U \subset V$, modulo reducing $U$, there are elements
$\{\gamma_n\}_{n=1}^\infty \subset \AUTOGROUP$ that converge uniformly to the identity on $U$.
In particular, for $n$ large enough, we have $\gamma_n (V) \cap V \neq \emptyset$ and thus
$\gamma_n (V) = V$ since $\mathcal{F}$ is invariant under $\AUTOGROUP$. Hence, up to
dropping finitely many elements in the sequence in question, we can assume without loss of generality that
$\{\gamma_n\}_{n=1}^\infty \subset \AUTOGROUP_V$ for every $n \in \N$. Next we have:

\vspace{0.1in}

\noindent {\it Claim}. The sequence $\{\gamma_n\}_{n=1}^\infty$ actually converges to the
identity uniformly on compact subsets of $V$, i.e.\ as elements of ${\rm Aut}(V)$.

\begin{proof}[Proof of the claim]
Consider a relatively compact open
set $U' \subset V$ with $U \subset U'$. The claim amounts to checking that $\{\gamma_n\}_{n=1}^\infty$
converges uniformly to the identity in $U'$. If this were not the case then, up to passing to a subsequence,
there would exist $\varepsilon >0$ such that
$$
\sup_{x \in U'} \Vert \gamma_n (x) -x \Vert \geq \varepsilon >0 \, .
$$
Since $\{\gamma_n\}_{n=1}^\infty$ is contained in a normal family on $V$, we can extract a limit map
$\gamma_{\infty}$ defined on $U'$ and thus satisfying 
$\sup_{x \in U'} \Vert \gamma_{\infty} (x) -x \Vert \geq \varepsilon$ so that $\gamma_{\infty}$
does not coincide with the identity on $U'$. However, $\gamma_{\infty}$ must coincide with the identity
on $U \subset U'$ since $\{\gamma_n\}_{n=1}^\infty$ converges uniformly to the identity on $U$.
The resulting contradiction proves our claim.
\end{proof}

Since $\{\gamma_n\}_{n=1}^\infty$ converges to the identity
on compact subsets of $V$, it follows that the elements of ${\rm Aut}(V)$ obtained by restricting
them to $V$ actually converges to the identity as elements of ${\rm Aut}(V)$ equipped with its Lie group
structure, cf. Corollary~\ref{COR:KOBAYASHI_CONSEQUENCES}. Thus $\AUTOGROUP_V$ is a non-discrete subgroup
of ${\rm Aut}(V)$ and hence its closure must be a Lie group with strictly positive dimension.

It remains to check the second assertion. For this, recall
that $\AUTOGROUP$ preserves the real volume form associated with the
holomorphic volume form $\Omega$ given in~(\ref{EQN:VOLUME_FORM}). This
implies that the group of derivatives of elements of $G_p$ is a subgroup of
${\rm SL}(2,\mathbb{C})$. On the other hand, Corollary~\ref{COR:KOBAYASHI_CONSEQUENCES}, Part~(3),
informs us that $G_p$ must be compact. Since ${\rm SU}\, (2)$ is a maximal compact subgroup
\cite{BOREL} of ${\rm SL}(2,\mathbb{C})$, it follows from the classical
Bochner Linearization Theorem that the local action of $G_p$ around $p \in V$ is conjugate to the (local)
linear action of a closed subgroup of ${\rm SU}\, (2)$ on a neighborhood of the origin.  
\end{proof}


The possibility of having a point $p \in V$ whose stabilizer
$G_p$ is conjugate to all of $SU(2)$ is a challenge for us as it raises quite a few technical issues.
To avoid get involved in a much longer argument and keep us focused on the situations
of primary interest, we will work only with the following set of parameters:
\begin{align*}
\goodparams = \{(A,B,C,D) \in \mathbb{C}^4 \, : \,
\mbox{every fixed point of every element of $\AUTOGROUP_{A,B,C,D} \setminus \{{\rm
id}\}$ is in $\mathcal{J}_{A,B,C,D}$}\}.
\end{align*}
In Propositions~\ref{PROP:GOODPARAMS_IN_COMPLETMENT_HYPERSURFACES}
and~\ref{PROP:REAL_GOODPARAMS}, it will be seen that the set $\goodparams$ is ``quite large'' and
contains several parameters of interest. In particular, it will be shown that $\mathbb{C}^4 \setminus 
\goodparams$ is at worst a countable union of (proper) real-algebraic subsets of $\C^4$ and, in particular,
it has null Lebesgue measure.

First, however, we have a simple and well-known lemma.

\begin{lemma}\label{1-rigidity_kobayashimetric}
	Let $f$ be an automorphism of a Kobayashi hyperbolic domain $V$. Assume there is a point $p \in U$ that
	is fixed by $f$ and where the differential of $f$ coincides with the identity. Then $f$ is the identity
	on all of $V$.
\end{lemma}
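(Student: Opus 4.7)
The plan is to run the classical Cartan uniqueness argument, adapted to the Kobayashi-hyperbolic setting via Corollary~\ref{COR:KOBAYASHI_CONSEQUENCES}. Assume for a contradiction that $f$ is not the identity on $V$. By the identity principle, $f$ is then not the identity on any neighborhood of $p$, so in a local holomorphic chart centered at $p$ the Taylor expansion has the form
\begin{align*}
f(z) = z + P_k(z) + O(|z|^{k+1}),
\end{align*}
where $k \geq 2$ is the smallest order at which $f$ fails to agree with the identity and $P_k$ is a non-zero $\mathbb{C}^2$-valued homogeneous polynomial of degree~$k$.

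Next I would compute the iterates of $f$ at $p$ and observe, by direct induction using $Df(p) = {\rm id}$ and the chain rule on Taylor jets, that
\begin{align*}
f^n(z) = z + n\, P_k(z) + O(|z|^{k+1})
\end{align*}
for every positive integer $n$, so that the order-$k$ jet of $f^n$ at $p$ grows linearly in~$n$. The main obstacle, and the place where Kobayashi hyperbolicity enters, is to rule out this linear growth. I would do this by invoking Corollary~\ref{COR:KOBAYASHI_CONSEQUENCES}: since $V$ is Kobayashi hyperbolic, ${\rm Aut}(V)$ is a real Lie group acting properly on $V$, and in particular the closed subgroup $\overline{\langle f\rangle}$ acts properly as well; thus the orbit $\{f^n(q)\}$ of any fixed relatively compact neighborhood of $p$ stays in a fixed compact subset of $V$. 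In particular $\{f^n\}$ is a normal family on a small polydisc around $p$ contained in the chart.

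Finally, I would apply the Cauchy estimates to the normal family $\{f^n\}$ on a polydisc of fixed radius around $p$: these give a uniform bound on the coefficients of order $k$ of the $f^n$. This contradicts the fact that the order-$k$ coefficient of $f^n$ equals $n$ times the (non-zero) coefficient of $P_k$. Hence $P_k = 0$ for every $k \geq 2$, so $f$ coincides with the identity on a neighborhood of~$p$; the identity principle for holomorphic maps then forces $f = {\rm id}$ on all of the connected component $V$.
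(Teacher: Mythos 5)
Your proof is correct, but it takes a genuinely different route from the one in the paper. The paper argues geometrically: it observes that $f$ is an isometry of the Kobayashi metric, hence preserves each small Kobayashi ball $B_r(p)$, identifies such a ball with a bounded domain in $\mathbb{C}^n$, passes to the Bergman metric (which, unlike the Kobayashi metric, is a genuine Riemannian metric), and then invokes the rigidity of Riemannian isometries fixing a point with identity differential. You instead run the classical Cartan iteration argument: if $f(z)=z+P_k(z)+O(|z|^{k+1})$ with $P_k\neq 0$, then $f^n(z)=z+nP_k(z)+O(|z|^{k+1})$, so the order-$k$ jet of $f^n$ grows without bound; on the other hand all $f^n$ fix $p$, so they lie in the compact stabilizer ${\rm Aut}(V)_p$ furnished by Corollary~\ref{COR:KOBAYASHI_CONSEQUENCES}, whence $\{f^n\}$ is a normal family and Cauchy estimates bound the order-$k$ coefficients uniformly, a contradiction. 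Both approaches are standard. Your route is arguably more self-contained here, since it only needs the compactness of the isotropy group, which the paper is already using elsewhere, and avoids importing Bergman-kernel theory; the paper's route is shorter once the Riemannian rigidity fact is granted. One small housekeeping remark: when you invoke normality to get Cauchy estimates in a fixed chart, you should note that by continuity of the evaluation map ${\rm Aut}(V)_p\times V\to V$ and compactness of ${\rm Aut}(V)_p$, one may shrink the polydisc around $p$ so that the images $f^n(\Delta)$ all remain inside the chart domain; this is what makes the uniform bound on the Taylor coefficients at $p$ legitimate. (Also, the ``$p\in U$'' in the statement is a typo for ``$p\in V$'', which you have implicitly and correctly assumed.)
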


\begin{proof}
Since $f$ is an isometry of the Kobayashi metric, the statement would be immediate if the Kobayashi
metric were a Riemannian metric which, however, is not always the case. To overcome this difficulty, we
locally replace the Kobayashi metric by the Bergman one as follows. For small \hbox{$r>0$}, let $B_r (p)$
denote the ball of radius~$r$ with respect to the Kobayashi metric. Since a holomorphic map
cannot increase the Kobayashi distance, it follows that $f (B_r (p)) \subset B_r (p)$. The analogous
argument applied to $f^{-1}$ allows us to conclude that $f$ induces an automorphism of
$B_r (p)$. Now, if $r >0$ is small enough, then $B_r (p)$ can be identified with a bounded domain
in some space $\C^n$ so that the Bergman metric is well defined. Thus $f$ induces an isometry of
the resulting Riemannian metric on $B_r (p)$ and hence coincides locally with the identity. The lemma
then follows.
\end{proof}

\begin{proposition}\label{PROP:GOODPARAMETERS_GOODSTABILIZERS}
Suppose that $(A,B,C,D) \in \goodparams$ and that $\AUTOGROUP_{A,B,C,D}$ is locally non-discrete on a
connected open $U \subset {\mathcal F}_{A,B,C,D}$. 
Let $V$ denote the Fatou component containing $U$. Then,
\begin{enumerate}
\item The closure $G= \overline{\AUTOGROUP_V}$ of $\AUTOGROUP_V$ in ${\rm Aut}(V)$ is a real Lie Group of dimension $\geq 1$.
\item The stabilizer $G_p$ of any $p \in V$ is trivial.
\end{enumerate}
\end{proposition}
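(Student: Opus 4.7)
Part (1) is precisely the content of Proposition~\ref{PROP_CLOSURE_GAMMA_V_IS_LIE}(1), so nothing new is needed there. The substance of the proposition is Part (2), which I handle by contradiction: suppose there exists $h \in G_p$ with $h \neq {\rm id}$, and derive the existence of a non-identity element of $\Gamma$ with a fixed point in $V$, contradicting $(A,B,C,D) \in \goodparams$.

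The crucial structural input is Proposition~\ref{PROP_CLOSURE_GAMMA_V_IS_LIE}(2), which asserts that in suitable local coordinates around $p$ the stabilizer $G_p$ acts linearly through a closed subgroup of ${\rm SU}\,(2)$ on a neighborhood of the origin in $\C^2$. In these coordinates $Dh(p)$ is realized as a non-identity element of ${\rm SU}\,(2)$: indeed, if $Dh(p) = {\rm Id}$, then by linearity of the $G_p$-action $h$ would coincide with the identity on a neighborhood of $p$, and hence on all of $V$ by analytic continuation, forcing $h = {\rm id}$. Since every non-identity element of ${\rm SU}\,(2)$ has eigenvalues $\lambda, \bar{\lambda}$ with $\lambda \bar{\lambda} = 1$ and $\lambda \neq 1$, we conclude that $Dh(p) - {\rm Id}$ is invertible, so that $p$ is a non-degenerate fixed point of $h$.

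Now I approximate $h$ by elements of $\Gamma_V$. Since $h \in G = \overline{\Gamma_V}$, pick a sequence $\gamma_n \in \Gamma_V$ with $\gamma_n \to h$ uniformly on compact subsets of $V$. Cauchy estimates upgrade this to $C^\infty$ convergence on any relatively compact subdomain, so for $n$ large, $\gamma_n(p)$ is close to $p$ and $D\gamma_n(p) - {\rm Id}$ is close to $Dh(p) - {\rm Id}$, hence invertible. The holomorphic inverse function theorem applied to $F_n(x) = \gamma_n(x) - x$ (or equivalently Rouch\'e in a small polydisk around $p$) then yields a fixed point $p_n \in V$ of $\gamma_n$ arbitrarily close to $p$. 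Finally, $h \neq {\rm id}$ forces $\gamma_n \neq {\rm id}$ for all sufficiently large $n$: just pick any $q \in V$ with $h(q) \neq q$ and note that $\gamma_n(q) \neq q$ eventually. This produces a non-identity element of $\Gamma$ with a fixed point in $V \subset \mathcal{F}_{A,B,C,D}$, contradicting $(A,B,C,D) \in \goodparams$.

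The one step that deserves careful attention is the invertibility of $Dh(p) - {\rm Id}$, which rests entirely on the ${\rm SU}\,(2)$-structure provided by Proposition~\ref{PROP_CLOSURE_GAMMA_V_IS_LIE}(2). Without such control on the linear part, $h$ could \emph{a priori} admit a whole curve of fixed points through $p$ (for instance, if $Dh(p)$ were of ``shear type''), and the implicit function argument that produces the fixed points $p_n$ of $\gamma_n$ would break down. Since the ${\rm SU}\,(2)$-structure is already available from the preceding proposition, the rest of the proof is a routine assembly of standard holomorphic-dynamics tools.
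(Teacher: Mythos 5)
Your proof is correct and follows essentially the same argument as the paper: use the ${\rm SU}(2)$ linearization from Proposition~\ref{PROP_CLOSURE_GAMMA_V_IS_LIE}(2) to show $Dh(p) - {\rm Id}$ is invertible, then approximate $h$ by $\gamma_n \in \Gamma_V$, extract nearby fixed points via the implicit function theorem, and contradict $(A,B,C,D) \in \goodparams$. The only cosmetic differences are that the paper invokes Lemma~\ref{1-rigidity_kobayashimetric} (Bergman-metric rigidity) rather than linearity of the local action to rule out $Dh(p) = {\rm Id}$, and shows $\gamma_n \neq {\rm id}$ via $D\gamma_n(p_n) \to Dh(p)$ rather than by picking a moved point; both variants are sound.
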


\begin{proof}
Beyond the proof of Proposition~\ref{PROP_CLOSURE_GAMMA_V_IS_LIE}, it remains to show Claim~(2).
Suppose for contradiction there exists of some point $p \in V$ and some $g \in G \setminus \{{\rm id}\}$
satisfying $g(p)=p$. Note that $g$ may lie in $\overline{\AUTOGROUP_V} \setminus \AUTOGROUP_V$ so that the
statement does not follow immediately from the definition of the parameter set $\goodparams$.

Since $g \neq {\rm id}$, Lemma~\ref{1-rigidity_kobayashimetric} implies that $D g (p)$ is not the identity
either. On the other hand, $D g (p)$ is conjugate to a matrix in $SU(2)$ so that the preceding discussion
shows that $D g(p)  - {\rm id}$ is, in fact, invertible.

On the other hand, $g$ lies in $G = \overline{\AUTOGROUP_V}$ so that
there are elements $\gamma_n \in \AUTOGROUP$ with
$\gamma_n \rightarrow g$ locally uniformly on $V$. Moreover, since the functions are holomorphic,
this implies $C^\infty$ convergence on compact subsets of $V$.
Since $D g (p) - {\rm id}$ is
invertible, it follows from the implicit function theorem (for Banach spaces) that for sufficiently
large $n$ the mappings $\gamma_n$ have fixed points $p_n$ converging to $p$.
Therefore $D \gamma_n(p_n) \rightarrow Dg(p) \neq {\rm id}$, implying that $\gamma_n \neq {\rm id}$
for sufficiently large $n$.
Thus, we have found non-trivial $\gamma_n \in \AUTOGROUP$ having fixed
points in the Fatou component $V$, contradicting the choice of parameters
$(A,B,C,D) \in \goodparams$. We conclude that $G_p = \{{\rm id}\}$.
\end{proof}

%

Recall from Definition/Proposition~\ref{PROP:DESCRIPTION_HYPERBOLIC_ELEMENTS} that, bar the identity, all elements of $\AUTOGROUP$
are split in hyperbolic maps and parabolic maps. Furthermore
an element of $\AUTOGROUP$ is parabolic
if and only if it is conjugate to a non-trivial power of the generators $g_x, g_y$ or $g_z$.

\begin{lemma}\label{fixedpointparabolic_Juliaset}
	For any $(A,B,C,D)$, any fixed point of a
	parabolic element $\gamma \in \AUTOGROUP_{A,B,C,D}$ lies in the Julia set of the corresponding
	$\AUTOGROUP=\AUTOGROUP_{A,B,C,D}$-action. 
\end{lemma}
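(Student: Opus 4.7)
By the definition of parabolic element given in Definition/Proposition~\ref{PROP:DESCRIPTION_HYPERBOLIC_ELEMENTS}, the element $\gamma$ is conjugate in $\Gamma$ to a non-trivial power of one of $g_x$, $g_y$, or $g_z$. Since the Julia set $\mathcal{J}_{A,B,C,D}$ is $\Gamma$-invariant, it is enough to treat the case $\gamma = g_x^n$ with $n\neq 0$. Moreover, since any normal family of elements of $\Gamma$ restricts to a normal family of the subgroup $\langle g_x\rangle$, we have $\mathcal{F}_{A,B,C,D}\subset \mathcal{F}(\langle g_x\rangle)$, hence $\mathcal{J}(\langle g_x\rangle)\subset \mathcal{J}_{A,B,C,D}$. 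Thus it suffices to show that any fixed point $p=(x_0,y_0,z_0)$ of $g_x^n$ lies in $\mathcal{J}(\langle g_x\rangle)$, i.e.\ that the family $\{g_x^k\}_{k\in\mathbb{Z}}$ fails to be normal on every open neighborhood $U$ of $p$ in $S_{A,B,C,D}$.

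The key observation is that the orbit $\{g_x^k(p)\}_{k\in\mathbb{Z}}$ is finite (its cardinality divides $n$) and hence contained in a fixed compact set $P\subset S_{A,B,C,D}$. To produce points with drastically different behavior, I would use that the restriction $\pi_x|_U:U\to\mathbb{C}$ is a non-constant holomorphic map (because the open set $U$ is two-dimensional while each fiber of $\pi_x$ is one-dimensional), so by the open mapping theorem $\pi_x(U)$ contains an open subset of $\mathbb{C}$. Since $\mathbb{C}\setminus([-2,2]\cup\mathcal{B}_x)$ is dense open, one can choose a point $q\in U$ lying in a fiber $S_{x_0'}$ with $x_0'\in\mathbb{C}\setminus([-2,2]\cup\mathcal{B}_x)$. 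By Proposition~\ref{PROP_ELLIPTIC_HYPERBOLIC}, the restriction of $g_x$ to $\overline{S_{x_0'}}$ is then loxodromic with both fixed points on $\Pi_\infty$, so that $g_x^k(q)$ leaves every compact subset of $S_{A,B,C,D}$ as $k\to+\infty$.

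It remains to rule out both ways a subsequence of $\{g_x^k\}_{k\geq 1}$ could witness normality on $U$, in the sense of Section~\ref{SUBSEC:FATOU_JULIA}. A subsequence $g_x^{k_j}$ converging uniformly on compact subsets of $U$ would force the limit to be continuous on $U$, but $g_x^{k_j}(p)\in P$ stays bounded while $g_x^{k_j}(q)\to\infty$, and these two limiting behaviors cannot be interpolated continuously across the small neighborhood containing both $p$ and $q$. Likewise, a subsequence converging to infinity on $U$ would require $g_x^{k_j}(p)$ to eventually leave every compact set, which is impossible since $g_x^{k_j}(p)\in P$. Hence $\{g_x^k\}_{k\geq 1}$ is not a normal family on any neighborhood of $p$, so $p\in\mathcal{J}(\langle g_x\rangle)\subset\mathcal{J}_{A,B,C,D}$, completing the proof.

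The argument is essentially the same as in Lemma~\ref{COR:GX_JULIA}; the only subtlety—and the step requiring a short justification—is the passage from an arbitrary neighborhood $U$ of $p$ to the existence of a nearby point lying in a ``loxodromic'' fiber, which is handled by the openness of $\pi_x|_U$. No other case analysis based on the location of $x_0$ (e.g., $x_0\in[-2,2]$, $x_0\in\mathcal{B}_x$, or $\overline{S_{x_0}}$ singular) is needed, because the obstruction to normality comes entirely from the behavior on fibers $S_{x_0'}$ with $x_0'$ close to $x_0$ and outside the finite exceptional set.
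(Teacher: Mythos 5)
Your proof is correct and follows essentially the same route as the paper's: reduce to a power of $g_x$, use that the fixed point has a bounded (indeed finite) orbit, locate nearby points on loxodromic fibers $S_{x_0'}$ with $x_0' \notin [-2,2]\cup\mathcal{B}_x$ that escape to infinity, and conclude normality fails at $p$. One minor stylistic remark: the ``cannot be interpolated continuously'' phrasing is a small detour --- the direct contradiction is simply that uniform-on-compacts convergence of $g_x^{k_j}$ to a holomorphic map $U\to S_{A,B,C,D}$ would force $g_x^{k_j}(q)$ to converge in $S_{A,B,C,D}$, hence stay bounded, contradicting its escape to infinity.
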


\begin{proof}
	Since parabolic maps are conjugate to a non-trivial power of one of the generators
	$g_x$, $g_y$, or $g_z$, it
	suffices to prove the statement for a non-trivial power of, say, $g_x$.  It
	follows from Proposition~\ref{PROP_ELLIPTIC_HYPERBOLIC} and
	Lemma~\ref{intersectinginfinityatdistinctpoints} that for all but finitely many
	values of $x_0 \in \mathbb{C} \setminus [-2,2]$ the action of $g_x$ on the
	fiber $S_{x=x_0}$ is loxodromic, with two distinct fixed points at infinity.
	Consider an iterate $g_x^\ell$ for some $\ell \neq 0$.  Any point on any such
	$S_{x=x_0}$ will have orbit under $g_x^\ell$ that tends to infinity.  These
	points form an open dense subset of $S_{A,B,C,D}$, implying that any  point having bounded
	orbit under $g_x^\ell$ (and hence any fixed
	point of $g_x^\ell$) must be in the Julia set.
\end{proof}

Now we will need a significantly more elaborate result.

\begin{lemma}\label{hyperbolicmaps_withhyperbolicfixedpoints}
	There is a countable union ${\mathcal H} \subset \mathbb{C}^4$ of real algebraic
	hypersurfaces such that if $(A,B,C,D) \in \mathbb{C}^4 \setminus {\mathcal H}$ 
	then $S_{A,B,C,D}$ is smooth and any hyperbolic $\gamma \in \AUTOGROUP_{A,B,C,D}$ has every fixed point consisting
	of a hyperbolic saddle point.
\end{lemma}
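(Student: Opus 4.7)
The plan is to define $\mathcal{H}$ as a countable union of real-algebraic hypersurfaces of $\mathbb{C}^4$: one hypersurface removing parameters for which $S_{A,B,C,D}$ is singular (using the two explicit polynomial conditions recorded in Section~\ref{SUBSEC_SINGULAR_POINTS}) together with, for each of the countably many hyperbolic $\gamma \in \Gamma$, a further real-algebraic hypersurface $\mathcal{H}_\gamma$ containing every parameter at which $\gamma$ admits a non-saddle fixed point on $S_{A,B,C,D}$.

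Fix a hyperbolic $\gamma$ and introduce the complex incidence variety
\[
V_\gamma = \{(p,(A,B,C,D)) \in \mathbb{C}^3 \times \mathbb{C}^4 \ : \ p \in S_{A,B,C,D},\ \gamma(p) = p\},
\]
with projection $\pi : V_\gamma \to \mathbb{C}^4$. By Corollary~\ref{isolatedfixedpoints_hyperbolicmaps} every fiber of $\pi$ is finite, so $\pi$ is generically finite on each irreducible component. Because $\Gamma$ preserves the volume form $\Omega$ of Section~\ref{SEC:VOLUME_FORM}, at any smooth fixed point $p$ the restricted derivative $D\gamma(p)|_{T_p S}$ lies in $\mathrm{SL}(2,\mathbb{C})$, so $p$ is a hyperbolic saddle exactly when $t(p) := \mathrm{tr}(D\gamma(p)|_{T_p S}) \notin [-2,2]$. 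The ``bad'' set $B_\gamma \subset V_\gamma$ is therefore the real-semi-algebraic locus $\{\mathrm{Im}(t) = 0,\ |t|\le 2\}$, and the task reduces to showing that $\pi(B_\gamma) \subset \mathbb{C}^4 \simeq \mathbb{R}^8$ lies in a proper real-algebraic hypersurface.

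I would argue component by component on $V_\gamma$. Any irreducible component $W$ whose projection is not Zariski-dense in $\mathbb{C}^4$ contributes the proper complex-algebraic hypersurface $\overline{\pi(W)}$ to $\mathcal{H}_\gamma$, which is harmless. On a component $W$ with dominant projection the complex-rational function $t$ is either non-constant, in which case $B_\gamma \cap W$ has real codimension at least one in $W$ and its $\pi$-image has real codimension at least one in $\mathbb{R}^8$ by generic finiteness of $\pi|_W$, so lies in a proper real-algebraic hypersurface; or $t$ is constant equal to some $t_0$, and I must rule out the possibility that $t_0 \in [-2,2]$.

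This last case is the main obstacle, and I would resolve it by using the Picard parameters $(0,0,0,4)$ as a universal witness. Writing $M(\gamma) \in \Gamma(2)$ for the matrix corresponding to $\gamma$ via the isomorphism of Section~\ref{SEC:PICARD}, the hyperbolicity of $\gamma$ in $\Gamma$ forces $|\mathrm{tr}\, M(\gamma)| > 2$; the semiconjugacy of Proposition~\ref{PROP:SEMI_CONJUGACY}, applied to torsion points of $\mathbb{T}^2 \subset \mathbb{C}^* \times \mathbb{C}^*$ fixed by $\eta_{M(\gamma)}$, exhibits affine fixed points of $\gamma$ on $S_{0,0,0,4}$, and at each of them Corollary~\ref{COR:C3_EIGVALS} (or Proposition~\ref{PROP:SING_PTS_EIGVALS} at a singular point of $S_{0,0,0,4}$) produces traces lying strictly outside $[-2,2]$. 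To transport this witness to a dominant component $W$ with $t \equiv t_0$, I would choose parameters $(A_n,B_n,C_n,D_n) \to (0,0,0,4)$ in the Zariski-open subset of $\mathbb{C}^4$ over which $\pi|_W$ has non-empty fiber, and select corresponding fixed points $p_n \in S_{A_n,B_n,C_n,D_n}$ in $W$; Proposition~\ref{fixedpoints_remainincompactparts} keeps the $p_n$ bounded away from $\Delta_\infty$, so after extracting a subsequence we have $p_n \to p_\ast$, an affine fixed point of $\gamma$ on $S_{0,0,0,4}$. Continuity of the trace, together with the Picard analysis, would then force $t_0 \notin [-2,2]$, contradicting the assumption. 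Taking the union of the $\mathcal{H}_\gamma$ over the countably many hyperbolic elements of $\Gamma$ yields the desired $\mathcal{H}$.
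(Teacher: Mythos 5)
Your proof is essentially correct and reaches the stated conclusion, but by a genuinely different route from the paper. Both arguments hinge on the same two ingredients --- the Picard witness Corollary~\ref{COR:C3_EIGVALS} and the compactness statement Proposition~\ref{fixedpoints_remainincompactparts} --- but the paper projects the semi-algebraic set $\tilde{H}\subset\mathbb{C}^7$ down by Tarski--Seidenberg, invokes the Cylindrical Algebraic Decomposition to reduce to showing that $H$ has empty interior, and derives the contradiction by continuing a branch of simple fixed points holomorphically along a path in a Zariski-open set $Z=\mathbb{C}^4\setminus({\rm NS}\cup\overline{M})$, then appealing to the open mapping theorem (a holomorphic trace confined to $[-2,2]$ on an open set must be constant). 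You instead decompose the incidence variety $V_\gamma$ over parameter space into its finitely many irreducible components and split each dominant component according to whether the trace function $t$ is constant; the open-mapping step of the paper is, in effect, your observation that a non-constant holomorphic $t$ confines $\{{\rm Im}\,t=0\}$ to real codimension one. This bypasses the paper's discussion of constructibility of $M$, simplicity of fixed points, and path continuation inside $Z$, and is arguably the cleaner presentation. Two small points are worth making explicit. First, to justify that $B_\gamma$ is semi-algebraic and that $t(p_{n_k})\to t_0$ still makes sense when the limit $p_\ast$ is a singular point of the Cayley cubic, interpret $t$ as ${\rm tr}\,D\gamma_{A,B,C}(p)-1$ for the full $3\times 3$ derivative: this is a polynomial in $(p,A,B,C)$, it equals the restricted $2\times 2$ trace at smooth surface points (the normal direction contributes eigenvalue $1$), and Corollary~\ref{COR:C3_EIGVALS} --- which covers singular $p_\ast$ as well --- gives $t_0=\mu+\mu^{-1}$ with $|\mu|>1$, hence $t_0\notin[-2,2]$. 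Second, for the dimension drop of $\pi(B_\gamma\cap W)$ you only need that semi-algebraic images do not increase dimension under polynomial maps; generic finiteness of $\pi|_W$ is not actually required there.
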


\begin{proof}
Let ${\rm NS} \subset \mathbb{C}^4$ be the set of parameters $(A,B,C,D)$ for
which $S_{A,B,C,D}$ is not smooth. It consists of finitely many complex
algebraic hypersurfaces and we immediately include {\rm NS} as part of~$\mathcal{H}$.

Fix a hyperbolic map $f_{A,B,C,D} \in \AUTOGROUP_{A,B,C,D}$ and recall that the
hyperbolic nature of $f_{A,B,C,D}$ depends only on its spelling in terms of the
generators $g_x$, $g_y$, and $g_z$. In particular, the notion of hyperbolic map
does not depend on the parameters $(A,B,C,D)$. Since $\AUTOGROUP$ is countable, we
can then fix the spelling of $f_{A,B,C,D}$ and reduce the proof to checking
that there are finitely many real-algebraic hypersurfaces $H \subset
\mathbb{C}^4$ away from which every fixed point of $f_{A,B,C,D}$ is a hyperbolic
saddle. The remainder of the proof consists of showing that this is, indeed,
the case.

Consider the set of of $7$-tuples $(A,B,C,D,x,y,z) \in \C^7$ and the subset
$\tilde{H} \subset \C^7$ consisting of points $(A,B,C,D,x,y,z)$ such that
\begin{enumerate}
\item $S_{A,B,C,D}$ is smooth, i.e. $(A,B,C,D) \in \C^4 \setminus {\rm NS}$,
\item $(x,y,z) \in S_{A,B,C,D}$,
\item $f_{A,B,C,D}(x,y,z) = (x,y,z),$  and
\item $Df_{A,B,C,D}(x,y,z)$ is not a hyperbolic saddle.  
\end{enumerate}
In our setting, Condition (4) is equivalent to requiring that
${\rm tr}(Df_{A,B,C,D}(x,y,z)) \in  [-2,2]$.
Therefore, $\tilde{H}$
is a semi-algebraic subset of $\mathbb{C}^7$, i.e.\ it is a set given by finitely many
polynomial equations and polynomial inequalities with real coefficients.

Notice that $H = {\rm pr}(\mathcal{H})$, where ${\rm pr}(A,B,C,D,x,y,z) =
(A,B,C,D)$. It follows from the Tarski-Seidenberg Theorem \cite[Theorem
2.2.1]{semi-algebraic} that $H$ is also semi-algebraic. By definition, the
dimension of a semi-algebraic set is the (real) dimension of the (real) Zariski
closure of the set. Therefore, if we prove that ${\rm dim}(H) \leq 7$ it will
follow that $H$ is contained in a finite union of real-algebraic hypersurfaces
of $\mathbb{C}^4$, which is sufficient for our purposes.  

The Cylindrical Algebraic Decomposition Theorem \cite[Theorem
2.3.6]{semi-algebraic} asserts that a semi-algebraic set can be decomposed into
finitely many sets, each of which is homeomorphic to $[0,1]^{d_i}$ for some
$d_i$.  Moreover, the dimension of the set (in the sense of the previous
paragraph) equals the maximum of the $d_i$, see \cite[Section
2.8]{semi-algebraic}. In particular, if ${\rm dim}(H) = 8$, then $H$ would
have non-empty interior. We will prove that this is not the case.

\vspace{0.1in}

First recall that the fixed points of $f_{A,B,C,D}$ are all isolated (see
Corollary~\ref{isolatedfixedpoints_hyperbolicmaps} or Lemma~16 in \cite{IU_ERGODIC}).
Let us first prove the following:

\vspace{0.1in}

\noindent {\it Claim}. There is an open $U \subset \mathbb{C}^4 \setminus (H \cup {\rm NS})$.

\vspace{0.1in}
\noindent

{\em Proof of the claim.} 
Consider a sequence of parameters $(A_n,B_n,C_n,Z_n) \in \C^4 \setminus {\rm
NS}$ converging to the Picard Parameters $(0,0,0,4)$. Suppose for
contradiction that for every $n$ the mapping $f_{A_n,B_n,C_n,D_n}$ has a
fixed point $p_n \in S_{A_n,B_n,C_n,D_n}$ that is not a hyperbolic saddle.  Then, since
$f_{A_n,B_n,C_n,D_n}$ preserves the volume form $\omega$, both eigenvalues of
$Df_{A_n,B_n,C_n,D_n}(p_n)$ have modulus equal to $1$.

For sufficiently large $n$ the fixed points $p_n$ remain away from some fixed
neighborhood of $\Delta_{\infty}$
(Proposition~\ref{fixedpoints_remainincompactparts}).  Therefore, we can
extract a subsequence so that $p_{n_k}$ converges to some point $p_\infty \in
S_{0,0,0,4}$.  Since $f_{A,B,C,D}$ is continuous and depends continuously on
the parameters we have that $p_\infty$ is a fixed point of $f_{0,0,0,4}$.  For
any $(A,B,C,D) \in \C^4$ let $F_{A,B,C}$ denote the extension of $f_{A,B,C,D}$
as an automorphism of $\C^3$.  The points $p_{n_k}$ and $p_\infty$ are also fixed points
for $F_{A_{n_k},B_{n_k},C_{n_k}}$ and $F_{0,0,0}$, respectively.
 For each $k$ all three of the eigenvalues of
$DF_{A_{n_k},B_{n_k},C_{n_k}}(p_{n_K})$ have modulus equal to $1$, with the
third eigenvalue corresponding to a direction transverse to
$S_{A_n,B_n,C_n,D_n}$.  Since the eigenvalues of a matrix depend
continuously on its entries and since the derivative of $D F_{A,B,C}(q)$ depends
continuously on the parameters $(A,B,C)$ and on the point $q$,
we conclude that each eigenvalue
of $DF_{0,0,0}(p_\infty)$ has modulus equal to $1$.
Since $p_\infty \in S_{0,0,0,4}$ this contradicts Corollary~\ref{COR:C3_EIGVALS}.

We conclude that there is some $n$ such that every fixed point of $f_{A_n,B_n,C_n,D_n}$
is a hyperbolic saddle.  Since we chose the parameters $(A_n,B_n,C_n,D_n) \in \C^4 \setminus \{\rm NS\}$ the surface $S_{A_n,B_n,C_n,D_n}$ is also smooth.  Both of these 
are open conditions and therefore they hold on some small neighborhood $U$ of $(A_n,B_n,C_n,D_n) \in \C^4$.
The claim follows at once.
\qed
\vspace{0.1in}

Consider now the set $\tilde{M} \subset \C^7$ consisting of $7$-tuples $(A,B,C,D,x,y,z) \in \C^7$ such that
\begin{enumerate}
\item $(x,y,z) \in S_{A,B,C,D},$
\item $f_{A,B,C,D}(x,y,z) = (x,y,z)$, and
\item $Df_{A,B,C,D}(p) - {\rm id}$ is singular.
\end{enumerate}
It is an complex algebraic subset of $\C^7$.  
The projection $M = {\rm pr}(\tilde{\rm M}) \subset \C^4$ onto the first
four coordinates is therefore {\it constructible},
see \cite{mumford}. Alternately, up to replacing the initial algebraic set by the corresponding projective
scheme, the so-called main theorem of elimination theory tells us that the resulting projection
on the coordinates $(A,B,C,D)$ yields an algebraic set $M$. In any case, the fundamental result to be used here
is the fact that the Zariski-closure of the constructible set $M$ must coincide with its closure for the
standard topology, see \cite{mumford}. Since it was shown that the complement of $M$
has non-empty interior in the standard topology, it follows that $M$ is contained in a {\it proper} Zariski-closed
subset of $\C^4$.

Consider the Zariski-open set $Z = \mathbb{C}^4 \setminus ({\rm NS} \cup \overline{M})$, where
$\overline{M}$ stands for the closure of the constructible set $M$. In particular, $Z$ is not empty.
Suppose that $(A_0, B_0,C_0,D_0) \in Z$ and 
that $p_{A_0, B_0,C_0,D_0}$ is a fixed point of $f_{A_0, B_0,C_0,D_0}$. Since $p_{A_0,
B_0,C_0,D_0}$ is simple ($Df_{A_0, B_0,C_0,D_0} (p_{A_0, B_0,C_0,D_0})
- {\rm id}$ is invertible), $p_{A_0, B_0,C_0,D_0}$ varies holomorphically with
  the parameters $(A,B,C,D)$.  In other words, we have a locally defined holomorphic
mapping $(A,B,C,D) \mapsto p_{A,B,C,D}$ for $(A,B,C,D)$ close enough to $(A_0,
B_0,C_0,D_0)$. Similarly, the differential $Df_{A,B,C,D} (p_{A,B,C,D})$ also
varies holomorphically with the parameters.

Moreover, the initial fixed point $p_{A_0, B_0,C_0,D_0}$ can actually be
(globally) continued along paths $c : [0,1] \rightarrow Z$. Indeed, as the
parameters vary in $Z$, two fixed points of $f_{A,B,C,D}$ cannot collide since
they are all simple. Furthermore, they cannot hit $\Delta_{\infty}$ either,
owing to Proposition~\ref{fixedpoints_remainincompactparts}.

\vspace{0.1in}

Suppose for contradiction that the set $H$ had non-empty interior.  
We can therefore choose some $(A_0,B_0,C_0,D_0)$ and some $\epsilon > 0$
such that $\mathbb{B}_\epsilon((A_0,B_0,C_0,D_0)) \subset H \cap Z$.
Since $f$ has finitely many fixed points we can reduce $\epsilon > 0$, if necessary,
so that there is some fixed point $p(A,B,C,D)$ varying holomorphically over
$\mathbb{B}_\epsilon((A_0,B_0,C_0,D_0))$ such that 
${\rm tr}(Df_{A,B,C,D}(p_{A,B,C,D})) \in [-2,2]$ for all $(A,B,C,D) \in \mathbb{B}_\epsilon((A_0,B_0,C_0,D_0))$.

Let $(A_1,B_1,C_1,D_1) \in U \cap Z$, where $U$ is the open set provided by the Claim above.
Consider a simple path $c : [0,1] \rightarrow Z$ with $c(0) =
(A_0,B_0,C_0,D_0)$ and $c(1) = (A_1,B_1,C_1,D_1)$.  Within
$Z$ there is a simply connected neighborhood $V$ of $c([0,1])$ on which
$p(A,B,C,D)$ and $Df_{A,B,C,D} (p_{A,B,C,D})$ vary holomorphically.
Since ${\rm tr}(Df_{A,B,C,D}(p_{A,B,C,D})) \in [-2,2]$ on an open neighborhood
of $c(0)$ the same holds on all of $V$.  
In particular, ${\rm tr}(Df_{A_1,B_1,C_1,D_1}(p_{A_1,B_1,C_1,D_1})) \in [-2,2]$,
contradicting that $(A_1,B_1,C_1,D_1) \in \mathbb{C}^4 \setminus H$.

We conclude that the (real) Zariski closure of $H$ has real dimension equal to $7$
and thus that $H$ is contained in finitely many real-algebraic hypersurfaces. The lemma is proved.
\end{proof}

We summarize these two lemmas with the following proposition.

\begin{proposition} \label{PROP:GOODPARAMS_IN_COMPLETMENT_HYPERSURFACES}
	There is a countable union of real-algebraic
	hypersurfaces ${\mathcal H} \subset \mathbb{C}^4$ 
	such that if $(A,B,C,D) \in \mathbb{C}^4 \setminus {\mathcal H}$ then every fixed point
	of every element of $\AUTOGROUP_{A,B,C,D} \setminus \{{\rm id}\}$ is in $\mathcal{J}_{A,B,C,D}$.\qed
\end{proposition}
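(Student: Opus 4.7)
The plan is to combine the two preceding lemmas with a short argument showing that hyperbolic saddle fixed points always lie in the Julia set. Let $\mathcal{H}\subset\mathbb{C}^4$ denote the countable union of real-algebraic hypersurfaces furnished by Lemma~\ref{hyperbolicmaps_withhyperbolicfixedpoints}; I claim this same $\mathcal{H}$ works for the proposition. Fix $(A,B,C,D)\in\mathbb{C}^4\setminus\mathcal{H}$ and let $\gamma\in\Gamma_{A,B,C,D}\setminus\{\mathrm{id}\}$ have a fixed point $p\in S_{A,B,C,D}$. By part~(vi) of Definition/Proposition~\ref{PROP:DESCRIPTION_HYPERBOLIC_ELEMENTS}, $\gamma$ is either parabolic or hyperbolic, so the argument splits into two cases.

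In the parabolic case, Lemma~\ref{fixedpointparabolic_Juliaset} directly asserts that $p\in\mathcal{J}_{A,B,C,D}$, so nothing further is needed. In the hyperbolic case, Lemma~\ref{hyperbolicmaps_withhyperbolicfixedpoints} tells us that $S_{A,B,C,D}$ is smooth at $p$ and that $D\gamma(p)$ is a hyperbolic saddle, i.e., its two eigenvalues $\lambda_1,\lambda_2$ satisfy $|\lambda_1|<1<|\lambda_2|$ (this is consistent with $\det D\gamma(p)=1$, as forced by the invariant volume form in Section~\ref{SEC:VOLUME_FORM}). I then need to show that such a $p$ cannot lie in any Fatou component of $\Gamma_{A,B,C,D}$.

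To do this, suppose for contradiction that $p\in\mathcal{F}_{A,B,C,D}$ and let $V$ be the Fatou component containing $p$. Then the family $\{\gamma^n\}_{n\ge 0}\subset\Gamma_{A,B,C,D}$ is normal on some neighborhood of $p$ in $V$. Since $\gamma^n(p)=p$ remains bounded for all $n$, no subsequence of $\{\gamma^n\}$ converges to infinity near $p$ in the sense defined in Section~\ref{SUBSEC:FATOU_JULIA}. Therefore some subsequence $\gamma^{n_k}$ converges locally uniformly near $p$ to a holomorphic limit $h$ fixing $p$, and by $C^\infty$ convergence of holomorphic maps we get $D h(p)=\lim_k D\gamma^{n_k}(p)$. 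But $D\gamma^{n_k}(p)$ has an eigenvalue of modulus $|\lambda_2|^{n_k}\to\infty$, so no such limit $Dh(p)$ can exist as a finite matrix. This contradiction shows $p\in\mathcal{J}_{A,B,C,D}$.

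The only real content beyond quoting the previous two lemmas is the normality-contradiction argument in the hyperbolic-saddle case, and I don't anticipate any real obstacle there since it is essentially the classical fact that repelling (or saddle) periodic points of a single holomorphic map lie in its Julia set, adapted here to the group setting by simply restricting attention to iterates of one element $\gamma$. With this, the proposition follows at once from the combination of Lemmas~\ref{fixedpointparabolic_Juliaset} and~\ref{hyperbolicmaps_withhyperbolicfixedpoints}.
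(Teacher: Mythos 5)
Your proposal is correct and follows the paper's approach exactly: the paper presents this proposition as a summary of Lemmas~\ref{fixedpointparabolic_Juliaset} and~\ref{hyperbolicmaps_withhyperbolicfixedpoints}, leaving implicit the (standard) fact that a saddle fixed point of $\gamma$ cannot lie in a Fatou component, and your normality argument with the unbounded eigenvalue of $D\gamma^{n_k}(p)$ correctly fills in precisely that implicit step, including the observation that the fixed point rules out a subsequence converging to infinity.
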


Similarly, the argument used in Lemma~\ref{hyperbolicmaps_withhyperbolicfixedpoints} can be repeated
word-for-word to yield:

\begin{corollary}\label{family_ABCallequaltozero}
	For all but countably many $D \in \C$, every fixed point of every element in
	$\AUTOGROUP_{0,0,0,D}$, bar the identity, lies in $\mathcal{J}_{0,0,0,D}$.\qed
\end{corollary}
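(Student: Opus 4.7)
The plan is to transcribe the argument of Lemma~\ref{hyperbolicmaps_withhyperbolicfixedpoints} to the restricted one-parameter family where $(A,B,C)=(0,0,0)$ and $D\in\C$ varies. Parabolic fixed points lie in $\mathcal{J}_{0,0,0,D}$ for every $D$ by Lemma~\ref{fixedpointparabolic_Juliaset}, and singular points of $S_{0,0,0,D}$ always lie in $\mathcal{J}_{0,0,0,D}$ by Remark~\ref{REM:SINGULAR_POINTS_JULIA}, so only the smooth fixed points of hyperbolic elements of $\Gamma_{0,0,0,D}$ need to be controlled. Since $\Gamma$ is countable, it suffices to show that for each fixed hyperbolic spelling $f$, the set $H_f\subset\C$ of parameters $D$ at which $f_D$ admits a smooth fixed point failing to be a hyperbolic saddle is small in the sense analogous to the conclusion of Lemma~\ref{hyperbolicmaps_withhyperbolicfixedpoints}, and then to take the countable union over $f$.

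For a fixed hyperbolic spelling $f$, I would introduce the semi-algebraic set
\[
\tilde H_f=\bigl\{(D,x,y,z)\in\C^4 : (x,y,z)\in S_{0,0,0,D},\ f_D(x,y,z)=(x,y,z),\ \operatorname{tr}Df_D(x,y,z)\in[-2,2]\bigr\}
\]
and its projection $H_f\subset\C$, which is semi-algebraic by Tarski--Seidenberg. The crucial step, exactly as in the four-parameter case, is to exhibit an open set $U\subset\C$ disjoint from $H_f$. I would produce $U$ as a small punctured Euclidean neighborhood of the Picard parameter $D=4$: Proposition~\ref{fixedpoints_remainincompactparts} keeps fixed points of $f_D$ bounded away from $\Delta_\infty$ near $D=4$, while Proposition~\ref{PROP:SING_PTS_EIGVALS} and Corollary~\ref{COR:C3_EIGVALS} show that at $D=4$ the extension $F=F_M:\C^3\to\C^3$ has, at every fixed point of $f$, two eigenvalues whose moduli are strictly away from $1$ (the third eigenvalue being the trivial $1$ arising from the normal direction to the surface). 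Continuous dependence of eigenvalues on the parameter then propagates the hyperbolic-saddle property to the whole of $U$.

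Once $U$ is in hand, I would introduce the complex-algebraic locus $\tilde M_f\subset\C^4$ of quadruples $(D,x,y,z)$ such that $(x,y,z)$ is a fixed point of $f_D$ and $Df_D(x,y,z)-\mathrm{id}$ is singular; its projection $M_f\subset\C$ is constructible by Chevalley's theorem, and because the complement of $M_f$ contains the non-empty open set $U$, its closure $\overline{M_f}$ must be a proper complex-algebraic, hence finite, subset of $\C$. On the Zariski-open set $Z=\C\setminus(\mathrm{NS}\cup\overline{M_f})$, where $\mathrm{NS}$ denotes the finite set of $D$ for which $S_{0,0,0,D}$ is singular, the implicit function theorem makes the fixed points of $f_D$ simple and holomorphically varying in $D$, and the multiplier trace is thereby a holomorphic function of $D$ on $Z$. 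If $H_f$ had non-empty interior in $\C$, a small open subset of $H_f\cap Z$ would carry a holomorphic trace function taking values in $[-2,2]\subset\mathbb{R}$; the open mapping theorem would force this trace to be a real constant on the connected component of $Z$ containing that open set, and analytic continuation along a path in $Z$ joining that open set to a point of $U$ would then contradict the hyperbolic-saddle values of the trace found on $U$. Hence $H_f$ has empty interior in $\C$, and the same semi-algebraic dimension-counting used in Lemma~\ref{hyperbolicmaps_withhyperbolicfixedpoints} yields the required smallness of $\bigcup_f H_f$.

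The main obstacle, exactly as in the four-parameter version, is the Picard-limit step: in the restricted one-parameter family one cannot approach $(0,0,0,4)$ from a generic direction of $\C^4$ and must instead exploit the structure at $D=4$ directly, relying on Corollary~\ref{COR:C3_EIGVALS} for smooth fixed points and on Proposition~\ref{PROP:SING_PTS_EIGVALS} for fixed points sitting at the four singular points of $S_{0,0,0,4}$. Once the Picard limit is established, the constructible-set argument, the open mapping theorem, and the path-continuation portions of the proof transcribe essentially line-for-line from the four-parameter case.
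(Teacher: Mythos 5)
Your transcription follows the paper's indicated route exactly: the paper justifies Corollary~\ref{family_ABCallequaltozero} by asserting that the proof of Lemma~\ref{hyperbolicmaps_withhyperbolicfixedpoints} can be ``repeated word-by-word,'' and you reproduce each ingredient faithfully -- the semi-algebraic set $\tilde H_f$ and its Tarski--Seidenberg projection, the Picard-limit claim via Proposition~\ref{fixedpoints_remainincompactparts}, Proposition~\ref{PROP:SING_PTS_EIGVALS}, and Corollary~\ref{COR:C3_EIGVALS}, the constructible locus with finite projection $\overline{M_f}\subset\C$, and the open-mapping / analytic-continuation step over $Z=\C\setminus(\mathrm{NS}\cup\overline{M_f})$. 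Since $Z$ is $\C$ minus a finite set and hence connected, that last step is even cleaner here than in $\C^4$, and your care in approaching the Picard parameter $D=4$ from inside the one-parameter family is exactly the right modification.

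There is, however, a gap between what this argument delivers and what the corollary asserts, and the gap is the last sentence of your proposal. The semi-algebraic dimension count shows only that $H_f\subset\C\cong\mathbb{R}^2$ has empty interior, hence real dimension at most one, hence is contained in a \emph{finite union of real-algebraic curves} -- the exact one-parameter analogue of ``$\mathcal H$ is a countable union of real-algebraic hypersurfaces of $\C^4$'' from Lemma~\ref{hyperbolicmaps_withhyperbolicfixedpoints} and Proposition~\ref{PROP:GOODPARAMS_IN_COMPLETMENT_HYPERSURFACES}. Taking the union over all hyperbolic words $f$ then gives a \emph{countable union of real-algebraic curves} in $\C$: nowhere dense and of Lebesgue measure zero, but in general uncountable. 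The corollary's ``all but countably many $D$'' is strictly stronger and does not follow: if a branch of the algebraic function $D\mapsto\operatorname{tr} Df_D(p(D))$ is non-constant, its preimage of $[-2,2]$ is a union of real-analytic arcs, not a finite set, so $H_f$ itself can be uncountable. To get a genuinely countable bad set one would need to show that every branch of the trace function is constant in $D$ (whereupon the Picard-limit step would force $H_f=\varnothing$), and neither your proposal nor the paper's ``word-by-word'' invocation provides such an argument. You should not claim that ``the same semi-algebraic dimension-counting yields the required smallness'': what it yields is a countable union of curves, and upgrading this to a countable set of points requires an additional idea.
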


When $(A,B,C,D)$ are all real, there is a simple sufficient condition for
every fixed point of every element of $\AUTOGROUP_{A,B,C,D} \setminus \{{\rm id}\}$
to be in $\mathcal{J}_{A,B,C,D}$. More specifically, for real parameters $(A,B,C,D)$, the real
slice $S_{A,B,C,D}(\mathbb{R}) = S_{A,B,C,D} \cap \mathbb{R}^3$ is invariant by the action of
$\AUTOGROUP_{A,B,C,D}$ and the resulting dynamics in this real $2$-dimensional surface can be investigated
in further detail. In particular, Cantat proved in \cite[Theorem 5.1]{cantat-1}
that if the real slice $S_{A,B,C,D}(\mathbb{R}) = S_{A,B,C,D} \cap \mathbb{R}^3$ is connected, then for
any hyperbolic mapping $f$ the set of bounded orbits $\mathcal{K}_f$ of $f$ is contained in
$S_{A,B,C,D}(\mathbb{R})$ and
that $f$ is uniformly hyperbolic on ${\mathcal K}_f$. Moreover, according to Benedetto-Goldman \cite{benedettogoldman},
the real slice $S_{A,B,C,D}(\mathbb{R})$ is connected if and only if the product $ABCD < 0$ and
none of these (real) parameters lies in the interval $(-2,2)$. Taking into account 
Lemma~\ref{fixedpointparabolic_Juliaset}, the combination of Cantat's and Benedetto-Goldman's theorems
then yield:

\begin{proposition}\label{PROP:REAL_GOODPARAMS}
	If $(A,B,C,D)$ are real and $S_{A,B,C,D}(\mathbb{R})$ is connected, then then every fixed point
	of every element of $\AUTOGROUP_{A,B,C,D} \setminus \{{\rm id}\}$ is in $\mathcal{J}_{A,B,C,D}$.\qed
\end{proposition}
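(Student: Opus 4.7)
The plan is to split any $\gamma \in \Gamma_{A,B,C,D} \setminus \{{\rm id}\}$ into two cases according to the dichotomy in Definition/Proposition~\ref{PROP:DESCRIPTION_HYPERBOLIC_ELEMENTS}: $\gamma$ is either parabolic or hyperbolic. The parabolic case is handled immediately by Lemma~\ref{fixedpointparabolic_Juliaset}, which makes no use of the reality of the parameters. The entire content of the proposition therefore lies in the hyperbolic case.

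For hyperbolic $\gamma$, I invoke Cantat's theorem \cite[Thm.~5.1]{cantat-1}, which applies precisely because $S_{A,B,C,D}(\mathbb{R})$ is assumed connected: the set $\mathcal{K}_\gamma$ of bounded $\gamma$-orbits is contained in $S_{A,B,C,D}(\mathbb{R})$ and $\gamma$ is uniformly hyperbolic on $\mathcal{K}_\gamma$. Any fixed point $p$ of $\gamma$ has the bounded orbit $\{p\}$, so $p \in \mathcal{K}_\gamma$, and uniform hyperbolicity forces $D\gamma(p)$ to have no eigenvalue of modulus one. Combined with $\det D\gamma(p) = 1$, which holds because $\Gamma$ preserves the holomorphic volume form $\Omega$ (Section~\ref{SEC:VOLUME_FORM}), the eigenvalues of $D\gamma(p)$ are $\lambda$ and $1/\lambda$ with $|\lambda| \neq 1$. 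Thus $p$ is a hyperbolic saddle for $\gamma$ viewed as a holomorphic diffeomorphism of $S_{A,B,C,D}$.

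It remains to show that any saddle fixed point of an element of $\Gamma$ must lie in $\mathcal{J}_{A,B,C,D}$. Assume for contradiction $p \in \mathcal{F}_{A,B,C,D}$, and let $U$ be a neighborhood of $p$ on which $\Gamma$ forms a normal family in the sense of Section~\ref{SUBSEC:FATOU_JULIA}. Extract from $\{\gamma^n\}_{n \in \mathbb{Z}} \subset \Gamma$ a subsequence $\gamma^{n_k}$ with $|n_k| \to \infty$; by normality it has a further subsequence that either converges uniformly on compact subsets of $U$ to a holomorphic $\psi: U \to S_{A,B,C,D}$ or converges to infinity on $U$. The latter is ruled out because $\gamma^{n_k}(p) = p$ for every $k$, so compact sets containing $p$ are met infinitely often. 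Hence $\psi$ exists and satisfies $\psi(p) = p$, and by Cauchy's estimates $D\psi(p) = \lim_k (D\gamma(p))^{n_k}$. But the operator norm of $(D\gamma(p))^{n_k}$ is at least $|\lambda|^{|n_k|} \to \infty$, contradicting convergence. Therefore $p \in \mathcal{J}_{A,B,C,D}$, and the proposition follows.

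The main substantive input is Cantat's uniform hyperbolicity theorem, which converts the real-connectedness hypothesis into saddle behavior at every fixed point; the rest is a routine normality argument exploiting the fixed-point constraint together with the resonance $\lambda_1 \lambda_2 = 1$ imposed by the invariant volume form. No further technical obstacle arises.
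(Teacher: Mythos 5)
Your proof is correct and follows essentially the same route as the paper: parabolic elements are handled by Lemma~\ref{fixedpointparabolic_Juliaset}, and hyperbolic elements by Cantat's theorem \cite[Theorem 5.1]{cantat-1}, which uses the connectedness of $S_{A,B,C,D}(\mathbb{R})$ to force every fixed point to be a saddle. The only difference is that you spell out the final (standard) normality argument showing a saddle fixed point cannot lie in the Fatou set, a step the paper leaves implicit, and your execution of it is sound.
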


\section{Ruling out Fatou components: Proofs of Theorems H and K}

Let $V \subset S_{A,B,C,D}$ be a connected component of the Fatou set
${\mathcal F}_{A,B,C,D}$ and denote by $\AUTOGROUP_V \leq \AUTOGROUP$ its stabilizer.
The purpose of this section is to study the dynamics of $\AUTOGROUP_V$ on $V$ and,
in particular, to derive sufficient conditions to ensure that certain open sets
of $S_{A,B,C,D}$ must be contained in the Julia set.
We also remind the reader that $V$ contains only smooth points of
$S_{A,B,C,D}$, owing to Remark~\ref{REM:SINGULAR_POINTS_JULIA}. In our
discussion, we will have to distinguish between bounded and unbounded Fatou
component.

Let us first consider the case of unbounded Fatou components which relies heavily
on Lemma~\ref{LEM:ESCAPE_TO_INFINITY}. More generally, we resume the notation employed in
Section~\ref{dynamics_near_infinty}. Every vertex $v_i$, $i\in \{1,2,3\}$, of
$\Delta_{\infty}$ is contained in a neighborhood $\infinityneighborhood_\infty(i) \subset \overline{S}
= \overline{S}_{A,B,C,D}$ where ``standard coordinates'' are defined. The neighborhoods
$\infinityneighborhood_\infty(i)$ are assumed to be pairwise disjoint. The distance of a point in
$\infinityneighborhood_\infty(i)$ to $v_i$ is measured with the Euclidean metric arising from the ``standard
coordinates''. We then set
$\mathcal{V}_\infty = \{ v_1, v_2, v_3\}$ and 
$\infinityneighborhood_\infty = \infinityneighborhood_\infty(1) \cup \infinityneighborhood_\infty(2) \cup \infinityneighborhood_\infty(3)$.
Finally, the distance from $p \in \infinityneighborhood_\infty$ to $\mathcal{V}_\infty$ is equal to the
distance in $\infinityneighborhood_\infty(i)$ of $p$ to $v_i$ where $i \in \{1,2,3\}$ is chosen so that
$p \in \infinityneighborhood_\infty(i)$.

We are now ready to prove Theorems H and K.  We repeat the statements here for the convenience of the reader.

\begin{theoremH}
Suppose that for some parameters $A,B,C$ there is a point $p \in \mathbb{C}^3$
and $\epsilon > 0$ such that for any two  vertices $v_i \neq v_j \in
	\mathcal{V}_\infty$, $i \neq j$, there is a hyperbolic element $\gamma_{i,j} \in
	\AUTOGROUP_{A,B,C}$ satisfying:
	\begin{itemize}
		\item[(A)] ${\rm Ind}(\gamma_{i,j}) = v_i$ and ${\rm Attr}(\gamma_{i,j}) = v_j$, and
		\item[(B)] $ \sup_{z \in B_{\epsilon} (p)} \Vert \gamma_{i,j}(z) - z \Vert < K(\epsilon)$.
	\end{itemize}
	Then, for any $D$, we have that $B_{\epsilon/2}(p) \cap S_{A,B,C,D}$ is disjoint from any unbounded
	Fatou components of $\AUTOGROUP_{A,B,C,D}$.
	Here, $K(\epsilon) > 0$ denote the constant given in Proposition~\ref{PROP:CONV_TO_ID}.
\end{theoremH}

\begin{proof}
Let 
$S(0)$ be the set of all six elements $\gamma_{i,j} \in \AUTOGROUP$ satisfying the hypotheses of
Theorem~H.
As in Proposition~\ref{PROP:CONV_TO_ID} and Lemma~\ref{LEM:ESCAPE_TO_INFINITY},
for every natural number $n$ we will consider the inductively defined
sets of iterated commutators $S(n)$ where, for every $n$, the set $S(n+1)$ contains
every possible commutator of any two distinct elements of $S(n)$.

We assume aiming at a contradiction that for some parameter $D$
there is an unbounded Fatou component $V \subset S_{A,B,C,D}$ for $\AUTOGROUP_{A,B,C,D}$ such that
$V \cap B_{\epsilon/2}(p) \neq \emptyset$.
	
Let $p' \in B_{\epsilon/2}(p) \cap V$ and let $\delta > 0$ be sufficiently small
so that $B_{\delta}(p') \cap S_{A,B,C,D} \subset B_{\epsilon/2}(p) \cap V$.
Since the elements $\gamma_{i,j}$ satisfy Hypothesis~(B), it follows from Proposition~\ref{PROP:CONV_TO_ID}
that there is some integer $N \geq 0$ such that for any $\gamma \in S(N')$, with $N' \geq N$,
we have $\gamma(p') \in B_{\delta}(p')$ and hence that $\gamma(V) = V$.

For a fixed neighborhood $\infinityneighborhood_\infty$ of the vertices of
$\Delta_\infty$ as above, Lemma~\ref{LEM:ESCAPE_TO_INFINITY} gives us a
sequence of elements $\{\eta_n\}_{n=0}^\infty \in \AUTOGROUP$ satisfying
Assertions~(i) and~(ii) of the lemma in question.

We claim that $V$ intersects
$\infinityneighborhood_\infty$ non-trivially. Since $V$ is unbounded, there is a
sequence $\{q_k\}_{k=1}^\infty \subset V$ which accumulates to $\Delta_\infty$.
Passing to a subsequence, if necessary, we can suppose that it converges to
some $q_\infty \in \Delta_\infty$.  If $q_\infty \in \mathcal{V}_\infty$ then
the claim holds. Otherwise, we have that $\gamma^{(N)}_{2,1}(\Delta_\infty
\setminus \{v_2\}) = \{v_1\}$ so that $\gamma_{2,1}^{(N)}(q_\infty) = v_1$. Since
$\gamma_{2,1}^{(N)} \in S(N)$ it stabilizes $V$ and we obtain a sequence
$\{\gamma_{2,1}^{(N)}(q_k)\}_{k=1}^\infty \subset V$ that converges to $v_1$,
thus implying the claim.

Now consider some point $r \in V \cap \infinityneighborhood_\infty$.  
Because of Assertion~(i), Proposition~\ref{PROP:CONV_TO_ID} implies that 
$\{\eta_n\}_{n=0}^\infty$ converges uniformly to the identity on
the open set $B_{\delta}(p') \cap S_{A,B,C,D} \subset B_{\epsilon/2}(p) \cap V$.
In turn, since $V$ is a Fatou component, this implies
that $\{\eta_n\}_{n=0}^\infty$ actually converges uniformly to the identity on any
compact subset of $V$ (see the claim in the proof of Proposition~\ref{PROP_CLOSURE_GAMMA_V_IS_LIE}).
Applying this to the singleton set $\{r\}$, we find that
$\eta_n(r) \rightarrow r$. In contrast, Assertion~(ii) from Lemma~\ref{LEM:ESCAPE_TO_INFINITY}
implies that ${\rm dist}(\eta_n(r),\mathcal{V}_\infty) \rightarrow 0$, providing a contradiction.
	
We conclude that any Fatou component for $\AUTOGROUP_{A,B,C,D}$ that intersects $B_{\epsilon/2}(p) \cap S_{A,B,C,D}$
non-trivially must be bounded.
\end{proof}

\vspace{0.1in}

\begin{theoremK}\label{THM:CYCLIC_STABILIZERS_BOUNDED_COMPONENTS}
	Suppose that $(A,B,C,D) \in \goodparams$ and that $V$ is a bounded Fatou component
	for  $\AUTOGROUP_{A,B,C,D}$, then the stabilizer $\AUTOGROUP_V$ of $V$ is cyclic.
\end{theoremK}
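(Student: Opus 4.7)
The plan is to argue by contradiction: assuming $\Gamma_V$ is not cyclic, I will show that $G := \overline{\Gamma_V}$ is a compact Lie subgroup of $\mathrm{Aut}(V)$ whose identity component $G_0$ is non-abelian, acts freely on $V$, and has real dimension~$3$, and then derive a contradiction from Theorem~C. That $\mathrm{Aut}(V)$ is a finite-dimensional real Lie group acting properly on $V$ comes from the Kobayashi hyperbolicity of $V$ (Corollary~\ref{COR:KOBAYASHI_CONSEQUENCES}). Compactness of $G$ follows from boundedness of $V$ together with volume preservation: the elements of $\Gamma_V$ extend to automorphisms of $S_{A,B,C,D}$ and map $\overline{V}$ into itself, so they form a uniformly bounded family of holomorphic maps on a neighborhood of the compact set $\overline{V}$; Cauchy estimates yield equicontinuity and Arzel\`a--Ascoli produces subsequences converging uniformly on $\overline{V}$. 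The invariance of the holomorphic volume form $\Omega$ from Section~\ref{SEC:VOLUME_FORM}, which is finite on the smooth bounded set $V$, prevents limits from degenerating (i.e.\ from collapsing $V$ onto $\partial V$ or from having critical points), so every accumulation point lies in $\mathrm{Aut}(V)$.

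Next, I establish the structure of $G_0$. Every element of $\Gamma \setminus \{\mathrm{id}\}$ is hyperbolic or parabolic (Definition/Proposition~\ref{PROP:DESCRIPTION_HYPERBOLIC_ELEMENTS}), hence of infinite order, and discrete subgroups of a compact Lie group are finite, so $\Gamma_V$ is not discrete in $G$. Thus there is a sequence in $\Gamma_V \setminus \{\mathrm{id}\}$ converging to $\mathrm{id}$ uniformly on compacta of $V$; in particular $\Gamma_V$ is locally non-discrete on $V$, and Proposition~\ref{PROP:GOODPARAMETERS_GOODSTABILIZERS} (applicable because $(A,B,C,D) \in \goodparams$) gives $G_p = \{\mathrm{id}\}$ for every $p \in V$, i.e.\ $G_0$ acts freely on $V$. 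Because $\Gamma$ is free on two generators (El-Huiti's theorem) and any subgroup of a free group is free (Nielsen--Schreier), the non-cyclic $\Gamma_V$ is free on at least two generators and therefore non-abelian; the iterated commutator construction from Lemma~\ref{LEM:ALGEBRA} combined with the quantitative closeness estimate of Proposition~\ref{PROP:CONV_TO_ID} then produces arbitrarily small non-commuting pairs in $\Gamma_V$, so $G_0$ itself is non-abelian.

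A compact connected non-abelian Lie group has real dimension at least~$3$; combined with the free action on the $4$-real-dimensional manifold $V$, this forces $\dim G_0 \in \{3, 4\}$. If $\dim G_0 = 4$ then $V$ is a single $G_0$-orbit, hence compact, and then $\mathrm{Aut}(V)$ is finite by the standard fact that a compact Kobayashi hyperbolic manifold has finite automorphism group --- a contradiction with $\dim G_0 \geq 3$. Hence $\dim G_0 = 3$ and $W := V/G_0$ is a smooth connected $1$-real-dimensional manifold, diffeomorphic to $\mathbb{R}$ or to $S^1$. The case $W \cong S^1$ again renders $V$ compact and is excluded in the same way. This leaves $W \cong \mathbb{R}$, which I regard as the main obstacle: $V$ fibers over $\mathbb{R}$ with compact $3$-dimensional $G_0$-orbits as fibers, and because $V$ is precompact in $S_{A,B,C,D}$, the two ends of the $\mathbb{R}$-factor give rise to two disjoint closed $G_0$-invariant subsets $E_{\pm} \subset \partial V \subset \mathcal{J}_{A,B,C,D}$. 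The task is then to upgrade the evident separation of $V$ by a single $G_0$-orbit (a compact codimension-one submanifold of $S_{A,B,C,D}$) into a genuine topological disconnection of $\mathcal{J}_{A,B,C,D}$ inside $S_{A,B,C,D}$, contradicting Theorem~C. The hardest part, and the one I expect to be the technical crux, is a careful analysis of how $\overline{V}$ embeds into $S_{A,B,C,D}$ near $\partial V$ that forces this separation to persist globally.
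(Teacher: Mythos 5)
Your framework matches the paper's proof: reduce to showing $\Gamma_V$ is abelian, establish that $G=\overline{\Gamma_V}$ is a compact Lie group with free $G_0$-action of real dimension $3$, fibre $V$ over a $1$-manifold, discard $S^1$, and contradict Theorem~C in the case $V/G_0 \cong \mathbb{R}$. But the step you flag as the ``technical crux'' is genuinely missing, and the direction you propose for it does not work. You claim that the two ends of the $\mathbb{R}$-factor produce two disjoint closed subsets $E_\pm \subset \partial V \subset \mathcal{J}_{A,B,C,D}$, and you hope to disconnect $\mathcal{J}_{A,B,C,D}$. This picture is wrong: the paper's argument shows that one of the two ends does \emph{not} reach $\partial V$ at all. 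Since $(A,B,C,D) \in \goodparams$, the surface $S_{A,B,C,D}$ is smooth, hence $\overline{S}_{A,B,C,D}$ is a rational (simply connected) surface, so the compact codimension-one orbit $G_0(p_0)$ separates $\overline{S}_{A,B,C,D}$ into two components $U_1$ (containing $\Delta_\infty$) and $U_2$ (bounded). Because $\mathcal{J}_{A,B,C,D}$ is connected, unbounded, and disjoint from $G_0(p_0) \subset V$, it lies entirely in $U_1$; thus $U_2$ is in the Fatou set, in fact $U_2 \subset V$ and $\overline{U_2}=U_2\cup G_0(p_0)\subset V$ is compact. The contradiction is then elementary: $\pi|_{U_2}$ has image an unbounded half-line $(-\infty,0)$ while $\overline{U_2}$ is a compact subset of $V$ on which the continuous map $\pi$ must be bounded. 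No disconnection of $\mathcal{J}_{A,B,C,D}$ occurs, and trying to force one is the wrong target.

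A secondary concern is your compactness argument for $G$. Arzel\`a--Ascoli plus ``the invariant volume form prevents degeneration'' gives a map $V \to \overline{V}$ in the limit, but passing from there to an honest automorphism of $V$ (surjectivity, invertibility, mapping $V$ into $V$ rather than partly into $\partial V$) requires additional work, e.g.\ tracking $g_n^{-1}$ simultaneously. The paper's route is cleaner and sidesteps all of this: since $V$ has finite $\Omega$-volume, there is a compact $K_0$ with more than half the total volume, forcing $g(K_0) \cap K_0 \neq \emptyset$ for all $g \in G$; then properness of $\mathrm{Aut}(V)$ on $V$ (Corollary~\ref{COR:KOBAYASHI_CONSEQUENCES}) immediately yields that $G$ is a closed subset of a compact set, hence compact. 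You should adopt this argument or fill in the missing details of your Arzel\`a--Ascoli route.
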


It should be noticed that, in each of the examples from Section~\ref{SEC:LOCALLY_NONDISCRETE}
where we prove that $\AUTOGROUP_{A,B,C,D}$ is locally non-discrete on
some open $U \subset S_{A,B,C,D}$, the proof was carried out by producing non-trivial
elements of the sets $S(n)$ of iterated commutators that converge to the
identity on $U$ as $n$ tends to infinity.
The theorem above asserts that, if in addition we have
$(A,B,C,D) \in \goodparams$, then this set $U$ does not intersect any bounded
Fatou component of $\AUTOGROUP_{A,B,C,D}$.
Indeed, if $V$ were to be a Fatou component intersecting $U$ then for all sufficiently large
$n$ the elements of $S(n)$ would stabilize $V$ so that $\AUTOGROUP_V$ would not be Abelian.


\begin{lemma}\label{LEM:COMPACT_LIE_GROUP} Assume that $V$ is a bounded Fatou component of $\AUTOGROUP_{A,B,C,D}$.
	Then the closure $G = \overline{\AUTOGROUP_V}$ of $\AUTOGROUP_V$ in ${\rm Aut}(V)$ is a compact real Lie group. 
\end{lemma}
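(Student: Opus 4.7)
The plan is to exploit the invariant volume form on $S_{A,B,C,D}$ together with the properness of the action of ${\rm Aut}(V)$ to rule out non-compactness of $G$ by a ``volume packing'' argument. Since $V$ is Kobayashi hyperbolic by Proposition~\ref{PROP:FATOU_COMP_ARE_HYPERBOLIC}, Corollary~\ref{COR:KOBAYASHI_CONSEQUENCES} equips ${\rm Aut}(V)$ with the structure of a finite-dimensional real Lie group acting properly on $V$; consequently $G = \overline{\Gamma_V}$ is a closed Lie subgroup by Cartan's closed subgroup theorem, and the restricted action of $G$ on $V$ remains proper.

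I would then check that $G$ preserves the holomorphic volume form $\mu := \Omega \wedge \overline{\Omega}$ restricted to $V$ and that $\mu(V) < \infty$. The invariance is established for $\Gamma_V$ in Section~\ref{SEC:VOLUME_FORM}, and it passes to the closure because compact convergence of holomorphic automorphisms upgrades to $C^{\infty}$ convergence via Cauchy's estimates, which preserves the relation $\gamma^{*}\mu = \mu$ in the limit. The finite-volume assertion is the delicate step: the component $V$ itself contains no singular points of $S_{A,B,C,D}$ by Remark~\ref{REM:SINGULAR_POINTS_JULIA}, but $\partial V$ may accumulate at some of the finitely many singular points. I would handle this using the fact recorded in Section~\ref{SEC:VOLUME_FORM} that each singular point possesses a neighborhood of finite $\mu$-volume; away from those neighborhoods $\mu$ is a smooth positive form bounded on compact subsets of $\mathbb{C}^3$, so the boundedness of $V$ in $\mathbb{C}^3$ yields $\mu(V) < \infty$.

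Finally, I would derive a contradiction from non-compactness of $G$ by packing disjoint translates of a small ball inside $V$. Fix $p \in V$ and a closed ball $B = \overline{B_r(p)} \subset V$ with $\mu(B) > 0$, and put $K = \{h \in G : h(B) \cap B \neq \emptyset\}$, which is compact in $G$ by properness. Assuming $G$ non-compact, one inductively constructs a sequence $g_1, g_2, \ldots \in G$ whose images $g_k(B)$ are pairwise disjoint: having chosen $g_1, \ldots, g_k$, the set of forbidden elements
\begin{equation*}
E_k \;=\; \bigcup_{i=1}^{k} g_i \cdot K \;=\; \bigl\{g \in G \,:\, g(B) \cap g_i(B) \neq \emptyset \text{ for some } i \leq k \bigr\}
\end{equation*}
is a finite union of compact sets, hence compact, so non-compactness of $G$ permits a choice $g_{k+1} \in G \setminus E_k$. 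Because $\mu$ is $G$-invariant, each $g_k(B)$ has $\mu$-volume equal to $\mu(B) > 0$, forcing $\mu(V) \geq \sum_{k} \mu(g_k(B)) = \infty$, which contradicts $\mu(V) < \infty$. The main obstacle, as noted above, will be the finite-volume verification in the presence of singularities on $\partial V$; everything else is a direct application of properness, volume invariance under holomorphic limits, and an elementary inductive extraction.
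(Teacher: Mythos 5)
Your proof is correct and rests on exactly the same three pillars as the paper's: $G$-invariance of the volume form $\mu = \Omega \wedge \overline{\Omega}$, finiteness of $\mu(V)$ (using boundedness of $V$ together with the finite local volume near the singularities recorded in Section~\ref{SEC:VOLUME_FORM}), and properness of the ${\rm Aut}(V)$-action from Corollary~\ref{COR:KOBAYASHI_CONSEQUENCES}. The only difference is the final compactness step. Where you inductively pack infinitely many pairwise disjoint translates of a small ball to contradict $\mu(V) < \infty$, the paper instead fixes a relatively compact open set $K_0 \subset V$ with $\mu(K_0) > \tfrac{1}{2}\mu(V)$, notes that volume preservation forces $g(K_0) \cap K_0 \neq \emptyset$ for every $g \in G$, and concludes directly that $G$ is a closed subset of the compact set $\{\alpha \in {\rm Aut}(V) : \alpha(\overline{K_0}) \cap \overline{K_0} \neq \emptyset\}$. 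Both routes work and are essentially the same ``finite volume excludes a non-compact group of volume-preserving automorphisms'' reasoning; the paper's half-volume trick bypasses the inductive extraction entirely, while your packing argument makes the mechanism behind the contradiction slightly more explicit.
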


\begin{proof}
Since $G$ is a closed subgroup of the Lie group ${\rm Aut}(V)$ it is a real Lie group. Thus we only
have to show that $G$ is compact.

We first notice that every element of $G =  \overline{\AUTOGROUP_V}$ preserves the holomorphic volume
form $\Omega$ defined in~(\ref{EQN:VOLUME_FORM}). Indeed, by construction, $\Omega$ is invariant by
elements in $\AUTOGROUP_V$ and the condition of preserving $\Omega$ is clearly closed so that
it has to hold for the closure of $\AUTOGROUP_V$. Since $V$ is bounded, the total (real) volume of $V$
defined by means of $\Omega$ is finite. Hence, we can find a relatively compact
open set $K_0 \subset V$ such that ${\rm vol}_\Omega(K_0)
> \frac{1}{2} {\rm vol}_\Omega(V)$.  
This implies that for every $g \in G$ we have $g(K_0) \cap K_0 \neq \emptyset$.

Let $K = \overline{K_0}$. Since ${\rm Aut}(V)$ acts properly on $V$,
\begin{align*}
	\{\alpha \in {\rm Aut}(V) \, : \, \alpha(K) \cap K \neq \emptyset\}
\end{align*}
is a compact subset of ${\rm Aut}(V)$. It follows that the closed subset $G$ is compact as well.
\end{proof}

\begin{proof}[Proof of Theorem K]
As an abstract group, $\AUTOGROUP$ is isomorphic to the congruence group $\MATRIXGP_2$ that is defined
in~(\ref{definition_Gamma2}).
Since any Abelian subgroup of a non-elementary
Fuchsian group is cyclic, it suffices to prove that $\AUTOGROUP_V$ is Abelian.

We begin by pointing out that every element in $\AUTOGROUP_V$ is hyperbolic. Indeed,
parabolic elements are conjugate to one of the generators $g_x$, $g_y$, or $g_z$, 
and hence are such that every open set of $S_{A,B,C,D}$ contains points whose orbit under
(any) parabolic element is unbounded. Thus $\AUTOGROUP_V$ cannot possess parabolic elements. Finally,
it cannot possess elliptic elements either since $\AUTOGROUP$ contains no elliptic element;
see Definition/Proposition \ref{PROP:DESCRIPTION_HYPERBOLIC_ELEMENTS} and Remark \ref{REM:ELLIPTICAL_ELEMENTS}.

Now, suppose for contradiction that there are two non-commuting elements $\eta, \tau
\in \AUTOGROUP_V$. By using again the isomorphism between $\AUTOGROUP$ and $\MATRIXGP_2$, we see that
$\eta$ and $\tau$ correspond to hyperbolic elements in $\MATRIXGP_2$ which do not commute. In particular,
their iterates also do not commute since two hyperbolic elements
of $SL(2,\mathbb{Z})$ commute if and only if they have the same axes of translation in $\mathbb{H}^2$.

Owing to Lemma~\ref{LEM:COMPACT_LIE_GROUP}, the closure $G = \overline{\AUTOGROUP_V}$ is
a compact Lie Group. Since $\eta$ and $\tau$ are
hyperbolic elements, they have infinite order.  We can therefore find
subsequences of the iterates $\eta^{n_k}$ and $\tau^{n_\ell}$ converging to the
identity and such that the elements of this subsequence are pairwise different. Thus the subgroup of $G$
generated by $\eta$ and $\tau$ non-trivially accumulates on the identity which implies that
the dimension of $G$ itself as a real Lie group is strictly positive. Furthermore,
since $\eta^{n_k}$ and $\tau^{n_\ell}$ do not commute for any $k$ and $\ell$, we also have that
the identity component $G_0$ of $G$ is non-Abelian. On the other hand,
the only compact connected real Lie groups of dimension one or two being Abelian (tori), it
follows that, in fact, we have ${\rm dim}_{\mathbb{R}}(G) \geq 3$, where ${\rm dim}_{\mathbb{R}}(G)$
stands for the dimension of $G$ as real Lie group.

Conversely, the condition that the parameters $(A,B,C,D)$ are in the set $\goodparams$ implies
that for any point $p \in  V$ the orbit $G_0(p)$ of $p$ under $G_0$ is
diffeomorphic to $G_0$. In particular, ${\rm dim}_{\mathbb{R}}(G_0)~\leq~4$. However, if we had
${\rm dim}_{\mathbb{R}}(G_0) = 4$, then $G_0(p)$ would be four dimensional, implying that
$G_0(p)~=~V$. This is clearly impossible since $G_0$ is compact and $V$ is open. Summarizing,
we must have ${\rm dim}_{\mathbb{R}}(G) = 3$.

The action of $G_0$ on $V$ is smooth, proper, and free (since $(A,B,C,D)$ lies in $\goodparams$).
It follows that the quotient space $V / G_0$ can be given a
structure of a smooth manifold with $${\rm dim}_{\mathbb{R}}(V / G_0) = {\rm dim}_{\mathbb{R}}(V)
- {\rm dim}_{\mathbb{R}}(G_0) = 1$$ in such a way that the quotient map $\pi: V
\rightarrow V / G_0$ is a submersion. Thanks to the classical result of Ehresmann, this
gives $V$ the structure of a fiber bundle $V \rightarrow V / G_0$ where the
fibers are diffeomorphic to $G_0$, see, for example, \cite{LEE}. In particular, the base $V / G_0$
is of dimension~$1$.

As a one-dimensional smooth manifold, it follows that $V / G_0$ is either 
$\mathbb{S}^1$ or $\mathbb{R}$. The former case is impossible
because $V$ is non-compact, while the total space of a fiber bundle with compact base and compact fibers is compact.

We will now show that the possibility of having $V / G_0 =\mathbb{R}$ cannot occur either.
For this, note first that our
assumption on parameters implies that $S_{A,B,C,D}$ is
smooth and hence the closure $\overline{S}_{A,B,C,D}$ is smooth in
$\mathbb{CP}^3$.  It is therefore biholomorphic to the blow-up of
$\mathbb{CP}^2$ at six points, implying that $\overline{S}_{A,B,C,D}$ is simply
connected. If we choose some point $p_0 \in V$ then the orbit of $G_0$ through
$p_0$ gives an embedding of $G_0$ into ${S}_{A,B,C,D}$.  Since
$\overline{S}_{A,B,C,D}$ is simply connected, it follows that
$\overline{S}_{A,B,C,D} \setminus G_0(p_0)$ has two connected components $U_1$
and $U_2$, see, for example \cite[Proposition 7.1.1]{DAVERMAN_VENEMA}. Moreover, one of
these components, say $U_1$, contains the triangle at infinity $\Delta_\infty$ and
the other component $U_2$ is bounded in ${S}_{A,B,C,D}$.

By Theorem C, the Julia set $\mathcal{J}_{A,B,C,D}$ is connected. The Julia set is also unbounded since
it contains the fibers $S_{x_0}$ for $x_0 \in (-2,2)$ by virtue of Lemma~\ref{COR:GX_JULIA}. Therefore,
$\mathcal{J}_{A,B,C,D} \subset U_1$ and $U_2 \subset V$.

Recalling that $\pi : V \rightarrow \mathbb{R}$ stands for the bundle projection, we clearly can assume without
loss of generality that $\pi(p_0) = 0$.
The fiber bundle structure implies that $V \setminus G_0(p_0)$
has two connected components.  One of them corresponds to $U_1 \cap V$ and the other to $U_2 \subset V$.  Since $\pi$
is non-zero on each component we can suppose that $\pi(U_1 \cap V) = (0,\infty)$ and $\pi(U_2) = (-\infty,0)$.
However, notice that $U_2 \cup G_0(p_0)$ is closed and bounded (in $\mathbb{C}^3$) and hence it is compact.
This implies that $\pi$ attains a minimum value on $U_2 \cup G_0(p_0)$ which, in turn, contradicts
the fact that $\pi(U_2) = (-\infty,0)$.

We conclude that any two elements of the stabilizer $\AUTOGROUP_V$ must commute, and hence that $\AUTOGROUP_V$ is cyclic.
\end{proof}

\section{Coexistence: Proof of Theorems F and G}\label{SEC:PROOF_OF_THEOREM_G}

In this section we combine the previous results to prove Theorem F about the
coexistence of local discreteness and non-discreteness for an open
set of parameters and Theorem G about coexistence of Fatou set and Julia set
with non-empty interior for the same set of parameters, after removing
countably many real-algebraic hypersurfaces $\mathcal{H}$; see
Proposition~\ref{PROP:GOODPARAMS_IN_COMPLETMENT_HYPERSURFACES}.

\begin{proof}[Proof of Theorem F]
Lemma \ref{Firstexamples} and Proposition \ref{prop_stillDubrovinMazzocco} give
that there is an open neighborhood $\parameterneighborhood_1~\subset~\mathbb{C}^4$ that
contains $(0,0,0,0)$ and that contains each of the Dubrovin-Mazzocco parameters
$(A(a),B(a),C(a),D(a))$ for $a \in (-2,2)$ such that for each $(A,B,C,D) \in
\parameterneighborhood_1$ we have that $\AUTOGROUP_{A,B,C,D}$ is locally non-discrete on a
non-empty open set $U \subset S_{A,B,C,D}$.  Moreover, the proofs these results
are obtained by showing that for arbitrarily large $n$ there are non-trivial
elements of the sets $S(n)$ of iterated commutators of ``level $n$'' from
Proposition \ref{PROP:CONV_TO_ID}. Therefore, for each of these parameters
values we have non-commuting elements arbitrarily close to the identity on $U$.

On the other hand, Theorem E and the proof of Proposition~\ref{PROP:FATOU_C3} ensure
the existence of an open set $\parameterneighborhood_2~\subset~\mathbb{C}^4$ containing
$(0,0,0,0)$ along with each of the Dubrovin-Mazzocco parameters
$(A(a),B(a),C(a),D(a))$, with $a \in (-2,2)$, such that for each $(A,B,C,D) \in
\parameterneighborhood_2$ we have:
\begin{itemize}
\item[(i)] A point $p = (u,u,u) \in \mathbb{C}^3$ and an $\epsilon > 0$ with the
following property. For every point $q$ in the ball $\mathbb{B}_\epsilon(p)$ and
any non-trivial $\gamma \in \AUTOGROUP_{A,B,C,D}$ one of the coordinates of
$\gamma(q)$ has modulus greater than $|u| + \epsilon$.  In particular, any
non-trivial $\gamma \in \AUTOGROUP_{A,B,C,D}$ satisfies $\gamma(q) \not \in \mathbb{B}_\epsilon(p)$.
\item[(ii)] Some point $q_0 \in \mathbb{B}_\epsilon(p) \cap S_{A,B,C,D}$ which must therefore be in $\mathcal{F}_{A,B,C,D}$.
\end{itemize}

Let $V_{\rm BQ}$ be the Fatou component containing the point $q_0$ appearing in item~(ii) above.  
According to Proposition~\ref{PROP:FATOU_DICHOTOMY}, one of the following holds:
\begin{itemize}
\item[(a)]  there is a sequence of mappings $f_n \in \AUTOGROUP_{A,B,C,D} \setminus \{\rm id\}$ that converges
uniformly on compact subsets of $V_{\rm BQ}$ to the identity, or
\item[(b)] the action of $\AUTOGROUP_{A,B,C,D}$ on $V_{\rm BQ}$ is properly discontinuous.
\end{itemize}
However, Case (a) is impossible because it would follows that $f_{n}(q_0) \rightarrow q_0$ and, in particular, that $f_{n}(q_0) \in
\mathbb{B}_\epsilon(p)$ for sufficiently large~$n$.  This contradicts the preceding item (i).
We must therefore have that Case (b) holds on $V_{\rm BQ}$.

Therefore, the local non-discreteness of $\AUTOGROUP_{A,B,C,D}$ on $U$ and its local discreteness (and even the properly discontinuous action) on
$V_{\rm BQ}$ coexist in $S_{A,B,C,D}$ for every $(A,B,C,D)
\in \parameterneighborhood = \parameterneighborhood_1 \cap \parameterneighborhood_2$.  In other words, these
groups are locally non-discrete without being ``globally non-discrete''.
\end{proof}

\begin{proof}[Proof of Theorem G]
Let $\parameterneighborhood \subset \mathbb{C}^4$ be the open set of parameters
constructed in the proof of Theorem F.  For any $(A,B,C,D) \in \parameterneighborhood$
the group $\AUTOGROUP_{A,B,C,D}$ has a non-trivial Fatou component $V_{\rm BQ}
\subset S_{A,B,C,D}$, as proved in Theorem E.

Therefore, it remains to show that for any $(A,B,C,D) \in \parameterneighborhood \setminus
\mathcal{H}$ the open set $U \subset S_{A,B,C,D}$ on which
$\AUTOGROUP_{A,B,C,D}$ is locally non-discrete (from Theorem F) satisfies $U \subset
\mathcal{J}_{A,B,C,D}$.  Here, $\mathcal{H}$ is the countable union of
real-algebraic hypersurfaces provided by
Proposition~\ref{PROP:GOODPARAMS_IN_COMPLETMENT_HYPERSURFACES}.
Recall from Theorem~F that there are non-commuting pairs of elements of
$\AUTOGROUP_{A,B,C,D}$ arbitrarily close to the identity on $U$.  Therefore,
Theorem K gives that $U$ is disjoint from any bounded Fatou component of
$\AUTOGROUP_{A,B,C,D}$. In fact, to ensure that $U$ is disjoint from any bounded
Fatou component is the only place in the proof where the parameters in 
$\mathcal{H}$ need to be removed from the set
of parameters $\parameterneighborhood$.

We will now use Theorem H to show that $U$ is also disjoint from any unbounded
Fatou component, and this for every $(A,B,C,D) \in \parameterneighborhood$. Since this requires more specific
details, the discussion will be split into two cases in order to make the argument more clear.
Also, in the sequel, we are allowed to reduce the size of the open set $U$, if necessary.

\vspace{0.1in}
\noindent
{\bf Case 1}: When $(A,B,C,D)$ is sufficiently close to $(0,0,0,0)$.

\vspace{0.1in}
\noindent
We saw in the proof of Lemma \ref{Firstexamples} that 
that if $A$, $B$, and $C$ are all sufficiently close to $0$ and 
if $h_x = g_x^2, h_y=g_y^2,$ and $h_z=g_z^2$, then there is some $\epsilon > 0$ such that for any $h \in \{h_x,h_y,h_z\}$
we have
\begin{align*}
	\sup_{q \in \mathbb{B}_{\epsilon} ({\bm 0})} \Vert h(q) - q \Vert < K(\epsilon).
\end{align*}
Here, $K(\epsilon)$ is the constant from Proposition \ref{PROP:CONV_TO_ID}.
Therefore, if we let $S(0) = \{h_x,h_x^{-1},h_y,h_y^{-1},h_z,h_z^{-1}\}$ 
and define the sets $S(n)$ of iterated commutators of ``level $n$'' for each $n \geq 0$, it then 
follows from Proposition \ref{PROP:CONV_TO_ID} that for any $\gamma \in S(n)$ we have
\begin{align*}
	\sup_{q \in \mathbb{B}_{\epsilon/2} ({\bm 0})} \Vert\gamma (q) - q \Vert \leq \frac{K(\epsilon)}{2^n}.
\end{align*}
Since the relationship between $\epsilon$ and $K(\epsilon)$ given by (\ref{EQN:EPSILON_K_LINEAR_RELATION})  is linear, it follows that for each $\gamma \in S(1)$ we have
\begin{align}\label{CLOSE_TO_ID_EPS_OVER_2}
	\sup_{q \in \mathbb{B}_{\epsilon/2} ({\bm 0})} \Vert \gamma(q) - q \Vert < K(\epsilon/2).
\end{align}

Let
\begin{align*}
	\gamma_{1,2} = [h_x,h_z], \quad \gamma_{1,3} = [h_y,h_z], \quad \mbox{and} \quad \gamma_{2,3} = [h_y,h_x]
\end{align*}
and let $\gamma_{2,1} = \gamma_{1,2}^{-1}$, $\gamma_{3,1} = \gamma_{1,3}^{-1}$, and $\gamma_{3,2} = \gamma_{2,3}^{-1}$.
It is straightforward to check that these six mappings satisfy Hypothesis~(A) of Theorem~H. 
For example, one has
\begin{align*}
	\gamma_{1,2} = [h_x,h_z] = h_x^{-1} h_z^{-1} h_x h_z &= (s_y s_z s_y s_z) (s_x s_y s_x s_y) (s_z s_y s_z s_y) (s_y s_x s_y s_x) \\
	& = s_y s_z s_y s_z s_x s_y s_x s_y s_z s_y s_z s_x s_y s_x.
\end{align*}
Since the right-hand side represents a
cyclically reduced composition containing all three mappings $s_x, s_y,$ and $s_z$,
Definition/Proposition \ref{PROP:DESCRIPTION_HYPERBOLIC_ELEMENTS} implies that $\gamma_{1,2}$ is hyperbolic.
Moreover, since the first (right-most) mapping is $s_x$ we have ${\rm Ind}(\gamma_{1,2}) = v_1$
and since the last (left-most) mapping is $s_y$ we have ${\rm Attr}(\gamma_{1,2}) = v_2$.

Meanwhile, estimate (\ref{CLOSE_TO_ID_EPS_OVER_2}) implies that these six
mappings $\gamma_{i,j} \in S(1)$ satisfy Hypothesis (B) of Theorem H
on the ball of radius $\epsilon/2$.   Therefore, for all $(A,B,C,D)$ close enough to 
the origin in $\mathbb{C}^4$ the ball $\mathbb{B}_{\epsilon/2}(0) \cap S_{A,B,C,D} \subset U$ is disjoint from any unbounded Fatou component of $\AUTOGROUP_{A,B,C,D}$.

\vspace{0.1in}
\noindent
{\bf Case 2}: When $(A,B,C,D)$ is close to Dubrovin-Mazzocco parameters.

\vspace{0.1in}

\noindent
We saw in Section~\ref{SEC:LOCALLY_NONDISCRETE} that if we let
\begin{align*}
	A(a) = B(a) = C(a) = 2a + 4, \quad \mbox{and} \quad D(a) = -(a^2+8a+8)
\end{align*}
then for any $a \in (-2,2)$ the surface $S_a = S_{A(a),B(a),C(a),D(a)}$ has
three singular points $p_1(a),p_2(a),$ and $p_3(a)$ given in~(\ref{EQN:SING_PTS}). Each
of the singular points is a common fixed point of $s_x, s_y$, and~$s_z$.

Let us focus on the singular point $p_1(a)$ while pointing out that the entire discussion below applies
to $p_2(a)$ and $p_3(a)$ as well.
In the proof of Proposition~\ref{prop_stillDubrovinMazzocco} it was shown that
for any $a \in (-2,2)$ there is an $\epsilon > 0$, an open neighborhood
$W_0$ of $(A(a),B(a),C(a))$ in $\mathbb{C}^3$, and 
a sufficiently high iterate $k$ so that if we let
\begin{align*}
	f_x = g_x^k, \quad f_y = g_y^{-1} g_x^k \ g_y, \quad \mbox{and} \quad f_z = g_z^{-1} g_x^k \ g_z
\end{align*}
then for any $(A,B,C) \in W_0$ and any $f \in \{f_x,f_y,f_z\}$ we have
\begin{align*}
	\sup_{q \in \mathbb{B}_{\epsilon} (p_1(a))} \Vert f(q) - q \Vert < K(\epsilon).
\end{align*}
Let
\begin{align*}
	\gamma_{1,2} = [f_x,f_z], \quad \gamma_{1,3} = [f_y,f_z], \quad \mbox{and} \quad \gamma_{2,3} = [f_y,f_x]
\end{align*}
and let $\gamma_{2,1} = \gamma_{1,2}^{-1}$, $\gamma_{3,1} = \gamma_{1,3}^{-1}$, and $\gamma_{3,2} = \gamma_{2,3}^{-1}$.
One can then check that these six mappings satisfy Hypothesis (A) of Theorem H.
For example, one has that
\begin{align*}
	\gamma_{1,2} = [f_x,f_z] = f_x^{-1} f_z^{-1} f_x f_z &= (s_y s_z)^k \ (s_x s_y) (s_y s_z)^k (s_y s_x) \ (s_z s_y)^k \ (s_x s_y) (s_z s_y)^k (s_y s_x) \\
	&= (s_y s_z)^k \ (s_x s_z) (s_y s_z)^{k-1} (s_y s_x) \ (s_z s_y)^k \ (s_x s_y) (s_z s_y)^{k-1} (s_z s_x).
\end{align*}
This is a cyclically reduced composition containing all three mappings $s_x, s_y,$ and $s_z$ and therefore
it represents a hyperbolic mapping thanks to
Definition/Proposition~\ref{PROP:DESCRIPTION_HYPERBOLIC_ELEMENTS}.  Moreover, since the
first (right-most) mapping is $s_x$ we have ${\rm Ind}(\gamma_{1,2}) = v_1$
and since the last (left-most) mapping is $s_y$ we have ${\rm Attr}(\gamma_{1,2}) = v_2$.

As in the previous example, these six commutators may not be
$K(\epsilon)$ close to the identity on $\mathbb{B}_\epsilon(p_1(a))$.  However, we can
again use the linearity of the dependence of $K(\epsilon)$ on $\epsilon$ to see
that they satisfy Hypothesis (B) of Theorem H on $\mathbb{B}_{\epsilon/2}(p_1(a))$.  
Therefore, for all $(A,B,C,D)$ close enough to $(A(a),B(a),C(a),D(a))$
the ball $\mathbb{B}_{\epsilon/2}(p_1(a)) \cap S_{A,B,C,D} \subset U$ is disjoint from any unbounded Fatou component of $\AUTOGROUP_{A,B,C,D}$.

We have possibly reduced the size of the open set of parameters $\parameterneighborhood
\subset \mathbb{C}^4$, while still containing $(0,0,0,0)$ and still containing each of the
Dubrovin-Mazzocco parameters $(A(a),B(a),C(a),D(a))$ for $a \in (-2,2)$.  We have 
also possibly reduced the size of the open set $U$ on which $\AUTOGROUP_{A,B,C,D}$
is locally non-discrete in which a way that for any $(A,B,C,D) \in \parameterneighborhood
\setminus \mathcal{H}$ we have $U \subset \mathcal{J}_{A,B,C,D}$.

Therefore for all $(A,B,C,D) \in \parameterneighborhood \setminus \mathcal{H}$ the group
$\AUTOGROUP_{A,B,C,D}$ has a non-empty Fatou component $V_{\rm BQ} \subset
\mathcal{F}_{A,B,C,D}$ and a Julia set $\mathcal{J}_{A,B,C,D}$ with non-empty interior.
\end{proof}

\vspace{0.2in}

{\footnotesize
\noindent
Julio Rebelo \\
Institut de Math\'ematiques de Toulouse; UMR 5219, \\
Universit\'e de Toulouse, \\
 118 Route de Narbonne,\\
F-31062 Toulouse, France.\\
rebelo@math.univ-toulouse.fr.

\vspace{0.1in}

\noindent
Roland Roeder\\
 Department of Mathematical Sciences\\
Indiana University--Purdue University Indianapolis,\\
Indianapolis, IN, United States.\\
roederr@iupui.edu
}

\end{document}